\theoremstyle{definition}
\newtheorem{para}{}[section]
\newtheorem{remark}[para]{Remark}
\newtheorem{remarks}[para]{Remarks}
\newtheorem{notation}[para]{Notation}
\newtheorem{convention}[para]{Convention}
\newtheorem{definition}[para]{Definition}
\newtheorem{definitions}[para]{Definitions}
\newtheorem{claim}[equation]{Claim}
\newtheorem{fact}[para]{Fact}
\newtheorem{example}[para]{Example}
\newcommand\Alternatives{\begin{enumerate}[(i)]}
\newcommand\EndAlternatives{\end{enumerate}}
\newcommand\Conditions{\begin{enumerate}[(1)]}
\newcommand\EndConditions{\end{enumerate}}
\theoremstyle{plain}
\newtheorem{theorem}[para]{Theorem}
\newtheorem{lemma}[para]{Lemma}
\newtheorem{proposition}[para]{Proposition}
\newtheorem{corollary}[para]{Corollary}
\newtheorem{conjecture}[para]{Conjecture}
\numberwithin{equation}{para}
\numberwithin{figure}{section}
\newcommand\Number{\begin{para}}
\newcommand\EndNumber{\end{para}}
\newcommand\Definition{\begin{definition}}
\newcommand\EndDefinition{\end{definition}}
\newcommand\Definitions{\begin{definitions}}
\newcommand\EndDefinitions{\end{definitions}}
\newcommand\Theorem{\begin{theorem}}
\newcommand\EndTheorem{\end{theorem}}
\newcommand\Conjecture{\begin{conjecture}}
\newcommand\EndConjecture{\end{conjecture}}
\newcommand\Remark{\begin{remark}}
\newcommand\EndRemark{\end{remark}}
\newcommand\Remarks{\begin{remarks}}
\newcommand\EndRemarks{\end{remarks}}
\newcommand\Convention{\begin{convention}}
\newcommand\EndConvention{\end{convention}}
\newcommand\Notation{\begin{notation}}
\newcommand\EndNotation{\end{notation}}
\newcommand\Lemma{\begin{lemma}}
\newcommand\EndLemma{\end{lemma}}
\newcommand\Proposition{\begin{proposition}}
\newcommand\EndProposition{\end{proposition}}
\newcommand\Corollary{\begin{corollary}}
\newcommand\EndCorollary{\end{corollary}}
\newcommand\Claim{\begin{claim}}
\newcommand\EndClaim{\end{claim}}
\newcommand\Proof{\begin{proof}}
\newcommand\EndProof{\end{proof}}
\newcommand\Equation{\begin{equation}}
\newcommand\EndEquation{\end{equation}}
\newcommand\Bullets{\begin{itemize}}
\newcommand\EndBullets{\end{itemize}}
\renewcommand\Im{\mathop{\rm Im}}
\newcommand\calh{{\mathcal H}}
\newcommand\cals{\mathcal{S}}
\newcommand\co{\colon\thinspace}
\newcommand\bn{\mathbf{n}}
\newcommand\bo{\mathbf{0}}
\newcommand\bq{\mathbf{q}}
\newcommand\bs{\mathbf{s}}
\newcommand\bu{\mathbf{u}}
\newcommand\bv{\mathbf{v}}
\newcommand\bw{\mathbf{w}}
\newcommand\bx{\mathbf{x}}
\newcommand\by{\mathbf{y}}
\newcommand\bz{\mathbf{z}}
\begin{document}

\title{The Delaunay tessellation in hyperbolic space}

\author{Jason DeBlois}
\address{Department of Mathematics\\University of Pittsburgh}
\email{jdeblois@pitt.edu}

\begin{abstract}The Delaunay tessellation of a locally finite subset of the hyperbolic space $\mathbb{H}^n$ is constructed via convex hulls in $\mathbb{R}^{n+1}$.  For finite and lattice-invariant sets it is proven to be a polyhedral decomposition, and versions (necessarily modified from the Euclidean setting) of the empty circumspheres condition and geometric duality with the Voronoi tessellation are proved.  Some pathological examples of infinite, non lattice-invariant sets are exhibited.\end{abstract}

\maketitle

The main theorem of this paper describes a ``convex hull construction'' of canonical polyhedral decompositions with prescribed vertex set for arbitrary complete, finite-volume hyperbolic manifolds and locally finite subsets.  Various versions of this construction have been useful in the study of hyperbolic manifolds, beginning with work of Epstein--Penner in which the ``vertex set'' is essentially a collection of horospherical cusp neighborhoods \cite{EpstePen}.  Charney--Davis--Moussong gave a version for finite subsets of closed  hyperbolic manifolds \cite{CDM}, generalizing earlier work of N\"a\"at\"anen--Penner \cite{NaatPen}.  More recently, Cooper--Long translated the Epstein--Penner construction to the setting of convex projective manifolds \cite{CooLo}.

Our results are complementary to \cite{EpstePen} and generalize the main case of \cite{CDM}.  The main new case here, which is an important tool in our subsequent works \cite{DeB_locmax} and \cite{DeB_Voronoi}, covers finite subsets of finite-volume non-compact hyperbolic manifolds.  Compared to previous work this case exhibits substantial differences in the nature of the cells produced and of the decomposition's ``geometric duality'' relationship with the Voronoi tessellation.  As we will describe below the statement, the source of these differences also significantly complicates the proof.

We call our decomposition the ``Delaunay tessellation'' because it is characterized by an empty circumspheres condition, property (2) below.  It is constructed in Definition \ref{faces n stuff}.

\newcommand\PseudoEP{Let $\Gamma<\mathit{SO}^+(1,n)$ be a torsion-free lattice and $\cals$ a non-empty, locally finite, $\Gamma$-invariant set in $\mathbb{H}^n$.  The Delaunay tessellation of $\cals$ is a locally finite, $\Gamma$-invariant collection of convex polyhedra (the \mbox{\rm cells}) whose union is $\mathbb{H}^n$, satisfying:\begin{enumerate}
\item Each face of each cell is a cell, and distinct cells that intersect do so in a face of each; i.e. it is a \mbox{\rm polyhedral complex} in the sense of eg.~\cite[Dfn.~2.1.5]{DeLoRS}, with vertex set $\cals$.
\item For each metric ball or horoball $B$ of $\mathbb{H}^n$ that intersects $\cals$ but only on its boundary, ie.~such that $S = \partial B$ satisfies $B\cap\cals = S\cap\cals$, the closed convex hull of $S\cap\cals$ in $\mathbb{H}^n$ is a Delaunay cell contained in $B$.  Each Delaunay cell has this form.
\item For each parabolic fixed point $U$ of $\Gamma$ such that there is a horoball centered at $U$ and disjoint from $\cals$, there is a unique horosphere $S$ centered at $U$ such that the closed convex hull of $S\cap\cals$ in $\mathbb{H}^n$ is a $\Gamma_U$-invariant $n$-cell, where $\Gamma_U$ is the stabilizer of $U$ in $\Gamma$.  Each other cell is compact and has a metric circumsphere.\end{enumerate}
The Delaunay tessellation is uniquely determined by condition (2) above.}
\theoremstyle{plain}\newtheorem*{PEtheorem}{Theorem \ref{pseudo EP}}
\begin{PEtheorem}\PseudoEP\end{PEtheorem}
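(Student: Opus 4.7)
The plan is to realize Delaunay cells via the hyperboloid model, identifying them with radial projections to $\mathbb{H}^n$ of the ``visible'' faces of the convex hull $C = \mathrm{conv}(\cals) \subset \mathbb{R}^{n+1}$ per Definition \ref{faces n stuff}. The key dictionary between the hyperbolic and ambient Euclidean pictures is that an affine hyperplane $H \subset \mathbb{R}^{n+1}$ meets $\mathbb{H}^n$ in a metric sphere (resp.\ horosphere) precisely when its normal is future-timelike (resp.\ future-null), and in both cases the side of $H$ not containing the origin projects to the exterior of the (horo)ball bounded by $H \cap \mathbb{H}^n$. Hence the empty-(horo)ball condition on $S = H \cap \mathbb{H}^n$ translates exactly to: $H$ is a supporting hyperplane of $C$ with $H \cap \cals = S \cap \cals$.

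Property (2) is then essentially formal: the correspondence $F = H \cap C = \mathrm{conv}(S \cap \cals)$ between visible faces of $C$ and appropriate (horo)spheres in $\mathbb{H}^n$ follows directly from the dictionary, and the hyperbolic convex hull of $S \cap \cals$ is the radial projection of $F$, since linear convexity in the hyperboloid model corresponds to geodesic convexity. Visible faces with spacelike normals are ruled out because a $\Gamma$-invariant $\cals$ (with $\Gamma$ a lattice) has limit set equal to $\partial\mathbb{H}^n$ and so cannot lie on one side of any equidistant hypersurface. Property (1) is then inherited from the face lattice of $C$ via radial projection, with local finiteness coming from $\Gamma$-invariance, finiteness of $\cals/\Gamma$ outside cusp horoballs, and the structural results \BCn{}, \HCn{} of the companion paper; covering of $\mathbb{H}^n$ follows because $\cals$ is nonempty and every ray from the origin through $\mathbb{H}^n$ hits some visible face of $C$.

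For property (3), given a parabolic fixed point $U$ of $\Gamma$ with a $\cals$-free horoball at $U$, I identify $U$ with a future-null ray in $\mathbb{R}^{n+1}$ and pick a representative $\nu_U$. The horoball hypothesis gives $\langle x, \nu_U\rangle \ge t_1 > 0$ for all $x \in \cals$, and the infimum $t_0$ of these values is attained in a closed horospherical shell on which $\Gamma_U$ acts cocompactly, so local finiteness of $\cals$ forces finiteness modulo $\Gamma_U$. The hyperplane $H_U = \{\langle x, \nu_U\rangle = t_0\}$ is then the unique supporting hyperplane of $C$ with null normal in the $U$-direction; $\Gamma_U$-invariance follows because $\Gamma_U$ preserves both $C$ and the null ray through $U$. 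The horospherical cell has full dimension $n$ because $\Gamma_U$ acts cocompactly on $S_U = H_U \cap \mathbb{H}^n$, so the hyperbolic convex hull of $S_U \cap \cals$ is the closed horoball bounded by $S_U$.

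The main obstacle is the last sentence of (3): that every other cell is compact and has a metric circumsphere. This requires showing that every visible null-normal face of $C$ arises from a parabolic fixed point of $\Gamma$ as above, the analog of the central step in \cite{EpstePen}. I expect it to proceed by showing that a visible null-normal face $F$ determines a boundary point $U_F \in \partial\mathbb{H}^n$, that local finiteness of the tessellation modulo $\Gamma$ forces the stabilizer $\Gamma_{U_F}$ (which preserves $F$ and a supporting horosphere) to act cocompactly on the projection of $F$ and hence to be infinite, and then that any infinite subgroup of a torsion-free lattice fixing a point of $\partial\mathbb{H}^n$ must contain a parabolic element, making $U_F$ a parabolic fixed point. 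The quantitative control for the cocompactness step likely comes from \ShrinkingParameters{} and related results of the companion paper.
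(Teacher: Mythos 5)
There is a genuine gap, and it is exactly where the paper has to work hardest. You treat property (1) and the identification of cells with hyperbolic hulls as ``essentially formal'' consequences of the face lattice of $\mathrm{Hull}(\cals)$ under radial projection, but Section \ref{bad example} shows these points genuinely fail for general locally finite sets: a face parallel to a light-like subspace need not equal the hull of its intersection with $\cals$, its $r_n$-image need not be closed or polyhedral, faces of faces need not be faces of $\mathrm{Hull}(\cals)$, and local finiteness can fail. The paper's proof of Theorem \ref{pseudo EP} consists almost entirely of ruling out these pathologies in the lattice-invariant case, and the engine is the rotation Lemma \ref{rotate plane} (perturbing a light-like support plane about a codimension-one subplane through support planes), which feeds Proposition \ref{horospherical cells} ($F=\mathrm{Hull}(P\cap\cals)$ is $n$-dimensional, its faces are compact faces of $\mathrm{Hull}(\cals)$, locally finite and finite mod $\Gamma_U$), Corollary \ref{its a poly} ($r_n(F)$ is closed, a polyhedron, and contains a horoball), and the local-finiteness Lemma \ref{locally finite}, whose hard case (support planes degenerating to a light-like plane) needs the comparison Lemma \ref{intersect ballssss}. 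Your proposal has no counterpart to any of this, and the companion-paper results you cite concern cyclic polygon geometry and supply neither the local finiteness nor the cocompactness input needed here. Your plan for the central step ``light-like visible face $\Rightarrow$ parabolic fixed point'' also has the dependencies reversed: you propose to use local finiteness of the tessellation mod $\Gamma$ to force $\Gamma_{U_F}$ to act cocompactly, but in the paper cocompactness comes first (Proposition \ref{good support}), from the Margulis thick--thin decomposition --- deep level sets inside the half-space bounded by a light-like support plane are uniformly far from $\cals$, hence project into a cusp, and Lemma \ref{invariant horoballs} then identifies $V^{\perp}$ as a parabolic fixed point with $\Gamma_U$ cocompact on $P\cap\mathbb{H}^n$ --- and local finiteness is deduced afterwards using exactly that cocompactness.

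There is also a concrete false claim in your treatment of (3): the $\Gamma_U$-invariant cell is not ``the closed horoball bounded by $S_U$.'' It is the closed convex hull of $S_U\cap\cals$, a proper $n$-dimensional polyhedral subset of that horoball which merely contains a smaller horoball (Corollary \ref{its a poly}). Already for a cocompact $\Gamma_U$-orbit on a horosphere in $\mathbb{H}^2$ the hull is the scalloped region bounded by the geodesic arcs between consecutive points, and the complementary scallops lie in other Delaunay cells; if the cell were the whole horoball, interiors of distinct cells would overlap. Finally, the covering claim ``every ray from the origin through $\mathbb{H}^n$ hits a visible face'' is not automatic either: it is Corollary \ref{all the points}, which requires showing $r_n(\mathrm{Hull}(\cals))$ is closed via Lemma \ref{convex hull boundary} and the limit-set argument of Lemma \ref{no time}, and closedness is precisely what fails in the example of Section \ref{bad example}.
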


Here and in the remainder of the paper, we use the \textit{hyperboloid model} of hyperbolic space, $\mathbb{H}^n=\{-x_0^2+x_1^2+\hdots+x_n^2 = -1, x_0>0\}\subset\mathbb{R}^{n+1}$, inheriting a metric from the Lorentzian inner product on $\mathbb{R}^{n+1}$.  See Section \ref{intro Lorentzian} for details.  Its group of orientation-preserving isometries is $\mathit{SO}^+(1,n)<\mathit{GL}(\mathbb{R}^{n+1})$.  A \textit{lattice} in $\mathit{SO}^+(1,n)$ is a discrete subgroup with a finite-volume fundamental domain in $\mathbb{H}^n$.

Every complete, orientable hyperbolic $n$-manifold of finite volume is isometric to $\mathbb{H}^n/\Gamma$ for a torsion-free lattice $\Gamma<\mathit{SO}^+(1,n)$.   Given such a manifold $M$, to obtain a polygonal decomposition with prescribed locally finite vertex set $\cals_0\subset M$ we take the Delaunay tessellation of the preimage $\cals$ of $\cals_0$ in $\mathbb{H}^n$ and project cells to $M$.  See Corollary \ref{down below}.

The idea of the convex hull construction is to take the convex hull $\mathrm{Hull}(\cals)$ of $\cals$ in $\mathbb{R}^{n+1}$, and for each proper face $F$ let $r_n(F)$ be a Delaunay cell, where $r_n$ is the projection to $\mathbb{H}^n$ along rays through the origin.  Any support plane for $F$ intersects $\mathbb{H}^n$ in a circum(horo)sphere for $r_n(F)$.  If $\cals$ satisfies the hypotheses of any of \cite{CDM}, \cite{EpstePen} or \cite{NaatPen}, then each proper face $F$ of $\mathrm{Hull}(\cals)$ has a support plane parallel to a \textit{space-like} subspace of $\mathbb{R}^{n+1}$, on which the Lorenzian inner product has positive-definite restriction.  This  no longer holds in our setting.

In particular, if $M = \mathbb{H}^n/\Gamma$ is non-compact and $\cals_0\subset M$ is finite then by Corollary \ref{co-finite} every cusp of $M$ is contained in a \textit{horospherical} Delaunay cell, described in Theorem \ref{pseudo EP}(3).  The support plane of an $n$-dimensional face $F$ of $\mathrm{Hull}(\cals)$ such that $r_n(F)$ is horospherical is parallel to a \textit{light-like} subspace $V$ of $\mathbb{R}^{n+1}$, on which the Lorentzian inner product's restriction is degenerate.  The corresponding \textit{parabolic fixed point} of $\Gamma$ is the one-dimensional light-like subspace $U$ of $V$.  See Section \ref{margulis} for details.

In consequence, the decompositions produced by Theorem \ref{pseudo EP} are not ``Euclidean'', in the sense of \cite{CDM} and \cite{EpstePen}, when there are horospherical Delaunay cells.  In addition, affine hyperplanes parallel to space-like subspaces enjoy compactness and stability properties (which also hold in the projective setting of \cite{CooLo}, see the bullet spanning pp.~6--7 there) that those parallel to light-like planes do not.  Our proof of Theorem \ref{pseudo EP} roughly parallels those of the main theorems of \cite{CDM} and \cite{EpstePen}, but the steps that require perturbing support planes are significantly complicated by this fact.  Lemma \ref{rotate plane} is a key new technical result explicitly describing useful perturbations of support planes that are parallel to light-like subspaces.

In another departure from \cite{CDM} and \cite{EpstePen}, the \textit{geometric dual} to the \textit{Voronoi tessellation} of $\cals$ is a proper subcomplex of the Delaunay tessellation when there are horospherical Delaunay cells.  For definitions see Section \ref{into Voronoi}.  There we prove:

\newcommand\TheRGD{The \mbox{\rm geometric dual complex} of a locally finite set $\cals\subset\mathbb{H}^n$, consisting of Delaunay cells geometrically dual to Voronoi cells, is a polyhedral complex.  For each metric ball $B$ of $\mathbb{H}^n$ that intersects $\cals$ such that $B\cap\cals = S\cap\cals$, where $S=\partial B$, the closed convex hull of $S\cap\cals$ in $\mathbb{H}^n$ is a compact geometric dual cell.  Each geometric dual cell has this form.}
\newtheorem*{RGDTheorem}{Theorem \ref{the real geometric dual}}

\begin{RGDTheorem}\TheRGD\end{RGDTheorem}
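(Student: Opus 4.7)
The plan is to work in the hyperboloid model with the convex hull construction used to define the Delaunay tessellation, and to identify geometric dual cells with those Delaunay cells that arise from \emph{space-like} supporting hyperplanes of $\mathrm{Hull}(\cals) \subset \mathbb{R}^{n+1}$.

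First I would unpack the definition of geometric dual to show that a Delaunay cell $C$ is geometrically dual to a Voronoi cell $V$ if and only if the vertices of $C$ share a common equidistant point in $\mathbb{H}^n$ lying in the relative interior of $V$. Such an equidistant point is the hyperbolic center of a metric sphere $S$ passing through the vertices of $C$, and the Voronoi condition translates exactly to the empty-ball condition $B \cap \cals = S \cap \cals$. By Lemma \ref{classification of spheres}, such empty metric spheres are in bijection with space-like supporting hyperplanes of $\mathrm{Hull}(\cals)$, and under this correspondence $\mathrm{Hull}(S \cap \cals) \subset \mathbb{H}^n$ is the projection of the associated face of $\mathrm{Hull}(\cals)$. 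This establishes parts 2 and 3 of the theorem; compactness is automatic since a space-like affine hyperplane of $\mathbb{R}^{n+1}$ meets $\mathbb{H}^n$ in a compact set.

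For the polyhedral complex property, the key fact is that if $F$ is a face of $\mathrm{Hull}(\cals)$ admitting a space-like supporting hyperplane $H$ and $F' \subsetneq F$ is a proper sub-face, then $F'$ likewise admits a space-like supporting hyperplane. One obtains such a hyperplane by tilting $H$ slightly about the affine span of $F'$: the space-like condition is open in the Grassmannian of affine hyperplanes of $\mathbb{R}^{n+1}$, so a small enough tilt remains space-like, and a generic tilt supports $\mathrm{Hull}(\cals)$ precisely along $F'$. Hence faces of geometric dual cells are geometric dual cells, and the common-face condition for intersections is inherited from the polyhedral structure of $\mathrm{Hull}(\cals)$ already established for the full Delaunay tessellation.

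The subtlest point will be verifying the tilting argument: one must ensure that the perturbed hyperplane supports $\mathrm{Hull}(\cals)$ along $F'$ alone, not along some intermediate face between $F'$ and $F$. I expect this to follow from a local analysis near the relative interior of $F'$, where $\mathrm{Hull}(\cals)$ is described as the intersection of finitely many half-spaces bounded by its facet-supporting hyperplanes, making the cone of supporting hyperplanes of $F'$ a finitely generated polyhedral cone whose relative interior contains space-like elements. Local finiteness of the geometric dual complex is then immediate, since each geometric dual cell is compact and thus contains only finitely many points of $\cals$, and each point of $\cals$ lies in only finitely many compact Delaunay cells.
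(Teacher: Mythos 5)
There is a genuine gap, and it sits exactly at the step you flag as subtle. Your justification of the tilting argument rests on the claim that near the relative interior of $F'$ the hull $\mathrm{Hull}(\cals)$ is cut out by finitely many facet half-spaces, so that the cone of supporting hyperplanes along $F'$ is a finitely generated polyhedral cone. For a general locally finite $\cals$ this is false: in the example of Section \ref{bad example}, infinitely many compact faces with space-like support planes share the single vertex at $p_0$ (see Remark \ref{bad dual}), so no neighborhood of that vertex meets only finitely many facets. For the same reason your closing assertion --- that each point of $\cals$ lies in only finitely many compact Delaunay cells, hence the geometric dual complex is locally finite --- is false; Remark \ref{bad dual} notes the geometric dual complex of that example is \emph{not} locally finite, and local finiteness is not part of the theorem. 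What actually makes the tilt legitimate is not finiteness of facets but two other facts: compactness of $P\cap U_{n+1}$ when the support plane $P$ is parallel to a space-like subspace (Lemma \ref{convex component}), so that planes near $P$ meet $U_{n+1}$ inside a small neighborhood of $P\cap U_{n+1}$ (Fact \ref{small neighborhood}), together with local finiteness of $\cals$, which puts $\cals\setminus P$ at a definite positive distance from $P\cap U_{n+1}$. One then rotates $P$ about a codimension-one support plane $Q$ for $F$ inside $P$ (rotating ``about the affine span of $F'$'' is not even well-defined when $F'$ has higher codimension in $P$) to get a space-like support plane for $\mathrm{Hull}(\cals)$ meeting it exactly in $Q\cap F$; this is the content of Lemma \ref{rotate space} and Lemma \ref{contravariance}. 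The same rotation is also needed for a step you assert without proof, namely $P\cap\mathrm{Hull}(\cals)=\mathrm{Hull}(P\cap\cals)$: for infinite $\cals$ this identity is not automatic (it fails for the light-like face in Section \ref{bad example}), and it is precisely what Lemma \ref{rotate space} establishes in the space-like case.

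A second, related gap: you propose to inherit the ``distinct cells meet in a common face'' property from ``the polyhedral structure of $\mathrm{Hull}(\cals)$ already established for the full Delaunay tessellation.'' No such structure is available here: Proposition \ref{the finite case} covers only finite sets and Theorem \ref{pseudo EP} only lattice-invariant ones, and the example of Section \ref{bad example} shows the full Delaunay tessellation of a locally finite set need not be a polyhedral complex at all. The correct route is to rerun the finite-case argument directly for geometric dual cells: writing $C=r_n(P\cap\mathrm{Hull}(\cals))$ and $C'=r_n(P'\cap\mathrm{Hull}(\cals))$ with $P,P'$ support planes parallel to space-like subspaces (Proposition \ref{geometric dual}), the plane $P\cap P'$ supports each face, $r_n$ carries $F\cap F'$ to $C\cap C'$, and no such plane can contain $\bo$. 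With these repairs --- Lemma \ref{rotate space} in place of your facet-cone argument, and the direct support-plane intersection argument --- your outline becomes essentially the paper's proof.
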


Note that Theorem \ref{the real geometric dual} applies to arbitrary locally finite sets and does not require lattice-invariance.  In fact we only restrict to the lattice-invariant setting in Section \ref{manifolds}.  Sections \ref{intro Lorentzian} and \ref{convex project} establish general preliminaries, respectively on Lorentzian geometry and convex hulls in Euclidean space.  In section \ref{Delaunay existence} we consider the case that $\cals\subset\mathbb{H}^n$ is finite.  The main result of this section is Proposition \ref{the finite case}, which describes the Delaunay tessellation of such $\cals$.  

The case where $\cals$ is finite is of independent interest in computational geometry.  In particular, Devillers et.~al.~constructed the Delaunay tessellation of such $\cals$ in \cite{BDT} and the predecessor \cite{DMT}, using a different approach.   They exclude cells with non-metric circumspheres and so obtain the geometric dual subcomplex of our complex.  See Remark \ref{empty characterization}.  Proposition \ref{the finite case} can be regarded as giving a geometric meaning to the cells excluded by Devillers et.~al.  We also remark that computational geometers have known a convex hull construction of \textit{Euclidean} Delaunay tessellations for many years: in a survey paper of Graham--Yao \cite[p.~693]{GrahamYao}, the idea is attributed to a 1979 paper of K.Q.~Brown.

Section \ref{bad example} explores the case of infinite but not lattice-invariant sets $\cals$.  There we give examples to illustrate how some aspects of Theorem \ref{pseudo EP} can fail in this setting.  We note that Akiyoshi--Sakuma have considered infinite, non lattice-invariant sets in the light cone and encountered similar pathologies \cite{AkSak}.

\subsection*{Acknowledgment}  This paper was inspired by a talk at Pitt by CMU's Noel Walkington.  Many thanks to the referee for helpful comments, observations, and suggestions.

\section{A brief introduction to the hyperboloid model}\label{intro Lorentzian}  This section collects a handful of well known basic facts on hyperbolic geometry, for easy referencing and to establish notation.  The \textit{Lorentzian inner product} on $\mathbb{R}^{n+1}$ is given by
$$ (x_0,x_1,\hdots,x_n)\circ (y_0,y_1,\hdots,y_n) = -x_0y_0+x_1y_1+\hdots+x_ny_n $$
This restricts on the tangent bundle of the \textit{hyperboloid model} $\mathbb{H}^n = \{\bx\,|\,\bx\circ\bx=-1, x_0>0\}$ to a genuine Riemannian metric, the \textit{hyperbolic metric} with constant sectional curvature $-1$.  

For $0< k < n$, $\mathbb{R}^{k+1}\times \{(0,\hdots,0)\}$ inherits its own Lorentzian inner product from that of the ambient $\mathbb{R}^{n+1}$, so its intersection with $\mathbb{H}^n$ is an isometrically embedded copy of $\mathbb{H}^k$.  The isometry group $\mathit{SO}^+(1,n)$ (called $\mathit{PO}(1,n)$ in \cite{Ratcliffe}) of $\mathbb{H}^n$ acts transitively on the set of $(k+1)$-dimensional \textit{time-like} subspaces, those which intersect $\mathbb{H}^n$ \cite[Theorem 3.1.6]{Ratcliffe}.  Their intersections with $\mathbb{H}^n$ comprise its collection of $k$-dimensional totally geodesic subspaces.

In particular, the geodesic containing distinct $\bx$ and $\by$ in $\mathbb{H}^n$ is its intersection with  the $2$-dimensional subspace of $\mathbb{R}^{n+1}$ that they span (see the Definition on p.~64 of \cite{Ratcliffe}, and Corollary 4 in the same section).  It follows from \cite[Theorem 3.2.5]{Ratcliffe} that the unique geodesic ray of $\mathbb{H}^n$ from $\bx$ through $\by$ is explicitly parametrized by arclength as:\begin{align}\label{geodesic ray}
 t\mapsto \cosh t\,\bx+\sinh t\, \bn,\end{align}
where $\bn$ is the unit vector in the direction of the component $\by + (\bx\circ\by)\bx$ of $\by$ normal to $\bx$.  The ray from $\bx$ runs through $\by$ at $t = \cosh^{-1}(-\bx\circ\by)$, so this is the distance $d_H(\bx,\by)$.

Note that for $\bx$ and $\by$ with $x_0 >0$, $y_0>0$, $\bx\circ\bx = a\leq0$ and $\by\circ\by = b\leq0$:\begin{align}\label{x circ y}
 \bx\circ\by +\sqrt{ab} & = -x_0y_0 + x_1y_1+\hdots x_ny_n + \sqrt{ab}\nonumber \\
   & = -\sqrt{-a+x_1^2+\hdots x_n^2}\sqrt{-b+y_1^2+\hdots y_n^2} + \sqrt{ab} + x_1y_1+\hdots x_ny_n
   \leq 0 \end{align}
Thus $\bx\circ\by\leq-\sqrt{ab}$.  The inequality above is the Cauchy-Schwartz inequality, which further implies strict inequality if and only if $\by$ is not a scalar multiple of $\bx$.

\begin{lemma}\label{normal vectors}  For each $n$-dimensional subspace $V$ of $\mathbb{R}^{n+1}$ there is a unique $1$-dimensional subspace $V^{\perp}$ such that $\bv\circ\bu = 0$ for each $\bv\in V$ and $\bu\in V^{\perp}$.\begin{itemize}
  \item  If $V$ is space-like then $V^{\perp}$ is time-like.
  \item  If $V$ is light-like then $V^{\perp} = V\cap L$, where $L = \{\bu\,|\,\bu\circ\bu=0\}$ is the \mbox{\rm light cone}.
  \item  If $V$ is time-like then $V^{\perp}$ is space-like.\end{itemize}
We will say that any $\bu\neq\bo$ in $V^{\perp}$ is a \mbox{\rm normal} to $V$.\end{lemma}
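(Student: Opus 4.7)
The plan is to get existence and uniqueness of $V^{\perp}$ from nondegeneracy of the ambient form, then classify $V$ by the signature of the restricted form, and finally read off the nature of $V^{\perp}$ in each of the three cases. For the first step: the Lorentzian form is nondegenerate on $\mathbb{R}^{n+1}$, so the map $W\mapsto W^{\perp}$ is an involution on subspaces that sends a $k$-dimensional subspace to one of dimension $n+1-k$. Applied to $V$ of dimension $n$ this yields a unique one-dimensional $V^{\perp}$.

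For the second step, I would classify $V$ by the Sylvester triple $(p,q,r)$ of its restricted form, where $r$ is the dimension of the radical. Two ambient constraints cut the possibilities to three: first, since the ambient signature is $(n,1)$, the ``negative'' part satisfies $q\leq 1$; second, the radical equals $V\cap V^{\perp}$, which has dimension at most $\dim V^{\perp}=1$, so $r\leq 1$. With $p+q+r=n$ this leaves $(n,0,0)$, $(n-1,1,0)$, and $(n-1,0,1)$, provided one rules out $(n-2,1,1)$. That exclusion uses the standard fact that a null vector orthogonal to a time-like vector must be zero; after normalizing the time-like vector to lie along $(1,0,\ldots,0)$ via $\mathit{SO}^+(1,n)$-transitivity, the orthogonal hyperplane becomes $\{x_0=0\}$, which is positive definite. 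The three admissible signatures correspond exactly to $V$ being space-like, time-like, or light-like in the sense needed for the bullets.

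In the space-like and time-like cases the restricted form is nondegenerate, so $V\cap V^{\perp}=\bo$ and $\mathbb{R}^{n+1}=V\oplus V^{\perp}$ as an orthogonal direct sum; the signatures add, forcing $V^{\perp}$ to be time-like in the first case and space-like in the second. In the light-like case the radical of the restricted form is one-dimensional and equals $V\cap V^{\perp}$; since $\dim V^{\perp}=1$ this forces $V^{\perp}\subset V$ and $V^{\perp}\subset L$, giving $V^{\perp}\subseteq V\cap L$. The step I expect to require the most care is the reverse inclusion $V\cap L\subseteq V^{\perp}$: given $\bv\in V\cap L$ and arbitrary $\bw\in V$, I would examine the polynomial $t\mapsto(\bv+t\bw)\circ(\bv+t\bw)$. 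Positive semidefiniteness of the form on $V$ makes this polynomial non-negative in $t$, and $\bv\circ\bv=0$ kills its constant term, so its linear coefficient $2(\bv\circ\bw)$ must vanish. Thus $\bv\in V^{\perp}$, completing the identification $V^{\perp}=V\cap L$.
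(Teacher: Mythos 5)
Your proof is correct, but it takes a more structural route than the paper's. You obtain $\dim V^{\perp}=1$ once and for all from nondegeneracy of the ambient form (orthogonal complements have complementary dimension), then classify $V$ by the Sylvester triple of the restricted form, excluding $(n-2,1,1)$ via the standard fact that the orthogonal complement of a time-like vector is positive definite, and read off the space-like and time-like bullets from additivity of signature across the orthogonal splitting $\mathbb{R}^{n+1}=V\oplus V^{\perp}$; the light-like bullet comes from identifying the radical with $V\cap V^{\perp}$ together with the quadratic-in-$t$ argument showing every null vector of $V$ lies in the radical. The paper instead argues case by case and constructively: in the space-like and time-like cases it runs Gram--Schmidt to build an orthogonal projection onto $V$ and produces a normal by subtracting projections, establishing $\dim V^{\perp}=1$ separately in each case and citing Ratcliffe (or Sylvester's law of inertia) for the time-like bullet; in the light-like case it uses the same quadratic trick you use to show a null $\bu\in V$ satisfies $\bu\circ\bv=0$ for all $\bv\in V$, then identifies $V=\langle\bu\rangle^{\perp}$ as the kernel of the nonconstant functional $\bv\mapsto\bv\circ\bu$ and pins down $V^{\perp}=V\cap L$ using the space-like slice $V_0=V\cap(\{0\}\times\mathbb{R}^n)$. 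What each buys: your argument is shorter and more uniform, at the cost of leaning on standard bilinear-form theory (maximal negative index, signature additivity) as black boxes; the paper's is more self-contained and hands-on, and the explicit projections it constructs are in the same spirit as computations reused later (e.g.\ in the proof of Lemma \ref{classification of spheres}). The one shared ingredient is the semidefiniteness/quadratic argument in the light-like case, which is essentially identical in both proofs.
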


Here, following common usage, we call a subspace $V$ \textit{space-like}, \textit{light-like}, or \textit{time-like} if the Lorentzian inner product's restriction to $V$ is respectively positive-definite, positive-semidefinite, or neither.  A single vector is space-like, light-like, or time-like if its span is.

Lemma \ref{normal vectors} is standard and is taken for granted in eg.~\cite{EpstePen} and \cite{CDM}.  Here is a proof sketch.  We note first that that $\{0\}\times\mathbb{R}^n$ is space-like, and its Lorentz-orthogonal complement $\mathbb{R}\times\{\bo\}$ is time-like.  For any other $n$-dimensional subspace $V$, $V_0 = V\cap \left(\{0\}\times\mathbb{R}^n\right)$ is  space-like, so the usual Gram-Schmidt process produces an orthonormal basis for $V_0$.  $V_0$ has codimension one in $V$, so the addition of a vector $\bv_0\in V-V_0$ fills this out to a basis for $V$.  We may take $\bv_0$ orthogonal to $V_0$ by subtracting off its orthogonal projection (defined in the usual way).

The hypothesis that $V$ is space-like, light-like or time-like now forces $\bv_0\circ\bv_0$ to be respectively positive, zero, or negative.  Note in the second case that $\bu=\bv_0\in V^{\perp}$ is a normal.  In the first we may normalize so that $\bv_0\circ\bv_0 = 1$, define an orthogonal projection $\mathbb{R}^{n+1}\to V$ in the usual way, and subtract from any $\bx\in\mathbb{R}^{n+1}-V$ its projection to $V$ to produce a normal $\bu$.  In the final, time-like case we normalize so that $\bv_0\circ\bv_0 =-1$ and define $p_V\co\mathbb{R}^{n+1}\to V$ by\begin{align}
  \label{time-like projection}  p_V(\bx) = p_{V_0}(\bx)-(\bx\circ\bv_0)\bv_0,\end{align}
where $p_{V_0}$ is the projection to $V_0$.  Then for any $\bx\in \mathbb{R}^{n+1}-V$, $\bu=\bx-p_V(\bx)$ is a normal to $V$.  Sylvester's law of inertia implies in this case that $\bu$ is space-like, and if $V$ is space-like that any normal is time-like.  Non-degeneracy of the Lorentzian inner product implies (in all cases) that $V^{\perp}$ is one-dimensional.

We will call \textit{hyperspheres} the non-empty intersections between $\mathbb{H}^n$ and $n$-dimensional affine subspaces of $\mathbb{R}^{n+1}$.  They are classified below.

\begin{lemma}\label{classification of spheres}  For an $n$-dimensional subspace $V$ of $\mathbb{R}^{n+1}$ and $\bx_0\notin V$ such that $P =V+\bx_0$ intersects $\mathbb{H}^n$, the hypersphere $S\doteq P\cap\mathbb{H}^n$ is classified as follows:\begin{itemize}
\item If $V$ is space-like then $P = \{\bx\,|\,\bx\circ\bu = \bx_0\circ\bu\}$, where $\bu$ is the unique normal to $V$ in $\mathbb{H}^n$, and $S=\{\bx\in\mathbb{H}^n\,|\,\cosh d_H(\bx,\bu) = -\bx_0\circ\bu\}$ is a metric sphere centered at $\bu$.
\item If $V$ is light-like then $P=\{\bx\,|\,\bx\circ\bu=-1\}$, where $\bu$ is the unique normal to $V$ with $\bx_0\circ\bu=-1$, and $S=\{\bx\in\mathbb{H}^n\,|\,\bx\circ\bu = -1\}$ is the \mbox{\rm horosphere centered at $\bu$}.
\item If $V$ is time-like then for $\lambda = \bx_0\circ\bu\in V$, where $\bu$ is a unit normal to $V$, $S$ is a component of the equidistant locus $\left\{\bx\in\mathbb{H}^n\,|\,\cosh d_H(\bx,V\cap\mathbb{H}^n) = \sqrt{1+\lambda^2}\right\}$.\end{itemize}
Every metric sphere, horosphere, or equidistant to a geodesic subspace is of the form above.\end{lemma}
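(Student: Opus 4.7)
The plan is to use Lemma \ref{normal vectors} in each case to obtain a normal $\bu$ to $V$, then exploit the fact that the affine hyperplane $P = V + \bx_0$ is a level set of the Lorentzian linear functional $\bx\mapsto\bx\circ\bu$: in all three cases the kernel of this nonzero functional equals $V$, so every $\bx\in P$ satisfies $\bx\circ\bu = \bx_0\circ\bu$, and this constant determines $P$. What remains is to recognize the locus $\{\bx\in\mathbb{H}^n\,|\,\bx\circ\bu = c\}$ as the claimed geometric object in each case, and then to handle the converse.

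For the space-like case, I would rescale $\bu$ so that $\bu\in\mathbb{H}^n$ (choosing the sheet with $u_0>0$), which pins it down uniquely; the distance formula $d_H(\bx,\by) = \cosh^{-1}(-\bx\circ\by)$ read off from (\ref{geodesic ray}) then immediately identifies $S$ as a metric sphere centered at $\bu$. For the light-like case, Lemma \ref{normal vectors} gives $V^\perp = V\cap L$ one-dimensional; inequality (\ref{x circ y}), applied to any $\bx\in P\cap\mathbb{H}^n$ together with the choice of $\bu$ on the positive light cone, forces $\bx\circ\bu < 0$, so $\bu$ can be rescaled uniquely to satisfy $\bx_0\circ\bu = -1$, whereupon $S = \{\bx\in\mathbb{H}^n\,|\,\bx\circ\bu=-1\}$ is the standard defining equation of a horosphere.

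The time-like case is the most involved. Since the Lorentzian form is non-degenerate on $V$ (as established in the proof of Lemma \ref{normal vectors}), the decomposition $\mathbb{R}^{n+1} = V\oplus V^\perp$ furnishes a unique $\bu\in V^\perp$ with $\bx_0-\bu\in V$. The main computation is the distance calculation: for $\bx\in S$, the vector $\bz = \bx-\bu$ lies in $V$, and direct expansion using $\bx\circ\bu = \bu\circ\bu$ gives $\bz\circ\bz = -1-\bu\circ\bu$, so $\by = \pm\bz/\sqrt{1+\bu\circ\bu}$ lies in $V\cap\mathbb{H}^n$ (for the sign making $y_0>0$), and a further expansion yields $-\bx\circ\by = \sqrt{1+\bu\circ\bu}$. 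A standard orthogonal-projection argument identifies $\by$ as the nearest point of $V\cap\mathbb{H}^n$ to $\bx$, placing $\bx$ on the equidistant locus at the claimed distance. The subtlety I expect to be the main obstacle is showing that $S$ is a single component of this locus rather than a union of both; this should follow from the fact that $P$ lies on one side of $V$ in $\mathbb{R}^{n+1}$, corresponding to a single sign choice for $\by$ throughout $S$. The converse direction is then routine bookkeeping: given a metric sphere, horosphere, or equidistant component, one constructs $\bu$ as the center, a generator of the light-like line, or a scaled point in $V^\perp$ respectively, and reads off $P$ as the appropriate level set of $\bx\mapsto\bx\circ\bu$.
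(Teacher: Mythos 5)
Your space-like and light-like cases, and your sketch of the converse, coincide with the paper's argument, and your time-like computation ($\bz=\bx-\bu\in V$, $\bz\circ\bz=-1-\bu\circ\bu$, normalizing $\bz$ to get $\by\in V\cap\mathbb{H}^n$, and the reverse Cauchy--Schwarz inequality (\ref{x circ y}) to see that $\by$ is the nearest point) is the same orthogonal-projection idea the paper uses.  The one step your proposal does not actually establish is the final claim of the time-like case: that $S$ is a \emph{component} of the equidistant locus.  Your computation is carried out only for $\bx\in S$, so it shows $S$ is \emph{contained in} the equidistant locus, and your observation that $P$ lies on one side of $V$ rules out $S$ meeting both sides of $V\cap\mathbb{H}^n$; but being a full component also requires the reverse inclusion, namely that every $\bx\in\mathbb{H}^n$ on that side with $\cosh d_H(\bx,V\cap\mathbb{H}^n)=\sqrt{1+\bu\circ\bu}$ actually lies in $P$.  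A ``single sign choice for $\by$ throughout $S$'' does not give this.

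The fix is exactly how the paper organizes the argument: run your projection computation for an \emph{arbitrary} $\bx\in\mathbb{H}^n$, not just $\bx\in S$.  Writing $\bx=\bz+\lambda(\bx)\,\bu$ with $\bz\in V$ and $\lambda(\bx)=\frac{\bx\circ\bu}{\bu\circ\bu}$, the same expansion gives $\bz\circ\bz=-1-\lambda(\bx)^2(\bu\circ\bu)$, and applying (\ref{x circ y}) to $\bz$ (whose normalization lies in $V\cap\mathbb{H}^n$) and an arbitrary $\by\in V\cap\mathbb{H}^n$ yields
$$\cosh d_H\bigl(\bx,V\cap\mathbb{H}^n\bigr)=\sqrt{1+\lambda(\bx)^2(\bu\circ\bu)},$$
with the minimum attained at the normalization of $\bz$.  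Hence the equidistant locus is precisely $\lambda^{-1}(\{\pm1\})\cap\mathbb{H}^n$, and since decomposing along $V\oplus V^{\perp}$ shows $P=V+\bu=\lambda^{-1}(1)$ exactly, $S=\lambda^{-1}(1)\cap\mathbb{H}^n$ is one of the two pieces of that locus, as required.  With this global distance formula in place, your argument is complete and agrees with the paper's proof in all three cases.
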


For now we will take as a definition that a \textit{horosphere} of $\mathbb{H}^n$ is a set of the form $\{\bx\in\mathbb{H}^n\,|\,\bx\circ\bu = -1\}$ for some fixed $\bu$ with $\bu\circ\bu=0$ and $u_0>0$.  Its \textit{ideal point} is $U=\mathrm{span}(\bu)$.

Lemma \ref{classification of spheres} above is a more-detailed version of Lemma 2 of \cite{CDM}, but the additional detail in the statement here is largely obvious or contained in the proof there.  We do fill in some detail in the case that $V$ is time-like, however.  In this case, for a unit normal $\bu$ to $V$ and $p_V\co \mathbb{R}^{n+1}\to V$ as defined in (\ref{time-like projection}), any $\bx\in\mathbb{R}^{n+1}$ decomposes as $p_V(\bx)+\lambda\bu$ for some $\lambda\in\mathbb{R}$.  Then $\bx\circ\bx = p_V(\bx)\circ p_V(\bx) + \lambda^2$, so for any $\bx\in\mathbb{H}^n$ and $\by\in V\cap\mathbb{H}^n$ we have:
$$ \bx\circ\by = p_V(\bx)\circ\by \leq -\sqrt{1+\lambda^2} $$
The inequality here is an application of (\ref{x circ y}).  Equality is attained if $\by$ is a scalar multiple of $p_V(\bx)$ (see below (\ref{x circ y})), so the distance from $\bx$ to $V\cap\mathbb{H}^n$ is $\cosh^{-1}\sqrt{1+\lambda^2}$.

Now note for any $\bx\in P = V+\bx_0$ that since $\bx-\bx_0\in V$ we have $\bx - p_V(\bx) = \bx_0 - p_V(\bx_0)$, so $\lambda$ as above is $\bx_0\circ\bu$, and the distance from $S = P\cap\mathbb{H}^n$ to $V\cap\mathbb{H}^n$ is constant.  The two components of the equidistant locus in question are $P\cap\mathbb{H}^n$ and $(V-\bx_0)\cap\mathbb{H}^n$.


\begin{figure}
\begin{tikzpicture}

\begin{scope}[xshift=-5cm]
\draw (-2,0) -- (2,0);
\node [above right] at (-2,0) {$\mathbb{R}$};

\draw (0,1.1) circle [radius=0.9];
\fill [color=gray] (0,0.7) circle [radius=0.1];
\fill (0.9,1.1) circle [radius=0.1];
\node [right] at (0.9,1.1) {$y$};
\fill (-0.9,1.1) circle [radius=0.1];
\node [left] at (-0.9,1.1) {$z$};
\fill (0,2) circle [radius=0.1];
\node [above] at (0,2) {$x$};
\end{scope}

\begin{scope}
\draw (-2,0) -- (2,0);

\draw (0,1) circle [radius=1];
\fill [color=white] (0,0) circle [radius=0.1];
\draw [very thick, color=gray] (0,0) circle [radius=0.1];
\fill (0.995,1.1) circle [radius=0.1];
\node [right] at (0.995,1.1) {$y$};
\fill (-0.995,1.1) circle [radius=0.1];
\node [left] at (-0.995,1.1) {$z$};
\fill (0,2) circle [radius=0.1];
\node [above] at (0,2) {$x$};
\end{scope}

\begin{scope}[xshift=5cm]
\draw (-2,0) -- (2,0);

\draw (0,0.9) circle [radius=1.1];
\draw [very thick, color=gray] (0.632,0) arc (0:180:0.632);
\fill (1.08,1.1) circle [radius=0.1];
\node [right] at (1.08,1.1) {$y$};
\fill (-1.08,1.1) circle [radius=0.1];
\node [left] at (-1.08,1.1) {$z$};
\fill (0,2) circle [radius=0.1];
\node [above] at (0,2) {$x$};
\end{scope}

\end{tikzpicture}
\caption{Hyperspheres of $\mathbb{H}^2$ determined by three points.}
\label{three pt circ}
\end{figure}
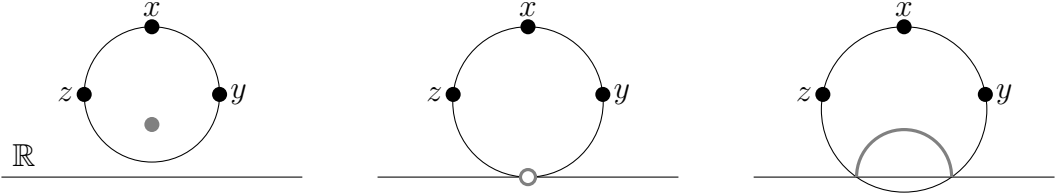

\begin{example}\label{three pt sphere}  The three possibilities for the hypersphere $S$ containing $\bx$, $\by$ and $\bz$ in $\mathbb{H}^2$ are pictured in Figure \ref{three pt circ} in the \textit{upper half-plane model} $\{z\in\mathbb{C}\,|\,\Im\,z>0\}$ (with Riemannian metric $g_z = \frac{g_E}{\Im z}$, where $g_E$ is the Euclidean metric).  This model is a useful visual aid.  Its geodesics are lines and semicircles perpendicular to $\mathbb{R}$; its hyperspheres are its intersections with Euclidean circles and straight lines.

In the figure we may take $d_H(x,y)$ and $d_H(x,z)$ to be fixed, with $d_H(y,z)$ increasing from left to right.  In each case $S$ is the intersection with $\mathbb{H}^2$ of the Euclidean circle in $\mathbb{C}$ containing $\bx$, $\by$ and $\bz$.  On the left this circle is contained in $\mathbb{H}^2$ and so is also a hyperbolic circle, though with a different center (the grey dot).  In the middle this circle is tangent to $\mathbb{R}$ and $S$ is a horosphere, the complement of the point of intersection.  On the right, $S$ is a component of the equidistant locus to the hyperbolic geodesic pictured in grey, which joins the points of intersection between $\mathbb{R}$ and the Euclidean circumcircle for $\bx$, $\by$ and $\bz$.\end{example}

Each hypersphere $S = P\cap\mathbb{H}^n$ bounds two closed regions, each the intersection of $\mathbb{H}^n$ with a \textit{half-space}: the closure of a component of $\mathbb{R}^{n+1} - P$.  Below we say a set in $\mathbb{H}^n$ is \textit{convex} if it contains the unique geodesic arc between any two of its points.

\begin{lemma}\label{convex component}  For an $n$-dimensional subspace $V$ of $\mathbb{R}^{n+1}$ and $\bx_0\notin V$ such that $P = V+\bx_0$ intersects $\mathbb{H}^n$, let $B$ be the half-space containing $\bo$ with $\partial B = P$.  Then $B\cap\mathbb{H}^n$ is the unique closed, convex region in $\mathbb{H}^n$ with boundary the hypersphere $S = P\cap\mathbb{H}^n$.  Furthermore:\begin{itemize}
\item  If $V$ is space-like then $B = \{\bx\,|\,\bx\circ\bu \geq \bx_0\circ \bu\}$, where $\bu\in\mathbb{H}^n\cap V^{\perp}$, and $B\cap\mathbb{H}^n$ is the metric ball of radius $\cosh^{-1} -\bx_0\circ\bu$ centered at $\bu$.  Also, $B\cap U_{n+1}$ is compact, where $U_{n+1}=\{\bx\,|\,\bx\circ\bx\leq -1, x_0 \geq 0\}$ is the convex hull of $\mathbb{H}^n$ in $\mathbb{R}^{n+1}$.
\item If $V$ is light-like then $B = \{\bx\,|\,\bx\circ\bu\geq -1\}$, where $\bu$ is the normal to $V$ such that $\bu\circ\bx_0=-1$.  In this case we say $B\cap\mathbb{H}^n$ is the \mbox{\rm horoball centered at $\bu$}.  It contains the \mbox{\rm geodesic ray $\gamma_{\bx}$ from $\bx$ in the direction of $\bu$}, given by: 
$$\gamma_{\bx}(t)= (\cosh t-\sinh t)\bx + \sinh t \bu = e^{-t}\bx+\sinh t\bu,\quad\mbox{for}\ t\geq 0$$ 
\item If $V$ is time-like then $B = \{\bx\,|\,\bx\circ\bu\geq\bx_0\circ\bu\}$, where $\bu$ is the unit normal to $V$ with $\bx_0\circ\bu <0$.  In this case $V\subset B$.
\end{itemize}
Also, if $V$ is time-like then each half-space bounded by $V$ has convex intersection with $\mathbb{H}^n$. For such a half-space $B$, we say $B\cap\mathbb{H}^n$ is a \mbox{\rm hyperbolic half-space} bounded by $V\cap\mathbb{H}^n$.\end{lemma}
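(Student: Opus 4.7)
The overall strategy is to split on the three cases of Lemma \ref{classification of spheres} and in each one to rewrite the hyperplane $P=V+\bx_0$ as a level set $\{\bx\,|\,\bx\circ\bu=c\}$ of the Lorentzian linear functional determined by the normal $\bu$ from that lemma.  The two half-spaces bounded by $P$ are then $\{\bx\circ\bu\le c\}$ and $\{\bx\circ\bu\ge c\}$, so in each case the claimed $B$ is immediately identified and the assertion $\bo\in B$ follows from $\bo\circ\bu=0$.  The real content is to verify that $B\cap\mathbb{H}^n$ is convex and that the opposite half-space fails to be.  For this I will exploit the fact that for any unit-speed geodesic $\gamma(t)=\cosh t\,\bx+\sinh t\,\bn$ (see (\ref{geodesic ray})), the function $f(t)\doteq\gamma(t)\circ\bu$ satisfies the ODE $f''=f$; hence $f=Ae^t+Be^{-t}$ has at most one critical point, and at that point $f''=f$ forces the critical value to have the same sign as $f$, so the critical point is a maximum iff $f<0$ there and a minimum iff $f>0$ there.

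In the space-like case, $\bu\in\mathbb{H}^n\cap V^{\perp}$ is unique; inequality (\ref{x circ y}) gives $\bx\circ\bu\le-1$ for all $\bx\in\mathbb{H}^n$, so $c=\bx_0\circ\bu\le-1<0$ and $\bo\in B$.  Convexity of $B\cap\mathbb{H}^n$ is either the classical convexity of metric balls or can be extracted from the ODE above (any critical point of $f$ must be a maximum, so the minimum on a compact interval is attained at an endpoint).  For the compactness of $B\cap U_{n+1}$, I will use the polar parametrization $\bx=\lambda\by$, $\by\in\mathbb{H}^n$, $\lambda=\sqrt{-\bx\circ\bx}\ge 1$; then $\bx\circ\bu=-\lambda\cosh d_H(\by,\bu)$, so the defining inequality of $B$ translates to $1\le\lambda\le -c$ and $d_H(\by,\bu)\le\cosh^{-1}(-c)$, exhibiting $B\cap U_{n+1}$ as a continuous image of a compact product.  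The light-like case runs similarly: rescaling the $\bu$ from Lemma \ref{classification of spheres} to force $\bx_0\circ\bu=-1$, one has $\bx\circ\bu<0$ on all of $\mathbb{H}^n$ by (\ref{x circ y}), so again every critical point of $f$ along a geodesic is a maximum, giving convexity of the horoball.  The geodesic ray formula is a direct computation: for $\bx\in S$ the vector $\bu-\bx$ is orthogonal to $\bx$ and satisfies $(\bu-\bx)\circ(\bu-\bx)=1$, so (\ref{geodesic ray}) produces the given parametrization, and $\gamma_{\bx}(t)\circ\bu=-e^{-t}\in[-1,0)$ keeps the ray in $B\cap\mathbb{H}^n$.

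For the time-like case, I take $\bu$ to be the orthogonal projection of $\bx_0$ to $V^\perp$; then $c=\bx_0\circ\bu=\bu\circ\bu>0$, and $V\subset B$ because every $\bv\in V$ satisfies $\bv\circ\bu=0<c$.  Convexity here is the delicate part: now $f(t)=\gamma(t)\circ\bu$ can be positive, so a critical point may be a minimum, and one cannot conclude as easily.  However, the ODE analysis says a critical point is a maximum precisely when $f<0$ there, in which case $f$ stays strictly negative on the whole geodesic and the constraint $f\le c$ (with $c>0$) is automatic; otherwise $f$ is monotone or has a single minimum, so $\max f$ on any compact interval is attained at an endpoint.  Either way, if the endpoints lie in $B\cap\mathbb{H}^n$ then the whole geodesic does.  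The ``also'' clause follows from the same argument with $c=0$: in one direction the analysis just given applies, and in the other direction a symmetric argument (critical points are minima with $f>0$) covers the opposite half-space.

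The main obstacle I expect is the time-like case of convexity, where the relevant half-space is \emph{not} the near side of the equidistant but rather the entire side of $\mathbb{H}^n$ that contains the geodesic subspace $V\cap\mathbb{H}^n$ together with the strip out to $S$; the ODE analysis above is what makes this work.  Uniqueness of $B$ in each case comes from exhibiting non-convexity of the opposite half-space: in the first two cases, a chord between two points on opposite ends of the metric sphere or horosphere leaves the complementary region; in the time-like case, the geodesic joining two points of $S$ yields an $f$ with $f(0)=f(d)=c>0$, forcing a minimum interior to $[0,d]$ whose value drops below $c$, so the opposite half-space is not convex.
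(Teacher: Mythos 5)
Your proposal is correct and runs on the same engine as the paper's proof: the observation that $f(t)=\gamma(t)\circ\bu$ is its own second derivative along geodesics (the paper phrases this via the normalized ratio $\bz\circ\bu/k$ being positive, convex, and equal to $1$ at both endpoints), from which convexity of the designated half-space and non-convexity of its complement follow in each of the three cases.  The only differences are in minor sub-arguments — you prove compactness of $B\cap U_{n+1}$ by the polar decomposition $\bx=\lambda\by$ instead of the paper's normalization $\bu=(1,0,\hdots,0)$ plus transitivity, and you get the final time-like assertion from the $c=0$ case of the same ODE analysis rather than the paper's remark that points of a geodesic arc are non-negative combinations of its endpoints — and these are equivalent in substance.
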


\begin{proof}  For $\bx,\by\in\mathbb{H}^n$ and $t\in\mathbb{R}$, call $\bz(t)$ the point on the geodesic from $\bx$ through $\by$ defined by the formula of (\ref{geodesic ray}).  Chasing the definition there, for any $\bu\in\mathbb{R}^{n+1}$ we have:\begin{align}\label{z circ u}
  \phi(t) \doteq \bz(t)\circ\bu = (\cosh t + \sinh t(\bx\circ\by)/n)(\bx\circ\bu) + \sinh t (\by\circ\bu)/n,
\end{align}
where $n = \sqrt{(\bx\circ\by)^2-1}$.  The coefficients of $\bx\circ\bu$ and $\by\circ\bu$ in (\ref{z circ u}) are non-negative on the interval $[0,\cosh^{-1}-\bx\circ\by]$, and $\phi''(t) = \phi(t)$.  Therefore if $\bx\circ\bu$ and $\by\circ\bu$ are both non-positive then $\phi$ is concave on this interval, so its values here are at least $\min\{\bx\circ\bu,\by\circ\bu\}$ (its values at the endpoints).  In particular, in the three cases above where $\bx_0\notin V$, for any $\bx$ and $\by$ in $S = (V+\bx_0)\cap\mathbb{H}^n$ the geodesic from $\bx$ to $\by$ lies in $B = \{\bx\,|\,\bx\circ\bu\geq\bx_0\circ\bu\}$ since in each case $\bx_0\circ\bu <0$.  It follows that the half-space bounded by $P$ opposite $B$ does not have convex intersection with $\mathbb{H}^n$.  

In the case that $V$ is space-like, for its normal $\bu$ in $\mathbb{H}^n$ every $\bx\in\mathbb{H}^n$ satisfies $\bx\circ\bu < 0$ by (\ref{x circ y}).  The above thus implies that $B\cap\mathbb{H}^n$ is convex.  It is a metric ball by definition of $d_H$ (see below \ref{geodesic ray}).  It is an exercise to show that $B\cap U_{n+1}$ is compact when $V = \{0\}\times\mathbb{R}^{n+1}$ (so $\bu = (1,0,\hdots,0)$ and $P = V+\lambda\bu$ for some $\lambda\geq 1$); transitivity of the $\mathit{SO}^+(1,n)$-action on $\mathbb{H}^n$, hence on space-like hyperplanes, now implies the same for arbitrary space-like $V$.

If $V$ is light-like then for our choice of light-like normal $\bu$, since $\bu\circ\bx =-1$ for some $\bx\in\mathbb{H}^n$ the Cauchy-Schwarz inequality implies that $u_0 > 0$.  Hence again every $\bx\in\mathbb{H}^n$ satisfies $\bx\circ\bu<0$ by (\ref{x circ y}), and $B\cap\mathbb{H}^n$ is convex.  By a simple direct computation, $\gamma_{\bx}(t)\in B\cap\mathbb{H}^n$ for all $t\geq 0$.

For time-like $V$ with a unit normal $\bu$ we first note that since the half-spaces bounded by $V$ are characterized by the inequalities $\bx\circ\bu\geq 0$ and $\bx\circ\bu\leq0$, for $\bx$ and $\by$ in one or the other the function $\phi(t)$ defined in (\ref{z circ u}) is respectively convex or concave on $[0,\cosh^{-1} -\bx\circ\by]$.  It thus follows as above that each hyperbolic half-space bounded by $V\cap\mathbb{H}^n$ is convex.

Now to show that $B\cap\mathbb{H}^n$ is convex for some half-space $B = \{\bx\,|\,\bx\circ\bu\geq\bx_0\circ\bu\}$, where $\bx_0\notin V$ and $\bx_0\circ\bu<0$ we consider three cases for $\bx,\by\in B\cap\mathbb{H}^n$: that each of $\bx\circ\bu$ and $\bx\circ\bu$ is less than $0$, that each is at least $0$, or neither.  In the first two cases we have already shown above that the geodesic from $\bx$ to $\by$ is in $B\cap\mathbb{H}^n$.  In the final case, assuming that (say) $\bx\circ\bu < 0 \leq \by\circ\bu$ we have $\phi'(t) \geq 0$ for all $t$, since the coefficient of $\bx\circ\bu$ in (\ref{z circ u}) is decreasing.  Again the result follows.\end{proof}

We finally record a weak Hahn--Banach theorem (see eg.~\cite[Th.~11.4.1]{Berger}) for hyperbolic space.

\begin{lemma}\label{HB}  For a closed convex set $C\subset\mathbb{H}^n$ and $\bx_0\notin C$, there is an $n$-dimensional time-like subspace $V\subset\mathbb{R}^{n+1}$ with $C$ and $\bx_0$ in opposite half-spaces bounded by $V$.\end{lemma}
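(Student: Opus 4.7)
The plan is to identify a nearest point of $C$ to $\bx_0$ and use the totally geodesic hyperplane through it perpendicular to the joining geodesic as the separator. Since $C$ is closed and $\bx_0 \notin C$, the infimum $d := \inf_{\bz \in C} d_H(\bx_0, \bz)$ is positive; Lemma \ref{convex component} implies closed hyperbolic balls are compact (being closed subsets of $B \cap U_{n+1}$ for $B$ as there), so $C \cap \overline{B}_H(\bx_0, 2d)$ is compact and $d_H(\bx_0, \cdot)$ attains its minimum $d$ there at some $\by \in C$.

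Next I would let $\bw$ be the unit space-like tangent vector at $\by$ along the geodesic toward $\bx_0$, so $\bw \circ \bw = 1$ and $\by \circ \bw = 0$; by (\ref{geodesic ray}), $\bx_0 = \cosh(d)\by + \sinh(d)\bw$. Take $V := \{\bz \in \mathbb{R}^{n+1} : \bz \circ \bw = 0\}$; this is $n$-dimensional, and since it contains $\by \in \mathbb{H}^n$ it is time-like. Its two closed half-spaces are $\{\bz \circ \bw \ge 0\}$ and $\{\bz \circ \bw \le 0\}$, and since $\bx_0 \circ \bw = \sinh(d) > 0$, the point $\bx_0$ lies in the open half-space $\{\bz \circ \bw > 0\}$.

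The crux is to verify $C \subseteq \{\bz \circ \bw \le 0\}$, which I would argue by contradiction via the first variation of arclength. Suppose some $\bz \in C$ satisfies $\bz \circ \bw > 0$. By convexity the arc $[\by, \bz] \subset C$; parametrizing it via (\ref{geodesic ray}) as $\gamma(t) = \cosh(t)\by + \sinh(t)\bu$ with initial unit tangent $\bu$, one computes $\gamma(t) \circ \bw = \sinh(t)(\bu \circ \bw)$, so $\bz \circ \bw > 0$ forces $\bu \circ \bw > 0$. Expanding $-\bx_0 \circ \gamma(t) = \cosh(d)\cosh(t) - \sinh(d)\sinh(t)(\bu \circ \bw)$ and using $d_H(\bx_0, \gamma(t)) = \cosh^{-1}(-\bx_0 \circ \gamma(t))$, a direct differentiation gives
$$\frac{d}{dt}\bigg|_{t=0^+} d_H(\bx_0, \gamma(t)) = -\bu \circ \bw < 0,$$
yielding points of $C$ closer to $\bx_0$ than $\by$, contradicting the minimality of $d$.

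The main obstacle I anticipate is translating the first-variation step into the Lorentzian framework cleanly; once the placement of $V = \bw^{\perp}$ is fixed and the explicit parametrization of $\gamma$ from (\ref{geodesic ray}) is in hand, the separation assembles from the sign computation above.
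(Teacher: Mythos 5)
Your proof is correct and takes essentially the same route as the paper: choose a nearest point of $C$ to $\bx_0$ (you supply the compactness justification the paper omits), separate with the time-like hyperplane $V=\bw^{\perp}$ through that point, Lorentz-orthogonal to the geodesic toward $\bx_0$, and verify the separation by a first-variation sign argument along geodesics from the nearest point into $C$. The only cosmetic difference is that the paper differentiates $\gamma(t)\circ\bx_0$ while you differentiate $d_H(\bx_0,\gamma(t))=\cosh^{-1}(-\bx_0\circ\gamma(t))$, which is an equivalent computation.
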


\begin{proof}  Let $\bx$ be a closest point in $C$ to $\bx_0$, and let $\bu=\bx_0 + (\bx\circ\bx_0)\bx$ be the component of $\bx_0$ normal to $\bx$.  Then $\bu\circ\bu = -1+(\bx\circ\bx_0)^2$ is greater than $0$ by (\ref{x circ y}), so $V=\bu^{\perp}$ is a time-like subspace through $\bx$.  Moreover, $\bu\circ\bx_0 = \bu\circ\bu>0$.

For $\by\in C-\{\bx\}$, let $\bz(t)$ be the geodesic joining $\bx$ to $\by$ described in (\ref{geodesic ray}).  Then $\phi(t) = \bz(t)\circ\bx_0$ has non-positive derivative at $t=0$, since $\gamma(t)\in C$ for all $t\in[0,\cosh^{-1} -\bx\circ\by]$ but $\bx$ is closest in $C$ to $\bx_0$.  Applying this fact to (\ref{z circ u}), with $\bx_0$ replacing $\bu$ there, we find that $(\bx\circ\by)(\bx\circ\bx_0)+(\by\circ\bx_0) \leq 0$.  It follows that $\bu\circ\by \leq 0$.\end{proof}

\section{Convex sets and projection to the hyperboloid}\label{convex project}
A subset of $\mathbb{R}^n$ is \textit{convex} if it contains the line segment joining any two of its points.  The \textit{convex hull} of $\cals\subset\mathbb{R}^n$ is the minimal (with respect to inclusion) convex set containing $\cals$, and the \textit{closed convex hull} is the minimal closed, convex set containing $\cals$.  We will denote the closed convex hull of $\cals$ by $\mathrm{Hull}(\cals)$.  It is the closure of the convex hull (cf.~\cite[11.2.3]{Berger}).

It is not hard to show that $U_{n+1}= \{\bx\in\mathbb{R}^{n+1}\,|\,\bx\circ\bx \leq -1,\ x_0 > 0\}$ is a closed, strictly convex subset of $\mathbb{R}^{n+1}$ bounded by $\mathbb{H}^n$, so for $\cals\subset\mathbb{H}^n$, $\mathrm{Hull}(\cals)\subset U_{n+1}$.  We will construct the Delaunay tessellation of $\cals$ by projecting ``visible'' faces of $\mathrm{Hull}(\cals)$ (this notation comes from \cite{AkSak}) to $\mathbb{H}^n$ along rays from the origin.

\begin{definition}  For a closed, convex subset $C$ of $\mathbb{R}^{n+1}$ not containing $\bo$, say $\bx\in C$ is \textit{visible} if it is the first point of intersection between $C$ and the ray from $\bo$ through $\bx$.\end{definition}

\begin{lemma}\label{convex hull}  Let $U_{n+1} = \{\bx\in\mathbb{R}^{n+1}\,|\,\bx\circ\bx \leq -1,\ x_0 > 0\}$, and let $r_n\co U_{n+1}\to\mathbb{H}^n$ take $\bx$ to $\bx/\sqrt{-\bx\circ\bx}$.  For $\cals\subset\mathbb{H}^n$, the set of visible points of $\mathrm{Hull}(\cals)$ projects bijectively under $r_n$ to a convex subset of $\mathbb{H}^n$ containing $\cals$ and contained in the closed convex hull of $\cals$ in $\mathbb{H}^n$.\end{lemma}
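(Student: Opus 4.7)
The plan is to reduce all four claims---$r_n$ injective on visible points, image contains $\cals$, image convex in $\mathbb{H}^n$, image contained in the closed $\mathbb{H}^n$-convex hull of $\cals$---to a single technical observation: for any $\by_1,\by_2\in\mathbb{H}^n$ and $\alpha,\beta\geq 0$ with $\alpha+\beta\geq 1$, one has $\alpha\by_1+\beta\by_2\in U_{n+1}$, and $r_n(\alpha\by_1+\beta\by_2)$ lies on the hyperbolic geodesic segment from $\by_1$ to $\by_2$. The first part follows from (\ref{x circ y}) applied to $\by_1\circ\by_2\leq -1$; the second, from the fact that $r_n$ is radial, so $r_n(\alpha\by_1+\beta\by_2)$ lies in the time-like $2$-plane through $\bo,\by_1,\by_2$, on the upper branch of its intersection with $\mathbb{H}^n$, and the positive cone spanned by $\by_1$ and $\by_2$ meets that branch in exactly the geodesic segment between them.

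The first two assertions of the lemma then follow immediately. Injectivity of $r_n$ on the visible locus is forced by definition, since two visible points on one ray from $\bo$ would each have to be its first intersection with $\mathrm{Hull}(\cals)$. For $\bs\in\cals$, the ray $\{t\bs:t>0\}$ satisfies $t\bs\circ t\bs=-t^2$, so it meets $U_{n+1}\supset\mathrm{Hull}(\cals)$ precisely when $t\geq 1$; hence $\bs$ is its own first intersection with $\mathrm{Hull}(\cals)$, and $r_n(\bs)=\bs$ lies in the image.

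For convexity I would take visible points $\bx_1,\bx_2$, write $\bx_i=t_i\by_i$ with $t_i\geq 1$ and $\by_i=r_n(\bx_i)$, and parameterize $[\bx_1,\bx_2]\subset\mathrm{Hull}(\cals)$ as $\gamma(s)=\alpha(s)\by_1+\beta(s)\by_2$ with $\alpha(s)+\beta(s)=(1-s)t_1+st_2\geq 1$. The observation above places each $r_n(\gamma(s))$ on the geodesic from $\by_1$ to $\by_2$; the map $s\mapsto r_n(\gamma(s))$ is continuous with the right endpoints, and injective when $\by_1\neq\by_2$ (a coincidence $r_n(\gamma(s))=r_n(\gamma(s'))$ forces a positive scalar relation between $\gamma(s)$ and $\gamma(s')$ that reduces, via linear independence of $\by_1,\by_2$, to $s=s'$). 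The intermediate value theorem then gives surjectivity onto the geodesic segment. For each $s$ the ray through $\gamma(s)$ meets the closed set $\mathrm{Hull}(\cals)$ in a nonempty closed subset bounded below away from $\bo$ (since $\mathrm{Hull}(\cals)\subset U_{n+1}$), so it has a first intersection---a visible point projecting under $r_n$ to $r_n(\gamma(s))$.

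For containment in the closed $\mathbb{H}^n$-convex hull, I would approximate each visible $\bx\in\mathrm{Hull}(\cals)$ by Euclidean convex combinations $\bx_k=\sum_i t_{i,k}\bs_{i,k}$ of points of $\cals$. Induction on the number of summands (splitting off one $\bs_{i,k}$ at a time via the basic observation) shows $r_n(\bx_k)$ lies in that hull for every $k$, and continuity of $r_n$ on $U_{n+1}$ together with closedness of the hull deliver the limit $r_n(\bx)$. The main obstacle I anticipate is the convexity argument---specifically the \emph{surjectivity} of $s\mapsto r_n(\gamma(s))$ onto the geodesic segment, rather than the easier fact that its image lies on the correct geodesic; the injectivity-via-linear-independence argument is what carries this through.
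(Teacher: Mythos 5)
Your proposal is correct, and most of it runs parallel to the paper's argument: the ``basic observation'' that a nonnegative combination $\alpha\by_1+\beta\by_2$ with $\alpha+\beta\geq 1$ lies in $U_{n+1}$ and projects onto the geodesic segment is exactly the paper's key fact (proved there via (\ref{geodesic ray}) and the remark that points of the geodesic arc are positive combinations of the endpoints), and your treatment of injectivity, of $\cals$ lying in the image, and of passing from points of $\mathrm{Hull}(\cals)$ to visible points on the same ray matches the paper's reduction ``$\mathrm{Hull}(\cals)$ and its visible set have the same image.'' Your explicit IVT/injectivity argument merely fleshes out what the paper dismisses as a brief calculation, and in fact the endpoint values plus continuity already give surjectivity onto the arc, so the linear-independence step is harmless but not needed. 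The genuine divergence is the last claim, containment of the image in the closed convex hull of $\cals$ in $\mathbb{H}^n$: the paper argues by separation, applying its hyperbolic Hahn--Banach Lemma \ref{HB} to any $\bx_0$ outside the closed hyperbolic hull to get a time-like subspace $V$ with $\cals$ (hence $\mathrm{Hull}(\cals)$, hence its radial image) on the other side, whereas you argue directly by writing a visible point as a limit of finite convex combinations of points of $\cals$ and inducting on the number of summands. Your route is more elementary and self-contained, at the cost of the renormalization bookkeeping in the inductive step (the partial sum lies in $U_{n+1}$, not $\mathbb{H}^n$, which is precisely why you need the observation with $\alpha+\beta\geq 1$ rather than $=1$) and the final limit-passing step; the paper's route is shorter here and reuses Lemma \ref{HB}, which it needs elsewhere anyway (e.g.\ in Lemma \ref{convex hull boundary} and Corollary \ref{all the points}).
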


\begin{proof}We claim that $r_n$ maps $\mathrm{Hull}(\cals)$ onto a set as above.  The lemma will follow, since it is clear from the definition that $\mathrm{Hull}(\cals)$ and its set of visible points have the same image, and that $r_n$ is injective on the set of visible points.


The image of $\mathrm{Hull}(\cals)$ is convex.  The key fact is that for any $\bx_0,\by_0\in U_{n+1}$ the line segment $[\bx,\by]$ in $\mathbb{R}^{n+1}$ that joins them projects under $r_n$ to the geodesic arc joining $\bx=r_n(\bx_0)$ to $\by=r_n(\by_0)$ in $\mathbb{H}^n$.  This follows from (\ref{geodesic ray}) and a brief calculation that $\cosh t\bx+\sinh t\bn$ is a positive linear combination of $\bx$ and $\by$ for $t\in(0,\cosh^{-1}(-\bx\circ\by))$.

For any $\bx_0$ outside the closed convex hull of $\cals$ in $\mathbb{H}^n$, Lemma \ref{HB} supplies a time-like subspace $V$ separating $\bx_0$ from $\cals$, hence also from $\mathrm{Hull}(\cals)$.  Since $V$ is a subspace $r_n(V\cap U_{n+1}) = V\cap\mathbb{H}^n$, and it follows that $V\cap\mathbb{H}^n$ separates $\bx_0$ from $r_n(\mathrm{Hull}(\cals)$.\end{proof}


In general $r_n(\mathrm{Hull}(\cals))$ may not be closed; see Section \ref{bad example}.  Lemma \ref{convex hull boundary} below describes such cases, but proving it requires more preliminaries on convex subsets of $\mathbb{R}^n$.

\begin{definition} An $n$-dimensional affine plane $P$ is a \textit{support plane} for a closed convex subset $C$ of $\mathbb{R}^{n+1}$ if $C$ is contained in one of the half-spaces bounded by $P$, and $P\cap C\neq\emptyset$.

For a $n$-dimensional affine plane $P$ of $\mathbb{R}^{n+1}$, $\bx_0\in P$, and a half-space $B$ bounded by $P$, a vector $\eta$ is a \textit{Euclidean outward normal to $B$} if $\eta\cdot(\bx-\bx_0)\leq 0$ for all $\bx\in B$.  (Here ``$\cdot$'' is the Euclidean inner product.)\end{definition}

\begin{remark}\label{Euclid vs Lorentz}  For $\eta = (e_0,e_1,\hdots,e_n)$, $\bar{\eta} \doteq (e_0,-e_1,\hdots,-e_n)$ shares an initial entry with $\eta$ and satisfies $\bar{\eta}\circ\bv=-\eta\cdot \bv$ for any $\bv\in\mathbb{R}^{n+1}$.  In particular, if $P$ is parallel to a space-like or light-like subspace and intersects $\mathbb{H}^n$, and $\eta$ is a Euclidean outward normal to the half-space $B$ with $B\cap\mathbb{H}^n$ convex then $B = \{\bx\,|\,\bx\circ\bar{\eta}\geq  \bx_0\circ\bar{\eta}\}$ for any $\bx_0\in P$ (compare Lemma \ref{convex component}).\end{remark}

\begin{lemma}\label{plane convergence}  For a closed convex set $C\subset\mathbb{R}^{n+1}$, say a sequence of support planes $P_n$ for $C$ \mbox{\rm converges} to a plane $P$ if there exist $\bx\in P$ and a Euclidean normal $\eta$ to $P$ approached by a sequence $\{(\bx_n,\eta_n)\}$, where $\bx_n\in P_n$ and $\eta_n$ is a Euclidean outward normal to a half-space bounded by $P_n$ and containing $C$.  Then $\bx\in C$, $P$ is a support plane for $C$ through $\bx$, and $\eta$ is a Euclidean outward normal to a half-space $B$ bounded by $P$ and containing $C$.\end{lemma}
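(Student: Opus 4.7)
The plan is to establish the three conclusions---$\bx\in C$, $P$ supporting $C$ through $\bx$, and $\eta$ being a Euclidean outward normal to a half-space $B\supset C$ bounded by $P$---by a direct limiting argument, using that $C$ is closed and the Euclidean inner product is continuous.

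First I will argue $\bx\in C$.  Here the hypothesis ``$\bx_n\in P_n$'' must be read with the implicit convention that $\bx_n$ is a point of contact of the support plane $P_n$, namely $\bx_n\in P_n\cap C$; otherwise the conclusion can fail (take $C=[0,\infty)^2\subset\mathbb{R}^2$, $P_n=\{y=0\}$, and $\bx_n=(-1,0)\in P_n\setminus C$).  Under this reading each $\bx_n$ lies in $C$, and since $C$ is closed and $\bx_n\to\bx$ we conclude $\bx\in C$.

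The remaining claims follow from a one-step limit calculation.  Because $\eta_n$ is a Euclidean outward normal to a half-space bounded by $P_n$ that contains $C$, and $\bx_n\in P_n$ lies on that bounding plane, we have
$$ \eta_n\cdot(\bc-\bx_n)\leq 0 \qquad\text{for every }\bc\in C. $$
Passing to the limit using continuity of the dot product together with $(\bx_n,\eta_n)\to(\bx,\eta)$ yields $\eta\cdot(\bc-\bx)\leq 0$ for every $\bc\in C$, so $C$ is contained in $B\doteq\{\by\,|\,\eta\cdot(\by-\bx)\leq 0\}$.  Since $\eta$ is a nonzero normal to $P$ and $\bx\in P$, the bounding hyperplane $\{\by\,|\,\eta\cdot(\by-\bx)=0\}$ of $B$ is exactly $P$; combined with $\bx\in P\cap C$, this shows that $P$ is a support plane for $C$ through $\bx$, and $\eta$ is by construction a Euclidean outward normal to $B$.

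The only real obstacle is pinning down $\bx\in C$, which amounts to clarifying the implicit requirement in the convergence definition that the $\bx_n$ be contact points of $P_n$ with $C$.  Once that is in hand, the support-plane inequality passes to the limit immediately and the rest is a matter of unpacking notation.
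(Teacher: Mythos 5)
Your argument is correct and is essentially the paper's: closedness of $C$ gives $\bx\in C$, and passing to the limit in $\eta_n\cdot(\bc-\bx_n)\leq 0$ gives the support-plane and outward-normal conclusions.  Your observation that one must read $\bx_n\in P_n\cap C$ (rather than merely $\bx_n\in P_n$) for the conclusion $\bx\in C$ is a fair clarification of exactly what the paper's proof implicitly assumes when it says ``since $C$ is closed, $\bx\in C$,'' and it is how the lemma is applied later.
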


\begin{proof}  Since $C$ is closed, $\bx\in C$.  The lemma follows upon fixing any $\by\in C$ and taking a limit of the inequality $\eta_n\cdot(\by-\bx_n)\leq0$.\end{proof}

For any closed, convex set $C$ and $\bx\in\partial C$ there is a support plane for $C$ through $\bx$ \cite[Prop.~11.5.2]{Berger}.  If $\bx$ is visible, more can be said.

\begin{lemma}\label{visible point}  Let $C$ be a closed, convex subset of $\mathbb{R}^{n+1}$ such that $\bo\notin C$.  For every visible point $\bx\in C$ there is a support plane $P$ for $C$ through $\bx$ that separates $C$ from $\bo$; i.e. such that the half-space $B$ bounded by $P$ with $B\cap C=P\cap C$ contains $\bo$.\end{lemma}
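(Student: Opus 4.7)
The plan is to construct $P$ as a limit, via Lemma \ref{plane convergence}, of nearest-point support hyperplanes to $C$ taken from the points $t\bx$ as $t\nearrow 1$.

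First I would note that since $\bx$ is visible from $\bo$ and $\bo\notin C$, for each $t\in(0,1)$ the point $t\bx$ lies on the open segment $(\bo,\bx)$ and hence is not in $C$.  Let $\bz_t\in C$ denote the Euclidean nearest point to $t\bx$, which exists and is unique because $C$ is closed and convex.  The standard Euclidean argument (compare the proof of Lemma \ref{HB}) shows that the hyperplane $P_t$ through $\bz_t$ perpendicular to $t\bx-\bz_t$ is a support plane for $C$, and that $\eta_t\doteq(t\bx-\bz_t)/|t\bx-\bz_t|$ is a Euclidean outward normal to the half-space $B_t$ bounded by $P_t$ and containing $C$.

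Next, since $\bx\in C$ we have $|\bz_t-t\bx|\le(1-t)|\bx|$, so $|\bz_t-\bx|\le|\bz_t-t\bx|+(1-t)|\bx|\le 2(1-t)|\bx|$, giving $\bz_t\to\bx$ as $t\to 1$.  Passing to a subsequence so that the unit vectors $\eta_t$ converge to some $\eta$, Lemma \ref{plane convergence} produces a support plane $P$ for $C$ through $\bx$ with $\eta$ a Euclidean outward normal to a half-space $B'$ bounded by $P$ and containing $C$.  Let $B$ be the complementary closed half-space; then $B\cap C=P\cap C$ automatically, since ``$\subseteq$'' follows from $C\subseteq B'$ and ``$\supseteq$'' is trivial.

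Finally, I would verify that $\bo\in B$.  This amounts to $\eta\cdot(\bo-\bx)\ge 0$, equivalently $\eta\cdot\bx\le 0$.  For each $t\in(0,1)$, $\bx\in C\subseteq B_t$ gives $\eta_t\cdot\bx\le\eta_t\cdot\bz_t$, while by construction $\eta_t\cdot(t\bx)=\eta_t\cdot\bz_t+|t\bx-\bz_t|>\eta_t\cdot\bz_t$.  Subtracting yields $(t-1)\eta_t\cdot\bx>0$, so $\eta_t\cdot\bx<0$ for $t\in(0,1)$; passing to the limit gives $\eta\cdot\bx\le 0$, as needed.  The main step in the argument is this last sign check, which exploits both that $\bx\in C$ lies on the $C$-side of each $P_t$ and that $t\bx$ lies strictly on the opposite side, in order to orient the limiting normal correctly with respect to $\bo$.
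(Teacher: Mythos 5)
Your proof is correct, and it takes a noticeably different route from the paper's. The paper first reduces to the case where $C$ has non-empty interior (otherwise $C$ lies in a hyperplane and the claim is immediate), then splits into two cases: if the ray from $\bo$ through $\bx$ meets $C$ at some $t\bx$ with $t>1$, any support plane at $\bx$ already has $\eta\cdot\bx\le 0$; otherwise it approximates $\bx$ by visible points $\bx_n$ lying below interior points $\by_n\in C$, where the sign condition $\eta_n\cdot\bx_n\le 0$ is automatic, and passes to a limit of unit normals. You instead work with the points $t\bx$, $t<1$, which lie \emph{outside} $C$ by visibility, and use the nearest-point projection to manufacture explicit support planes $P_t$ whose normals $\eta_t$ are correctly oriented: the inequality $\eta_t\cdot\bx<0$ comes from $\bx\in C$ lying on the $C$-side of $P_t$ while $t\bx$ lies strictly on the other side. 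The concluding step is the same in spirit (compactness of unit normals plus Lemma \ref{plane convergence}, which the paper carries out by hand), but your construction buys a cleaner argument: no case split, no reduction to the non-empty interior case, and the estimate $|\bz_t-\bx|\le 2(1-t)|\bx|$ makes the convergence of base points completely explicit. The only stylistic caveat is that the supporting-hyperplane property of the nearest-point projection is a Euclidean standard fact not stated in the paper (Lemma \ref{HB} is only its hyperbolic analog), so a one-line verification of $(t\bx-\bz_t)\cdot(\by-\bz_t)\le 0$ for $\by\in C$ would make the write-up self-contained.
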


\begin{proof}  We will assume $C$ has a non-empty interior, for if not it is entirely contained in a supporting hyperplane \cite[Prop.~11.2.7]{Berger}.  Thus every support plane $P$ for $C$ bounds a unique half-space $B_0$ containing $C$.  The lemma holds if and only if $\eta\cdot\bx\leq 0$ for some such $P$ and a Euclidean outward normal $\eta$ to $B_0$, since $\eta\cdot\bx = - \eta\cdot(\bo-\bx)$.

If the ray from $\bo$ through $\bx$ contains $t\bx$ for some $t>1$, then $\bu\cdot\bx\leq 0$ for any outward normal $\bu$ to a support plane through $\bx$ (since $(t-1)\bx\cdot\bu\leq 0$).  For $\bx$ without this property, choose a sequence $\{\by_n\}$ of points in the interior of $C$ approaching $\bx$, and let $\{\bx_n\}$ be the corresponding sequence with $\bx_n$ the visible point on the ray from $\bo$ through $\by_n$.  Passing to a subsequence, we may assume $\{\bx_n\}$ also converges to $\by$.

For each $n$ let $\eta_n$ be an outward normal with length $1$ to a support plane for $C$ through $\bx_n$.  Passing to a subsequence again, we assume $\eta_n$ converges to a vector $\eta$.  Then $\eta\cdot\bx = \lim_{n\to\infty} \eta_n\cdot\bx_n \leq 0$ and for any $\by\in C$, $\eta\cdot(\by-\bx) = \lim_{n\to\infty} \eta_n\cdot(\by-\bx_n)\leq 0$.\end{proof}

\begin{lemma}\label{ratatouille}  If $P$ is an $n$-dimensional affine subspace of $\mathbb{R}^{n+1}$ with $P\cap U_{n+1}\neq\emptyset$ then $r_n(P\cap U_{n+1})\subset B\cap\mathbb{H}^n$ for a half-space $B$ bounded by $P$ with $\bo\in B$ and $B\cap\mathbb{H}^n$ convex.\end{lemma}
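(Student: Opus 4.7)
The plan is to reduce to Lemma~\ref{convex component} after verifying its hypothesis that $P$ meets $\mathbb{H}^n$, and then to use convexity of the half-space that Lemma provides to contain the images under $r_n$.

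First I would establish that $P\cap\mathbb{H}^n\neq\emptyset$. The key observation is that $U_{n+1}$ contains no line: any line $t\mapsto\bp+t\bv$ lying in $U_{n+1}$ would require $p_0+tv_0\geq 1$ for every $t\in\mathbb{R}$, forcing $v_0=0$, and the Lorentz quadratic $(\bp+t\bv)\circ(\bp+t\bv)=\bp\circ\bp+2t(\bp\circ\bv)+t^2(v_1^2+\cdots+v_n^2)$ is then unbounded above unless $\bv=\bo$. Since the $n$-dimensional affine plane $P$ contains lines, $P\not\subset U_{n+1}$; the closed convex set $K=P\cap U_{n+1}$ is therefore a proper subset of $P$, so it has non-empty topological boundary in $P$, and any such boundary point must lie in $\partial U_{n+1}=\mathbb{H}^n$.

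With $P\cap\mathbb{H}^n\neq\emptyset$ in hand I would extract the half-space $B$. If $\bo\notin P$, Lemma~\ref{convex component} directly produces a unique $B$ bounded by $P$ with $B\cap\mathbb{H}^n$ convex and $\bo\in B$. If instead $\bo\in P$, then $P$ is an $n$-dimensional subspace containing a time-like vector (any $\bx\in P\cap U_{n+1}$), hence is itself time-like, so either of its two half-spaces has convex intersection with $\mathbb{H}^n$ and contains $\bo\in P$; pick either and call it $B$. Finally, for any $\bx\in P\cap U_{n+1}$, write $r_n(\bx)=(1/\sqrt{-\bx\circ\bx})\,\bx$: since $\bx\circ\bx\leq-1$ the scalar lies in $(0,1]$, so $r_n(\bx)$ lies on the segment $[\bo,\bx]$, which is contained in $B$ by convexity of this half-space; membership in $\mathbb{H}^n$ is automatic.

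I expect the first stage to be the main obstacle: the hypothesis only tells us $P$ meets $U_{n+1}$, and one must argue that this forces $P$ to meet the boundary $\mathbb{H}^n$ before Lemma~\ref{convex component} becomes applicable. Once that is in place the rest is the elementary observation that $r_n$ moves $\bx\in U_{n+1}$ only part of the way from $\bo$ to $\bx$ along the ray through both.
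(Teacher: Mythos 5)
Your proof is correct and takes essentially the same route as the paper: split on whether $\bo\in P$, take the half-space containing $\bo$ supplied by Lemma \ref{convex component} (or, when $\bo\in P$, either half-space of the time-like subspace), and observe that $r_n(\bx)$ lies on the segment $[\bo,\bx]\subset B$. Your preliminary check that $P\cap U_{n+1}\neq\emptyset$ forces $P\cap\mathbb{H}^n\neq\emptyset$ (via $U_{n+1}$ containing no line) is a detail the paper leaves implicit, and it is a worthwhile addition since Lemma \ref{convex component} formally requires that $P$ meet $\mathbb{H}^n$.
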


\begin{proof}  If $P$ contains $\bo$ then so do both half-spaces bounded by $P$.  In this case $r_n(P\cap U_{n+1})\subset P$ is contained in each such half-space.  We therefore suppose that $\bo\notin P$, and let $B$ be the unique half-space bounded by $P$ that contains $\bo$.  For any $\bx\in P$, the ray from $\bo$ through $\bx$ thus intersects $B$ in the segment $\{t\bx\,|\,0\leq t\leq 1\}$.  But for $\bx\in U_{n+1}$, $r_n(\bx) = \bx/\sqrt{-\bx\circ\bx}$ has smaller $t$-coordinate than $1$ by definition.  Finally, recall from Lemma \ref{convex component} that any half-space $B$ with $\bo\in B$ has $B\cap\mathbb{H}^n$ convex.\end{proof}

\begin{lemma}\label{convex hull boundary}Let $\cals\subset\mathbb{H}^n$, and take $r_n\co U_{n+1}\to\mathbb{H}^n$ as in Lemma \ref{convex hull}.  For any $\bx\in\mathbb{H}^n$ that lies outside  $r_n(\mathrm{Hull}(\cals))$ but in its closure, there is an $n$-dimensional time-like subspace $V$ through $\bx$ such that $\cals$ is contained in one of the half-spaces bounded by $V$.\end{lemma}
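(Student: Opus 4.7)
The plan is to take a sequence $\{\bx_k\}$ in $r_n(\mathrm{Hull}(\cals))$ converging to $\bx$, lift each $\bx_k$ to the visible point $\by_k\in\mathrm{Hull}(\cals)$ on the ray from $\bo$ through $\bx_k$, and use Lemma \ref{visible point} (whose hypothesis $\bo\notin\mathrm{Hull}(\cals)$ holds since $\mathbb{H}^n\subset\{x_0\geq 1\}$) to produce a support plane $P_k$ for $\mathrm{Hull}(\cals)$ through $\by_k$ separating it from $\bo$.  Writing $\by_k=t_k\bx_k$ with $t_k\geq 1$ and letting $\eta_k$ be a Euclidean unit outward normal to the half-space $B_k$ bounded by $P_k$ that contains $\bo$, the support condition becomes $\eta_k\cdot\bz\geq\eta_k\cdot\by_k=t_k(\eta_k\cdot\bx_k)$ for every $\bz\in\mathrm{Hull}(\cals)$, while $\bo\in B_k$ forces $\eta_k\cdot\by_k\geq 0$ and hence $\eta_k\cdot\bx_k\geq 0$.

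I would next rule out a bounded subsequence of $\{t_k\}$: if $t_{k_j}\to t\geq 1$ then $\by_{k_j}\to t\bx$, which lies in $\mathrm{Hull}(\cals)$ (closed) and in $U_{n+1}$ (since $(t\bx)\circ(t\bx)=-t^2\leq -1$), giving $\bx=r_n(t\bx)\in r_n(\mathrm{Hull}(\cals))$, a contradiction.  So $t_k\to\infty$.  Passing to a further subsequence, I may assume $\eta_k\to\eta$ for some Euclidean unit vector $\eta$.

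Taking limits now produces the required subspace.  Fixing any $\bz\in\cals$, dividing the support inequality by $t_k$ gives $(\eta_k\cdot\bz)/t_k\geq\eta_k\cdot\bx_k$; the left side tends to $0$ while the right tends to $\eta\cdot\bx$, yielding $\eta\cdot\bx\leq 0$.  Combined with $\eta\cdot\bx=\lim(\eta_k\cdot\bx_k)\geq 0$, this forces $\eta\cdot\bx=0$.  Taking the limit directly in $\eta_k\cdot\bz\geq\eta_k\cdot\by_k\geq 0$ gives $\eta\cdot\bz\geq 0$ for every $\bz\in\cals$.  The subspace $V=\{\bv\in\mathbb{R}^{n+1}\,|\,\eta\cdot\bv=0\}$ is then $n$-dimensional, contains $\bx\in\mathbb{H}^n$ and so is time-like, and bounds the half-space $\{\bv\,|\,\eta\cdot\bv\geq 0\}$ which contains $\cals$.

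The main obstacle is the non-compactness of the projection: the lifts $\by_k$ may escape to infinity in $\mathbb{R}^{n+1}$ (i.e.\ $t_k\to\infty$) even as the $\bx_k$ converge, so the supporting planes $P_k$ need not converge as affine planes.  Normalizing the $\eta_k$ in the Euclidean (rather than Lorentzian) length is what allows extraction of a convergent subsequence of outward normals, and the factor of $t_k$ on the right-hand side of the rescaled support inequality is what forces the limiting plane to pass through $\bx$ itself rather than through any would-be Euclidean limit of the diverging $\by_k$.
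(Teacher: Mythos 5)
Your proof is correct and is essentially the paper's own argument: a sequence of visible points over the $\bx_k$, separating support planes from Lemma \ref{visible point}, a convergent subsequence of unit Euclidean normals, and the rescaled-by-$t_k$ (i.e.\ $\sqrt{-\by_k\circ\by_k}$) limit forcing the limiting plane through $\bx$ and $\bo$. The only cosmetic differences are the opposite sign convention for the normals and that you pass to the limiting inequality $\eta\cdot\bz\geq 0$ directly rather than via translated planes and Lemma \ref{plane convergence}.
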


\begin{proof} Let $\{\bx_k\}$ be a sequence of visible points of $\mathrm{Hull}(\cals)$ such that $r_n(\bx_k) \to\bx$ as $k\to\infty$.  If $\bx_k\circ\bx_k$ were universally bounded then since $r_n(\bx_k)= \bx_k/\sqrt{-\bx_k\circ\bx_k}$ a subsequence would converge to some $\bx_0\in\mathrm{Hull}(\cals)$ mapping to $\bx$, so we will assume $\bx_k\circ\bx_k\to-\infty$ as $k\to\infty$.

For each $k$ let $P_k$ be a support plane for $\mathrm{Hull}(\cals)$ through $\bx_k$ supplied by Lemma \ref{visible point}, separating $\bo$ from $\mathrm{Hull}(\cals)$.  Let $\eta_k$ be a unit Euclidean normal to $P_k$ pointing outward from $\mathrm{Hull}(\cals)$, so by construction $\eta_k\cdot(\bo-\bx_k)\geq 0$ and $\eta_k\cdot(\by-\bx_k)\leq 0$ for any $k\in\mathbb{N}$ and $\by\in\mathrm{Hull}(\cals)$.

Passing to a subsequence, we assume that $\{\eta_k\}$ converges to a unit vector $\eta$.  Fixing $\by\in\mathrm{Hull}(\cals)$, dividing each of the above inequalities by $\sqrt{-\bx_k\circ\bx_k}$, and taking a limit shows that $\eta\cdot(\bo-\bx) = 0$, so the affine $n$-plane $V$ through $\bx$ and normal to $\eta$ also contains $\bo$ (hence is a subspace).  We also note for any $k\in\mathbb{N}$ that
$$ \eta_k\cdot\left(\by-\frac{\bx_k}{\sqrt{-\bx_k\circ\bx_k}}\right) = \eta_k\cdot (\by-\bx_k) + \left(1-\frac{1}{\sqrt{-\bx_k\circ\bx_k}}\right)\eta_k\cdot\bx_k \leq 0. $$
Since $\by\in\mathrm{Hull}(\cals)$ was arbitrary, the translate of $P_k$ through $r_n(\bx_k)$ separates $\mathrm{Hull}(\cals)$ from $\bo$.  Since $V$ is the limit (in the sense of Lemma \ref{plane convergence}) of these translates, it does as well. \end{proof}

\section{The Delaunay tessellation and the finite case}\label{Delaunay existence}  

\begin{definition}\label{faces n stuff}  The \textit{dimension} of a set $C\subset\mathbb{R}^{n+1}$ is the minimal dimension of an affine plane $Q$ containing $C$, and the \textit{interior} of $C$ is its interior in $Q$ (cf.~\cite[Prop.~11.2.7]{Berger}).  A \textit{face} of a closed convex set $C$ is a set of the form $C\cap P$, where $P$ is a support plane for $C$.  If $\bo\notin C$, a face $F$ of $C$ is \textit{visible} if every $\bx\in F$ is visible.

\begin{figure}
\input{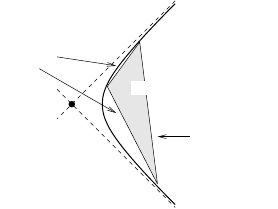_t}
\caption{The convex hull in $\mathbb{R}^2$ of a three-point set $\cals$ in $\mathbb{H}^1$}
\label{in R^2}
\end{figure}

We define the \textit{Delaunay tessellation} of a set $\cals\subset\mathbb{H}^n= \{\bx\in\mathbb{R}^{n+1}\,|\,\bx\circ\bx=-1, x_0>0\}$ as: 
$$\{r_n(F)\,|\, F\ \mbox{is a visible face of}\ \mathrm{Hull}(\cals)\}.$$
Here $r_n$ is as in Lemma \ref{convex hull}.  For a visible face $F$ of $\mathrm{Hull}(\cals)$, say $r_n(F)$ is a \textit{Delaunay cell}.\end{definition}

The main result of this section, Proposition \ref{the finite case}, asserts among other things that when $\cals$ is finite its Delaunay tessellation is a polyhedral complex in the standard sense (see eg.~\cite[Dfn.~2.1.5]{DeLoRS}): cells are polyhedra, their faces are also cells, and cells that intersect do so in a face.  Before going further let us port these notions to the hyperbolic setting.

\begin{definition}\label{tessellation}  A \textit{convex polyhedron} in $\mathbb{H}^n$ is the intersection of a collection of hyperbolic half-spaces (defined in Lemma \ref{convex component}) whose associated collection of bounding totally geodesic subspaces is locally finite.  The \textit{dimension in} $\mathbb{H}^n$ of a set $C$ is defined, in analogy with \ref{faces n stuff}, to be the minimal $k$ such that $C$ is contained in a $k$-dimensional totally geodesic subspace.  The \textit{interior} and \textit{faces} of a convex set are similarly defined in analogy with \ref{faces n stuff}.\end{definition}

The key advantage of taking $\cals$ finite is that it ensures $\mathrm{Hull}(\cals)$ is a \textit{polyhedron} of $\mathbb{R}^{n+1}$ \cite[Prop.~12.1.15]{BergerII}, the intersection of a finite collection of half-spaces (cf.~\cite[Definition 12.1.1]{BergerII}).

\begin{lemma}\label{poly to poly}  If $\cals\subset\mathbb{H}^n$ is finite and of dimension $k\leq n$ in $\mathbb{R}^{n+1}$, and the $k$-dimensional affine subspace containing $\cals$ does not also contain $\bo$, then $r_n$ takes $\mathrm{Hull}(\cals)$ to the closed convex hull of $\cals$ in $\mathbb{H}^n$.  This is a compact polyhedron of dimension $k$ in $\mathbb{H}^n$.\end{lemma}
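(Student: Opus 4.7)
My plan is to first establish that $r_n(\mathrm{Hull}(\cals))$ coincides with the closed convex hull of $\cals$ in $\mathbb{H}^n$, and then to deduce the dimension and polyhedron claims from the structure of $\mathrm{Hull}(\cals)$. Since $\cals$ is finite, $\mathrm{Hull}(\cals)$ is a compact convex polytope in $\mathbb{R}^{n+1}$; and $r_n$ is continuous on $U_{n+1}$ (where $\sqrt{-\bx \circ \bx}$ is continuous and positive), so $r_n(\mathrm{Hull}(\cals))$ is compact, hence closed. Lemma \ref{convex hull} already tells us this image is convex in $\mathbb{H}^n$, contains $\cals$, and is contained in the closed convex hull of $\cals$ in $\mathbb{H}^n$; closedness forces those inclusions to be equalities.

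For the dimension, I would set $W = \mathrm{span}(Q) \subset \mathbb{R}^{n+1}$. Since $\bo \notin Q$, $W$ has dimension $k+1$, and since $\cals \subset W \cap \mathbb{H}^n$ is non-empty, $W$ is time-like, so $W \cap \mathbb{H}^n$ is a $k$-dimensional totally geodesic subspace containing $r_n(\mathrm{Hull}(\cals))$; thus the dimension in $\mathbb{H}^n$ is at most $k$. Equality follows because any time-like subspace $V$ whose intersection with $\mathbb{H}^n$ contains $\cals$ must contain $\mathrm{span}(\cals) = W$, so $\dim V \geq k+1$.

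For the polyhedron claim, I would use a Lorentzian isometry to identify $W$ with $\mathbb{R}^{k+1}$ and $W \cap \mathbb{H}^n$ with $\mathbb{H}^k$, reducing to $k = n$ (the case $k=0$, a single point, is trivial). Each facet $F_j$ of the now $n$-dimensional polytope $\mathrm{Hull}(\cals) \subset Q$ lies in an $(n-1)$-dimensional affine plane $R_j \subset Q$, and its linear span $V_{R_j}$ is an $n$-dimensional time-like subspace (time-like because $R_j$ contains a vertex of $F_j$ in $\mathbb{H}^n$). Let $K_j$ be the half-space of $\mathbb{R}^{n+1}$ bounded by $V_{R_j}$ that contains $\mathrm{Hull}(\cals)$; Lemma \ref{convex component} then makes $K_j \cap \mathbb{H}^n$ a hyperbolic half-space. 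The remaining claim is that $r_n(\mathrm{Hull}(\cals)) = \bigcap_j (K_j \cap \mathbb{H}^n)$. The inclusion ``$\subseteq$'' is immediate because each $K_j$, being bounded by a linear subspace, is closed under positive scaling.

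The main obstacle is the reverse inclusion, which I would reformulate as $\bigcap_j K_j = C$, where $C$ is the cone from $\bo$ over $\mathrm{Hull}(\cals)$. Let $\ell : \mathbb{R}^{n+1} \to \mathbb{R}$ be the linear functional with $\ell^{-1}(1) = Q$. On $\ker \ell$, the intersection $\bigcap_j K_j$ identifies with the recession cone of the bounded polytope $\mathrm{Hull}(\cals)$, which reduces to $\{\bo\}$ by boundedness. If some $\bp \in \bigcap_j K_j$ satisfied $\ell(\bp) < 0$, then for every $\bq \in \mathrm{Hull}(\cals)$ the segment $[\bp, \bq]$ would cross $\ker \ell$ inside $\bigcap_j K_j$, hence at $\bo$, forcing $\bq$ to be a negative scalar multiple of $\bp$. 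Since $|\cals| \geq 2$ once $k \geq 1$, and two distinct points of $\mathbb{H}^n$ are never scalar multiples of one another (the scalar $\lambda$ would satisfy $\lambda^2 = 1$ and must preserve the sign of $x_0$), this is impossible. Hence $\ell \geq 0$ on $\bigcap_j K_j$, and any such point $\bp$ with $\ell(\bp) > 0$ rescales to $\bp/\ell(\bp) \in Q \cap \bigcap_j K_j = \mathrm{Hull}(\cals)$, placing $\bp$ in $C$.
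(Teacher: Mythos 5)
Your proof is correct, and its skeleton coincides with the paper's: reduce to the full-dimensional case $k=n$ by passing to the time-like span of $\cals$, write $\mathrm{Hull}(\cals)$ inside its affine hull as a finite intersection of facet half-spaces, pass to the (time-like) linear spans of the facet hyperplanes, and identify $r_n(\mathrm{Hull}(\cals))$ with the intersection of the corresponding hyperbolic half-spaces. The genuine difference is how the crucial reverse inclusion is handled. The paper disposes of it with a terse topological remark --- the image is a compact subset of $\bigcap_i(B_i\cap\mathbb{H}^n)$ with frontier in the bounding hyperplanes $V_i\cap\mathbb{H}^n$, ``it follows that equality holds'' --- whereas you prove it by an explicit convex-geometry computation: the cone over $\mathrm{Hull}(\cals)$ equals $\bigcap_j K_j$, because the trace of $\bigcap_j K_j$ on $\ker\ell$ is the recession cone of a bounded polytope (hence $\{\bo\}$), and the possibility $\ell(\bp)<0$ is excluded since it would force all of $\cals$ onto a single ray through $\bo$, impossible for two distinct points of $\mathbb{H}^n$. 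Your route buys a self-contained justification of exactly the step the paper leaves implicit, and you also record explicitly that the facet spans are time-like (via vertices in $\mathbb{H}^n$), which the paper uses silently; the paper's version is simply shorter. Your derivation of ``image $=$ closed convex hull'' from compactness of $r_n(\mathrm{Hull}(\cals))$ plus minimality is the same use of Lemma \ref{convex hull} as in the paper, just stated up front. The only points you leave tacit --- passing from the cone identity to $\bigcap_j(K_j\cap\mathbb{H}^n)=r_n(\mathrm{Hull}(\cals))$, and regarding a polyhedron of the totally geodesic copy of $\mathbb{H}^k$ as a polyhedron of $\mathbb{H}^n$ when $k<n$ --- are routine and are glossed at the same level in the paper.
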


\begin{proof}  Take $k = n$ and let $P$ be the affine subspace of $\mathbb{R}^{n+1}$ containing $\cals$ but not $\bo$.  By \cite[Prop.~12.1.15]{BergerII}, $\mathrm{Hull}(\cals)$ is a compact, convex polyhedron in $P$, so $\mathrm{Hull}(\cals) = \bigcap_{i=1}^j H_i$, where the $H_i$ are half-spaces of $P$ bounded by $(n-1)$-dimensional affine subspaces $Q_i\subset P$.  Any point in the frontier of $P$ is in some $Q_i$.

For each $i$ let $V_i$ be the vector subspace of $\mathbb{R}^{n+1}$ containing $Q_i$, and let $B_i$ be the half-space bounded by $V_i$ that contains $H_i$.  Since $r_n$ preserves $V_i$ and the $B_i$ for each $i$, $r_n(\mathrm{Hull}(\cals)) \subset \bigcap_{i=1}^j (B_i\cap\mathbb{H}^n)$ is a compact subset with its frontier in the $V_i\cap\mathbb{H}^n$.  It follows that equality holds.  By compactness and Lemma \ref{convex hull}, $r_n(\mathrm{Hull}(\cals))$ is the closed convex hull of $\cals$ in $\mathbb{H}^n$.

If $r_n(\mathrm{Hull}(\cals))$ were contained in $V\cap\mathbb{H}^n$ for some proper vector subspace $V$ then $\cals$ would lie in $V\cap P$, which has dimension $n-1$.  Therefore $r_n(\mathrm{Hull}(\cals))$ has dimension $n$.

If $\cals$ has dimension $k<n$ there is a $(k+1)$-dimensional time-like vector subspace $V$ containing $\cals$, whose intersection with $\mathbb{H}^n$ is an isometrically embedded copy of $\mathbb{H}^k$.  We therefore work in $V$ and apply the full-dimensional case.\end{proof}

\begin{lemma}\label{good visibility}  If $\cals\subset\mathbb{H}^n$ is finite then every visible point of $\mathrm{Hull}(\cals)$ is contained in a visible face, and every visible face is of the form $P\cap\mathrm{Hull}(\cals)$ for a support plane $P$ with $\bo\notin P$ that separates $\bo$ from $\cals$.  Each face of a visible face of $\mathrm{Hull}(\cals)$ is itself a visible face of $\mathrm{Hull}(\cals)$.\end{lemma}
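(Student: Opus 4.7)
The plan is to prove a unified visibility criterion and then deduce all three claims from it. Specifically, I will show that $\bx \in \mathrm{Hull}(\cals)$ is visible if and only if there is a support plane $P$ for $\mathrm{Hull}(\cals)$ through $\bx$ with $\bo \notin P$ and $\bo$ in the half-space opposite $\mathrm{Hull}(\cals)$. Letting $\eta$ denote the Euclidean outward normal to $P$, and noting $\eta\cdot\bo = 0$, this is equivalent to finding some outward normal $\eta$ at $\bx$ with $\eta\cdot\bx < 0$. The backward direction is immediate: for $t\in[0,1)$ one has $\eta\cdot(t\bx) > \eta\cdot\bx$, so $t\bx\notin\mathrm{Hull}(\cals)$, which makes $\bx$ visible.

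For the forward direction I would argue by contrapositive via polyhedral cone duality. If every outward normal $\eta$ at $\bx$ satisfied $\eta\cdot\bx\geq 0$, i.e.\ $\eta\cdot(-\bx)\leq 0$, then the bipolar theorem applied to the polyhedron $\mathrm{Hull}(\cals)$ (which is indeed a polyhedron since $\cals$ is finite, see \cite[Prop.~12.1.15]{BergerII}) would place $-\bx$ in the tangent cone of $\mathrm{Hull}(\cals)$ at $\bx$.  That would give $(1-t)\bx\in\mathrm{Hull}(\cals)$ for some small $t>0$---a point of $\mathrm{Hull}(\cals)$ strictly between $\bo$ and $\bx$ on the ray from $\bo$, contradicting visibility.

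Granted this criterion, the three claims follow quickly.  For (1), apply the criterion to a visible $\bx$ to obtain a support plane $P$ with outward normal $\eta$ satisfying $\eta\cdot\bx < 0$; the face $F = P\cap\mathrm{Hull}(\cals)$ contains $\bx$, and every $\by\in F$ satisfies $\eta\cdot\by = \eta\cdot\bx < 0$, so the same $P$ witnesses the visibility of $\by$, making $F$ visible.  For (2), let $\bx_0$ lie in the relative interior of a visible face $F$, so the outward normal cone at $\bx_0$ is the normal cone $N_F$ of $F$.  The criterion yields $\eta_0\in N_F$ with $\eta_0\cdot\bx_0 < 0$; because this strict inequality is an open condition and the relative interior of $N_F$ is dense in $N_F$, I can perturb to $\eta$ in the relative interior of $N_F$ still satisfying $\eta\cdot\bx_0 < 0$.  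The support plane $P = \{\by\,|\,\eta\cdot\by = \eta\cdot\bx_0\}$ then cuts out $F$ exactly and places $\bo$ strictly on the opposite side.  Finally, (3) combines the standard polyhedral fact that a face of a face of $\mathrm{Hull}(\cals)$ is itself a face of $\mathrm{Hull}(\cals)$ (via a convex combination of outward normals) with the tautology that every point of a face $F'\subseteq F$ is visible because every point of $F$ is.

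The main obstacle I anticipate is the strict/weak-inequality upgrade in the criterion's forward direction: Lemma \ref{visible point} by itself may return a support plane $P$ that contains $\bo$, so the polyhedral tangent/normal cone duality---the step where finiteness of $\cals$ is truly essential---is the crux of the argument.
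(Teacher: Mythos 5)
Your proposal is correct, and it reaches the lemma by a genuinely different route than the paper. The paper never formulates a pointwise criterion: it writes $\mathrm{Hull}(\cals)=\bigcap_{i=1}^k B_i$ as a finite intersection of half-spaces via \cite[Prop.~12.1.15]{BergerII} and, for a visible $\bx$, pigeonholes the points $t\bx$ (as $t\to 1$) into a single bounding half-space $B_{i_0}$ with $\bo\notin B_{i_0}$, so that the facet $F_{i_0}$ is a visible face through $\bx$; the strict separation in the second assertion then comes from interpolating an arbitrary normal cutting out $F$ with the facet normal $\eta_{i_0}$ (a face containing a relative-interior point of $F$ contains $F$, so the interpolated plane still cuts out exactly $F$). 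You instead package the finiteness into convex-analytic machinery: the strict/weak upgrade of Lemma \ref{visible point} comes from polarity between the normal cone and the tangent cone --- which for a polytope is polyhedral, closed, and equal to the cone of feasible directions, precisely the point an infinite $\cals$ would break, as you correctly flag --- and the second assertion comes from perturbing into the relative interior of the normal cone of $F$, using the normal-fan fact that such functionals expose exactly $F$. The two mechanisms do the same work (the paper's interpolation is a hands-on substitute for your relative-interior perturbation, and your part (3) is the paper's convex-combination argument verbatim), but yours isolates cleanly where polyhedrality enters, at the cost of importing standard facts (bipolar theorem for polyhedral cones, relative interior of the normal cone exposes the face) that would need a citation or short proof. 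One small point of hygiene: state the criterion in terms of an outward normal $\eta$ with $\eta\cdot\bx<0$ rather than ``$\bo$ in the half-space opposite $\mathrm{Hull}(\cals)$,'' since when $\mathrm{Hull}(\cals)$ is lower-dimensional a support plane may contain it and neither half-space is then distinguished; your argument in fact only ever uses the normal formulation, so nothing else changes.
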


\begin{proof}  
Applying \cite[Prop.~12.1.15]{BergerII}, let $\mathrm{Hull}(\cals)=\bigcap_{i=1}^k B_i$ for a collection of half-spaces $B_i$ bounded by affine $n$-planes $P_i$.  If $\bx\in\mathrm{Hull}(\cals)$ then $\bx\in B_i$ for all $i$, and if $\bx$ is visible then for each $t\in [0,1)$ there exists $i_t$ such that $t\bx\notin B_{i_t}$.  Since the collection $\{B_i\}$ is finite, as $t\to 1$ a subsequence of the $i_t$ is constant at some $i_0$.  Since $\bo\notin B_{i_0}$, $F_{i_0}$ is visible.

Now suppose $\bx$ is in the interior of a visible face $F$.  Let $P$ be a support plane for $\mathrm{Hull}(\cals)$ with $F=P\cap\mathrm{Hull}(\cals)$, and let $\eta$ be a Euclidean normal to $P$ pointing outward from $\mathrm{Hull}(\cals)$.  If $\eta_{i_0}$ is a Euclidean normal to $P_{i_0}$ pointing outward from $\mathrm{Hull}(\cals)$, then $\eta_t = t\eta+(1-t)\eta_{i_0}$ is normal to a support plane $P_t$ for $\mathrm{Hull}(\cals)$ for each $t\in[0,1]$, pointing outward from $\mathrm{Hull}(\cals)$.  For $t>0$, $F = P_t\cap\mathrm{Hull}(\cals)$, and for $t$ near $0$, $\eta_t\cdot\bx<0$ since this is true for $\eta_{i_0}$.  For such $t$, $\bo\notin P_t$ and $P_t$ separates it from $\cals$.

For a visible face $F$ of $\mathrm{Hull}(\cals)$, let $P$ be a support plane for $\mathrm{Hull}(\cals)$ separating $\bo$ from $\cals$ with $\bo\notin\cals$.  For any proper face $F_0\subset F$ there is a codimension-one support plane $Q$ for $F$ in $\cals$ with $F_0=Q\cap F$.  Let $\eta$ be a Euclidean normal to $P$ pointing outward from $\cals$ and $\delta$ a Euclidean normal to $Q$ in $P$ pointing outward from $F$.  One verifies that for small $t$, $\eta_t = (1-t)\eta + t\delta$ is a Euclidean normal to a support plane $P_t$ in $\mathbb{R}^{n+1}$ for $\mathrm{Hull}(\cals)$ with $F_0 = P_t\cap\mathrm{Hull}(\cals)$ that separates $\cals$ from $\bo$.\end{proof}

\begin{proposition}\label{the finite case}  The Delaunay tessellation of a finite set $\cals\subset\mathbb{H}^{n}$ is a decomposition of the closed convex hull of $\cals$ in $\mathbb{H}^n$ into a finite collection of convex polyhedra (the \mbox{\rm cells}), such that each face of each cell is a cell and distinct cells that intersect do so in a face of each.  For each non-totally geodesic hypersphere $S$ of $\mathbb{H}^n$ that intersects $\cals$ and bounds a convex region $B$ with $B\cap\cals = S\cap\cals$, the closed convex hull of $S\cap\cals$ in $\mathbb{H}^n$ is a Delaunay cell.  Each Delaunay cell is compact and of this form.\end{proposition}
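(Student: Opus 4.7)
My plan is to leverage the correspondence between visible faces of the Euclidean polytope $\mathrm{Hull}(\cals)\subset\mathbb{R}^{n+1}$ and the Delaunay cells of $\cals$ via the radial projection $r_n$. Since $\cals$ is finite, $\mathrm{Hull}(\cals)$ is a Euclidean polytope with finitely many faces by \cite[Prop.~12.1.15]{BergerII}, so the Delaunay tessellation has only finitely many cells. For each visible face $F$, Lemma \ref{good visibility} yields a support plane $P$ with $\bo\notin P$, and the vertices of $F$ lie in $\cals\cap P$ (with their affine hull inside $P$ avoiding $\bo$), so Lemma \ref{poly to poly} identifies $r_n(F)$ with the closed hyperbolic convex hull of those vertices---a compact convex polyhedron. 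Because every visible point of $\mathrm{Hull}(\cals)$ lies in some visible face (Lemma \ref{good visibility}), the cells cover $r_n(\mathrm{Hull}(\cals))$, which Lemma \ref{poly to poly} identifies with the closed hyperbolic convex hull of $\cals$.

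To obtain the polyhedral-complex structure I would next match Euclidean faces of a visible $F$ with hyperbolic faces of $r_n(F)$. If $F_0\subset F$ is a face cut out by a support hyperplane $Q$ inside the affine hull $P_F$ of $F$, then the linear span $W_0$ of $Q$ is a time-like subspace of $\mathbb{R}^{n+1}$ not containing $\bo$, and $W_0\cap\mathbb{H}^n$ is a totally geodesic subspace. The time-like case of Lemma \ref{convex component}, applied in $\mathrm{span}(P_F)$, shows that the Euclidean half-space of $P_F$ bounded by $Q$ and containing $F$ projects under $r_n$ into a hyperbolic half-space bounded by $W_0\cap\mathbb{H}^n$ and containing $r_n(F)$, with boundary meeting $r_n(F)$ exactly in $r_n(F_0)$. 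Combined with the fact that faces of visible faces are visible (Lemma \ref{good visibility}), this shows every face of a cell is a cell. The intersection property then follows from injectivity of $r_n$ on the visible set (Lemma \ref{convex hull}) together with the usual fact that two intersecting faces of a Euclidean polytope meet in a common face.

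Finally, to match cells with empty circumspheres, given a visible face $F$ take its support plane $P$ separating $\bo$ from $\cals$; then $S=P\cap\mathbb{H}^n$ is a non-totally-geodesic hypersphere (Lemma \ref{classification of spheres}), and since $P$ supports $\mathrm{Hull}(\cals)$, one has $F=\mathrm{Hull}(P\cap\cals)=\mathrm{Hull}(S\cap\cals)$ in the Euclidean sense. Lemma \ref{convex component} singles out the unique convex half-region $B$ bounded by $S$ as the one containing $\bo$, and since $\cals$ lies on the opposite side of $P$ from $\bo$, the empty-ball condition $B\cap\cals=S\cap\cals$ holds; applying Lemma \ref{poly to poly} to $S\cap\cals\subseteq P$ identifies $r_n(F)$ with the closed hyperbolic convex hull of $S\cap\cals$. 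Conversely, given $S$ bounding a convex $B$ with $B\cap\cals=S\cap\cals$, the affine plane containing $S$ separates $\cals$ from $\bo$ and is therefore the support plane for a visible face of $\mathrm{Hull}(\cals)$ whose image under $r_n$ realizes the claimed cell. The main obstacle I anticipate is step two: verifying that $r_n$ carries the Euclidean face lattice of each visible $F$ bijectively onto the hyperbolic face lattice of $r_n(F)$, which requires careful bookkeeping of dimensions between time-like linear subspaces of $\mathbb{R}^{n+1}$ and their totally geodesic intersections with $\mathbb{H}^n$.
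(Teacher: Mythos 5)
Your plan follows the paper's proof nearly step for step (visible faces of $\mathrm{Hull}(\cals)$, Lemmas \ref{good visibility}, \ref{poly to poly}, \ref{convex component}, \ref{classification of spheres}, injectivity of $r_n$ on visible points), and your circumsphere paragraph is essentially the paper's argument. But one citation fails in the main case: you cannot invoke Lemma \ref{poly to poly} to identify $r_n(\mathrm{Hull}(\cals))$ with the closed convex hull of $\cals$ in $\mathbb{H}^n$. That lemma assumes $\cals$ has dimension $k\leq n$ in $\mathbb{R}^{n+1}$ with its affine hull missing $\bo$, whereas a typical finite $\cals\subset\mathbb{H}^n$ (any $n+2$ points in general position) affinely spans all of $\mathbb{R}^{n+1}$, so the lemma says nothing about $\mathrm{Hull}(\cals)$ itself. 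The paper gets the identification differently: $\mathrm{Hull}(\cals)$ is compact, so $r_n(\mathrm{Hull}(\cals))$ is closed, and then Lemma \ref{convex hull} (the image is convex, contains $\cals$, and lies inside the closed hyperbolic hull) forces equality. Lemma \ref{poly to poly} is applied only to individual faces, which do lie in affine $n$-planes avoiding $\bo$, exactly as you do for the cells.

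The second, more substantive gap is in the face-matching step: your argument runs only in the direction Euclidean face $\Rightarrow$ hyperbolic face, and that does not yield ``each face of each cell is a cell.'' A face of the cell $r_n(F)$ is, per Definition \ref{tessellation}, cut out by an arbitrary supporting totally geodesic hyperplane $V\cap\mathbb{H}^n$, so one must show every such intersection equals $r_n(F_0)$ for some Euclidean face $F_0$ of $F$ --- precisely the ``bijective face lattice'' verification you defer as your anticipated obstacle; as written, the conclusion is claimed before that step is done. The missing direction is short and is the paper's actual argument: since $V$ is a linear subspace and $r_n$ is radial, a point of $F$ lies in a given half-space bounded by $V$ if and only if its $r_n$-image does; hence $V\cap P$ is a support plane for $F$ inside the support plane $P$ furnished by Lemma \ref{good visibility}, $F_0=V\cap F$ is a Euclidean face of $F$ (so a visible face of $\mathrm{Hull}(\cals)$ by the last assertion of Lemma \ref{good visibility}), and $r_n(F_0)=r_n(F)\cap V\cap\mathbb{H}^n$ is the given face of the cell. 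With these two repairs your outline closes up; your handling of intersecting cells (injectivity of $r_n$ on the visible set plus the fact that intersecting faces of the polytope $\mathrm{Hull}(\cals)$ meet in a common face) and of the empty-circumsphere correspondence matches the paper.
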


\begin{remark}\label{empty characterization}  The \textit{hyperbolic Delaunay complex} of \cite{BDT}, defined on p.~7 there, satisfies the empty circumspheres condition with only metric spheres.  Thus it is a subcomplex of our Delaunay tessellation (by Theorem \ref{the real geometric dual}, the geometric dual to the Voronoi tessellation). \end{remark}

\begin{proof}[Proof of Prop.~\ref{the finite case}]  Since $\cals$ is finite, $\mathrm{Hull}(\cals)$ is a compact convex polyhedron \cite[Prop.~12.1.15]{BergerII}.  By compactness $r_n(\mathrm{Hull}(\cals))$ is closed.  This coincides with the image of its visible set, so by Lemma \ref{convex hull} the visible set maps onto the closed convex hull of $\cals$ in $\mathbb{H}^n$.  By Lemma \ref{good visibility}, each visible point of $\mathrm{Hull}(\cals)$ is in an $n$-dimensional visible face, so the Delaunay tessellation covers the closed  convex hull of $\cals$.  By Lemma \ref{poly to poly} it is the union of its $n$-cells.

For a Delaunay cell $C$ with a face $C_0\subset C$, let $V_0$ be the time-like vector subspace such that $V_0\cap\mathbb{H}^n$ is the support plane for $C$ with $C_0 = V_0\cap C$.  For the visible face $F$ of $\mathrm{Hull}(\cals)$ such that $C=r_n(F)$, and the support plane $P$ for $\mathrm{Hull}(\cals)$ supplied by Lemma \ref{good visibility}, $V_0\cap P$ is a codimension-one support plane for $F$ in $P$ such that $r_n(V_0\cap F) = C_0$.  The face $F_0 = V_0\cap F$ of $F$ is also a face of $\mathrm{Hull}(\cals)$, hence $C_0$ is also a Delaunay cell.

Now let $C$ and $C'$ be distinct, intersecting Delaunay cells, let $F$ and $F'$ be the visible faces with $C=r_n(F)$ and $C'=r_n(F')$, and let $P$ and $P'$ be support planes for $\mathrm{Hull}(\cals)$ as in Lemma \ref{good visibility}, with $F = P\cap\mathrm{Hull}(\cals)$ and $F'=P'\cap\mathrm{Hull}(\cals)$.  Then $Q = P\cap P'$ is a support plane for $F$ in $P$ with $F\cap F' = F\cap Q$, and the necessarily time-like vector subspace $V$ spanned by $Q$ intersects $\mathbb{H}^n$ in a support plane for $C$ with $C \cap Q = r_n(F\cap Q) = r_n(F\cap F') = C\cap C'$.  (Recall that $r_n$ is injective on the visible set of $\mathrm{Hull}(\cals)$, hence on $F\cup F'$.) It follows that $C\cap C'$ is a face of $C$.  By the same argument it is a face of $C'$.

For a visible face $F$ let $P$ be the support plane supplied by Lemma \ref{good visibility}, with $F = P\cap\mathrm{Hull}(\cals)$ and $\bo$ and $\cals$ in opposite half-spaces bounded by $P$.  Since $\bo\notin P$, $S=P\cap\mathbb{H}^n$ is not totally geodesic.  By Lemma \ref{convex component} the half-space $B$ containing $\bo$ has convex intersection with $\mathbb{H}^n$, and $r_n(F)\subset B\cap\mathbb{H}^n$ by Lemma \ref{ratatouille}.  By Lemma \ref{poly to poly}, $r_n(F)$ is a compact polyhedron, the closed convex hull of $B\cap\cals = S\cap\cals$.

Now suppose on the other hand that $\cals_0 = \cals\cap S$ for some hypersphere $S$ of the form $P\cap\mathbb{H}^n$, where $P$ is an affine plane bounding a half-space $B$ with $B\cap\mathbb{H}^n$ convex and $B\cap\cals = \cals_0$.  Then $P$ is a support plane for $\mathrm{Hull}(\cals)$, and $F = P\cap \mathrm{Hull}(\cals)$ is a face with $F = \mathrm{Hull}(\cals_0)$.  Since $S$ is not totally geodesic $P$ does not contain $\bo$, so $F$ is visible.\end{proof}

\section{A bad example}\label{bad example}

The main example of this section, a set $\cals\subset\mathbb{H}^2$, illustrates the issues one has to work around in analyzing the Delaunay tessellation of an arbitrary locally finite subset of $\mathbb{H}^n$.  Note that if $\cals$ is locally finite in $\mathbb{H}^n$, it is also locally finite in $\mathbb{R}^{n+1}$: for $\bx$ and $\by\in\mathbb{H}^n$,\begin{align}\label{x - y}
  \|\bx-\by\|^2 = 2\left[(x_0-y_0)^2 - \bx\circ\by - 1\right] \geq 2(-\bx\circ\by-1) \end{align}
(Here the left-hand side is the square of the Euclidean distance between $\bx$ and $\by$).  Such $\cals$ may however experience ``convergence at infinity'', causing pathologies in the Delaunay tessellation.  What we mean by this is most apparent in the upper half-plane model, where we introduce our bad example $\cals$ before translating back to the hyperboloid model.


\begin{claim}  There exists $p_0$ on the positive imaginary axis and a sequence $\{p_n = x_n+iy_n\}\subset\mathbb{H}^2$ such that $x_n$ decreases to some $a>0$, $y_n$ decreases to $0$, and for each $n\geq 2$, $p_n$ is the lowest point of the Euclidean circle through $p_0$, $p_{n-1}$, and $p_n$.\end{claim}

Let us take the Claim for granted for now (we will prove it at the end of the section) and record its consequences for Delaunay tessellations.  For $p_0$ and $\{p_n\}$ provided by the Claim, let $p_{-n} = -\bar{p}_n$ for each $n\in\mathbb{N}$, and let $\cals = \{p_n\,|\,n\in\mathbb{Z}\}$.  By construction,  $p_{\infty} \doteq a\in\mathbb{R}$ and $p_{-\infty} \doteq -a$ are the only accumulation points of $\cals$ in $\mathbb{R}^2$.  Since these lie outside $\mathbb{H}^2$, $\cals$ is closed and discrete, and hence locally finite, in $\mathbb{H}^2$.

The Claim directly implies for each $n\geq 2$ that the Euclidean circle $S_n$ through $p_0$, $p_{n-1}$ and $p_n$ lies entirely in $\mathbb{H}^2$, so it is the hyperbolic circle through these points.  Moreover, convexity properties of circles imply that the closed disk bounded by $S_n$ contains only $p_0$, $p_{n-1}$ and $p_n$ among all $p_k\in\cals$.  The analogous fact holds for the circle $S_{-n}$ through $p_0$, $p_{-n}$ and $p_{-n+1}$.

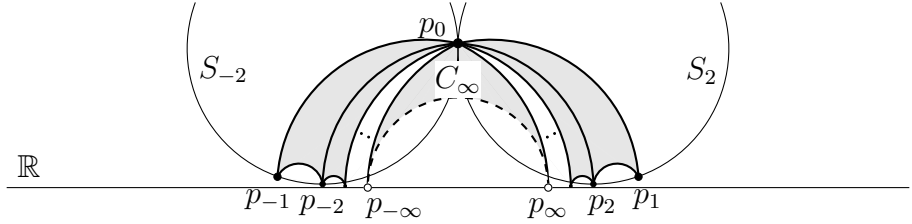
\begin{figure}
\begin{tikzpicture}

\begin{scope}[scale=1.2]

\fill [opacity=0.1] (2,0.12) arc (4.194:102.803:1.64);
\fill [opacity=0.1] (2,0.12) -- (0,1.6) -- (1.5,0.039);
\draw [thick] (2,0.12) arc (4.194:102.803:1.64);
\fill [opacity=0.1] (1.5,0.039) -- (0,1.6) -- (1.25,0.017);
\draw [thick] (1.5,0.039) arc (1.394:86.317:1.603);
\fill [color=white] (1.25,0.017) arc (0.59:76:1.65);
\draw [thick] (1.25,0.017) arc (0.59:76:1.65);
\fill [color=white] (2,0.12) arc (26.84:171.56:0.266);
\draw [thick] (2,0.12) arc (26.84:171.56:0.266);
\fill [color=white] (1.5,0.039) arc (17.65:172.4:0.1285);
\draw [thick] (1.5,0.039) arc (17.65:172.4:0.1285);

\fill [opacity=0.1] (-2,0.12) arc (175.806:77.197:1.64);
\fill [opacity=0.1] (-2,0.12) -- (0,1.6) -- (-1.5,0.039);
\draw [thick] (-2,0.12) arc (175.806:77.197:1.64);
\fill [opacity=0.1] (-1.5,0.039) -- (0,1.6) -- (-1.25,0.017);
\draw [thick] (-1.5,0.039) arc (178.606:93.683:1.603);
\fill [color=white] (-1.25,0.017) arc (179.41:104:1.65);
\draw [thick] (-1.25,0.017) arc (179.41:104:1.65);
\fill [color=white] (-2,0.12) arc (153.16:8.44:0.266);
\draw [thick] (-2,0.12) arc (153.16:8.44:0.266);
\fill [color=white] (-1.5,0.039) arc (162.35:7.6:0.1285);
\draw [thick] (-1.5,0.039) arc (162.35:7.6:0.1285);

\fill [opacity=0.1] (1,0) arc (0:64.01:1.78);
\fill [opacity=0.1] (1,0) -- (0,1.6) -- (-1,0);
\draw [thick] (1,0) arc (0:64.01:1.78);
\fill [opacity=0.1] (-1,0) arc (180:115.99:1.78);
\draw [thick] (-1,0) arc (180:115.99:1.78);
\fill [color=white] (1,0) arc (0:180:1);
\draw [thick, dashed] (1,0) arc (0:180:1);

\draw (-5,0) -- (5,0);
\node [above right] at (-5,0) {$\mathbb{R}$};

\draw [fill] (0,1.6) circle [radius=0.05];
\node [left] at (0,1.77) {$p_0$};

\draw [fill] (-2,0.12) circle [radius=0.04];
\node [below] at (-2.1,0.1) {$p_{-1}$};
\draw (0,1.6) arc (2.304:-200:1.501);
\draw (0,1.6) arc (2.304:20:1.501);
\node at (-2.6,1.3) {$S_{-2}$};

\draw [fill] (2,0.12) circle [radius=0.04];
\node [below] at (2.1,0.1) {$p_1$};
\draw (0,1.6) arc (-182.304:20:1.501);
\draw (0,1.6) arc (-182.304:-200:1.501);
\node at (2.7,1.3) {$S_2$};


\draw [fill] (-1.5,0.039) circle [radius=0.03];
\node [below] at (-1.5,0.039) {$p_{-2}$};
\draw [fill] (1.5,0.039) circle [radius=0.03];
\node [below] at (1.6,0.039) {$p_{2}$};

\fill [color=white] (0.25,1) -- (0.25,1.4) -- (-0.25,1.4) -- (-0.25,1);
\node at (0,1.2) {$C_{\infty}$};

\draw [fill] (1.25,0.017) circle [radius=0.015];
\draw [fill] (-1.25,0.017) circle [radius=0.015];

\draw [fill] (1.07,0.6) circle [radius=0.01];
\draw [fill] (0.99,0.56) circle [radius=0.01];
\draw [fill] (-1.07,0.6) circle [radius=0.01];
\draw [fill] (-0.99,0.56) circle [radius=0.01];

\draw [fill, color=white] (1,0) circle [radius=0.04];
\draw (1,0) circle [radius=0.04];
\node [below] at (1,0) {$p_{\infty}$};
\draw [fill, color=white] (-1,0) circle [radius=0.04];
\draw (-1,0) circle [radius=0.04];
\node [below] at (-0.7,0) {$p_{-\infty}$};
\end{scope}

\end{tikzpicture}
\caption{The example constructed in this section.  Delaunay cells are shaded.}
\end{figure}

We now translate $\cals$ to the hyperboloid model using an isometry $I$.   For each $n\geq 2$, since $S_n$ and $S_{-n}$ are metric circles in $\mathbb{H}^2$, their images under $I$ are the intersections of affine planes $P_{\pm n}$ with $\mathbb{H}^2$ by Lemma \ref{classification of spheres}.  Since the disk bounded by $S_n$ intersects only $p_0$, $p_n$ and $p_{n-1}$, by Lemma \ref{convex component} the same holds for the half-space $B_n$ containing $\mathbf{0}$ with $\partial B_n = P_n$.  It follows that $P_n\cap\mathrm{Hull}(I(\cals))$ is a visible face of $\mathrm{Hull}(I(\cals))$, hence that $\cals$ has a Delaunay cell $C_n$ that is a triangle with vertices $p_0$, $p_{n-1}$, and $p_n$.  Another, $C_{-n}$, is spanned by $p_0$, $p_{-n+1}$ and $p_n$.

We claim that the Delaunay tessellation of $\cals$ has another cell $C_{\infty}$ that is not of the form $C_{\pm n}$ for any $n\geq 2$, which occupies all of the triangle spanned by $p_0$ and the ideal points $p_{\pm\infty}$ of $\mathbb{H}^2$ except the geodesic joining $p_{-\infty}$ to $p_{\infty}$.  The Euclidean circle $S_{\infty}$ through these three points does not enclose any other points of $\cals$.  This can be argued using the facts that the $S_{\pm n}$ limit to circles through $p_0$ that are tangent to $\mathbb{R}$ at $p_{\pm\infty}$, respectively, as $n\to\infty$; that these circles each exclude all points of $\cals-\{p_0\}$; and that the portion of $S_{\infty}$ consisting of points with real coordinate of absolute value at least $a$ is enclosed by these circles.

The image of $S_{\infty}\cap\mathbb{H}^2$ under $I$ is $P_{\infty}\cap\mathbb{H}^2$, where $P_{\infty}$ is an affine plane parallel to a time-like vector subspace spanned by the two lines $\ell_{\pm\infty}$ in the light cone corresponding to the ideal points $p_{\pm\infty}$ of the upper half-plane model.  This is a support plane for $\mathrm{Hull}(I(\cals))$, since it contains $I(p_0)$ and separates all other points of $\cals$ from $\mathbf{0}$ (by Lemma \ref{convex component} and the paragraph above).  We take $C_{\infty} = r_3(F_{\infty})$, where $F_{\infty}\doteq P_{\infty}\cap\mathrm{Hull}(\cals)$.  The description of $C_{\infty}$ above will follow from the fact that $F_{\infty}$ is the wedge spanned by the rays through $I(p_0)$ that are parallel to $\ell_{\pm\infty}$ and contained in $U_3$.

This in turn follows from the fact that the lines through the origin and the $I(p_{\pm n})$ converge in projective space to $\ell_{\pm\infty}$ as $n\to\infty$, which is the analog for the hyperboloid model of the convergence $p_{\pm n}\to p_{\pm\infty}$ in $\mathbb{R}^2$.  The fact implies that the line segments in $\mathbb{R}^3$ from $I(p_0)$ to the $I(p_{\pm n})$ have direction vectors converging to those of $\ell_{\pm\infty}$.  The line segments themselves have Euclidean lengths increasing without bound, since $\cals$ is locally finite but infinite, so each point on one of the rays in question is approached by a sequence on these line segments.  Thus since $\mathrm{Hull}(\cals)$ is closed it contains the rays, and the claim follows.

The Delaunay tessellation of $\cals$ lacks several properties that hold for finite sets by Proposition \ref{the finite case}, and for lattice-invariant sets as described in Theorem \ref{pseudo EP}.\begin{itemize}
  \item  The Delaunay cell $C_{\infty}$ is not the convex hull of its vertices, since it has only one: $p_0$.  Neither it nor the union of all Delaunay cells is a closed subset of $\mathbb{H}^2$, since they are missing the geodesic joining $p_{-\infty}$ and $p_{\infty}$.
  \item  The faces of $F_{\infty}$ containing $I(p_0)$ are not faces of $\mathrm{Hull}(\cals)$, since the only support plane that contains them is $P_{\infty}$ (c.f.~\cite[Remark 2.12(ii)]{AkSak}).  Consequentially, the two edges of $C_{\infty}$ are not themselves Delaunay cells.
  \item  The collection of Delaunay cells of $\cals$ is not locally finite at $p_0$ or any other point of an edge of $C_{\infty}$.\end{itemize}

In a slight variation on this construction, one could take $a=0$ in the Claim above.  Then $\cals\subset\mathbb{H}^2$ would have the single accumulation point $p_{\infty} =0\in\mathbb{R}$.  In this case the corresponding face $F_{\infty}$ of $\mathrm{Hull}(\cals)$ is one-dimensional --- the geodesic arc joining $p_0$ to $p_{\infty}$ --- and contained in no $2$-face.


\begin{proof}[Proof of claim]  Take $p_0=i$ for starters.  For $n\in\mathbb{N}$ we will take $p_n = x_n+iy_n$, where for each $n\in\mathbb{N}$, $x_n = a + r^n$ for some $a>0$ and $r\in(0,1)$ and for each $n\geq 2$, $p_n$ be the lowest point of a circle containing $p_0$ and $p_{n-1}$.  This requirement yields a quadratic equation whose smaller solution gives the following formula for $y_n$:\begin{align*}
  y_n & = \frac{1+2x_{n-1}x_n - x_n^2 - y_{n-1}^2 - \sqrt{(x_n^2+(1-y_{n-1})^2)((2x_{n-1}-x_n)^2+(1-y_{n-1})^2)}}{2(1-y_{n-1})} \end{align*}
This formula determines $y_n$ in terms of $x_n$, $x_{n-1}$, and $y_{n-1}$ for $n\geq 2$.  It is clear by construction that the sequence $\{y_n\}$ is strictly decreasing, since $p_n$ and $p_{n-1}$ both lie on $S_n$ but $p_n$ is its unique lowest point.  We will show that if $y_1$ is chosen in the interval $(0,1)$ then $\{y_n\}$ is also bounded below, so it converges to some $L<y_1$.  Translating the entire original collection $\{p_0,p_1,\hdots\}$ vertically by $-L$ then yields the desired one.

Note that since the sequence $\{x_n\}$ is decreasing, $x_n$ is strictly less than $2x_{n-1} - x_n$, so:\begin{align*}
  y_n & > \frac{1+2x_{n-1}x_n - x_n^2 - y_{n-1}^2 - \left[(2x_{n-1}-x_n)^2+(1-y_{n-1})^2\right]}{2(1-y_{n-1})} \\
	& = y_{n-1} - \frac{(x_{n-1}-x_n)(2x_{n-1}-x_n)}{1-y_{n-1}} > y_{n-1} - K r^{n-1}, \end{align*}
where $K = \frac{(1-r)(a+2)}{1-y_1}$.  This bounds $\{y_n\}$ below by a convergent geometric sequence, and the claim follows.\end{proof}

\section{The Voronoi tessellation and its geometric dual}\label{into Voronoi}

Here we will introduce the Voronoi tessellation of a locally finite set $\cals\subset\mathbb{H}^n$, then describe its ``geometric dual'', a subcomplex of the Delaunay tessellation of $\cals$.  For any such $\cals$ the Voronoi tessellation is a locally finite \textit{polyhedral complex}, in the sense of \cite[Dfn.~2.1.5]{DeLoRS}, and its geometric dual is a polyhedral complex (possibly not locally finite, see Remark \ref{bad dual} below).

\begin{definition}\label{Voronoi}  For locally finite $\cals\subset \mathbb{H}^n$ and $\bs\in\cals$, the \textit{Voronoi $n$-cell} determined by $\bs$ is
$$ V_{\bs} = \{\bx\in\mathbb{H}^n\,|\, d_H(\bs,\bx)\leq d_H(\bs',\bx)\ \forall\ \bs'\in\cals\}$$
The \textit{Voronoi tessellation} of $\cals$ is the complex with cells consisting of the $V_{\bs}$ and their faces.\end{definition}

It is clear from the definition that $\mathbb{H}^n$ is the underlying space of the Voronoi tessellation.

\begin{lemma}  If $\cals\subset\mathbb{H}^n$ is locally finite then $V_{\bs}$ is a convex polyhedron for each $\bs\in\cals$, as is each face of $V_{\bs}$, and the collection $\{V_{\bs}\,|\,\bs\in\cals\}$ is locally finite.  For any $\{\bs_0,\hdots,\bs_l\}$ such that $\bigcap_{i=0}^l V_{\bs_i}$ is non-empty, it is a face of $V_{\bs_i}$ for each $i$.\end{lemma}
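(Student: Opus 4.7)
The plan is to realize $V_{\bs}$ as a locally finite intersection of hyperbolic half-spaces indexed by $\cals\setminus\{\bs\}$ and then invoke Definition \ref{tessellation}. Using $\cosh d_H(\bx,\by) = -\bx\circ\by$ from below (\ref{geodesic ray}), the inequality $d_H(\bs,\bx)\leq d_H(\bs',\bx)$ translates to $\bx\circ(\bs-\bs')\geq 0$. For distinct $\bs,\bs'\in\mathbb{H}^n$, (\ref{x circ y}) gives $\bs\circ\bs'<-1$, so $(\bs-\bs')\circ(\bs-\bs') = -2(1+\bs\circ\bs')>0$; i.e.~$\bs-\bs'$ is space-like. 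By Lemma \ref{normal vectors} the hyperplane $W_{\bs,\bs'}\doteq(\bs-\bs')^\perp$ is then time-like, so by Lemma \ref{convex component} it bounds two hyperbolic half-spaces, and the one containing $\bs$ is precisely $H_{\bs,\bs'}\doteq\{\bx\in\mathbb{H}^n\,|\,\bx\circ(\bs-\bs')\geq 0\}$. Thus $V_{\bs} = \bigcap_{\bs'\neq\bs} H_{\bs,\bs'}$.

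To see that the family $\{W_{\bs,\bs'}\cap\mathbb{H}^n\}_{\bs'\neq\bs}$ is locally finite, fix $\bx\in\mathbb{H}^n$ and $\epsilon>0$, set $r=d_H(\bs,\bx)$, and suppose $\by$ lies in the metric $\epsilon$-ball around $\bx$ and also in $W_{\bs,\bs'}$. By the triangle inequality and the defining equality $d_H(\bs',\by) = d_H(\bs,\by)$, we have $d_H(\bs',\bs)\leq 2d_H(\bs,\by)\leq 2(r+\epsilon)$, so $\bs'$ lies in a compact metric ball around $\bs$; only finitely many such $\bs'$ exist by local finiteness of $\cals$. This verifies the hypothesis of Definition \ref{tessellation}, so $V_{\bs}$ is a convex polyhedron. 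A face of $V_{\bs}$ is obtained by enforcing $\bx\circ(\bs-\bs')=0$ for $\bs'$ in some subset $J\subset\cals\setminus\{\bs\}$; it sits inside a lower-dimensional time-like subspace in which the remaining half-space constraints are still locally finite by the same bound, so each face is itself a convex polyhedron.

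Local finiteness of the collection $\{V_{\bs}\}$ follows by a parallel argument: given $\bx$, let $\bs_0\in\cals$ be a closest point (which exists since closed metric balls in $\mathbb{H}^n$ are compact and $\cals$ is locally finite), set $r=d_H(\bs_0,\bx)$, and observe that any $\by$ in the $\epsilon$-ball around $\bx$ which lies in $V_{\bs}$ satisfies $d_H(\bs,\by)\leq d_H(\bs_0,\by)\leq r+\epsilon$, whence $d_H(\bs,\bx)\leq r+2\epsilon$, giving only finitely many valid $\bs$. Finally, if $\bx\in\bigcap_{i=0}^l V_{\bs_i}$, the shared value $d_H(\bs_i,\bx)$ is the minimum distance from $\bx$ to $\cals$, so every point of $\bigcap_i V_{\bs_i}$ is equidistant from $\bs_0$ and each $\bs_i$. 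Equivalently, $\bigcap_{i=0}^l V_{\bs_i} = V_{\bs_0}\cap\bigcap_{i=1}^l(W_{\bs_0,\bs_i}\cap\mathbb{H}^n)$, with each $W_{\bs_0,\bs_i}\cap\mathbb{H}^n$ a supporting hyperplane for $V_{\bs_0}$ at any point of the intersection. Iterating the face construction of Definition \ref{faces n stuff}, cutting by one bisector at a time inside the current totally geodesic ambient, realizes this set as an iterated face of $V_{\bs_0}$; by symmetry the same holds for each $V_{\bs_i}$. I expect the main subtlety to be verifying cleanly that this iterative support-hyperplane cutting produces a face in the paper's sense rather than merely a convex subset, which should fall out by induction on $l$ using the above observation that at each stage one is intersecting with a genuine supporting hyperplane.
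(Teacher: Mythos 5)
Your reduction of $V_{\bs}$ to the intersection $\bigcap_{\bs'\neq\bs}H_{\bs,\bs'}$, the computation showing each $\bs-\bs'$ is space-like so that the bisectors are totally geodesic hyperplanes, and the two triangle-inequality arguments giving local finiteness of the bisector family and of the collection $\{V_{\bs}\}$ are all correct, and are essentially the paper's own argument. The genuine gap is in the final assertion, and it is exactly the ``subtlety'' you flag and then defer. By Definition \ref{tessellation} (via \ref{faces n stuff}), to show $\bigcap_{i=0}^l V_{\bs_i}$ is a face of $V_{\bs_0}$ you must exhibit a \emph{single} support plane $P$ with $V_{\bs_0}\cap P=\bigcap_{i=0}^l V_{\bs_i}$; what you have shown is only that the intersection equals $V_{\bs_0}$ cut by the $l$ supporting hyperplanes $W_{\bs_0,\bs_i}\cap\mathbb{H}^n$. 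The proposed induction ``cutting by one bisector at a time inside the current totally geodesic ambient'' does not close this: at each stage you obtain a face of a face, where the inner face is taken with respect to support planes of the lower-dimensional ambient, and promoting that back to a face of $V_{\bs_0}$ in $\mathbb{H}^n$ is precisely the transitivity statement you are trying to prove, not something that falls out formally. The paper closes it with a one-line trick you are missing: the vectors $\bu_i\doteq\bs_0-\bs_i$ satisfy $\by\circ\bu_i\geq 0$ for every $\by\in V_{\bs_0}$, so for any positive coefficients $c_i$ the subspace $\{\by\,|\,\by\circ\sum_i c_i\bu_i=0\}$ meets $\mathbb{H}^n$ (it contains the non-empty intersection, hence is time-like) and bounds a half-space containing $V_{\bs_0}$; since $c_i>0$, a point $\by\in V_{\bs_0}$ lies on it if and only if $\by\circ\bu_i=0$ for every $i$, i.e.~if and only if $\by\in\bigcap_{i=0}^l V_{\bs_i}$. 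This single plane witnesses the face condition, and by symmetry the same construction works with any $\bs_j$ in place of $\bs_0$.

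A smaller soft spot: your claim that every face of $V_{\bs}$ arises by ``enforcing equalities'' over some subset $J$ is true for these polyhedra but is itself unproven, and you do not need it for the statement at hand. To see that a face of $V_{\bs}$ is a convex polyhedron, take it in the form $V_{\bs}\cap P$ for a support plane $P$ and note that it is the intersection of the (locally finite) family of half-spaces defining $V_{\bs}$ with the two hyperbolic half-spaces bounded by $P\cap\mathbb{H}^n$; this is again a locally finite family, so Definition \ref{tessellation} applies directly.
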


Using the definition of the hyperbolic metric (cf.~Section \ref{intro Lorentzian}), we may recast the criterion $d_H(\bx,\bs) \leq d_H(\bx,\bs')$ as $\bx\circ\bs \geq \bx\circ\bs'$.  In particular, the locus where equality holds is the hyperplane $\{\bx\circ(\bs-\bs') = 0\}$, which is time-like since $\bs-\bs'$ is space-like by (\ref{x circ y}).  So the set $\{\bx\,|\,d_H(\bx,\bs) \leq d_H(\bx,\bs')\}$ is a hyperbolic half-space.  More generally:

\begin{fact}\label{equations n stuff}  For $\{\bs_0,\hdots,\bs_{l}\}\subset\mathbb{H}^n$, if the locus $\{\bx\,|\,d_H(\bx,\bs_i) = d_H(\bx,\bs_j)\ \forall\ i,j\}$ is non-empty then it is an $(n-k)$-dimensional totally geodesic subspace of $\mathbb{H}^n$, where $k$ is the dimension of the minimal such subspace containing all $\bs_i$.\end{fact}

\begin{proof} As above we restate the criterion that $d_H(\bx,\bs_i) = d_H(\bx,\bs_j)$ for all $i$ and $j$ as $\bx\circ(\bs_0-\bs_i)=0$ for all $i>0$.  We may assume that $\bs_0,\hdots,\bs_k$ is linearly independent and spans the span $V$ of $\{\bs_0,\hdots,\bs_l\}$ in $\mathbb{R}^{n+1}$.  Another linearly independent spanning set for $V$ is $\{\bs_0,\bs_0-\bs_1,\hdots,\bs_0-\bs_k\}$, and the locus in question lies in the Lorentz-orthogonal complement to the span $V_0$ of the last $k$.  This has dimension $(n+1-k)$, so if every $\bx\in V_0^{\perp}$ is also orthogonal to $\bs_0-\bs_j$ for all $j>k$ then the fact will follow.  

For any such $j$ there exist $a_i\in\mathbb{R}$ such that $\bs_j = \sum_{i=0}^k a_i\bs_i$, or equivalently,
$$ \bs_0-\bs_j = \left(1-\sum_{i=0}^k a_i\right)\bs_0 + \sum_{i=1}^k a_i(\bs_0-\bs_i). $$
The coefficient of $\bs_0$ above is non-zero if and only if $\bs_0-\bs_j$ is not in $V_0$.  If this holds then any $\bx\in V_0^{\perp}$ that is also orthogonal to $\bs_0-\bs_j$ must be orthogonal to $\bs_0$.  But since $\bs_0$ is time-like, $\bs_0^{\perp}$ is space-like, so the locus $\{\bx\,|\,d_H(\bx,\bs_i) = d_H(\bx,\bs_j)\ \forall\ i,j\}$ is empty in this case.\end{proof}

Let us now prove the lemma.

\begin{proof}  For any $\bx\in\mathbb{H}^n$, by local finiteness the function $\bs\mapsto d_H(\bs,\bx)$ attains a minimum $J$ at some $\bs_0\in\cals$; hence $\bx\in V_{\bs_0}$.  The closed ball $B_{\bx}(2J)$ contains only finitely many points $\{\bs_0,\hdots,\bs_l\}$ of $\cals$, again by local finiteness.  Then $B_{\bx}(J/2)\subset \bigcup_{i=0}^l V_{\bs_i}$, and $B_{\bx}(J/2)\cap V_{\bs_0} \subset \bigcap_{i=1}^l \calh_i$, where $\calh_i = \{\by\,|\,d_H(\by,\bs_0)\leq d_H(\by,\bs_i)\}$ for each $i$ between $1$ and $l$.

This implies local finiteness both for the collection $\{V_{\bs}\}$ and for the collection of bounding hyperplanes of each $V_{\bs}$.  Thus $V_{\bs}$ is a polyhedron (recall Definition \ref{tessellation}), as are its faces.

Suppose $\bigcap_{i=0}^l V_{\bs_i}$ is non-empty.  Note that the vectors $\bu_i\doteq\bs_0-\bs_i$ featured in the proof of Fact \ref{equations n stuff} are Lorentz-normal to the vector subspaces $V_i$ with the property that $V_i\cap\mathbb{H}^n$ is a bounding hyperplane for $\calh_i$.  Further, $\by\circ\bu_i \geq 0$ for each $\by\in\calh_i$.  Any positive linear combination of the $\bu_i$ thus determines a support plane for $V_{\bs_0}$ intersecting it in $\bigcap_{i=0}^l V_{\bs_i}$.\end{proof}

\begin{proposition}\label{geometric dual}  Let $\cals\subset\mathbb{H}^n$ be locally finite.  For a Voronoi $k$-cell $V$, if $\cals_0\subset\cals$ is maximal such that $V=\bigcap_{\bs\in\cals_0} V_{\bs}$ then $F_V = \mathrm{Hull}(\cals_0)$ is an $(n-k)$-dimensional face of $\mathrm{Hull}(\cals)$,  of the form $P\cap\mathrm{Hull}(\cals)$ for a support plane $P$ for $\mathrm{Hull}(\cals)$ that separates $\cals$ from $\bo$ and is parallel to a space-like subspace of $\mathbb{R}^{n+1}$.  For any such support plane $P$, if $\cals_0= P\cap\cals$ then $P\cap\mathrm{Hull}(\cals) = \mathrm{Hull}(\cals_0)$ and $\bigcap_{\bs\in\cals_0} V_{\bs}$ is a Voronoi cell.\end{proposition}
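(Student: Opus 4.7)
The plan is to exploit the classification in Lemma~\ref{classification of spheres}: affine planes parallel to space-like subspaces of $\mathbb{R}^{n+1}$ correspond exactly to metric spheres in $\mathbb{H}^n$, whose centers are points of $\mathbb{H}^n$. Under this dictionary, the center of a metric sphere through a subset $\cals_0\subset\cals$ containing no other point of $\cals$ in its closed ball lies in $\bigcap_{\bs\in\cals_0}V_{\bs}$, and the associated affine plane is a support plane for $\mathrm{Hull}(\cals)$ separating it from $\bo$. I would organize the argument around one key identity that subsumes both directions.

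The key identity: if $P=\{\by:\by\circ\bx=c\}$ with $\bx\in\mathbb{H}^n$ and $c\leq-1$ is a support plane for $\mathrm{Hull}(\cals)$ with $\cals\subset\{\by:\by\circ\bx\leq c\}$, and $\cals_0=P\cap\cals$, then $P\cap\mathrm{Hull}(\cals)=\mathrm{Hull}(\cals_0)$. The plan is to use that $P\cap U_{n+1}$ is compact (Lemma~\ref{convex component}) to make $P\cap\mathrm{Hull}(\cals)$ a compact convex subset of $U_{n+1}$, then apply Krein--Milman: any extreme point of $P\cap\mathrm{Hull}(\cals)$ is extreme in $\mathrm{Hull}(\cals)$ itself, since the linear functional $\by\mapsto\by\circ\bx$ attains its maximum $c$ on $P$, so any convex decomposition of such a point within $\mathrm{Hull}(\cals)$ forces its pieces onto $P$. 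Extreme points of $\mathrm{Hull}(\cals)$ lie in the closure of $\cals$ in $\mathbb{R}^{n+1}$, but limits of $\cals\subset\mathbb{H}^n$ in $U_{n+1}$ preserve $\by\circ\by=-1$ and so stay in $\mathbb{H}^n$; local finiteness makes $\cals$ closed there, forcing the extreme points into $\cals_0$.

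For the forward direction, given a Voronoi $k$-cell $V$ with maximal witnessing set $\cals_0$, I would introduce the equidistant locus $E$ for $\cals_0$ in $\mathbb{H}^n$, an $(n-j)$-plane by Fact~\ref{equations n stuff} where $j$ is the hyperbolic affine-span dimension of $\cals_0$. Fixing $\bs_0\in\cals_0$ gives $V=E\cap V_{\bs_0}$, cut out in $E$ by half-space conditions $\{d_H(\cdot,\bs_0)\leq d_H(\cdot,\bs')\}$ for $\bs'\in\cals\setminus\cals_0$; maximality of $\cals_0$ forbids $V$ from being contained in any of the corresponding equality loci (else $\cals_0\cup\{\bs'\}$ would still intersect to $V$), so $V$ has relative interior of dimension $n-j$ in $E$, forcing $j=n-k$ and $\dim\mathrm{Hull}(\cals_0)=n-k$. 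Any $\bx$ in this relative interior has $d_H(\bx,\bs)=d_0$ for all $\bs\in\cals_0$ and $d_H(\bx,\bs')>d_0$ strictly otherwise. Setting $c=-\cosh d_0$, the plane $P=\{\by:\by\circ\bx=c\}$ is a support plane parallel to the space-like subspace $\bx^{\perp}$, with $\cals\subset\{\by\circ\bx\leq c\}$ and $\bo\circ\bx=0>c$, so it separates $\cals$ from $\bo$; the key identity yields $F_V=P\cap\mathrm{Hull}(\cals)=\mathrm{Hull}(\cals_0)$, a face of $\mathrm{Hull}(\cals)$. Conversely, given any such $P$ with Lorentz-normal $\bx\in\mathbb{H}^n$, the identity gives $P\cap\mathrm{Hull}(\cals)=\mathrm{Hull}(\cals_0)$ and (by the extreme-point count) $\cals_0\neq\emptyset$; the strict inequalities $\bs'\circ\bx<c$ for $\bs'\notin\cals_0$ translate to $d_H(\bx,\bs')>d_H(\bx,\bs)$, placing $\bx$ in $\bigcap_{\bs\in\cals_0}V_{\bs}$, which is therefore a non-empty face of each $V_{\bs}$ by the preceding lemma and hence a Voronoi cell.

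The main obstacle will be the forward direction's passage from the combinatorial maximality of $\cals_0$ to the geometric statement that the relative interior of $V$ in $E$ has dimension $n-j$ and consists precisely of points with strict inequalities for every $\bs'\in\cals\setminus\cals_0$. The subtlety is that $\cals\setminus\cals_0$ may be infinite, so to treat $V$ as a polyhedron in $E$ with a well-defined relative interior, I would first appeal to local finiteness to verify that the defining family of half-space conditions in $E$ is locally finite, so the description of the interior as the locus of strict inequalities is genuinely correct.
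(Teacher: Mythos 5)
Your overall frame matches the paper's: you use the dictionary between time-like normals $\bx\in\mathbb{H}^n$ and Voronoi points, produce the support plane $P=\{\by\,:\,\by\circ\bx=-\cosh d_0\}$ from a point of $V$, and reverse the computation for the converse; your treatment of the forward direction is in fact more explicit than the paper's (the paper fixes an arbitrary $\bv\in V$ and asserts the strict inequality for $\bs\notin\cals_0$, whereas you take $\bx$ in the relative interior and derive strictness from maximality plus convexity, with local finiteness giving the dimension count --- the obstacle you flag at the end is real and your sketch of its resolution is correct). Where you genuinely diverge is the key identity $P\cap\mathrm{Hull}(\cals)=\mathrm{Hull}(P\cap\cals)$: the paper proves this as Lemma~\ref{rotate space}, by rotating $P$ about a support plane $Q$ for $\mathrm{Hull}(\cals_0)$ inside $P$, using compactness of $P\cap U_{n+1}$, Fact~\ref{small neighborhood} and local finiteness; you replace this with a Krein--Milman argument on the compact convex set $P\cap\mathrm{Hull}(\cals)$. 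The paper's rotation lemma also delivers the extra statement that every face of $F$ is a face of $\mathrm{Hull}(\cals)$, which gets reused later (Lemma~\ref{contravariance}, Proposition~\ref{horospherical cells}); your route is more self-contained but proves only the identity itself. Two small omissions: you should note that $\cals_0$ is finite (it lies on a compact metric sphere, so local finiteness applies), which is needed both for Fact~\ref{equations n stuff} and for the unnumbered lemma you invoke at the end.

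The one step you cannot cite as standard is ``extreme points of $\mathrm{Hull}(\cals)$ lie in the closure of $\cals$.'' That is Milman's theorem, whose standard form assumes the generating set is compact; here $\cals$ and $\mathrm{Hull}(\cals)$ are typically unbounded, and non-compactness is exactly where this paper's difficulties live, so this needs a proof rather than a citation. The statement is true in this setting, and a short argument runs as follows: write an extreme point $p$ as a limit of Carath\'eodory combinations $\sum_i\lambda_{k,i}\bs_{k,i}$ with $\bs_{k,i}\in\cals$. Every point of $\cals$ has first coordinate at least $1$, so each weighted term $\lambda_{k,i}\bs_{k,i}$ has bounded first coordinate; hence for indices whose points escape to infinity, $\lambda_{k,i}\to 0$ while $\lambda_{k,i}\bs_{k,i}$ subconverges to a vector $w_i$ in the closed positive light cone, and each such direction is a recession direction of $\mathrm{Hull}(\cals)$ (for $a\in\cals$ with $|a|\to\infty$ and $a/|a|\to u$, and any $c\in\mathrm{Hull}(\cals)$, the points $(1-t/|a|)c+(t/|a|)a$ converge to $c+tu$). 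If the total escaping contribution $w=\sum w_i$ were nonzero, then $p=q+w$ with $q\in\mathrm{Hull}(\cals)$ and $q+2w\in\mathrm{Hull}(\cals)$, so $p$ would be a midpoint of two distinct points, contradicting extremality; so $w=\bo$, $p$ is a convex combination of limits of the bounded terms, and extremality plus closedness of $\cals$ (local finiteness) forces $p\in\cals$. With that supplied --- or with the Milman step simply replaced by an appeal to Lemma~\ref{rotate space} --- your proof is complete and correct.
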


\begin{corollary}\label{black sox}  For a Voronoi $k$-cell $V$ of a locally finite set $\cals\subset\mathbb{H}^n$, if $\cals_0\subset\cals$ is maximal such that $V = \bigcap_{\bs\in\cals_0} V_{\bs}$ then the closed convex hull $C_V$ of $\cals_0$ in $\mathbb{H}^n$, the \mbox{\rm geometric dual} to $V$, is a Delaunay cell and an $(n-k)$-dimensional, compact, convex polyhedron.\end{corollary}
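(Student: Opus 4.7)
The plan is to deduce Corollary \ref{black sox} directly from Proposition \ref{geometric dual} together with Lemmas \ref{convex component} and \ref{poly to poly}. The proposition identifies $F_V \doteq \mathrm{Hull}(\cals_0)$ as an $(n-k)$-dimensional face of $\mathrm{Hull}(\cals)$ of the form $F_V = P\cap\mathrm{Hull}(\cals)$, for a support plane $P$ of $\mathrm{Hull}(\cals)$ that separates $\cals$ from $\bo$ and is parallel to a space-like vector subspace of $\mathbb{R}^{n+1}$. What remains is to see that $C_V = r_n(F_V)$ is a Delaunay cell and to transfer the polyhedral structure from $F_V$ down to $\mathbb{H}^n$.

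First I would verify that $F_V$ is a \emph{visible} face of $\mathrm{Hull}(\cals)$, so that $C_V$ qualifies as a Delaunay cell by Definition \ref{faces n stuff}. For each $\bx\in F_V$ the ray from $\bo$ through $\bx$ meets $P$ only at $\bx$ (since $\bo\notin P$), while $\mathrm{Hull}(\cals)$ lies entirely on the side of $P$ opposite $\bo$; hence the open segment from $\bo$ to $\bx$ misses $\mathrm{Hull}(\cals)$ and $\bx$ is visible. Next, since $P$ is parallel to a space-like subspace, Lemma \ref{convex component} provides a closed half-space $B$ bounded by $P$ with $\bo\in B$ and $B\cap U_{n+1}$ compact. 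The inclusions $F_V\subset P=\partial B\subset B$ and $F_V\subset \mathrm{Hull}(\cals)\subset U_{n+1}$ together force $F_V\subset B\cap U_{n+1}$, so $F_V$ is compact and $\cals_0\subset F_V\cap\cals$ is finite by local finiteness of $\cals$.

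Finally I would apply Lemma \ref{poly to poly} to the finite set $\cals_0$. Its affine span has dimension $\dim F_V = n-k$ and lies in $P$, which excludes $\bo$, so the hypotheses of the lemma are met. It then identifies $r_n(F_V)=r_n(\mathrm{Hull}(\cals_0))$ with the closed convex hull of $\cals_0$ in $\mathbb{H}^n$, which by definition is $C_V$, and confirms that this is a compact $(n-k)$-dimensional convex polyhedron in $\mathbb{H}^n$. Combined with the visibility of $F_V$, this delivers all four assertions of the corollary. The reasoning is essentially bookkeeping once Proposition \ref{geometric dual} is in hand; the only nontrivial inputs are that the separation of $\cals$ from $\bo$ by $P$ gives visibility, and that the space-like hypothesis on $P$ yields compactness via Lemma \ref{convex component}.
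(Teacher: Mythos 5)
Your argument is correct and follows essentially the same route as the paper, which derives the corollary directly from Proposition \ref{geometric dual} and Lemma \ref{poly to poly} with $C_V = r_n(F_V)$; your only (harmless) variation is obtaining finiteness of $\cals_0$ from compactness of $F_V$ via the space-like support plane and Lemma \ref{convex component}, where the paper instead notes that the points of $\cals_0$ lie on a compact metric sphere. The visibility of $F_V$ that you check explicitly is already recorded in the proof of Proposition \ref{geometric dual}, so nothing further is needed.
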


If $\bx$ is in a Voronoi cell $V= \bigcap_{\bs\in\cals_0} V_{\bs}$ then by definition all $\bs\in\cals_0$ are equidistant from $V$.  Since metric spheres are compact (Lemma \ref{convex component}), any such collection is finite and Corollary \ref{black sox} follows directly from Proposition \ref{geometric dual} and Lemma \ref{poly to poly}, with $C_V = r_n(F_V)$.

One step in the proof of Proposition \ref{geometric dual} uses an argument borrowed from \cite{EpstePen}.

\begin{lemma}\label{rotate space}  For locally finite $\cals\subset\mathbb{H}^n$, suppose $P$ is a support plane for $\mathrm{Hull}(\cals)$, parallel to a space-like subspace of $\mathbb{R}^{n+1}$, that separates $\cals$ from $\bo$.  Then $F = P\cap\mathrm{Hull}(\cals)$ is a compact polyhedron equal to $\mathrm{Hull}(\cals_0)$, where $\cals_0=P\cap\cals$, and every face of $F$ is a face of $\mathrm{Hull}(\cals)$.\end{lemma}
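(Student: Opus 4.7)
The plan proceeds through three claims in order. First, $F$ is compact: writing $V$ for the space-like subspace parallel to $P$, Lemma \ref{convex component} gives a half-space $B_{\bo}$ bounded by $P$ with $\bo \in B_{\bo}$ and $B_{\bo} \cap U_{n+1}$ compact. Although $B_{\bo}$ sits on the opposite side of $P$ from $\mathrm{Hull}(\cals)$, one has $F \subset P \subset B_{\bo}$ and $F \subset \mathrm{Hull}(\cals) \subset U_{n+1}$, so $F \subset B_{\bo} \cap U_{n+1}$ is compact. In particular $\cals_0 = P \cap \cals \subset F$ is finite and $\mathrm{Hull}(\cals_0)$ is a compact convex polyhedron in $P$.

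The main work, and in my view the main obstacle, is proving $F = \mathrm{Hull}(\cals_0)$; the inclusion $\mathrm{Hull}(\cals_0) \subset F$ is immediate. Fix an outward Euclidean normal $\eta$ to the half-space of $P$ containing $\cals$, and set $c = \eta \cdot \bx_0$ for $\bx_0 \in P$. Note $c < 0$: otherwise $\bo \in P$, forcing $P = V$, but $V \cap U_{n+1} = \emptyset$ since $V$ is space-like. Applying Lemma \ref{convex component} to the parallel plane $\{\bx : \eta \cdot \bx = c - \tau\}$ for small $\tau > 0$ shows that $\cals^{\tau} := \{\bs \in \cals : \eta \cdot \bs \geq c - \tau\}$ is finite, and shrinking $\tau$ below $\min\{c - \eta \cdot \bs : \bs \in \cals^{\tau} \setminus \cals_0\}$ yields $\cals^{\tau} = \cals_0$. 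For $\bx \in F$ written as $\lim \bx_k$ with $\bx_k \in \mathrm{conv}(\cals)$, decompose $\bx_k = \mu_k \bz_k + \nu_k \bw_k$ into convex combinations of $\cals_0$ and $\cals \setminus \cals_0$ terms; the estimate $\eta \cdot \bw_k \leq c - \tau$ together with $\eta \cdot \bx_k \to c$ forces $\nu_k \to 0$, and a subsequence gives $\bz_k \to \bz \in \mathrm{Hull}(\cals_0)$ (compact) and $\nu_k \bw_k \to \bx - \bz$. The difficulty is that $\bw_k$ may be unbounded, and here the space-like hypothesis is essential: if $\|\bw_k\|_E \to \infty$, then $\bw_k / \|\bw_k\|_E \to \bu$, a unit vector in the recession cone of $U_{n+1}$, namely the closed forward light cone, with $\eta \cdot \bu \leq 0$. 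Setting $\alpha = \lim \nu_k \|\bw_k\|_E$ (finite since $\nu_k \bw_k$ is bounded), one has $\bx - \bz = \alpha \bu$; if $\alpha > 0$, then $\eta \cdot (\bx - \bz) \in [0, \tau]$ and $\eta \cdot \bu \leq 0$ force $\eta \cdot \bu = 0$, placing $\bu \in \bar{\eta}^{\perp} = V$ by Lemma \ref{normal vectors} applied to the time-like vector $\bar{\eta}$. As no non-zero vector lies in both the space-like $V$ and the closed forward light cone, $\alpha = 0$ and $\bx = \bz \in \mathrm{Hull}(\cals_0)$.

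Finally, that each face $F_0$ of $F$ is a face of $\mathrm{Hull}(\cals)$ follows by the tilt strategy from Lemma \ref{good visibility}. Fix a codimension-one support plane $Q \subset P$ for $F$ with $F_0 = Q \cap F$ and a Euclidean normal $\delta$ to $Q$ in $P$ pointing outward from $F$; I claim that for small $t > 0$ the plane $P_t$ through $F_0$ with Euclidean normal $\eta_t = \eta + t \delta$ meets $\mathrm{Hull}(\cals)$ exactly in $F_0$. For $\bs \in \cals_0$ the verification is as in Lemma \ref{good visibility}. For $\bs \in \cals \setminus \cals_0$, the metric-sphere description of $P$ via Lemma \ref{classification of spheres} gives a center $\bu_0 \in \mathbb{H}^n$ and radius $R$ with $c - \eta \cdot \bs$ proportional to $\cosh d_H(\bs, \bu_0) - \cosh R$, while $\|\bs\|_E \leq \sqrt{2}\, s_0$ together with the triangle inequality bounds $|\delta \cdot (\bs - \bx_0)|$ by a constant times $\cosh d_H(\bs, \bu_0)$. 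Hence $(\delta \cdot (\bs - \bx_0))^+ / (c - \eta \cdot \bs)$ has a finite limit as $d_H(\bs, \bu_0) \to \infty$, and local finiteness bounds it on any compact annulus strictly outside the ball of radius $R$ about $\bu_0$; choosing $t$ below the reciprocal of this overall supremum completes the verification.
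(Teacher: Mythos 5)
Your argument is correct, but your route to the central equality $F=\mathrm{Hull}(\cals_0)$ is genuinely different from the paper's. The paper argues by contradiction with a rotation of support planes: given $\bx\in F-\mathrm{Hull}(\cals_0)$ it separates $\bx$ from $\mathrm{Hull}(\cals_0)$ by a hyperplane $Q\subset P$ and tilts $P$ about $Q$; the tilted plane still has $\cals$ on one side because $\cals-\cals_0$ lies at a definite Euclidean distance from the compact set $P\cap U_{n+1}$ (local finiteness plus Fact \ref{small neighborhood}). You instead work directly with limits of convex combinations, splitting off the mass carried by $\cals-\cals_0$, and rule out escape to infinity by noting that a direction of escape lies in the recession cone of $U_{n+1}$, which meets the space-like $V$ only in $\bo$. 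Your version is more self-contained (no analogue of Fact \ref{small neighborhood} is needed) and isolates exactly where the space-like hypothesis enters; the paper's rotation argument has the advantage that the very same tilted plane also proves the final assertion that faces of $F$ are faces of $\mathrm{Hull}(\cals)$, and it is the template reused later in the light-like setting (Lemma \ref{rotate plane}). For that final assertion you also tilt, but you control the tilt by an explicit growth comparison ($c-\eta\cdot\bs$ comparable to $\cosh d_H(\bs,\bu_0)$, while $|\delta\cdot(\bs-\bx_0)|$ is at most a multiple of $\cosh d_H(\bs,\bu_0)$), where the paper uses the qualitative $\epsilon$-neighborhood argument; both work.

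Two small patches. First, the recession cone of $U_{n+1}$ is the solid future causal cone $\{\bu\,|\,\bu\circ\bu\le0,\ u_0\ge0\}$, not only the light cone; this changes nothing, since its intersection with the space-like $V$ is still trivial. Second, your tilt paragraph shows that every $\bs\in\cals$ satisfies $\eta_t\cdot(\bs-\bx_0)\le0$, with equality exactly on $Q\cap\cals_0$; this makes $P_t$ a support plane containing $F_0$, but the exact equality $P_t\cap\mathrm{Hull}(\cals)=F_0$ (rather than mere containment) deserves a sentence, since limit points of convex combinations could a priori land on $P_t$. The cleanest fix is already in your toolkit: for small $t$ the plane $P_t$ is parallel to a space-like subspace and separates $\cals$ from $\bo$, so your first two claims applied to $P_t$ give $P_t\cap\mathrm{Hull}(\cals)=\mathrm{Hull}(P_t\cap\cals)=\mathrm{Hull}(Q\cap\cals_0)=F_0$. (Also take the base point $\bx_0$ of the tilted plane in $Q$, and read ``has a finite limit'' as ``is bounded''.)
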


\begin{proof}  $P\cap U_{n+1}$ is compact by Lemma \ref{convex component}, so therefore $F$ is as well.  Since $F$ contains $\cals_0$ it contains $\mathrm{Hull}(\cals_0)$; if properly then for any $\bx\in F - \mathrm{Hull}(\cals_0)$ there is a support plane $Q$ for $\mathrm{Hull}(\cals_0)$ in $P$ that separates it from $\bx$.  Rotating $P$ around $Q$ by a small amount produces a support plane $P'$ for $\mathrm{Hull}(\cals)$ that separates it from $\bx$ and $\bo$, contradicting that $\bx\in F$.

To flesh this out a bit, if $\eta$ is a Euclidean normal to $P$ pointing outward from $\mathrm{Hull}(\cals)$ and $\delta$ is a Euclidean normal to $Q$ in $P$ pointing outward from $\mathrm{Hull}(\cals_0)$, then $\eta_t = (1-t)\eta+t\delta$ is normal to a plane $P_t$ with $P_t\cap P = Q$.  Let $\bx_0\in Q$.  We have $\eta\cdot (\bo-\bx_0) > 0$, since $P$ separates $\cals$ from $\bo$ but does not contain $\bo$, so $\eta_t\cdot (\bo-\bx_0)>0$ for all small enough $t$.  One checks directly that for all $t>0$, $\eta_t\cdot (\bx-\bx_0) > 0$, $\eta_t\cdot (\bs-\bx_0) = 0$ for all $\bs\in\cals_0\cap Q$ and $\eta\cdot(\bs-\bx_0) < 0$ for all $\bs\in\cals_0-Q$.  The last equation holds for all $\bs\in\cals-Q$ if $t$ is small enough.  This follows from the fact below.

\begin{fact}\label{small neighborhood}  For $\epsilon>0$ and an affine plane $P$ with $P\cap U_{n+1}$ compact, if $P_t\to P$ (in the sense of Lemma \ref{plane convergence}) then $P_t\cap U_{n+1}$ lies in the $\epsilon$-neighborhood of $P\cap U_{n+1}$ for all small enough $t$.\end{fact}

The proof is a short exercise that also uses convexity of $U_{n+1}$.  Since $\cals$ is locally finite there exists $\epsilon>0$ such that $\cals-\cals_0$ is outside the $\epsilon$-neighborhood of $P\cap U_{n+1}$.  Fact \ref{small neighborhood} therefore implies that for small enough $t$ it lies in the same half-space determined by $P_t$ as by $P$.

We note also that the face $Q\cap F = P_t\cap\mathrm{Hull}(\cals)$ is also a face of $\mathrm{Hull}(\cals)$.  Since $Q$ is an arbitrary support plane for $F$ in $P$, the lemma's final assertion holds.\end{proof}

\begin{proof}[Proof of Proposition \ref{geometric dual}]  Note that $\cals_0=\{\bs_0,\hdots,\bs_l\}$ is finite since its members lie in a metric sphere.  Since $V$ is $k$-dimensional then applying Fact \ref{equations n stuff} we find that $\cals_0$ spans an $(n+1-k)$-dimensional subspace $W_0$ of $\mathbb{R}^{n+1}$.  For fixed $\bv\in V$, if $c_0 = \bv\circ\bs_0$ then $\bv\circ \bs_i = c_0$ for all $i>0$ by the definition of the hyperbolic metric below (\ref{geodesic ray}).  For $\bs\in\cals-\cals_0$ we have:
$$ -\bv\circ\bs = \cosh d_H(\bv,\bs) > \cosh d_H(\bv,\bs_0) = -c_0 $$
Thus $\cals$ is contained in the half-space $B_0 = \{\bx\circ\bv \leq c_0\}$, and its bounding hyperplane $P$ is a support plane for $\mathrm{Hull}(\cals)$ containing $\cals_0$.  The face $F = P\cap\mathrm{Hull}(\cals)$ contains $\cals_0$ and is contained in the $(n-k)$-dimensional affine space $P\cap W_0$, so has dimension $n-k$.  Since $\bo$ is in the opposite half-space $B$ to $B_0$, $F$ is visible.

The translate $P - \bs_0 = \{\bx\circ\bv = 0\}$ of $P$ is a space-like vector subspace, since its normal $\bv$ is time-like.  Therefore by Lemma \ref{rotate space}, $F=\mathrm{Hull}(\cals_0)$ is compact.

Now suppose $F = P\cap\mathrm{Hull}(\cals)$ for a support plane $P$ parallel to a space-like subspace.  By Lemma \ref{rotate space} again $F = \mathrm{Hull}(\cals_0)$ is compact, where $\cals_0 = P\cap\cals$.  Reversing the argument above shows that the normal $\bv\in\mathbb{H}^n$ to $V$ is equidistant from the points of $\cals_0$ and closer to them than any others; hence $V = \bigcap_{\bs\in\cals_0} V_{\bs}$ is a non-empty (since it contains $\bv$) face of the Voronoi tessellation.\end{proof}

\begin{lemma}\label{contravariance}  Geometric duality is contravariant with respect to inclusion of faces: for locally finite $\cals\subset\mathbb{H}^n$, if $V'$ is a face of a Voronoi cell $V$ then $C_V$ is a face of the geometric dual $C_{V'}$ to $V'$; and every face of $C_V$ is of the form $C_{V''}$ for some Voronoi cell $V''$ containing $V$.\end{lemma}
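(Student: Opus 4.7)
The plan is to translate the face relation in the Voronoi tessellation into an inclusion $\cals_0 \subset \cals_0'$ of maximal defining sets, then explicitly produce a totally geodesic support hyperplane in $\mathbb{H}^n$ realizing $C_V$ as a face of $C_{V'}$, and finally invert the construction for the converse.

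First I would observe that if $V'$ is a face of $V$, then $V_{\bs} \supset V \supset V'$ for each $\bs \in \cals_0$, so $\bs \in \cals_0'$ by maximality; hence $\cals_0 \subset \cals_0'$ and $C_V \subset C_{V'}$. The case $V' = V$ is trivial, so I assume $V' \subsetneq V$ and choose $\bv \in V$ distinct from a point $\bv' \in V'$. Proposition \ref{geometric dual} provides support planes $P = \{\bx \circ \bv = c_0\}$ and $P' = \{\bx \circ \bv' = c_0'\}$ (with $c_0 = -\cosh d_H(\bv,\bs) < 0$ for $\bs \in \cals_0$ and $c_0' = -\cosh d_H(\bv',\bs) < 0$ for $\bs \in \cals_0'$), together with the identifications $F_V = \mathrm{Hull}(\cals_0)$ and $F_{V'} = \mathrm{Hull}(\cals_0')$ in $\mathbb{R}^{n+1}$.

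The key step is to exhibit a hyperbolic support hyperplane via the vector $\bn = c_0' \bv - c_0 \bv'$. Setting $c = d_H(\bv,\bv')$, a direct Lorentzian computation yields
\[
\bn \circ \bn = 2 c_0 c_0' \cosh c - c_0^2 - (c_0')^2 = -(c_0 - c_0' e^c)(c_0 - c_0' e^{-c}),
\]
which is positive because the triangle inequality $|d_H(\bv,\bs) - d_H(\bv',\bs)| \le c$ forces the ratio $\cosh d_H(\bv,\bs)/\cosh d_H(\bv',\bs)$ into the open interval $(e^{-c}, e^c)$, making $\bn$ space-like. For $\bs \in \cals_0 \subset P \cap P'$ one has $\bs \circ \bn = 0$, while for $\bs \in \cals_0' \setminus \cals_0$ the strict inequality $\bs \circ \bv < c_0$ (since $\bs \in \cals \setminus \cals_0$) together with $\bs \circ \bv' = c_0'$ yields $\bs \circ \bn = c_0'((\bs \circ \bv) - c_0) > 0$. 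Since $C_{V'} = r_n(\mathrm{Hull}(\cals_0'))$, every $\bx \in C_{V'}$ pairs non-negatively with $\bn$, with equality exactly when the $\cals_0' \setminus \cals_0$ coefficients in a convex-combination representation of its preimage vanish, i.e.\ exactly when $\bx \in C_V$. The totally geodesic hyperplane $H = \bn^\perp \cap \mathbb{H}^n$ therefore supports $C_{V'}$ and meets it in $C_V$, exhibiting $C_V$ as a face.

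For the converse, given a face $D$ of $C_V$, set $F_D = r_n^{-1}(D) \subset F_V$. Because $r_n|_{F_V}$ is a bijection carrying Euclidean line segments to hyperbolic geodesics (cf.\ the proof of Lemma \ref{convex hull}), the extremal characterization of faces transfers and $F_D$ is a Euclidean face of $F_V$. The final assertion of Lemma \ref{rotate space}, applied to $F_V$, promotes $F_D$ to a face of $\mathrm{Hull}(\cals)$ cut out by a support plane $P_D$ still parallel to a space-like subspace (a small tilt of $P$ about a Euclidean support hyperplane of $F_V$ containing $F_D$; space-likeness is an open condition on the Lorentz normal). Proposition \ref{geometric dual} then writes $F_D = P_D \cap \mathrm{Hull}(\cals) = \mathrm{Hull}(\cals_0'')$ with $\cals_0'' = P_D \cap \cals$, and makes $V'' = \bigcap_{\bs \in \cals_0''} V_{\bs}$ a Voronoi cell. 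Since $\cals_0'' \subset F_D \subset F_V \subset P$, one has $\cals_0'' \subset P \cap \cals = \cals_0$, whence $V'' \supset V$ and $C_{V''} = r_n(F_D) = D$. The main obstacle is the first part's support-plane construction: $P \cap \mathbb{H}^n$ is a metric sphere rather than a totally geodesic subspace, so producing a genuine hyperbolic support hyperplane requires the specific Lorentzian combination $\bn$ above, with its space-likeness pinned down by the triangle inequality.
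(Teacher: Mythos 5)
Your proof is correct, and its first half takes a genuinely different route from the paper's. For the assertion that $C_V$ is a face of $C_{V'}$, the paper simply reuses the support plane $P=\{\bx\circ\bv=c_0\}$ from the proof of Proposition \ref{geometric dual}: since $P$ supports all of $\mathrm{Hull}(\cals)$ and $P\cap\mathrm{Hull}(\cals)=\mathrm{Hull}(\cals_0)$, it a fortiori supports $\mathrm{Hull}(\cals_0')$ with $P\cap\mathrm{Hull}(\cals_0')=\mathrm{Hull}(\cals_0)$, and then projects via the upstairs--downstairs face correspondence of Proposition \ref{the finite case}. You instead build the supporting object intrinsically in $\mathbb{H}^n$: the Lorentz vector $\bn=c_0'\bv-c_0\bv'$, whose space-likeness you pin down by the $\cosh$/triangle-inequality estimate (which is correct, including the strictness coming from $\sinh<\cosh$), so that $\bn^\perp\cap\mathbb{H}^n$ is a genuine totally geodesic hyperplane supporting $C_{V'}$ and meeting it exactly in $C_V$. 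The paper's route is shorter and computation-free; yours buys an explicit hyperbolic support hyperplane and avoids re-invoking the projection argument for this step. Your converse direction is essentially the paper's: pull the face $D$ back to a face of $F_V$, promote it to a face of $\mathrm{Hull}(\cals)$ cut by a space-like-parallel support plane (the paper cites Lemma \ref{rotate space} implicitly; you make the tilt and the openness of space-likeness explicit --- though note it is the Lorentz \emph{normal} that stays time-like), and apply Proposition \ref{geometric dual}. One small refinement you should make, though it only matches the looseness already present in the paper's proof of Proposition \ref{geometric dual}: the strict inequality $\bs\circ\bv<c_0$ for $\bs\in\cals\setminus\cals_0$ can fail when $\bv$ lies in a proper face of $V$ (where $\bv$ may be equidistant from additional sites), so you should take $\bv$ in the relative interior of $V$; since the proper face $V'$ lies in the relative boundary, such a $\bv$ is automatically distinct from $\bv'$, and the rest of your argument goes through unchanged.
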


\begin{proof}  Let $C_V$ have vertex set $\cals_0 = \{\bs_0,\hdots,\bs_l\}$, so $V = \bigcap_{i=0}^l V_{\bs_i}$.  A face $V'$ of $V = \bigcap_{i=0}^l V_{\bs_i}$ is of the form $V\cap\bigcap_{i=l+1}^k V_{\bs_i}$ for an additional collection $\{\bs_{l+1},\hdots,\bs_k\}\subset \cals$.  Let $\cals_0' = \{\bs_0,\hdots,\bs_k\}$.  Then $C_{V'} = r_n(\mathrm{Hull}(\cals_0'))$ by Proposition \ref{geometric dual}, and the hyperplane $P$ from the proof of the proposition is in particular a supporting hyperplane for $\mathrm{Hull}(\cals_0')$ with $P\cap\mathrm{Hull}(\cals_0') = P\cap\mathrm{Hull}(\cals) = \mathrm{Hull}(\cals_0)$.  Hence $C_V$ is a face of $C_{V'}$.

For a face $C_0$ of $C_V$, an argument from the proof of Proposition \ref{the finite case} produces a face $F_0$ of $F$ such that $r_n(F_0) = C_0$, where $F$ is the visible face of $\mathrm{Hull}(\cals)$ mapping to $C_V$.  Thus $F_0 = \mathrm{Hull}(\cals_0')$ for some $\cals_0'\subset\cals_0$, so $C_0$ is the geometric dual to the Voronoi cell $V'' = \bigcap_{\bs\in\cals_0'} V_{\bs}$ containing $V$.\end{proof}

Proposition \ref{geometric dual} implies for a locally finite set $\cals\subset\mathbb{H}^n$ that the collection of Delaunay cells that are geometrically dual to Voronoi cells has the following description:
$$ \{r_n(F)\,|\,F = P\cap\mathrm{Hull}(\cals)\ \mbox{for a support plane $P$ parallel to a space-like subspace}\} $$ 
We prove below that it is a polyhedral complex in the standard sense of \cite[Dfn.~2.1.5]{DeLoRS}, and characterize it by an empty metric circumspheres condition.

\begin{theorem}\label{the real geometric dual} \TheRGD \end{theorem}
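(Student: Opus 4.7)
The plan is to verify three items: the polyhedral complex axioms, and the two directions of the metric-sphere characterization. Polyhedrality and compactness of each geometric dual cell follow from Corollary \ref{black sox}, and Lemma \ref{contravariance} supplies the face-closure axiom.

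The only nontrivial polyhedral axiom is that two intersecting cells $C_V$ and $C_{V'}$ meet in a face of each. I would write $C_V = r_n(F)$ and $C_{V'} = r_n(F')$ with $F = P\cap\mathrm{Hull}(\cals)$ and $F' = P'\cap\mathrm{Hull}(\cals)$ supplied by Proposition \ref{geometric dual}, cut out by support planes $P,P'$ parallel to space-like subspaces and separating $\cals$ from $\bo$. Fix $\bx_0\in P\cap P'$ (non-empty since $F\cap F'$ is) and Euclidean outward normals $\eta,\eta'$ to $P,P'$, and for $t\in(0,1)$ let $P_t$ be the affine $n$-plane through $\bx_0$ normal to $\eta_t = (1-t)\eta + t\eta'$. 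Since $\eta\cdot(\bx-\bx_0)$ and $\eta'\cdot(\bx-\bx_0)$ are both non-positive on $\mathrm{Hull}(\cals)$, a direct check shows $P_t$ is a support plane with $P_t\cap\mathrm{Hull}(\cals) = F\cap F'$; the same sign check at $\bo$ shows $P_t$ separates $\cals$ from $\bo$. Combining Remark \ref{Euclid vs Lorentz} with inequality (\ref{x circ y}) shows that a convex combination of future-pointing time-like vectors is again future-pointing time-like, so $P_t$ too is parallel to a space-like subspace. Proposition \ref{geometric dual} then recognizes $F\cap F' = \mathrm{Hull}(\cals_0\cap\cals_0')$ as the face corresponding to the Voronoi cell $V_t = \bigcap_{\bs\in\cals_0\cap\cals_0'}V_{\bs}$, where $\cals_0 = P\cap\cals$ and $\cals_0' = P'\cap\cals$. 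Since $V$ and $V'$ are both faces of $V_t$, Lemma \ref{contravariance} identifies $C_{V_t}$ as a common face of $C_V$ and $C_{V'}$; and injectivity of $r_n$ on the visible set gives $C_V\cap C_{V'} = r_n(F\cap F') = C_{V_t}$.

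For the metric-sphere characterization I would trace the correspondence through Proposition \ref{geometric dual}. Given a geometric dual cell $C_V = r_n(F)$ as above, Lemma \ref{classification of spheres} identifies $S = P\cap\mathbb{H}^n$ as a metric sphere, Lemma \ref{convex component} identifies the side of $P$ containing $\bo$ as the bounded metric ball $B$, and the separation of $\cals$ from $\bo$ forces the empty-ball condition $B\cap\cals = S\cap\cals$; Lemma \ref{poly to poly} then identifies $C_V$ with the closed convex hull of $S\cap\cals$ in $\mathbb{H}^n$. Conversely, any metric sphere $S$ satisfying $B\cap\cals = S\cap\cals$ is the intersection with $\mathbb{H}^n$ of an affine $n$-plane $P$ parallel to a space-like subspace (Lemma \ref{classification of spheres}), and the empty-ball hypothesis translates to $P$ being a support plane for $\mathrm{Hull}(\cals)$ that separates $\cals$ from $\bo$; Proposition \ref{geometric dual} then produces the corresponding geometric dual cell.

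I expect the main obstacle to be the intersection-in-a-face step: the interpolating plane $P_t$ must simultaneously inherit the Lorentzian property of being parallel to a space-like subspace and the Euclidean separation of $\cals$ from $\bo$, and the resulting face of $\mathrm{Hull}(\cals)$ must then be identified, via Proposition \ref{geometric dual} and Lemma \ref{contravariance}, as the geometric dual of a Voronoi cell having $V$ and $V'$ as faces.
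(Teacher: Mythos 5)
Your proof is correct, and on the one genuinely delicate point it takes a different route from the paper. The paper disposes of the sphere characterization, polyhedrality, and face-closure exactly as you do (via Proposition \ref{geometric dual}, Corollary \ref{black sox}, and Lemma \ref{contravariance}), and then handles ``intersecting cells meet in a face of each'' by rerunning the corresponding argument of Proposition \ref{the finite case}: it takes $Q = P\cap P'$, observes that $Q$ is a support plane for $F$ inside $P$ cutting out $F\cap F'$, and that the necessarily time-like span of $Q$ meets $\mathbb{H}^n$ in a hyperbolic support plane for $C_V$, with Lemma \ref{good visibility} replaced by Proposition \ref{geometric dual} and the remark that a plane parallel to a space-like subspace meeting $U_{n+1}$ cannot contain $\bo$. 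You instead interpolate the Euclidean normals, in the spirit of Lemmas \ref{good visibility} and \ref{rotate space}, to manufacture a single support plane $P_t$ with $P_t\cap\mathrm{Hull}(\cals)=F\cap F'$, still space-like-parallel and still separating $\cals$ from $\bo$, and then read off from Proposition \ref{geometric dual} and Lemma \ref{contravariance} that $C_V\cap C_{V'}$ is itself a geometric dual cell and a face of each. This buys something the paper gets only indirectly (the intersection is exhibited as a dual cell in one stroke, with its circumsphere $P_t\cap\mathbb{H}^n$), at the cost of the Lorentzian bookkeeping you flag; the paper's route avoids that bookkeeping entirely because the time-like span of $P\cap P'$ needs no positivity discussion.

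Two spots in your version deserve an explicit sentence. First, the convex-combination step needs the Lorentzified normals $\bar\eta,\bar\eta'$ to be time-like \emph{with the same time orientation}; this is immediate once you note (Remark \ref{Euclid vs Lorentz} together with Lemma \ref{convex component}) that each is a negative multiple of the sphere center $\bu\in\mathbb{H}^n$, or equivalently that both pair positively with any point of $F\cap F'\subset U_{n+1}$, after which (\ref{x circ y}) gives time-likeness of $\eta_t$'s Lorentz counterpart and hence space-likeness of the parallel subspace of $P_t$. Second, the appeal to Lemma \ref{contravariance} needs $V$ and $V'$ to be faces of $V_t$; this follows from the unnumbered Voronoi lemma ($V$ is a face of $V_{\bs}$ for each $\bs\in\cals_0\cap\cals_0'$) together with the elementary fact that a face of a convex set remains a face of any intermediate convex subset containing it, but as written you assert it without argument. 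Neither point is a gap in the idea, only in the write-up.
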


\begin{remark}\label{bad dual}  For $\cals$ as in the main example of Section \ref{bad example}, the geometric dual to the Voronoi tessellation contains every face but $C_{\infty}$, since these have metric circumspheres.  It is thus not locally finite, at $p_0$ in particular, and its underlying space is not closed nor convex.\end{remark}

\begin{proof}[Proof of Theorem \ref{the real geometric dual}]  The description of the geometric dual complex above follows from Proposition \ref{geometric dual}.  By Corollary \ref{black sox}, every geometric dual cell is a compact, convex polyhedron.  For such a cell $C$ write $C=r_n(F)$ for the face $F=P\cap\mathrm{Hull}(\cals)$ given by Proposition \ref{geometric dual}, where $P$ is a support plane parallel to a space-like subspace hence with $S=P\cap\mathbb{H}^n$ a metric sphere (Lemma \ref{classification of spheres}).  Corollary \ref{black sox} implies $C$ is the closed convex hull of $S\cap\cals$.

The half-space $B$ bounded by $P$ and containing $\bo$ intersects $\mathbb{H}^n$ in a ball bounded by $S$ (recall Lemma \ref{convex component}), which contains $r_n(F)$ by Lemma \ref{ratatouille}, and $(B\cap\mathbb{H}^n)\cap\cals = S\cap\cals$.  On the other hand, a metric sphere $S$ that intersects $\cals$ and bounds an empty ball is of the form $P\cap\mathbb{H}^n$ for a support plane $P$ bounding a half-space $B$ containing $\bo$ with $B\cap\cals=P\cap\cals$.  Proposition \ref{geometric dual} therefore implies that $r_n(F)$ is a geometric dual cell, where $F = P\cap\mathrm{Hull}(\cals)$.

That a face of a geometric dual cell is itself a geometric dual cell is one of the assertions of Lemma \ref{contravariance}.  To prove that geometric dual cells intersect in a face of each we follow the proof of the corresponding assertion of Proposition \ref{the finite case}.  It applies without only one alteration: replace the appeal to Lemma \ref{good visibility} with one to Proposition \ref{geometric dual}, and note that no affine plane intersecting $U_{n+1}$ and parallel to a space-like subspace contains $\bo$. \end{proof}

The Voronoi tessellation's geometric dual may be a proper subcomplex of the Delaunay tessellation   even in the simplest possible case of three points in $\mathbb{H}^2$.

\begin{example}\label{no dual} \textbf{The simplest case.}  Figure \ref{three point Vor} illustrates (in the upper half-plane model) the Voronoi and Delaunay tessellations determined by three points in $\mathbb{H}^2$, configured as in Figure \ref{three pt circ}.  In each case the Delaunay triangle spanned by $x$, $y$, and $z$ is shaded, with its edges dashed.  The edges of the Voronoi tessellation are in bold.


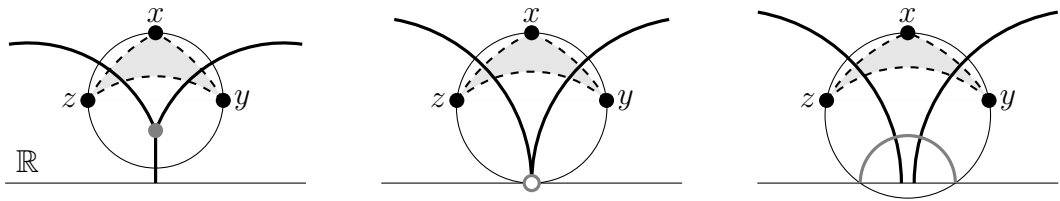
\begin{figure}
\begin{tikzpicture}

\begin{scope}[xshift=-5cm]
\draw (-2,0) -- (2,0);
\node [above right] at (-2,0) {$\mathbb{R}$};

\fill [opacity=0.1] (0.9,1.1) -- (0,2) -- (-0.9,1.1);
\fill [opacity=0.1] (0.9,1.1) arc (28.811:61.189:2.282);
\draw [thick, dashed] (0.9,1.1) arc (28.811:61.189:2.282);
\fill [opacity=0.1] (-0.9,1.1) arc (151.189:118.811:2.282);
\draw [thick, dashed] (-0.9,1.1) arc (151.189:118.811:2.282);
\fill [color=white] (0.9,1.1) arc (50.711:129.289:1.421);
\draw [thick, dashed] (0.9,1.1) arc (50.711:129.289:1.421);

\draw (0,1.1) circle [radius=0.9];

\draw [very thick] (0,0) -- (0,0.7);
\draw [very thick] (0,0.7) arc (158.94:82:1.818);
\draw [very thick] (0,0.7) arc (21.06:98:1.818);

\fill [color=gray] (0,0.7) circle [radius=0.1];

\fill (0.9,1.1) circle [radius=0.1];
\node [right] at (0.9,1.1) {$y$};
\fill (-0.9,1.1) circle [radius=0.1];
\node [left] at (-0.9,1.1) {$z$};
\fill (0,2) circle [radius=0.1];
\node [above] at (0,2) {$x$};

\end{scope}

\begin{scope}
\draw (-2,0) -- (2,0);

\fill [opacity=0.1] (0.995,1.1) -- (0,2) -- (-0.995,1.1);
\fill [opacity=0.1] (0.995,1.1) arc (30.075:65.665:2.195);
\draw [thick, dashed] (0.995,1.1) arc (30.075:65.665:2.195);
\fill [opacity=0.1] (-0.995,1.1) arc (149.925:114.335:2.195);
\draw [thick, dashed] (-0.995,1.1) arc (149.925:114.335:2.195);
\fill [color=white] (0.995,1.1) arc (47.869:132.131:1.483);
\draw [thick, dashed] (0.995,1.1) arc (47.869:132.131:1.483);

\draw (0,1) circle [radius=1];

\draw [very thick] (0,0) arc (180:100:2.211);
\draw [very thick] (0,0) arc (0:80:2.211);

\fill [color=white] (0,0) circle [radius=0.1];
\draw [very thick, color=gray] (0,0) circle [radius=0.1];
\fill (0.995,1.1) circle [radius=0.1];
\node [right] at (0.995,1.1) {$y$};
\fill (-0.995,1.1) circle [radius=0.1];
\node [left] at (-0.995,1.1) {$z$};
\fill (0,2) circle [radius=0.1];
\node [above] at (0,2) {$x$};

\end{scope}

\begin{scope}[xshift=5cm]
\draw (-2,0) -- (2,0);

\draw (0,0.9) circle [radius=1.1];

\fill [opacity=0.1] (1.08,1.1) -- (0,2) -- (-1.08,1.1);
\fill [opacity=0.1] (1.08,1.1) arc (31.01:69.39:2.136);
\draw [thick, dashed] (1.08,1.1) arc (31.01:69.39:2.136);
\fill [opacity=0.1] (-1.08,1.1) arc (148.99:110.61:2.136);
\draw [thick, dashed] (-1.08,1.1) arc (148.99:110.61:2.136);
\fill [color=white] (1.08,1.1) arc (45.48:134.52:1.54);
\draw [thick, dashed] (1.08,1.1) arc (45.48:134.52:1.54);

\fill (1.08,1.1) circle [radius=0.1];
\node [right] at (1.08,1.1) {$y$};
\fill (-1.08,1.1) circle [radius=0.1];
\node [left] at (-1.08,1.1) {$z$};
\fill (0,2) circle [radius=0.1];
\node [above] at (0,2) {$x$};

\draw [very thick] (0.085,0) arc (180:100:2.31);
\draw [very thick] (-0.085,0) arc (0:80:2.31);
\draw [very thick, color=gray] (0.632,0) arc (0:180:0.632);

\end{scope}

\end{tikzpicture}
\caption{Delaunay and Voronoi tessellations determined by three points in $\mathbb{H}^2$.}
\label{three point Vor}
\end{figure}

In the left case the Delaunay tessellation and the geometric dual complex coincide.  In particular, the Delaunay triangle is the geometric dual to the Voronoi vertex: the grey dot.  In the middle and on the right, the Voronoi tessellation has no vertex and the Delaunay triangle has no geometric dual; instead, the geometric dual to the Voronoi tessellation has cells $x$, $y$, $z$, and the two edges containing $x$.  Recall from Example \ref{three pt sphere} that the Delaunay triangles' circumspheres are horospherical and metric in these respective cases. \end{example}

\begin{remark}  For $x$, $y$, and $z$ from Example \ref{no dual} let $\bx = I(x)$, $\by = I(y)$ and $\bz=I(z)$, where $I$ is an isometry from the upper half-plane to the hyperboloid model of $\mathbb{H}^2$.  In all cases the planes in $\mathbb{R}^3$ containing the Voronoi edges $V_{\bx}\cap V_{\by}$ and $V_{\bx}\cap V_{\bz}$ intersect in a line.  In the left-hand case this line is spanned by the Voronoi vertex $\bu = V_{\bx}\cap V_{\by}\cap V_{\bz}\in\mathbb{H}^2$, which by Proposition \ref{geometric dual} is the geometric dual to the Delaunay triangle.  In the other two cases we may regard a spanning vector $\bu$ for the line of intersection as an analog of this vertex.

We further strengthen this analogy by observing that the planes containing $V_{\bx}\cap V_{\by}$ and $V_{\bx}\cap V_{\bz}$ are respectively Lorentz-orthogonal to $\bx-\by$ and $\bx-\bz$, so in all cases such a vector $\bu$ has the property that $\bu\circ(\bx-\by) = \bu\circ(\bx-\bz) = 0$.  Therefore the Lorentz-orthogonal complement $\bu^{\perp}$ to $\bu$ is parallel to the support plane for $\mathrm{Hull}(\{\bx,\by,\bz\})$ in $\mathbb{R}^3$ in all cases, so $\bu$ is ``dual'' to the Delaunay triangle in the sense of Proposition \ref{geometric dual}.  (Also, it follows from this and Lemma \ref{classification of spheres} that $\bu$ is light-like in the middle case and space-like in the right-hand case of Example \ref{no dual}.)

This example suggests that the geometric duality correspondence of Proposition \ref{geometric dual} might be extended in some cases by allowing Voronoi ``cells'' that lie outside $\mathbb{H}^2$ in, say, projective space.  While we will not pursue it here, this seems worth further study.\end{remark}

\section{Tessellating manifolds}\label{manifolds}

In this section we will construct Delaunay tessellations of finite subsets of finite-volume hyperbolic manifolds.  The first basic observation is that invariant sets have invariant hulls.

\begin{lemma}\label{invariance'n'stuff}  Suppose $\cals\subset\mathbb{H}^n$ is invariant under a subgroup $\Gamma$ of $\mathit{SO}^+(1,n)$.  Then $\mathrm{Hull}(\cals)$ is also $\Gamma$-invariant, and the $\Gamma$-action takes faces to faces and commutes with $r_n$.  If $\cals$ is locally finite its Voronoi tessellation and its geometric dual are $\Gamma$-invariant.\end{lemma}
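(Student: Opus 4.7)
The proof is essentially a compilation of functoriality observations, since every ingredient in the construction is defined in terms of structure ($\mathbb{R}^{n+1}$-linear, Lorentzian, or hyperbolic-metric) preserved by $\Gamma$. My plan is to exploit the fact that $\mathit{SO}^+(1,n)<\mathit{GL}(\mathbb{R}^{n+1})$ consists of linear isomorphisms of $\mathbb{R}^{n+1}$ that preserve the Lorentzian form and the component $\mathbb{H}^n$, and proceed essentially by inspection of each defined object in turn.

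First I would argue that $\mathrm{Hull}(\cals)$ is $\Gamma$-invariant. For $g\in\Gamma$, $g$ is a homeomorphism of $\mathbb{R}^{n+1}$ (being linear) that carries line segments to line segments, so $g\cdot\mathrm{Hull}(\cals)$ is closed and convex. It contains $g\cdot\cals=\cals$, hence contains $\mathrm{Hull}(\cals)$; applying this to $g^{-1}$ shows equality. For the commutation with $r_n$, note that since $g$ is linear it fixes $\bo$ and sends the ray from $\bo$ through $\bx$ to the ray from $\bo$ through $g\bx$; combined with the identity $g\bx\circ g\bx=\bx\circ\bx$ this gives $r_n(g\bx)=gr_n(\bx)$ for $\bx\in U_{n+1}$. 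For the faces, observe that $g$ carries $n$-dimensional affine subspaces to $n$-dimensional affine subspaces and half-spaces to half-spaces, so it carries support planes of $\mathrm{Hull}(\cals)$ to support planes of $g\cdot\mathrm{Hull}(\cals)=\mathrm{Hull}(\cals)$, and hence sends the face $P\cap\mathrm{Hull}(\cals)$ bijectively to $gP\cap\mathrm{Hull}(\cals)$. Visibility is preserved because $\bo$ is fixed and $r_n$ is $\Gamma$-equivariant, so the Delaunay tessellation is a $\Gamma$-invariant collection.

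For the Voronoi tessellation, $\Gamma$ acts by isometries of $\mathbb{H}^n$, so $d_H(g\bs,g\bx)=d_H(\bs,\bx)$ for all $\bs,\bx$; chasing Definition \ref{Voronoi} then gives $g\cdot V_{\bs}=V_{g\bs}$ directly, and faces of Voronoi cells go to faces of Voronoi cells. For the geometric dual, the cleanest path is to apply the characterization from Proposition \ref{geometric dual}: a face $F$ of $\mathrm{Hull}(\cals)$ is dual to a Voronoi cell exactly when it is of the form $P\cap\mathrm{Hull}(\cals)$ for a support plane $P$ parallel to a space-like subspace separating $\cals$ from $\bo$. Because $g\in\mathit{SO}^+(1,n)$ preserves the Lorentzian form, it carries space-like subspaces to space-like subspaces, and because $g\bo=\bo$ it preserves the separation condition, so it permutes the geometric dual cells.

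I do not anticipate any real obstacle; the only subtlety worth spelling out carefully is that while the convex-hull and face statements use only linearity of $g$, the geometric-dual statement genuinely needs that $g$ preserves the Lorentzian form (to preserve the class of space-like subspaces), and the Voronoi statement needs that $g$ preserves the hyperbolic metric. Both of these are automatic from $g\in\mathit{SO}^+(1,n)$, so the work is really just to cite the relevant preservation in each clause.
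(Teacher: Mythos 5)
Your proposal is correct and follows essentially the same route as the paper: linearity of $g\in\Gamma$ gives invariance of $\mathrm{Hull}(\cals)$ and the face structure, fixing $\bo$ and preserving $\circ$ gives equivariance of $r_n$, the isometric action gives Voronoi invariance, and Proposition \ref{geometric dual} handles the geometric dual. The only difference is that you spell out a few details (e.g.\ $g\cdot V_{\bs}=V_{g\bs}$ and the preservation of space-like support planes) that the paper leaves implicit, which is fine.
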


\begin{proof}  Every $\gamma\in\Gamma$, being linear, takes the convex hull of $\cals$ into itself; being invertible, it is a self-bijection of the convex hull; and being continuous and invertible it is a self-homeomorphism of $\mathrm{Hull}(\cals)$.  Again since it is linear, $\gamma$ takes affine planes to affine planes and hence faces to faces.  Since $\gamma$ is in $\mathit{SO}^+(1,n)$ it takes $\mathbb{H}^n$ to itself, so since it takes rays through $\bo$ to rays through $\bo$ it commutes with $r_n$.

If $\cals$ is locally finite its Voronoi tessellation is defined (see Section \ref{into Voronoi}).  Since the definition is  in terms of distance and $\Gamma$ acts by isometries it preserves the Voronoi tessellation.  It follows from Proposition \ref{geometric dual} that the geometric dual to the Voronoi tessellation is also preserved.\end{proof}

Recall from the introduction that a \textit{lattice} in $\mathit{SO}^+(1,n)$ is a discrete subgroup with a finite-volume fundamental domain.  Lattice-invariance imposes strong constraints on the nature of faces of $\mathrm{Hull}(\cals)$.  One reason is the following basic fact:

\begin{fact}\label{limit of lattice}  The limit set of a lattice $\Gamma<\mathit{SO}^+(1,n)$ is the entire sphere at infinity of $\mathbb{H}^n$.\end{fact}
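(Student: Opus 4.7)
The plan is to argue by contradiction using finite covolume. Suppose $\xi\in\partial\mathbb{H}^n$ is not in the limit set $\Lambda(\Gamma)$; I will show that a small enough neighborhood of $\xi$ in the closed ball compactification of $\mathbb{H}^n$ embeds in $\mathbb{H}^n/\Gamma$ with only finite multiplicity, and that its intersection with $\mathbb{H}^n$ has infinite hyperbolic volume, contradicting that $\Gamma$ is a lattice.

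First I would record that $\Lambda(\Gamma)$ is closed in $\partial\mathbb{H}^n$ by definition and is $\Gamma$-invariant, because every $\gamma\in\mathit{SO}^+(1,n)$ extends continuously to $\overline{\mathbb{H}^n}$ and sends accumulation points of $\Gamma\cdot x$ to accumulation points of $\Gamma\cdot(\gamma x)$. In particular, if $\Lambda(\Gamma)\neq\partial\mathbb{H}^n$ then $\partial\mathbb{H}^n\setminus\Lambda(\Gamma)$ is a non-empty open $\Gamma$-invariant subset of $\partial\mathbb{H}^n$. Pick $\xi$ in this complement and an open neighborhood $V$ of $\xi$ in $\overline{\mathbb{H}^n}$ (say, in the closed-ball model) whose closure still misses $\Lambda(\Gamma)$.

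The main step is to show that $\Gamma$ acts properly discontinuously on $\overline{\mathbb{H}^n}\setminus\Lambda(\Gamma)$; in particular, only finitely many $\gamma\in\Gamma$ satisfy $\gamma\overline{V}\cap\overline{V}\neq\emptyset$. This is the classical description of the domain of discontinuity for a Kleinian-type group (cf.\ Ratcliffe, Ch.~12). Granting it, $V\cap\mathbb{H}^n$ projects to $\mathbb{H}^n/\Gamma$ with multiplicity bounded by that finite count. On the other hand, working in the closed ball model where the hyperbolic volume form $(2/(1-|x|^2))^n\,dx$ diverges at the boundary, one verifies by a direct estimate (or by exhibiting a small horoball at $\xi$ inside $V\cap\mathbb{H}^n$) that $V\cap\mathbb{H}^n$ has infinite hyperbolic volume. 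This contradicts finite covolume of $\Gamma$ and completes the proof.

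The main obstacle is the proper-discontinuity step at the boundary: it is intuitively clear but requires the standard analysis of how sequences $\gamma_n x$ can accumulate on $\overline{\mathbb{H}^n}$ for a discrete group. In a self-contained write-up I would either cite this (e.g.\ \cite{Ratcliffe}) or, as an alternative, replace it with a direct fundamental-domain argument: cover $\mathbb{H}^n$ by $\Gamma$-translates of a finite-volume fundamental domain $D$ and show that translates $\gamma D$ meeting a small $V$ would have to either cluster near $\xi$ (forcing $\xi\in\Lambda(\Gamma)$) or come from cusp excursions of $D$, the latter being controlled since parabolic fixed points lie in $\Lambda(\Gamma)$.
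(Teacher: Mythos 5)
The paper does not actually prove Fact \ref{limit of lattice}: immediately after stating it, it simply cites Theorem 12.1.15 of \cite{Ratcliffe}, treating it as standard background. Your proposal, by contrast, supplies the standard argument behind that citation, and it is essentially sound: the limit set $\Lambda$ is closed and $\Gamma$-invariant; if $\xi\notin\Lambda$, choose $V$ with $\overline{V}$ (closure in the closed ball) disjoint from $\Lambda$, so that $\overline{V}$ is a compact subset of $\overline{\mathbb{H}^n}\setminus\Lambda$; proper discontinuity of the $\Gamma$-action there gives a finite bound $N$ on the number of $\gamma$ with $\gamma\overline{V}\cap\overline{V}\neq\emptyset$; then $\mathrm{vol}(V\cap\mathbb{H}^n)\leq N\cdot\mathrm{vol}(\mathbb{H}^n/\Gamma)<\infty$ (cleanest by integrating the orbit-counting function $x\mapsto\#\{\gamma\,:\,\gamma x\in V\}$ over a finite-volume fundamental domain, which also handles torsion, since the Fact is stated for arbitrary lattices), while a horoball based at $\xi$ of small Euclidean diameter lies in $V\cap\mathbb{H}^n$ and has infinite volume --- contradiction. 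The one caveat is that your key step, proper discontinuity of $\Gamma$ on $\overline{\mathbb{H}^n}\setminus\Lambda(\Gamma)$, is itself a classical theorem of the same standing as the Fact being proved (it is in the same chapter of \cite{Ratcliffe}, or follows from the convergence property of discrete isometry groups), so in the end you still either cite Chapter 12 or prove a comparable statement; what your route buys is a genuine argument in place of a bare citation and a clear picture of why finite covolume forces the limit set to be everything, while the paper's choice buys brevity consistent with its treatment of other standard facts. Your fallback fundamental-domain sketch would need more care (controlling cusp excursions of the translates $\gamma D$ near $\xi$) and is less convincing as written than your main line, but the main line stands.
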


For standard material on limit sets see eg.~Ch.~12 of \cite{Ratcliffe}. In particular, Fact \ref{limit of lattice} is Theorem 12.2.13 there.  Below we interpret these notions in the context of the hyperboloid model:

\begin{definition}\label{at infinity}  The \textit{sphere at infinity} of $\mathbb{H}^n$ is the projectivized positive light cone $p(L^+)$ in $\mathbb{R}P^n$.  Here $p\co\mathbb{R}^{n+1}-\{\bo\}\to\mathbb{R}P^n$ is the quotient map.  The \textit{limit set} of $\cals\subset\mathbb{H}^n$ is the set of accumulation points of $p(\cals)$ in $p(L^+)$, where $L^+=\{\bu\,|\,\bu\circ\bu=0,u_0>0\}$ is the positive light cone.  The \textit{limit set} of $\Gamma<\mathit{SO}^+(1,n)$ is the limit set of $\Gamma.\bx$ for some (any) fixed $\bx\in \mathbb{H}^n$.\end{definition}

The following is an exercise in Lorentzian geometry.

\begin{fact}\label{limit sets}  Suppose $P$ is an affine subspace of $\mathbb{R}^{n+1}$ intersecting $\mathbb{H}^n$, and let $V$ be the vector subspace parallel to $P$ and $B$ a half-space bounded by $P$.\begin{itemize}
  \item  If $V$ is space-like and $B\cap\mathbb{H}^n$ is convex then its limit set is empty.
  \item  If $V$ is light-like and $B\cap\mathbb{H}^n$ is convex then its limit set is the singleton $p(V^{\perp}\cap L^+)$.
  \item  If $V$ is time-like then the limit set of $B\cap\mathbb{H}^n$ is a hemisphere bounded by $p(V\cap L^+)$.\end{itemize}\end{fact}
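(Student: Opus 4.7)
My plan is to address the three cases in parallel, using the explicit descriptions of $B\cap\mathbb{H}^n$ from Lemma \ref{convex component} together with the classification of $V^\perp$ in Lemma \ref{normal vectors}.  The basic observation is that $[\bv_0]\in p(L^+)$ lies in the limit set of $B\cap\mathbb{H}^n$ iff, after passing to a subsequence, there is $\{\bx_k\}\subset B\cap\mathbb{H}^n$ with Euclidean norms $\|\bx_k\|\to\infty$ and $\bx_k/\|\bx_k\|\to\bv_0/\|\bv_0\|$.  The light-like character of $\bv_0$ follows from $\bx_k\circ\bx_k=-1$ upon dividing by $\|\bx_k\|^2$; that $(v_0)_0>0$ follows from $(x_k)_0\geq |(x_k)_i|$ for each $i$ on $\mathbb{H}^n$ (which passes to $\bv_0$ in the limit and rules out $\bv_0=\bo$ when combined with $\|\bv_0/\|\bv_0\|\|=1$).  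If $\|\bx_k\|$ is bounded, any convergent subsequence has limit in $\{\bx\circ\bx=-1\}$, disjoint from $p(L^+)$ after projection.  So in each case the task is to identify which limit directions $[\bv_0]$ actually arise.

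Cases (1) and (2) should be quick.  For (1), Lemma \ref{convex component} gives that $B\cap U_{n+1}$ is compact, so no sequence in $B\cap\mathbb{H}^n$ escapes to Euclidean infinity, and the limit set is empty.  For (2), Lemma \ref{convex component} gives $B=\{\bx\,|\,\bx\circ\bu\geq -1\}$ for some $\bu\in V^\perp$; by Lemma \ref{normal vectors} we have $V^\perp\subset L$, so we may arrange $\bu\in L^+$.  Dividing the defining inequality by $\|\bx_k\|$ and passing to the limit yields $\bv_0\circ\bu\geq 0$.  Combined with (\ref{x circ y}) applied to $\bv_0,\bu\in L^+$ (which gives $\bv_0\circ\bu\leq 0$ with equality forcing $\bv_0\in\langle\bu\rangle$), we get $[\bv_0]=[\bu]=p(V^\perp\cap L^+)$, a single point.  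Conversely, the ray $\gamma_{\bx}$ from Lemma \ref{convex component} lies in $B$ and projectively tends to $[\bu]$, since $\gamma_{\bx}(t)=e^{-t}\bx+\sinh(t)\bu$ gives $\gamma_{\bx}(t)/\sinh(t)\to\bu$.

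Case (3) is the main obstacle, specifically the converse showing the full closed hemisphere is realized.  By Lemma \ref{normal vectors}, $V^\perp$ is one-dimensional and space-like; fix $\bu\in V^\perp-\{\bo\}$ so $B=\{\bx\,|\,\bx\circ\bu\leq c\}$ with $c=\bx_0\circ\bu$.  The forward direction is the same division argument: $\bv_0\circ\bu\leq 0$, so $[\bv_0]$ lies in the closed hemisphere $\{[\bv]\in p(L^+)\,|\,\bv\circ\bu\leq 0\}$, whose boundary is $p(V\cap L^+)$ (using $V=\bu^\perp$ since $V^\perp=\langle\bu\rangle$).  For the converse, given $[\bv_0]$ with $\bv_0\circ\bu<0$ strictly, I would pick any $\bx_0\in B\cap\mathbb{H}^n$ and shoot the geodesic ray $\bx_t$ from $\bx_0$ toward $[\bv_0]$ via (\ref{geodesic ray}) with $\bn=\bv_0/(-\bx_0\circ\bv_0)-\bx_0$ (the computation showing this is the correct unit tangent uses $\bv_0\circ\bv_0=0$ and $\bx_0\circ\bv_0<0$).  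A short calculation then gives $\bx_t\circ\bu=e^{-t}(\bx_0\circ\bu)+\sinh(t)(\bv_0\circ\bu)/(-\bx_0\circ\bv_0)$, which tends to $-\infty$ since $\bv_0\circ\bu<0$ and $-\bx_0\circ\bv_0>0$ by (\ref{x circ y}).  Hence $\bx_t\in B\cap\mathbb{H}^n$ for all large $t$ with $p(\bx_t)\to[\bv_0]$.  The boundary case $\bv_0\circ\bu=0$, i.e.\ $[\bv_0]\in p(V\cap L^+)$, is then handled by noting that accumulation loci in $p(L^+)\cong S^{n-1}$ are closed and the open hemisphere $\{\bv\circ\bu<0\}$ is dense in the closed one, so these boundary directions also lie in the limit set.
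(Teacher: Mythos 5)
The paper records this Fact without proof --- it is introduced with ``The following is an exercise in Lorentzian geometry'' --- so there is no argument of the author's to compare against; your write-up is a correct solution to that exercise. Your reduction of ``accumulation in $p(L^+)$'' to unit-sphere normalization ($\|\bx_k\|\to\infty$, $\bx_k/\|\bx_k\|\to\bv_0/\|\bv_0\|$, with the light-like and positive-first-entry properties of the limit checked) is sound, and each case then goes through as you say: compactness of $B\cap U_{n+1}$ from Lemma \ref{convex component} in the space-like case; in the light-like case the two-sided squeeze $\bw\circ\bu\geq 0$ from the defining inequality versus $\bw\circ\bu\leq 0$ from (\ref{x circ y}) with the equality case pinning $[\bw]=[\bu]$, plus the ray $\gamma_{\bx}$ already supplied by Lemma \ref{convex component} for realization; and in the time-like case the division argument for the inclusion into $\{\bv\circ\bu\leq 0\}$ together with your explicit ray $\bx_t=e^{-t}\bx_0+\lambda\sinh t\,\bv_0$ aimed at $[\bv_0]$ (your computation that $\bn$ is unit and Lorentz-orthogonal to $\bx_0$ is right) for the open cap, and closedness of the set of accumulation points for the boundary $p(V\cap L^+)$. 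Two cosmetic remarks: read ``hemisphere'' as the closure of a complementary component of $p(V\cap L^+)$ in $p(L^+)$ --- in a round cross-section it is a spherical cap, which is exactly your set $\{[\bv]\,|\,\bv\circ\bu\leq 0\}$, and note it is independent of the affine translate $\bx_0$; and you reuse the symbol $\bx_0$ both for the base point of $P$ and for the chosen point of $B\cap\mathbb{H}^n$, which is harmless but worth cleaning up.
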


\begin{lemma}\label{no time}  Suppose $\cals\subset\mathbb{H}^n$ is locally finite and $\Gamma$-invariant for a lattice $\Gamma$ of $\mathit{SO}^+(1,n)$.  For any $n$-dimensional affine subspace $P$ that bounds a half-space $B$ containing $\mathrm{Hull}(\cals)$, the vector subspace parallel to $P$ is not time-like.  If $P$ intersects $\mathbb{H}^n$ then $\bo\notin B$.\end{lemma}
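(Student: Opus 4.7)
The plan is to derive both assertions by comparing two limit sets in $p(L^+)$: the limit set of $B\cap\mathbb{H}^n$, classified case-by-case by Fact \ref{limit sets}, against the limit set of $\cals$ itself, which lattice-invariance will force to be everything.

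The opening observation to establish is that the limit set of $\cals$ equals all of $p(L^+)$. Taking $\cals$ non-empty, any $\bs\in\cals$ satisfies $\Gamma\bs\subset\cals$, so the limit set of $\cals$ contains the limit set of $\Gamma\bs$. The latter is by Definition \ref{at infinity} the limit set of $\Gamma$, and by Fact \ref{limit of lattice} this is all of $p(L^+)$. Since $\cals\subset\mathrm{Hull}(\cals)\subset B$, the limit set of $B\cap\mathbb{H}^n$ (which contains the limit set of $\cals$) is therefore also all of $p(L^+)$.

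For the first claim I would suppose for contradiction that $V$ is time-like. The third bullet of Fact \ref{limit sets} then says the limit set of $B\cap\mathbb{H}^n$ is a closed hemisphere bounded by $p(V\cap L^+)$, a proper subset of $p(L^+)$, contradicting the previous paragraph. For the second claim, with $P$ meeting $\mathbb{H}^n$ and assuming $\bo\in B$ for contradiction, the first claim gives $V$ space-like or light-like. In either case Lemma \ref{convex component} identifies the unique half-space bounded by $P$ with convex intersection with $\mathbb{H}^n$ as the one containing $\bo$, so $B\cap\mathbb{H}^n$ is convex; the first two bullets of Fact \ref{limit sets} then give the limit set of $B\cap\mathbb{H}^n$ as empty or a single ideal point, again contradicting fullness.

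No serious obstacle is expected here: the argument is a direct case analysis once the opening observation—that $\Gamma$-invariance inflates the limit set of $\cals$ to the full sphere at infinity—is in place. The only delicacy is pinning down the correct reading of ``limit set'' so that containment of sets in $\mathbb{H}^n$ passes to containment of accumulation sets in $p(L^+)$, which is immediate from Definition \ref{at infinity}.
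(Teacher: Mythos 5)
Your proposal is correct and follows essentially the same route as the paper: $\Gamma$-invariance plus Fact \ref{limit of lattice} makes the relevant limit set the whole sphere at infinity, and then the case analysis of Fact \ref{limit sets}, together with Lemma \ref{convex component} identifying the convex side with the side containing $\bo$, rules out a time-like parallel subspace and rules out $\bo\in B$ when $P$ meets $\mathbb{H}^n$. The paper's proof is just a compressed version of exactly this argument.
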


\begin{proof}  Since $\cals$ is $\Gamma$-invariant the limit set of $\mathrm{Hull}(\cals)$ contains that of $\Gamma$ and thus is the entire sphere at infinity.  The fact above thus implies the result (recall also from Lemma \ref{convex component} that $B\cap\mathbb{H}^n$ is convex if and only if $\bo\in B$.)\end{proof}

\begin{corollary}\label{empty spheres}  Suppose $\cals\subset\mathbb{H}^n$ is locally finite and invariant under a lattice $\Gamma$ of $\mathit{SO}^+(1,n)$.  Every support plane for $\mathrm{Hull}(\cals)$ separates $\mathrm{Hull}(\cals)$ from $\bo$. In particular every face of $\mathrm{Hull}(\cals)$ is visible, and every visible point lies in a visible face.  For a face $F$ and a support plane $P$ with $F=P\cap\mathrm{Hull}(\cals)$, if $B$ is the half-space bounded by $P$ that contains $\bo$ then $r_n(F)\subset B$.\end{corollary}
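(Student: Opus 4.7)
The plan is to derive the four assertions in sequence, with Lemma \ref{no time} doing the heavy lifting for the separation claim and Lemmas \ref{visible point} and \ref{ratatouille} handling the visibility and inclusion claims.

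For the separation statement, let $P$ be a support plane for $\mathrm{Hull}(\cals)$, and let $B'$ be the half-space bounded by $P$ with $\mathrm{Hull}(\cals) \subset B'$. Lemma \ref{no time} tells us that the vector subspace $V$ parallel to $P$ is space-like or light-like, and its second clause then yields $\bo \notin B'$ as soon as $P \cap \mathbb{H}^n \neq \emptyset$. This last condition is the main subtlety of the argument, since a priori a support plane could touch $U_{n+1}$ only in its interior; but since $\mathrm{Hull}(\cals) \subset U_{n+1}$ and $P \cap \mathrm{Hull}(\cals) \neq \emptyset$ give $P \cap U_{n+1} \neq \emptyset$, a short direct computation will do. In the space-like case with time-like normal $\bu$ to $V$, every $\bx \in P$ satisfies $\bx \circ \bu = c$ for a fixed constant, and (\ref{x circ y}) applied at any $\bx \in P \cap U_{n+1}$ forces $c \leq -\sqrt{|\bu \circ \bu|}$, whereupon Lemma \ref{classification of spheres} exhibits $P \cap \mathbb{H}^n$ as a metric sphere; the light-like case produces a horosphere by the analogous argument. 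With $\bo \notin B'$ in hand, the fact that the closed half-space $B'$ contains $P$ forces $\bo \notin P$, so $P$ strictly separates $\mathrm{Hull}(\cals)$ from $\bo$.

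The remaining assertions then follow readily. For any face $F = P \cap \mathrm{Hull}(\cals)$ and any $\bx \in F$, the open segment from $\bo$ to $\bx$ lies in the open half-space opposite $B'$ and hence is disjoint from $\mathrm{Hull}(\cals)$; thus $\bx$ is the first intersection point of the ray from $\bo$ through $\bx$ with $\mathrm{Hull}(\cals)$, exhibiting $F$ as visible. Any visible point admits, by Lemma \ref{visible point}, a support plane separating it from $\bo$, so the resulting face contains the point and is visible by what we just said. Finally, if $B$ is the half-space bounded by $P$ containing $\bo$, then $B$ is uniquely determined since $\bo \notin P$, and Lemma \ref{ratatouille} identifies it as the half-space with $r_n(P \cap U_{n+1}) \subset B \cap \mathbb{H}^n$; combined with $F \subset P \cap U_{n+1}$ this gives $r_n(F) \subset B$.
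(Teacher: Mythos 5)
Your proof is correct and follows essentially the same route as the paper's: Lemma \ref{no time} gives the separation (the paper likewise notes, in a parenthetical, that a support plane meets $U_{n+1}$ and hence $\mathbb{H}^n$, which you verify by the explicit computation with (\ref{x circ y})), visibility of faces and of visible points follows as you say, and the final inclusion is exactly the paper's appeal to Lemma \ref{ratatouille}. The only cosmetic difference is your use of Lemma \ref{visible point} where the paper simply cites that every visible (boundary) point lies in some face.
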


\begin{proof}  That support planes separate $\mathrm{Hull}(\cals)$ from $\bo$ follows directly from the final assertion of Lemma \ref{no time}.  (Note that each support plane contains a face of $\mathrm{Hull}(\cals)$, hence intersects $U_{n+1}$ and hence $\mathbb{H}^n$.)  Since each face lies in a support plane, it follows directly that each face is visible. Since each visible point is contained in a face, it lies in a visible face.  The final assertion above follows directly from Lemma \ref{ratatouille}.\end{proof}

\begin{corollary}\label{all the points}  If $\cals\subset\mathbb{H}^n$ is locally finite and invariant under a lattice $\Gamma$ of $\mathit{SO}^+(1,n)$ then $r_n(\mathrm{Hull}(\cals)) = \mathbb{H}^n$, and $\mathbb{H}^n=\bigcup \{r_n(F)\,|\,F\ \mbox{is a visible face of}\ \mathrm{Hull}(\cals)\}$.\end{corollary}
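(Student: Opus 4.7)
The plan is to first reduce the second assertion to the first and then prove the first by contradiction, using the Hahn--Banach result (Lemma \ref{HB}) and the boundary description (Lemma \ref{convex hull boundary}) to produce a time-like separating subspace in either case where some point of $\mathbb{H}^n$ is missed, in violation of Lemma \ref{no time}.

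\textbf{Reducing the second assertion.} Once $r_n(\mathrm{Hull}(\cals)) = \mathbb{H}^n$ is established, fix any $\bx \in \mathbb{H}^n$. By Lemma \ref{convex hull}, $r_n$ is a bijection from the set of visible points of $\mathrm{Hull}(\cals)$ onto $r_n(\mathrm{Hull}(\cals))$, so there is a visible $\bx_0 \in \mathrm{Hull}(\cals)$ with $r_n(\bx_0) = \bx$. By Corollary \ref{empty spheres}, $\bx_0$ lies in a visible face $F$ of $\mathrm{Hull}(\cals)$, and then $\bx = r_n(\bx_0) \in r_n(F)$. This gives the covering by images of visible faces.

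\textbf{Proving $r_n(\mathrm{Hull}(\cals)) = \mathbb{H}^n$.} Suppose for contradiction that there exists $\bx \in \mathbb{H}^n \setminus r_n(\mathrm{Hull}(\cals))$. Let $C = \overline{r_n(\mathrm{Hull}(\cals))} \subset \mathbb{H}^n$, which is closed and, by Lemma \ref{convex hull} together with continuity, convex. Split into two cases.

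If $\bx \notin C$, apply Lemma \ref{HB} to obtain an $n$-dimensional time-like subspace $V \subset \mathbb{R}^{n+1}$ with $\bx$ and $C$ in opposite closed half-spaces bounded by $V$. In particular $\cals \subset C$ lies in the half-space $B$ of $\mathbb{R}^{n+1}$ bounded by $V$ opposite to $\bx$. Since $B$ is closed and convex in $\mathbb{R}^{n+1}$, we get $\mathrm{Hull}(\cals) \subset B$, and $V$ is the $n$-dimensional vector subspace parallel to its own bounding plane. This contradicts Lemma \ref{no time}.

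If $\bx \in C \setminus r_n(\mathrm{Hull}(\cals))$, Lemma \ref{convex hull boundary} directly supplies an $n$-dimensional time-like subspace $V$ through $\bx$ such that $\cals$ lies in one of the closed half-spaces of $\mathbb{R}^{n+1}$ bounded by $V$; again by convexity and closedness of that half-space, $\mathrm{Hull}(\cals)$ lies there too, contradicting Lemma \ref{no time}.

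\textbf{Where the real work is.} The substantive content has already been extracted in the preceding lemmas: Lemma \ref{no time} (which uses density of the limit set of a lattice at infinity, Fact \ref{limit of lattice}, to rule out time-like support subspaces), and Lemmas \ref{HB} and \ref{convex hull boundary} (which provide separating time-like subspaces in the two possible failure modes). The proof of the corollary itself is purely the combinatorial matching of these three inputs, so no new obstacle arises; the only care needed is to apply Lemma \ref{HB} to the closure of $r_n(\mathrm{Hull}(\cals))$ rather than the set itself, since the latter may not be closed (as the examples of Section \ref{bad example} show in the non-lattice case).
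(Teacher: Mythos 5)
Your proposal is correct and follows essentially the same route as the paper: both arguments rest on Lemma \ref{convex hull boundary} and Lemma \ref{HB} producing a time-like separating subspace that violates Lemma \ref{no time}, and both deduce the covering by visible-face images from Corollary \ref{empty spheres}. The only difference is cosmetic — you case-split on whether the missed point lies in the closure of $r_n(\mathrm{Hull}(\cals))$, whereas the paper first shows that image is closed and then applies Lemma \ref{HB}, which amounts to the same thing.
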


\begin{proof}  We note first that $r_n(\mathrm{Hull}(\cals))$ is closed, since if it were not then Lemma \ref{convex hull boundary} would supply a time-like $n$-dimensional subspace $V$ with $\cals$ on one side of it.  This violates Lemma \ref{no time}.  Since $r_n(\mathrm{Hull}(\cals))$ is closed, if it were not all of $\mathbb{H}^n$ then Lemma \ref{HB} would supply a subspace $V$ as above, again violating Lemma \ref{no time}.

The $r_n$-image of $\mathrm{Hull}(\cals)$ is the image of its set of visible points, so the result holds since each such point is contained in a visible face by Corollary \ref{empty spheres}.\end{proof}

\subsection{A brief interruption on horospheres and horoballs}  To understand the restrictions that lattice-invariance places on horospherical circumspheres, we need a little more information about horospheres. The results here are standard (with the possible exception of Lemma \ref{Euclid's planes}), but we record them for completeness.

\begin{lemma}\label{bad balls}  Suppose for a light-like subspace $V$ of $\mathbb{R}^{n+1}$ and $\bx_0\in\mathbb{R}^{n+1}$ that $P = V+\bx_0$ intersects $\mathbb{H}^n$.  For the unique $\bu\in V^{\perp}$ such that $P=\{\bx\circ\bu=-1\}$ and any $k<0$, let $P_k = \{\bx\circ\bu = k\}$.  For any $\bx\in S=P\cap\mathbb{H}^n$, the unique closest point of $S_k = P_k\cap\mathbb{H}^n$ to $\bx$ is  $\gamma_{\bx}(t_k)$, where $\gamma_{\bx}$ is the geodesic ray from Lemma \ref{convex component} and $t_k=\ln (-1/k)$.\end{lemma}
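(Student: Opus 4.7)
The plan is to verify the claim by direct Lorentzian computation, decomposing an arbitrary $\by\in S_k$ with respect to the $2$-plane $W=\mathrm{span}(\bx,\bu)$ and its Lorentz-orthogonal complement $W^{\perp}$.

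First I would confirm that $\gamma_{\bx}(t_k)$ lies in $S_k$.  Using $\bx\circ\bu=-1$ and $\bu\circ\bu=0$ (the latter because $V^{\perp}=V\cap L$ for the light-like $V$, by Lemma \ref{normal vectors}), the formula $\gamma_{\bx}(t)=e^{-t}\bx+\sinh t\,\bu$ gives $\gamma_{\bx}(t)\circ\bu=-e^{-t}$; setting this equal to $k$ and solving yields $t=\ln(-1/k)=t_k$.

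Next I would tackle the minimization.  The Lorentzian form restricted to $W$ has Gram matrix with entries $\bx\circ\bx=-1$, $\bu\circ\bu=0$, $\bx\circ\bu=-1$, hence determinant $-1$.  By Sylvester's law of inertia, $W$ has signature $(1,1)$, so $W^{\perp}$ is space-like and the form on $W$ is non-degenerate; this gives the direct-sum decomposition $\mathbb{R}^{n+1}=W\oplus W^{\perp}$ and the inequality $\bw\circ\bw\geq 0$ for $\bw\in W^{\perp}$, with equality iff $\bw=\bo$.  Writing $\by=a\bx+b\bu+\bw$, the constraint $\by\circ\bu=k$ forces $a=-k$, and expanding $\by\circ\by=-a^2-2ab+\bw\circ\bw=-1$ then gives
\[
 b=\frac{k^2-1-\bw\circ\bw}{2k}.
\]
Since $\bx\circ\by=-a-b=k-b$, one has $d_H(\bx,\by)=\cosh^{-1}(b-k)$, so the distance is minimized precisely when $b$ is.  Because $k<0$ the displayed expression is strictly increasing in $\bw\circ\bw$, so its unique minimum over $\bw\in W^{\perp}$ is attained at $\bw=\bo$, giving $b=(k^2-1)/(2k)$.

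A final calculation confirms this closest point equals $\gamma_{\bx}(t_k)$: from $t_k=\ln(-1/k)$ one has $e^{-t_k}=-k$ and $\sinh t_k=(-1/k-(-k))/2=(k^2-1)/(2k)$, matching the values $a=-k$ and $b=(k^2-1)/(2k)$ just obtained.  I would also observe that for $k<0$ the locus $\{\by\circ\by=-1,\ \by\circ\bu=k\}$ lies entirely in the positive sheet $\mathbb{H}^n$, so the critical point is automatically in $S_k$ and no separate sign check is required.  The main conceptual step is justifying the direct-sum decomposition via non-degeneracy on $W$ and reading off the signature to obtain $\bw\circ\bw\geq 0$; after that, the argument is algebraic bookkeeping and I do not foresee any serious obstacle.
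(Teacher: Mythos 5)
Your argument is correct, and it is genuinely different from the paper's. You minimize the distance directly: since the Lorentzian form is non-degenerate of signature $(1,1)$ on $W=\mathrm{span}(\bx,\bu)$, you get $\mathbb{R}^{n+1}=W\oplus W^{\perp}$ with $\circ$ positive definite on $W^{\perp}$, and writing $\by=a\bx+b\bu+\bw$ the two constraints pin down $a=-k$ and $b=\bigl(k^2-1-\bw\circ\bw\bigr)/(2k)$, so that $-\bx\circ\by=b-k$ is uniquely minimized at $\bw=\bo$, which one checks is exactly $\gamma_{\bx}(t_k)$. (Your side remarks are fine: $u_0>0$ follows from $\bx\circ\bu=-1$ at a point of $\mathbb{H}^n$ together with (\ref{x circ y}), and the same inequality shows the locus $\{\by\circ\by=-1,\ \by\circ\bu=k\}$, $k<0$, lies in the upper sheet.) The paper instead argues synthetically: it shows $-\bx\circ\gamma_{\by}(t)=e^{-t}(-\bx\circ\by)+\sinh t>\cosh t$ for every $\by\in S\setminus\{\bx\}$, i.e.\ $d_H(\bx,\gamma_{\by}(t))>t$, and then observes that $\by\mapsto\gamma_{\by}(t_k)$ is onto $S_k$, so every point of $S_k$ other than $\gamma_{\bx}(t_k)$ is strictly farther than $t_k$ from $\bx$. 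The paper's route exhibits the foliation of the horoball by the rays $\gamma_{\by}$ and the Busemann-function picture exploited in Remark \ref{busemann horo}, while yours is a self-contained coordinate computation that produces the closest point and its uniqueness in one stroke, with uniqueness falling out of positive definiteness on $W^{\perp}$ rather than from the surjectivity-plus-strict-inequality argument.
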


\begin{remark}\label{busemann horo}  It follows from Lemma \ref{bad balls} that for $k\geq-1$ each $S_k$ above satisfies the classical definition of a horosphere as a level set of the \textit{Busemann function} of $\gamma_{\bx}$, see eg.~\cite[\S II.8]{BrH}.\end{remark}

\begin{proof} A computation shows the unique point of intersection between $\gamma_{\bx}$ and $S_k$ is $\gamma_{\bx}(t_k)$.  For $\bx$ as above and arbitrary $\by\in S-\{x\}$, consider:
$$ -\bx\circ\gamma_{\by}(t) = e^{-t}(-\bx\circ\by) + \sinh t(-\bx\circ\bu) > \cosh t, $$
This is because $\bx\circ\bu = -1$ by hypothesis, and $\bx\circ\by\leq -1$ since both are in $\mathbb{H}^n$ (recall (\ref{x circ y})).  It yields the basic fact that $d_H(\bx,\gamma_{\by}(t)) > t = d_H(\by,\gamma_{\by}(t))$ for each $t\in\mathbb{R}$.  In particular, for the unique point $\gamma_{\by}(t_k)$ of intersection between $\gamma_{\by}$ and $S_k$, $d_H(\bx,\gamma_{\by}(t_k))>t_k$.

The lemma follows directly from the claim that the map $\by\mapsto \gamma_{\by}(t_k)$ is onto $S_k$.  This in turn follows from the facts that for any $\bz\in S_k$, the geodesic $\gamma_{\bz}$ has unique point $\by_{\bz} = \gamma_{\bz}(-t_k)$ with $S$, and that $\gamma_{\by_{\bz}}(t) = \gamma_{\bz}(t-t_k)$ for each $t\in\mathbb{R}$; in particular, $\gamma_{\by_{\bz}}(t_k) =\bz$.\end{proof}

\begin{lemma}\label{Euclid's balls}  For any horosphere $S$ of $\mathbb{H}^n$ there is a Euclidean isometry $\mathbb{R}^{n-1}\to S$.\end{lemma}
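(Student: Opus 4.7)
My approach is to build an explicit diffeomorphism $\Phi\colon \mathbb{R}^{n-1}\to S$ and verify that it pulls the hyperbolic metric on $S$ back to the standard Euclidean metric on $\mathbb{R}^{n-1}$. By Lemma \ref{classification of spheres} (or the definition just after it), there is a light-like vector $\bu\in L^+$ with $S=\{\bx\in\mathbb{H}^n\,|\,\bx\circ\bu=-1\}$. Fix any basepoint $\bx_0\in S$; then $\bx_0\circ\bu=-1$ and $\bx_0\circ\bx_0=-1$. The plan is to use the (Lorentz-)orthogonal complement $W = \bx_0^{\perp}\cap\bu^{\perp}$ as the ``direction space'' for the parametrization.

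First I would verify that $W$ is a space-like $(n-1)$-dimensional subspace, and that $\bx_0,\bu$ together with any Lorentz-orthonormal basis $\bv_1,\hdots,\bv_{n-1}$ of $W$ form a basis of $\mathbb{R}^{n+1}$. Dimension-counting gives $\dim W\geq n-1$; the pairing $\bu\circ\bx_0=-1\neq 0$ together with Lemma \ref{normal vectors} applied to $\bu^{\perp}$ (a light-like hyperplane containing $\bu$) forces $\bx_0\notin\bu^{\perp}$, giving $\dim W=n-1$. That $W$ is space-like follows because $T_{\bx_0}\mathbb{H}^n=\bx_0^{\perp}$ is a positive definite subspace for the Lorentzian form (this is how the Riemannian metric on $\mathbb{H}^n$ is defined), and $W\subset\bx_0^{\perp}$. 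Linear independence of $\{\bx_0,\bu,\bv_1,\hdots,\bv_{n-1}\}$ follows by pairing a hypothetical linear relation successively with each $\bv_j$, then with $\bu$, then with $\bx_0$.

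Now I define
\[
  \Phi(y_1,\hdots,y_{n-1}) \;=\; \bx_0 \,+\, \sum_{i=1}^{n-1} y_i\bv_i \,+\, \tfrac{1}{2}|y|^2\,\bu,
\]
where $|y|^2=y_1^2+\hdots+y_{n-1}^2$. Using $\bx_0\circ\bx_0=-1$, $\bu\circ\bu=0$, $\bx_0\circ\bu=-1$, $\bv_i\circ\bv_j=\delta_{ij}$, $\bx_0\circ\bv_i=0$, $\bu\circ\bv_i=0$, a direct computation shows $\Phi(y)\circ\Phi(y)=-1$ and $\Phi(y)\circ\bu=-1$. As noted immediately after (\ref{x circ y}), $\bx\circ\bu<0$ for any $\bx$ on the upper sheet $\mathbb{H}^n$, so the condition $\Phi(y)\circ\bu=-1<0$ combined with $\Phi(y)\circ\Phi(y)=-1$ pins $\Phi(y)$ to the upper sheet; hence $\Phi(y)\in S$. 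Conversely, expanding an arbitrary $\bx\in S$ in the basis $\{\bx_0,\bu,\bv_1,\hdots,\bv_{n-1}\}$, the constraints $\bx\circ\bu=-1$ and $\bx\circ\bx=-1$ force the $\bx_0$-coefficient to equal $1$ and the $\bu$-coefficient to equal half the sum of squares of the $\bv_i$-coefficients. This shows $\Phi$ is a bijection onto $S$.

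Finally I would compute the induced metric. The differential is $d\Phi_y(\bw)=\sum_i w_i\bv_i + (y\cdot\bw)\bu$, and since $\bu\circ\bu=0$ and $\bu\circ\bv_i=0$, the cross-terms and the $\bu$-$\bu$ term vanish, leaving
\[
  d\Phi_y(\bw_1)\circ d\Phi_y(\bw_2) \;=\; \sum_{i}w_{1,i}w_{2,i} \;=\; \bw_1\cdot\bw_2.
\]
Thus $\Phi$ is a Riemannian isometry from $(\mathbb{R}^{n-1},g_E)$ onto $S$ with its induced metric. The only subtle point is the setup (establishing that $W=\bx_0^{\perp}\cap\bu^{\perp}$ has the right dimension and signature); after that the calculation is forced by the algebraic relations among $\bx_0$, $\bu$, and the $\bv_i$. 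No appeal to transitivity of $\mathit{SO}^+(1,n)$ is strictly needed, though one could alternatively reduce to a standard horosphere (say the one with $\bu=(1,1,0,\hdots,0)$) and parametrize explicitly, which would shorten the verification slightly.
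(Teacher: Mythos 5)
Your proof is correct and is essentially the paper's own argument: both parametrize the horosphere as a graph over a space-like $(n-1)$-dimensional direction space (your $W=\bx_0^{\perp}\cap\bu^{\perp}$ is the paper's $V_0$) via the quadratic map $\bv\mapsto \bx_0+\bv+\tfrac12(\mathrm{const}+\|\bv\|^2)\bu$, and conclude it is an isometry because the differential only adds multiples of the null vector $\bu$, which pairs to zero with everything in sight. Your extra verifications (basis, surjectivity, upper sheet) are fine but do not change the route.
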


\begin{proof}  Suppose $V$ is an $n$-dimensional light-like subspace of $\mathbb{R}^{n+1}$ and $\bx_0$ is a vector such that $P=V+\bx_0$ intersects $\mathbb{H}^n$.  Without loss of generality we may take $\bx_0\in\mathbb{H}^n$. Fix $\bu\in V^{\perp}$ such that $\bx_0\circ\bu = -1$, and let $V_0 = \bx_0^{\perp}\cap V$.  This is a space-like subspace of $V$ (\cite[Theorem 3.1.5]{Ratcliffe}) of dimension $(n-1)$ since $\bu\notin V_0$.  Thus $V$ is spanned by $V_0$ and $\bu$.

Writing an arbitrary element of $V$ as $t\bu+\bv$ for $t\in\mathbb{R}$ and $\bv\in V_0$, consider the Lorentzian norm of $\bx_0+(t\bu+\bv)\in P$.\begin{align}\label{go circ yourself}
 [\bx_0+(t\bu+\bv)]\circ[\bx_0+(t\bu+\bv)] = \bx_0\circ\bx_0 - 2t + \bv\circ\bv \end{align}
For each $\bv\in V_0$ it follows that $\bx_0+(t\bu+\bv)\in\mathbb{H}^n$ if and only if $t = \frac{1}{2}\left(1+\bx_0\circ\bx_0 + \bv\circ\bv\right)$.  Let $k = 1+\bx_0\circ\bx_0$ and $\|\bv\|^2 = \bv\circ\bv$.  The map
$$ F(\bv) = \bx_0 + \bv + \frac{1}{2}(k+\|\bv\|^2)\bu $$
is therefore a homeomorphism from $V$ to the horosphere $P\cap\mathbb{H}^{n-1}$.  Using basic calculus one easily checks that for $\bw\in V_0$, $dF_{\bv}(\bw) = \bw + 2(\bv\circ\bw)\bu$, so $F$ is an isometry since $\bu\in V^{\perp}$.  But $V_0$ is space-like, so $\circ|_{V_0}$ is positive-definite and $(V_0,\circ)$ is isometric to Euclidean $(n-1)$-space.\end{proof}

\begin{lemma}\label{intersect ball}  For $\bu\in L^+ = \{\bx\,|\,\bx\circ\bx =0, x_0>0\}$ and a space-like vector $\bv$ such that $\bu\notin V = \bv^{\perp}$, the totally geodesic subspace $V\cap\mathbb{H}^n$ has compact intersection with the horoball $B \cap\mathbb{H}^n$, where $B = \{\bx\circ\bu\geq -1\}$.\end{lemma}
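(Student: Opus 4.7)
The plan is to show compactness by establishing closedness and boundedness, with the main work in the boundedness step via a contradiction argument using the hypothesis $\bu\notin V$.

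First I would note that because $\bv$ is space-like, Lemma \ref{normal vectors} identifies $V=\bv^{\perp}$ as time-like, so $V\cap\mathbb{H}^n$ is a non-empty totally geodesic subspace. The horoball $B\cap\mathbb{H}^n$ is closed in $\mathbb{H}^n$ by Lemma \ref{convex component}, and $V$ is closed in $\mathbb{R}^{n+1}$, so $V\cap B\cap\mathbb{H}^n$ is closed in $\mathbb{R}^{n+1}$. Hence it is enough to show that $V\cap B\cap\mathbb{H}^n$ is bounded in the Euclidean sense.

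Suppose for contradiction there is a sequence $\{\bx_k\}\subset V\cap B\cap \mathbb{H}^n$ with Euclidean norms $\|\bx_k\|_E\to\infty$. After passing to a subsequence, the unit vectors $\bw_k=\bx_k/\|\bx_k\|_E$ converge to some unit $\bw\in\mathbb{R}^{n+1}$. Since $V$ is closed and each $\bw_k\in V$, the limit $\bw$ lies in $V$. Because $\bx_k\circ\bx_k=-1$ we get $\bw\circ\bw=\lim \bw_k\circ\bw_k = 0$, so $\bw$ is light-like; and since $\bw\ne\bo$, the constraint $\bw_0^2=\bw_1^2+\cdots+\bw_n^2$ together with $(\bx_k)_0>0$ forces $\bw_0>0$, i.e., $\bw\in V\cap L^+$.

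Now the hypothesis $\bu\notin V$ means $\bw$ is not a scalar multiple of $\bu$ (if $\bw = c\bu$ with $c\neq 0$ then $\bu\in V$, contradicting the assumption). Applying the strict case of (\ref{x circ y}) to the pair $\bu,\bw\in L^+$, with $a=b=0$, yields $\bw\circ\bu<0$. Therefore $\bw_k\circ\bu\to\bw\circ\bu<0$, and
\[
  \bx_k\circ\bu = \|\bx_k\|_E\,(\bw_k\circ\bu)\longrightarrow -\infty.
\]
In particular eventually $\bx_k\circ\bu<-1$, contradicting $\bx_k\in B$. This contradiction forces $V\cap B\cap\mathbb{H}^n$ to be bounded, completing the proof.

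The only slightly delicate point, and the one I would think of as the main obstacle, is ensuring that the limit direction $\bw$ actually lies in the positive light cone $L^+$ rather than merely in $L$; once that is in hand, the strict form of inequality (\ref{x circ y}) does all the real work. Nothing here needs the Lorentzian structure beyond what has already been established, and no Lorentzian normalization tricks (such as renormalizing $\bx_k$ to $\mathbb{H}^n$ by $\sqrt{-\bx_k\circ\bx_k}$) are required since we are content with Euclidean boundedness.
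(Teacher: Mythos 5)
Your proof is correct, but it takes a genuinely different route from the paper's. You argue by contradiction and softness: a sequence escaping to infinity in $V\cap B\cap\mathbb{H}^n$ has (after Euclidean renormalization) a limit direction $\bw\in V\cap L^+$, which the hypothesis $\bu\notin V$ prevents from being proportional to $\bu$; the strict case of (\ref{x circ y}) then gives $\bw\circ\bu<0$, so $\bx_k\circ\bu\to-\infty$, violating membership in the horoball. The paper instead uses $\bu\notin V$ in the equivalent form $\bu\circ\bv\neq 0$ to construct an explicit point $\bw(t)=s(t)\bu+t\bv\in\mathbb{H}^n$ in the span of $\bu$ and $\bv$, and then the one-line estimate $\bx\circ\bw(t)=s(t)(\bx\circ\bu)\geq -s(t)$ (valid since $\bx\circ\bv=0$ on $V$ and $\bx\circ\bu\geq-1$ on $B$) shows the intersection lies in the metric ball of radius $\cosh^{-1}(s(t))$ about $\bw(t)$, whose compactness is already known from Lemma \ref{convex component}. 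The paper's argument buys an explicit bounding ball with a computable radius, while yours is more conceptual -- essentially the observation, in the spirit of Fact \ref{limit sets}, that the limit set of the horoball is the single point $p(\mathrm{span}(\bu)\cap L^+)$, which is disjoint from the limit set of $V\cap\mathbb{H}^n$ -- and generalizes easily to other convex sets whose limit sets miss that point. The only small points to keep tidy in your write-up are the ones you implicitly rely on: $\mathbb{H}^n$ is closed in $\mathbb{R}^{n+1}$ (so your intersection really is closed in $\mathbb{R}^{n+1}$, making Heine--Borel applicable), and the strict-inequality case of (\ref{x circ y}) applies because for light-like vectors with positive first coordinate, $\bu\circ\bw=0$ forces proportionality; both are exactly as you indicated.
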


\begin{proof}  For $\bw = s\bu+t\bv$, $\bw\circ\bw = 2st\bu\circ\bv+t^2\bv\circ\bv$.  Since $\bu\circ\bv\neq 0$ by hypothesis, $\bw\circ\bw=-1$ if and only if $s = \frac{-1-t^2\bv\circ\bv}{2t\bu\circ\bv}$.  Thus any $t\in\mathbb{R}-\{0\}$ determines $s(t)$ such that $\bw(t) = s(t)\bu+t\bv$ has $\bw(t)\circ\bw(t)=-1$.  The function $s(t)$ has opposite signs on the two components of $\mathbb{R}-\{0\}$, and $s(t)\to\infty$ as $t\to 0$.  Since $u_0>0$ it follows that $\bw(t)\in\mathbb{H}^n$ when $s(t)>0$.

Fix some $t$ with $s(t)>0$, so $\bw(t)\in\mathbb{H}^n$.  If $\bx\in V\cap B$ then $\bx\circ\bw(t) \geq -s(t)$, so by Lemma \ref{convex component} $(V\cap B)\cap\mathbb{H}^n$ is contained in the ball of radius $\cosh^{-1} (s(t))$ about $\bw(t)$.\end{proof}

\begin{lemma}\label{intersect balls}  For linearly independent vectors $\bu$ and $\bu'$ in $L^+$, the horoball intersection $(B\cap B')\cap \mathbb{H}^n$ is compact, where $B = \{\bx\circ\bu\geq -1\}$ and $B'=\{\bx\circ\bu' \geq -1\}$.  For the horosphere $S$ determined by $\bu$, if $S\cap B'\neq\emptyset$ then $S\cap B'=S\cap U$ for a hyperbolic ball $U$ centered in $S$.  As $\bu'\to\bu$ in the Euclidean sense, eventually $S\cap B'\neq\emptyset$ and the radius of $U$ increases without bound.\end{lemma}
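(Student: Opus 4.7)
I would treat the three assertions in order. For compactness of $(B\cap B')\cap\mathbb{H}^n$, the key input is that linear independence of $\bu,\bu'\in L^+$ forces $\bu\circ\bu' < 0$ by the Cauchy--Schwarz-type inequality (\ref{x circ y}) specialized to light-like vectors. Then $\bu+\bu'$ is time-like with positive first coordinate, so $\bu+\bu' = c\bw$ for a unique $\bw\in\mathbb{H}^n$ and $c > 0$. Adding the two horoball inequalities yields $\cosh d_H(\bx,\bw)\le 2/c$ for any $\bx\in(B\cap B')\cap\mathbb{H}^n$, and Lemma \ref{convex component} identifies this set with a compact metric ball (or the empty set if $2/c < 1$).

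For the second assertion, I would pull $S\cap B'$ back via the Euclidean isometry $F\co V_0\to S$ of Lemma \ref{Euclid's balls}, choosing the base point $\bx_0\in S$ so that $k = 1+\bx_0\circ\bx_0 = 0$ and $F(\bv) = \bx_0+\bv+\tfrac{1}{2}\|\bv\|^2\bu$. With $a := \bu\circ\bu' < 0$, direct expansion gives $F(\bv)\circ\bu' = \tfrac{a}{2}\|\bv\|^2+\bv\circ\bu'+\bx_0\circ\bu'$, a concave quadratic on $V_0$. Completing the square identifies $\{\bv : F(\bv)\circ\bu'\ge -1\}$ with a Euclidean ball in $V_0$ centered at $\bv^*/|a|$---where $\bv^*\in V_0$ is the Riesz representative of the linear functional $\bv\mapsto\bv\circ\bu'$ under the positive-definite inner product $\circ|_{V_0}$---of squared radius
\[R^2 = \frac{2(\bx_0\circ\bu'+1)}{|a|}+\frac{\|\bv^*\|^2}{a^2}.\]
A separate direct expansion, using $\bx_0\circ\bx_0=-1$ and the orthogonality of $\bu,\bx_0$, and $V_0$, yields $F(\bv)\circ F(\bv_0) = -1-\tfrac{1}{2}\|\bv-\bv_0\|^2$, so $\cosh d_H(F(\bv),F(\bv_0))$ depends only on the Euclidean distance $\|\bv-\bv_0\|$. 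Consequently the Euclidean ball of squared radius $R^2$ around $\bv^*/|a|$ corresponds under $F$ to $S\cap U$, where $U$ is the hyperbolic ball of radius $r$ with $\cosh r = 1+R^2/2$ centered at $F(\bv^*/|a|)\in S$.

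For the limiting statement I would write $\bu' = \bu+\bh$ and decompose $\bh = \bh_0+\alpha\bu+\beta\bx_0$ in the splitting $\mathbb{R}^{n+1} = V_0\oplus\langle\bu\rangle\oplus\langle\bx_0\rangle$. The constraint $\bu'\in L^+$ becomes $\|\bh_0\|^2 = \beta(\beta+2\alpha+2)$, and linear independence of $\bu$ and $\bu'$ rules out $\beta = 0$, forcing $\beta > 0$ for $\bh$ small. Short calculations then give $|a| = \beta$, $\bx_0\circ\bu'+1 = -(\alpha+\beta)$, and $\bv^* = \bh_0$, whereupon the radius formula collapses to the exact identity $R^2 = 2/\beta - 1$. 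As $\bh\to 0$ in the Euclidean sense, $\beta\to 0^+$, so eventually $R^2 > 0$ (i.e., $S\cap B'\neq\emptyset$), and $\cosh r = 1+R^2/2 = \tfrac{1}{2}+1/\beta\to +\infty$, giving the desired divergence of the hyperbolic radius.

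The main obstacle I anticipate lies in the second step: recognizing that the $F$-pullback of $\{F(\bv)\circ\bu'\ge -1\}$ \emph{and} the $F$-pullback of a hyperbolic ball centered on $S$ are both Euclidean balls in $V_0$, with matching centers, requires the two explicit expansions in parallel. Once that dictionary is in place, the limit step is essentially bookkeeping, driven by the light-cone constraint $\|\bh_0\|^2\sim 2\beta$.
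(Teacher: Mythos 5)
Your proposal is correct, but it reaches the conclusion by a genuinely different route than the paper. The paper works entirely inside the $2$-plane spanned by $\bu$ and $\bu'$: it parametrizes its intersection with $\mathbb{H}^n$ as $\gamma(a)=a\bu+b(a)\bu'$, observes that $\bx_0=\gamma(1/2)=\tfrac12\bu-\tfrac{1}{\bu\circ\bu'}\bu'$ lies on $S$, and then notes that for $\bx\in S$ (or, adding the two horoball inequalities, for $\bx\in B\cap B'$) the condition $\bx\circ\bu'\geq-1$ translates in one line into $\bx\circ\bx_0\geq r_0:=-\tfrac12+\tfrac{1}{\bu\circ\bu'}$; this simultaneously gives the ball $U$ of radius $\cosh^{-1}(-r_0)$ centered at $\bx_0\in S$, the compactness of $(B\cap B')\cap\mathbb{H}^n$ as a closed subset of $U$, and the limit statement since $\bu\circ\bu'\to 0^-$. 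You instead split the three claims: compactness via the time-like vector $\bu+\bu'$ and Lemma \ref{convex component}; the ball statement via the intrinsic flat parametrization $F$ of the horosphere from Lemma \ref{Euclid's balls}, where completing the square exhibits the pullback of $B'$ as a Euclidean ball and the identity $F(\bv)\circ F(\bv_0)=-1-\tfrac12\|\bv-\bv_0\|^2$ converts it to a hyperbolic ball centered on $S$; and the limit via the coordinate decomposition giving $R^2=2/\beta-1$ with $\beta=-\bu\circ\bu'$. Your computations check out (in particular $\bv^*=\bh_0$, $|a|=\beta$, $\bx_0\circ\bu'+1=-(\alpha+\beta)$, and the light-cone identity $\|\bh_0\|^2=\beta(\beta+2\alpha+2)$), and your radius $\cosh r=1+R^2/2=\tfrac12-\tfrac{1}{\bu\circ\bu'}$ agrees exactly with the paper's $-r_0$, a good consistency check. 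What each approach buys: the paper's is much shorter and keeps everything tied to the single scalar $\bu\circ\bu'$, while yours makes explicit the conceptually useful fact that a horoball traces a Euclidean ball on a horosphere (leveraging Lemma \ref{Euclid's balls}), at the cost of more bookkeeping; note also that $\beta>0$ holds for all linearly independent $\bu,\bu'$ (it equals $-\bu\circ\bu'$), not only for small $\bh$, which slightly streamlines your step 3, and the borderline case $R^2=0$ (a radius-zero ball) deserves a word, though the paper's proof has the same degenerate case at $r_0=-1$.
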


\begin{proof}  The span of $\bu$ and $\bu'$ intersects $\mathbb{H}^n$ in the (non-linearly reparametrized) geodesic $\gamma(a) = a\bu+b(a)\bu'$, $a>0$, where $b(a) = -1/(2a\bu\circ\bu')$.  A computation shows that $\bx_0 = \gamma(1/2)$ is the unique point of intersection between $\gamma$ and $S$.  For arbitrary $\bx$ in $S$ we have $\bx\in B' \Leftrightarrow \bx\circ\bu'\geq -1$; i.e.~if and only if
$$\bx\circ\bx_0 = \frac{-1}{2} + \frac{-1}{\bu\circ\bu'}\bx\circ\bu' \geq \frac{-1}{2}+\frac{1}{\bu\circ\bu'}.$$
Let $r_0$ be the right-hand quantity above.  Then $S\cap B'\neq\emptyset$ if and only if $r_0\leq -1$.  If so then $S\cap B' = S\cap U$, where $U$ is the ball of radius $\cosh^{-1}(-r_0)$ centered at $\bx_0$.  Note that $r_0\to-\infty$ as $\bu'\to \bu$, since then $\bu'\circ\bu\to 0$ (from below by (\ref{x circ y})) by continuity of $\circ$.

It remains only to note that any $\bx\in B\cap B'$ also satisfies the inequality above, so $B\cap B'$ is a closed subset of $U$ hence compact.\end{proof}

\begin{lemma}\label{intersect ballssss}  For a sequence $\{\bu_n\}\to\bu$ of time-like vectors in $\mathbb{R}^{n+1}$ converging to a light-like vector, and $s_n\to -1$, let $P_n = \{\bx\circ\bu_n=s_n\}$ and $B_n = \{\bx\circ\bu_n\geq s_n\}$.  If the horosphere $S = P\cap\mathbb{H}^n$ determined by $\bu$ (where $P=\{\bx\circ\bu=-1\}$) contains a point $\bx$ with $\bx\circ\bu_n\to -1$, and $B_n\cap S$ contains no sequence with unbounded distance to $S-B_n$ as $n\to\infty$, then
$$ \lim_{n\to\infty} \frac{\bu_n\circ\bu_n}{\bu\circ\bu_n} = 2 $$\end{lemma}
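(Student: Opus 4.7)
The plan is to reduce the desired limit, via a coordinate decomposition adapted to the horosphere $S$, to the asymptotic $\|\bw'_n\|^2/t_n \to 0$ for certain components $\bw'_n$ and $t_n$ of $\bv_n := \bu_n - \bu$, and then to extract this from the hypothesis by realizing $B_n \cap S$ as a Euclidean ball in the horospheric chart of Lemma \ref{Euclid's balls}.

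First I would fix $\bx_0 \in S$ (so $\bx_0 \circ \bu = -1$ and $\bx_0 \circ \bx_0 = -1$) and let $V_0 := \bx_0^{\perp} \cap \bu^{\perp}$, the $(n-1)$-dimensional space-like subspace from the proof of Lemma \ref{Euclid's balls} (with $k = 1+\bx_0\circ\bx_0 = 0$ there). Then $\mathbb{R}^{n+1} = V_0 \oplus \langle\bu\rangle \oplus \langle\bx_0\rangle$. Decomposing $\bv_n = \bw'_n + s_n \bu + t_n \bx_0$ (so $s_n, t_n \to 0$ and $\bw'_n \to \bo$ since $\bv_n \to \bo$), a direct computation using $\bu \circ \bu = 0$, $\bu \circ \bx_0 = -1$, and the orthogonalities in $V_0$ gives
\begin{equation*}
\bu \circ \bu_n = -t_n, \qquad \bu_n \circ \bu_n = \|\bw'_n\|^2 - 2 s_n t_n - t_n^2 - 2 t_n,
\end{equation*}
where $\|\bw\|^2 := \bw \circ \bw \geq 0$ on $V_0$. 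Because $\bu$ (light-like) and $\bu_n$ (time-like) are not proportional, (\ref{x circ y}) is strict and gives $t_n > 0$. Dividing,
\begin{equation*}
\frac{\bu_n \circ \bu_n}{\bu \circ \bu_n} = 2 - \frac{\|\bw'_n\|^2}{t_n} + 2 s_n + t_n,
\end{equation*}
so since $s_n, t_n \to 0$ the claim reduces to $\|\bw'_n\|^2/t_n \to 0$.

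Next I would parametrize $B_n \cap S$ using the Euclidean isometry $F\co V_0 \to S$, $F(\bw) = \bx_0 + \bw + \tfrac{1}{2}\|\bw\|^2 \bu$, supplied by Lemma \ref{Euclid's balls}. Since $F(\bw) \circ \bu = -1$ identically, the condition $F(\bw) \circ \bu_n \geq r_n$ expands, after substituting the decomposition of $\bv_n$, to the quadratic inequality
\begin{equation*}
\bw \circ \bw'_n - \tfrac{t_n}{2}\|\bw\|^2 \geq R_n, \qquad R_n := r_n + 1 + s_n + t_n,
\end{equation*}
with $R_n \to 0$. Since $t_n > 0$, completing the square exhibits $F^{-1}(B_n \cap S)$ as either empty or a Euclidean ball about $\bw'_n/t_n$ of squared radius $(\|\bw'_n\|^2 - 2 R_n t_n)/t_n^2$.

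Finally I would invoke the hypothesis. The map $F$ is an isometry onto $S$ with its induced (flat) Riemannian metric, and a direct computation from this parametrization yields $\cosh d_H(\bx, \by) = 1 + \tfrac{1}{2} d_E(\bx, \by)^2$ for $\bx, \by \in S$, so hyperbolic and intrinsic-Euclidean distances on $S$ blow up together. The hypothesis that no sequence in $B_n \cap S$ has unbounded $d_H$-distance to $S - B_n$ thus supplies a constant $C$ with $(\|\bw'_n\|^2 - 2 R_n t_n)/t_n^2 \leq C$ uniformly in $n$ (the bound is trivial when the cap is empty). Rearranging gives $\|\bw'_n\|^2/t_n \leq 2 R_n + C t_n \to 0$, as required. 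I expect the main obstacle to be this translation step, which requires converting the qualitative ``no unbounded-distance sequence'' statement into a uniform quantitative bound on the square-completed cap and verifying that the drift terms in $R_n$ are genuinely $o(t_n)$ after dividing through by $t_n$.
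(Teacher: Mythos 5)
Your argument is correct, but it takes a genuinely different route from the paper's. The paper works along the geodesic from the center $\bu_n/\sqrt{-\bu_n\circ\bu_n}$ of the ball $B_n\cap\mathbb{H}^n$ toward the ideal point of $S$: Lemma \ref{bad balls} identifies the point of $S$ closest to that center, the hypothesis that some $\bx\in S$ has $\bx\circ\bu_n\to-1$ then yields $\limsup\,\bu_n\circ\bu_n/\bu\circ\bu_n\leq 2$ by comparing inner products at the closest point and at $\bx$, and the bounded-cap hypothesis is used separately to get $\liminf\geq 2$, after exhibiting $B_n\cap S$ as the trace on $S$ of a hyperbolic metric ball whose radius must stay bounded. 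You instead decompose $\bu_n-\bu$ in the Lorentz frame $V_0\oplus\langle\bu\rangle\oplus\langle\bx_0\rangle$ and work in the flat chart $F$ of Lemma \ref{Euclid's balls}; this produces the exact identity $\bu_n\circ\bu_n/\bu\circ\bu_n = 2-\|\bw'_n\|^2/t_n+2s_n+t_n$, so the upper bound $\leq 2+o(1)$ is automatic (notably, you never use the hypothesis about the point $\bx$ with $\bx\circ\bu_n\to-1$; your identity shows it is not needed for the conclusion), and the bounded-cap hypothesis enters only once, to bound the Euclidean radius of the cap and hence give $\|\bw'_n\|^2/t_n\leq 2R_n+Ct_n\to 0$. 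Your translation of the hypothesis is sound: the relation $\cosh d_H=1+\tfrac12 d_E^2$ on $S$ (which your chart computation does verify) converts ambient hyperbolic distances into intrinsic flat ones, and if the radii $\rho_n$ were unbounded along a subsequence, the cap centers $F(\bw'_n/t_n)$ would form a sequence in $B_n\cap S$ with unbounded distance to $S-B_n$, exactly the kind of sequence the hypothesis forbids (this is also how the paper invokes it). Two small points to make explicit: before applying (\ref{x circ y}) to get $t_n=-\bu\circ\bu_n>0$ you should note that $u_{n,0}>0$ for all large $n$ (which follows since $\bu_n\to\bu$ and $u_0>0$), and it is worth observing that since $t_n>0$ the set $F^{-1}(B_n\cap S)$ is always a bounded ball or empty, so $S-B_n\neq\emptyset$ and the distances in the hypothesis are well defined. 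Overall your route is more elementary --- no geodesic or closest-point computation --- and it isolates precisely what the bounded-cap hypothesis contributes.
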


\begin{proof}  The geodesic $\gamma$ of $\mathbb{H}^n$ described below is from Lemma \ref{convex component}, with $\frac{\sqrt{-\bu_n\circ\bu_n}}{-\bu\circ\bu_n}\bu$ in the role of $\bu$ there and $\gamma(0)=\bu_n/\sqrt{-\bu_n\circ\bu_n}$ in the role of $\bx$. (The scale factors ensure correct pairings.)
$$\gamma(t) = e^{-t}\frac{\bu_n}{\sqrt{-\bu_n\circ\bu_n}} + \sinh t\frac{\sqrt{-\bu_n\circ\bu_n}}{-\bu\circ\bu_n}\,\bu$$
$P_n\cap\mathbb{H}^n$ is a metric sphere centered at $\gamma(0)$ (by Lemma \ref{classification of spheres}), and $B_n\cap\mathbb{H}^n$ is a metric ball with the same center (Lemma \ref{convex component}), each with radius $\cosh^{-1}(-s_n/\sqrt{-\bu_n\circ\bu_n})$.  Since $s_n\to -1$ and $\bu_n\circ\bu_n\to 0$ (by continuity of $\circ$), the radius approaches infinity as $\bu_n\to\bu$.

A computation reveals that $\gamma(t_n)$ is the unique point of intersection between $\gamma$ and $S$, where $t_n= \ln\frac{-\bu\circ\bu_n}{\sqrt{-\bu_n\circ\bu_n}}$.  Since $\gamma = \gamma_{\gamma(0)}$, Lemma \ref{bad balls} implies that $\gamma(t_n)$ is the closest point of $S$ to $\gamma(0)$, in particular closer than $\bx$.  Therefore:
$$ \gamma(t_n)\circ\bu_n = \frac{1}{2}\left[\bu\circ\bu_n - \frac{\bu_n\circ\bu_n}{\bu\circ\bu_n}\right] 
 \geq \bx\circ\bu_n \quad\Rightarrow\quad \frac{\bu_n\circ\bu_n}{\bu\circ\bu_n} \leq -2\bx\circ\bu_n+\bu\circ\bu_n$$
The right-hand inequality above implies that $\limsup_{n\to\infty} \frac{\bu_n\circ\bu_n}{\bu\circ\bu_n}\leq2$.

If $\gamma(t_n)$ is not in $B_n$ then $\gamma(t_n)\circ \bu_n< s_n$.  Supposing that this occurs for all but finitely many $n$, rearranging as above we infer that $\liminf_{n\to\infty} \frac{\bu_n\circ\bu_n}{\bu\circ\bu_n} \geq 2$, and the result holds.  We therefore restrict attention to an infinite subsequence on which we suppose that $\gamma(t_n)\in B_n$.

For $\by\in S$ (so $\by\circ\bu=-1$) we have $\by\in B_n$ if and only if $\by\circ\bu_n\geq s_n$; i.e.~if and only if
$$ \gamma(t_n)\circ\by = \frac{\by_n\circ\bu_n}{-\bu\circ\bu_n} - \frac{1}{2}\left[1-\frac{-\bu_n\circ\bu_n}{(\bu\circ\bu_n)^2}\right] \geq -\frac{1}{2} + \frac{-1}{2\bu\circ\bu_n}\left[2s_n+\frac{\bu_n\circ\bu_n}{\bu\circ\bu_n}\right]  $$
Thus $B_n\cap S = U_n\cap S$ where $U$ is a metric ball centered at $\gamma(t_n)$ with radius the inverse hyperbolic cosine of the right-hand quantity above.  By hypothesis this radius remains bounded, so since $\bu\circ\bu_n\to 0$ the bracketed quantity on the right approaches $0$ as $n\to\infty$.  The Lemma follows, since $s_n\to -1$.
\end{proof}

\begin{lemma}\label{Euclid's planes}  Let $V$ be an $n$-dimensional light-like subspace of $\mathbb{R}^{n+1}$, $\bx_0$ such that $P = V+\bx_0$ intersects $\mathbb{H}^n$, and $\bu\in V^{\perp}$ such that $\bx_0 \circ \bu = -1$.  For a codimension-one affine subspace $Q$ of $P$, exactly one of the following holds:
\begin{itemize} \item $Q$ contains a translate of $\bu$, and $Q\cap\mathbb{H}^n$ is a totally geodesic hyperplane in the Euclidean metric on $P\cap\mathbb{H}^n$; or
\item  the half-space $P^-=\{\bq - t\bu\,|\, \bq\in Q\ \mbox{and}\ t\geq 0\}$ of $P$ bounded by $Q$ has compact intersection with $U_{n+1} = \{\bx\,|\,\bx\circ\bx\leq -1,\ x_0>0\}$.
\end{itemize}\end{lemma}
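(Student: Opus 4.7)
The strategy is to parametrize the horosphere $S = P \cap \mathbb{H}^n$ via the Euclidean isometry $F\co V_0 \to S$ from the proof of Lemma~\ref{Euclid's balls}, where $V_0 = \bx_0^{\perp}\cap V$ is the $(n-1)$-dimensional space-like subspace of $V$ with $V = V_0 \oplus \mathbb{R}\bu$, and $F(\bv) = \bx_0 + \bv + \tfrac{1}{2}(k+\|\bv\|^2)\bu$ for $k = 1+\bx_0\circ\bx_0$.  The dichotomy in the statement corresponds exactly to whether $\bu$ lies in the direction subspace $Q_0 \subset V$ of $Q$; to exploit this, I would fix $\bq_0 \in Q$ and write $\bq_0 = \bx_0 + \bv_0 + t_0\bu$ with $\bv_0 \in V_0$ and $t_0 \in \mathbb{R}$.

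If $\bu \in Q_0$, set $W = Q_0 \cap V_0$ (of dimension $n-2$), so that $Q_0 = W \oplus \mathbb{R}\bu$.  A short calculation in the decomposition $V = V_0 \oplus \mathbb{R}\bu$ then shows $F(\bv) \in Q$ if and only if $\bv - \bv_0 \in W$, so $F^{-1}(Q \cap S)$ is the affine hyperplane $\bv_0 + W$ of $V_0 \cong \mathbb{R}^{n-1}$, and hence $Q \cap \mathbb{H}^n$ is a totally geodesic hyperplane in the Euclidean metric on $S$.  Note that in this case $Q$ is invariant under translation by $\bu$, so $P^- = Q$ is not in fact a half-space of $P$; this confirms that the two alternatives are mutually exclusive.

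Now suppose $\bu \notin Q_0$, so $V = Q_0 \oplus \mathbb{R}\bu$ and $Q_0 = \{\bw + g(\bw)\bu : \bw \in V_0\}$ for a unique linear functional $g\co V_0 \to \mathbb{R}$.  Parametrize a general point of $P$ as $\bx = \bx_0 + \bv + s\bu$ with $\bv \in V_0$, $s \in \mathbb{R}$; one checks directly that $\bx \in P^-$ iff $s \leq t_0 + g(\bv - \bv_0)$.  Using the orthogonality relations $\bv \circ \bu = 0$ (because $\bu \in V^{\perp}$ annihilates all of $V$), $\bx_0 \circ \bv = 0$ (by definition of $V_0$), $\bu \circ \bu = 0$, and $\bx_0 \circ \bu = -1$, direct expansion yields $\bx \circ \bx = \bx_0 \circ \bx_0 - 2s + \|\bv\|^2$, so $\bx \in U_{n+1}$ iff $2s \geq k + \|\bv\|^2$.

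Combining the two inequalities gives $\|\bv\|^2 - 2g(\bv - \bv_0) \leq 2t_0 - k$; completing the square with respect to the positive-definite form $\circ|_{V_0}$ confines $\bv$ to a closed Euclidean ball (possibly empty), after which $s$ is restricted to a bounded interval.  Hence $P^- \cap U_{n+1}$ is bounded, and since $P^-$ and $U_{n+1}$ are each closed in $\mathbb{R}^{n+1}$, the intersection is compact.  The only real subtlety is the case split on whether $\bu \in Q_0$; everything else is routine computation once the orthogonality relations among $\bx_0$, $\bu$, and $V_0$ are in hand.
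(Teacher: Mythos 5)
Your argument is correct, and the half that does the real work goes by a genuinely different route than the paper. For the case $\bu\notin Q_0$ the paper argues geometrically: assuming $\bx_0\in Q$, it takes a time-like vector $\bw\in\mathbb{H}^n$ Lorentz-orthogonal to $Q_0=Q-\bx_0$, forms the space-like affine hyperplane $R=\bw^{\perp}+\bx_0\supset Q$, and shows (using $\bw\circ\bu<0$, from the Cauchy--Schwartz inequality (\ref{x circ y})) that $P^-$ lies in the half-space bounded by $R$ whose intersection with $U_{n+1}$ is compact by Lemma \ref{convex component}; compactness of $P^-\cap U_{n+1}$ then comes for free. You instead compute directly in the splitting $V=V_0\oplus\mathbb{R}\bu$: writing $Q_0$ as the graph of a linear functional $g$ on $V_0$, the two membership conditions become $2s\geq k+\|\bv\|^2$ and $s\leq t_0+g(\bv-\bv_0)$, and completing the square (via the Riesz representative of $g$ in the positive-definite form $\circ|_{V_0}$) traps $\bv$ in a ball and then $s$ in a bounded interval. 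Both are sound; the paper's version is shorter because it leans on the already-proved space-like compactness statement, while yours is self-contained, quantitative, and makes the conic-section intuition mentioned after the lemma explicit. Your first case coincides with the paper's (both push $Q$ through the isometry $F$ of Lemma \ref{Euclid's balls}). One small point: for the ``exactly one'' claim, the relevant observation is not that $P^-=Q$ fails to be a half-space, but that in the first case $P^-\cap U_{n+1}=Q\cap U_{n+1}$ is non-compact --- which is immediate from your setup, since $Q\cap\mathbb{H}^n\neq\emptyset$ and for $\bx\in Q\cap\mathbb{H}^n$ the ray $\bx+t\bu$, $t\geq0$, stays in $Q\cap U_{n+1}$ (as $(\bx+t\bu)\circ(\bx+t\bu)=-1-2t$). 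The paper records exactly this non-compactness; with that one-line addition your proof is complete.
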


The intuition here is from conic sections: since $P\cap\mathbb{H}^n$ is asymptotic to the paraboloid $P\cap L$, $Q\cap U_{n+1}$ cuts off two non-compact pieces if and only if $Q$ is ``parallel to $L$''.

\begin{proof}  Let us first suppose that $Q$ contains a translate of $\bu$, and as in the proof of Lemma \ref{Euclid's balls}, suppose $\bx_0$ is Lorentz-orthogonal to $V_0 = V\cap(\{0\}\times\mathbb{R}^n)$.  It follows from (\ref{go circ yourself}) that
$$P\cap U_{n+1} = \{\bx_0 + \bv + t\bu\,|\, \bv\in V_0\ \mbox{and}\ t \geq 1 + \bx_0\circ\bx_0+\bv\circ\bv\}$$
Upon noting that $V$ is spanned by $V_0$ and $V^{\perp}$, it is thus clear that $Q\cap U_{n+1}$ is non-compact.  For any such $Q$, $Q-\bx_0$ is an affine subspace of $V$ that has non-trivial intersection with $V_0$.  Let $Q_0 = (Q-\bx_0)\cap V_0$.  This is a totally geodesic Euclidean subspace of $V_0$, with codimension $1$ in $Q$, that is mapped by $F$ (from Lemma \ref{Euclid's balls}) to $Q\cap\mathbb{H}^n$.

Suppose now that $Q$ contains no translate of $\bu$, and assume that $\bx_0\in Q$.  Then $Q_0=Q-\bx_0$ is a space-like subspace of $V$.  The orthogonal complement to $Q_0$ in $\mathbb{R}^{n+1}$ contains a time-like vector $\bw$, which we may take in $\mathbb{H}^n$ after scaling.  Let $W$ be the orthogonal complement to $\bw$, a space-like $n$-dimensional subspace containing $Q_0$, and let $R = W+\bx_0$, intersecting $P$ in $Q$.  Lemma \ref{convex component} implies that $B = \{\bx\circ\bw\geq\bx_0\circ\bw\}$ is that half-space bounded by $R$ that has compact intersection with $U_{n+1}$.  We claim this contains the half-space of $P$ above.

For $\bq\in Q$, $\bq\circ\bw = \bx_0\circ\bw$ since $\bw$ is by construction Lorentz-orthogonal to $Q_0 = Q-\bx_0$.  The claim, and hence the lemma, now follows from (\ref{x circ y}), which implies that $\bw\circ\bu < 0$.\end{proof}

\subsection{Finite-volume hyperbolic manifolds}\label{margulis}Consequences of Margulis' Lemma, a deep result on the geometry of hyperbolic manifolds, further constrain the faces of $\mathrm{Hull}(\cals)$ with horospherical circumspheres when $\cals$ is lattice-invariant.  In this brief section we will lay out the relevant results, all of which are standard, using the expository text by Benedetti--Petronio \cite{BenPet} for pinpoint citations.

For a discrete subgroup $\Gamma$ of $\mathit{SO}^+(1,n)$, let $M = \mathbb{H}^n/\Gamma$.  If $\Gamma$ is torsion-free then $M$ is a manifold, and if $\Gamma$ is a lattice then $M$ has finite volume.  For any $\epsilon>0$ let $M_{(0,\epsilon]}$ be the ``$\epsilon$-thin part'' of $M$, consisting of points at which the injectivity radius is at most $\epsilon$, and $M_{[\epsilon,\infty)}$, the ``$\epsilon$-thick part'', be the points where it is at least $\epsilon$.  The \textit{injectivity radius} of $M$ at $\bx$ is $\frac{1}{2}\min \{d_H(\tilde{x},g.\tilde{x})\}$, taken over all $g\in\Gamma-\{\mathit{id}\}$, for some $\tilde{x}\in\mathbb{H}^n$ projecting to $x$.

Margulis' Lemma asserts that for any $n\geq2$ there exists $\epsilon_n>0$, the \textit{$n$-dimensional Margulis constant}, such that if $0<\epsilon <\epsilon_n$ then for any torsion-free lattice $\Gamma<\mathit{SO}^+(1,n)$, every subgroup of $\Gamma$ consisting entirely of elements $g$ with $d_H(x,g.x) < \epsilon$ for some fixed $x\in \mathbb{H}^n$ is almost nilpotent (see \cite{BenPet}).
 
This implies that the $\epsilon$-thin part $M_{(0,\epsilon]}$ of $M = \mathbb{H}^n/\Gamma$ is a finite disjoint union of topological components classified by Theorem D.3.3 of \cite{BenPet}.  The \textit{cusps} of $M$, components of $M_{[0,\epsilon)}$ of type (2) in Theorem D.3.3 (see also the bottom of p.~150 in \cite{BenPet}), are most important here.  In particular, cusps are noncompact, whereas $M_{[\epsilon,\infty)}$ is compact \cite[Prop.~2.6]{BenPet} and so are components of $M_{(0,\epsilon]}$ of other types.

Cusps are associated to \textit{parabolic subgroups} of $\Gamma$.  An isometry $\gamma\in\mathit{SO}^+(1,n)$ of $\mathbb{H}^n$ is \textit{parabolic} if it has no time-like eigenvector and a unique light-like eigenvector $\bu$ with eigenvalue $1$.  Its \textit{fixed point} is $p(U\cap L^+)$, where $U = \mathrm{span}(\bu)$ and $p\co\mathbb{R}^{n+1}-\{\bo\}\to \mathbb{R}P^n$ is projectivization (recall Definition \ref{at infinity}).  A non-trivial group of isometries is parabolic if all its non-trivial elements are.  It is a basic fact that all elements of such a group share a fixed point.

\begin{lemma}\label{invariant horoballs}  Suppose $\Gamma<\mathit{SO}^+(1,n)$ is a torsion-free lattice, and $L$ is a cusp of $M = \mathbb{H}^n/\Gamma$.  For any component $\tilde{L}$ of $\pi^{-1}(L)$, where $\pi\co\mathbb{H}^n\to M$ is the quotient map, there is a parabolic subgroup $\Gamma_1$ of $\Gamma$ stabilizing $\tilde{L}$ such that $L$ is isometric to $\tilde{L}/\Gamma$ and the following hold:\begin{itemize}  \item $\tilde{L}$ contains the horoball determined by some $\bu\in U$, where $U$ is the common light-like eigenspace of $\Gamma_1$;
\item  $\tilde{L}$ is contained in the horoball determined by some $\bu'\in U$; and
\item $\Gamma_1$ acts cocompactly on the horosphere determined by any $\bu\in U$.\end{itemize}
Conversely, any parabolic subgroup $\Gamma_1<\Gamma$ stabilizes a component $\widetilde{L}$ of $\pi^{-1}(L)$ for some cusp $L$ of $M$, and any $\bu$ in the fixed point of $\Gamma_1$ determines a horoball containing some such $\widetilde{L}$.\end{lemma}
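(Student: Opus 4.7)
The plan is to derive the lemma by invoking the classification of thin-part components in \cite[Theorem D.3.3]{BenPet} and then translating its conclusions into hyperboloid-model language. The work of Margulis' lemma is already baked into that theorem; what remains is bookkeeping with the Lorentzian pairing.

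For the forward direction I would fix $\epsilon<\epsilon_n$ so that $L$ is a type-(2) component of $M_{(0,\epsilon]}$. By \cite[Theorem D.3.3]{BenPet} each component $\widetilde{L}$ of $\pi^{-1}(L)$ carries an action of a maximal parabolic subgroup $\Gamma_1<\Gamma$ (loxodromic elements produce the compact type-(1) components, not cusps) with quotient isometric to $L$; all nontrivial elements of $\Gamma_1$ share a common light-like fixed line $U$. The first two bullets then follow from how parabolic displacement varies along horospheres at $U$: a point moving toward $U$ has every $\Gamma_1$-displacement shrinking to zero (cleanly visible in the Euclidean horospherical coordinates of Lemma \ref{Euclid's balls}), so every sufficiently deep horoball at $U$ is absorbed into $\widetilde{L}$; whereas on a horosphere far from $U$ every nontrivial parabolic in $\Gamma_1$ has displacement exceeding $2\epsilon$, confining $\widetilde{L}$ to a suitable horoball at $U$.

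For the third bullet, Lemma \ref{Euclid's balls} identifies any horosphere $S$ centered at $U$ with Euclidean $\mathbb{R}^{n-1}$, and $\Gamma_1\subset\mathit{SO}^+(1,n)$ acts on $S$ by Euclidean isometries. Since the cusp $L$ admits a compact horospherical cross-section on which $\Gamma_1$ acts cocompactly, and any two horospheres centered at $U$ are $\Gamma_1$-equivariantly related by a radial dilation, cocompactness holds on every such $S$. For the converse, given parabolic $\Gamma_1<\Gamma$ with common fixed point $U$, I would enlarge to the maximal parabolic $\Gamma_1^{\max}<\Gamma$ fixing $U$ and apply the forward direction at successively smaller $\epsilon$: shrinking $\epsilon$ pushes the cusp $L_\epsilon$ deeper into $M$, and the bullet-two argument then shrinks its lifts $\widetilde{L}_\epsilon$ into arbitrarily small horoballs at $U$. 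Each such $\widetilde{L}_\epsilon$ is $\Gamma_1^{\max}$-invariant and hence $\Gamma_1$-invariant, so for any $\bu\in U\cap L^+$ some $\widetilde{L}_\epsilon$ lies inside the horoball of $\bu$.

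The hard step is truly just the invocation of \cite[Theorem D.3.3]{BenPet}; harvesting the three bullets from it is largely formal. The subtlest remaining point is verifying that the Euclidean structure on $S$ produced by Lemma \ref{Euclid's balls} is the one preserved by $\Gamma_1$, but this is automatic since $\Gamma_1$ preserves the Lorentzian pairing and the light-like vector defining $S$.
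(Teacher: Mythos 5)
Your skeleton---invoke \cite[Theorem D.3.3]{BenPet} and translate into the hyperboloid model, with cocompactness of $\Gamma_1$ on horospheres doing the remaining work---is the same as the paper's, but the way you harvest the first two bullets has a genuine gap. You argue that each $\Gamma_1$-displacement tends to $0$ along rays toward $U$, ``so every sufficiently deep horoball at $U$ is absorbed into $\widetilde{L}$.'' Pointwise decay along rays does not give a depth threshold uniform over the (noncompact) horosphere, and the inference fails at the level of a single element: for $n\geq 4$ a parabolic can have a nontrivial rotational part, so in upper half-space coordinates $(y,t)\mapsto(Ay+b,t)$ with $A\neq I$ its displacement at fixed height $t$ is unbounded in $y$, and its $2\epsilon$-thin locus contains no horoball whatsoever. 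What rescues the statement is precisely the cocompactness of $\Gamma_1$ on horospheres (equivalently compactness of $\mathbb{R}^{n-1}/\Gamma_1$, the last line of Theorem D.3.3): the minimal $\Gamma_1$-displacement function is $\Gamma_1$-invariant and monotone in the depth coordinate, hence uniformly small on deep horospheres and uniformly bounded below at bounded depth. You invoke cocompactness only for the third bullet; it must enter here as well --- this is exactly what the paper extracts from the $\Gamma_1$-invariant height function $Q$ in the proof of \cite[Lemma D.3.8]{BenPet}, whose maximum and minimum on the compact quotient give the two sandwiching horoballs. Your second bullet also quietly assumes that membership in $\widetilde{L}$ is governed by $\Gamma_1$-displacements rather than by displacements of other elements of $\Gamma$; that is part of the thin-part structure theory and needs to be cited, not taken for granted.

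The converse as you state it is circular at one point: ``apply the forward direction at successively smaller $\epsilon$'' presupposes that the cusp $L_\epsilon$ you feed into it is the one associated to $U$, which is the thing to be produced. One must first manufacture a cusp from $\Gamma_1$ --- project a ray toward $U$ (displacements of a fixed nontrivial element of $\Gamma_1$ decay along it), note that its image lies in the thin part with injectivity radius tending to $0$, so the component containing it is noncompact and hence a cusp --- and then identify the ideal point of that component's stabilizer with $U$. The paper does this identification with Lemma \ref{intersect balls}: the horoball enclosing $\widetilde{L}$ contains the unbounded ray toward $U$, and horoballs centered at distinct ideal points intersect compactly. Without that step, your assertions that $\widetilde{L}_\epsilon$ is $\Gamma_1^{\max}$-invariant and that the lifts shrink into ``arbitrarily small horoballs at $U$'' are unsupported. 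With these two repairs (both available from ingredients you already mention), your argument becomes essentially the paper's proof.
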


\begin{proof}  This is Case 2 in the proof of Theorem D.3.3 of \cite{BenPet}.  In particular, $\Gamma_1$ is parabolic by construction.  It and $\tilde{L}$ are defined at the beginning of that case (pp.~147--148), and $L = \pi(\tilde{L})$ is isometric to $\tilde{L}/\Gamma_1$ by Lemma D.3.7.

The proof of \cite[Th.~D.3.3]{BenPet} takes place in the upper half-space model $\mathbb{R}^{n-1}\times(0,\infty)$ for $\mathbb{H}^n$, with $\Gamma_1$ consisting of isometries fixing the point $\infty$ of $\partial \mathbb{H}^n = (\mathbb{R}^{n-1}\times\{0\})\cup\{\infty\}$.  Each such is of the form $(y,t)\mapsto (I(y),t)$ for a Euclidean isometry $I$ of $\mathbb{R}^{n-1}$ (see point 3 on p.~142).  It preserves each horosphere centered at $\infty$, of the form $\mathbb{R}^{n-1}\times\{t\}$ for some $t>0$, and acts as an isometry of the inherited Euclidean metric.

In the hyperboloid model the role of $\infty$ is filled by $p(U\cap L^+)$ for the common one-dimensional light-like eigenspace $U$ of elements of $\Gamma_1$, and horospheres centered at $p(U\cap L^+)$ are of the form $\{\bx\circ\bu=-1\}\cap \mathbb{H}^n$ for $\bu\in U\cap L^+$ (cf.~Definition \ref{at infinity}).

The proof of Lemma D.3.8 identifies $\tilde{L}$ as the set of points above the graph of a certain continuous function $Q\co\mathbb{R}^{n+1}\to (0,\infty)$.  It is implicit in the proof (and easy to verify) that $Q$ is $\Gamma_1$-invariant in the obvious way: namely, that $Q(y) = Q(I_{\gamma}(y))$ for any $y\in\mathbb{R}^{n-1}$ and $\gamma\in\Gamma_1$, where $I_{\gamma}$ is the Euclidean isometry such that $\gamma(y,t) = (I_{\gamma}(y),t)$.

It is also true that $V = \mathbb{R}^{n-1}/\Gamma_1$ is compact --- this the final line of Theorem D.3.3 and the final line of its proof --- whence $Q$ attains a minimum $t_0$ and maximum $t_1$ on $\mathbb{R}^{n-1}$.  Hence we have $\mathbb{R}^{n-1}\times [t_1,\infty)\subset \tilde{L}\subset \mathbb{R}^{n-1}\times[t_0,\infty)$, giving the first two bullet points.  That $V$ is compact also directly implies the final point.

Now let $\Gamma_1$ be an arbitrary parabolic subgroup of $\Gamma$ with shared light-like eigenspace $U$.  Any fixed $g\in\Gamma_1-\{\mathit{id}\}$ preserves each $\bu\in U$ and, for each such $\bu$ with $u_0>0$, the horosphere $S_{\bu}=\{\bx\circ\bu=-1\}\cap\mathbb{H}^n$ that it determines.  For fixed such $\bu$ and any $\bx\in S_{\bu}$, let $\gamma_{\bx}$ be the geodesic ray of Lemma \ref{convex component}.  

One checks directly that $d_H(\gamma(t),g.\gamma(t))$ decreases in $t$, approaching $0$ as $t\to\infty$, so there exists $t_0\geq0$ such that $\pi$ projects $\gamma_{\bx}[t_0,\infty)$ into a component $L$ of $M_{(0,\epsilon]}$.  $L$ is a cusp since $\pi(\gamma_{\bx}[t_0,\infty))$ contains points with arbitrarily small injectivity radius, so a component $\widetilde{L}$ of $\pi^{-1}(L)$ contains $\gamma_{\bx}[t_0,\infty)$.  That $\widetilde{L}$ is stabilized by $\Gamma_1$ now follows from Lemma \ref{intersect balls} and the above, since the horoball containing $\widetilde{L}$ has non-compact intersection (containing $\gamma_{\bx}[t_0,\infty)$) with the horoball determined by $\bu$.

For any $\epsilon'<\epsilon$ there is a component $L'$ of $M_{(0,\epsilon']}$ contained in $L$, and a component $\widetilde{L}'$ of $\pi^{-1}(L')$ contained in $\widetilde{L}$.  It follows from the definition of $Q$ that $\widetilde{L}'$ the set of points above the graph of $Q+k$ for some $k = k(\epsilon')$ approaching $\infty$ as $\epsilon'>0$.  The horoball $\{\bx\circ\bu\geq -1\}\cap\mathbb{H}^n$ bounded by $S_{\bu}$ therefore contains $\widetilde{L'}$ for some such $\epsilon'$.\end{proof}

\subsection{Back to the Epstein--Penner construction}  A key idea in \cite{EpstePen} is that rotating a support plane around an codimension-one subplane produces a new one in certain circumstances (see eg.~the paragraph spanning pp.~74--75 there).  This is also quite useful in the current setting, but more delicate when the support plane in question is parallel to a light-like subspace: such planes have non-compact intersection with $\mathbb{H}^n$.  

\begin{lemma}\label{rotate plane}  Suppose $\cals\subset\mathbb{H}^n$ is locally finite and $P$ is a support plane for $\mathrm{Hull}(\cals)$, parallel to a light-like subspace $V$ of $\mathbb{R}^{n+1}$, that separates $\bo$ from $\cals$.  If $Q\subset P$ is an $(n-1)$-dimensional affine plane containing no translate of $V^{\perp}$, such that the half-space $P^+$ of $P$ bounded by $Q$ that has $P^+\cap U_{n+1}$ non-compact contains $P\cap\cals$, then rotating around $Q$ ``in the space-like direction'' (see below) produces a family of space-like planes separating $\cals$ from $\bo$ and $P-P^+$.\end{lemma}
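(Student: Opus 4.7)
The plan is to define the rotated family as follows. Fix $\bx_0 \in Q$ and let $\bu$ span $V^\perp$ with $\bx_0 \circ \bu = -1$, so $P = \{\bx : \bx \circ \bu = -1\}$ and, by Lemma~\ref{convex component}, $\mathrm{Hull}(\cals) \subset \{\bx : \bx \circ \bu \leq -1\}$ while $\bo \in \{\bx : \bx \circ \bu > -1\}$. Let $\eta$ be the unit Euclidean outward normal to the half-space containing $\mathrm{Hull}(\cals)$; by Remark~\ref{Euclid vs Lorentz}, $\bar\eta$ is a negative scalar multiple of $\bu$. Writing $Q_0 = Q - \bx_0$, the hypothesis $\bu \notin Q_0$ implies that the unit Euclidean-perpendicular $\delta$ to $Q_0$ in $V$ satisfies $\delta \cdot \bu \neq 0$; I fix its sign by requiring $\delta \cdot \bu < 0$, which amounts to choosing $\delta$ to point away from $P^+ = Q + \mathbb{R}_{\geq 0}\,\bu$ (Lemma~\ref{Euclid's planes}). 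Finally, set $\eta_s = \cos(s)\,\eta + \sin(s)\,\delta$ and let $P_s$ be the affine hyperplane through $\bx_0$ with Euclidean normal $\eta_s$.

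I claim this sign choice is the ``space-like direction.''  A direct Lorentzian computation, using $\bar\eta\circ\bar\eta = 0$ (since $\bar\eta$ is parallel to $\bu$, which lies in the light cone) and $\bar\eta\circ\bar\delta < 0$ (from $\bar\eta$ being a negative multiple of $\bu$ together with $\bu\circ\bar\delta = -\delta\cdot\bu > 0$), gives
$$\overline{\eta_s}\circ\overline{\eta_s} = 2\sin(s)\cos(s)\,\bar\eta\circ\bar\delta + \sin^2(s)\,\bar\delta\circ\bar\delta,$$
which is negative (hence $P_s$ is space-like) for all small $s > 0$. Separation of $P - P^+$ from $\cals$ by $P_s$ is then immediate from the construction: for $\bp = \bq - t\bu$ with $\bq \in Q$ and $t > 0$, one computes $\eta_s\cdot(\bp - \bx_0) = -t\sin(s)\,\delta\cdot\bu > 0$, placing $\bp$ strictly on the $\bo$-side of $P_s$; and $\eta_s\cdot(\bo - \bx_0)$ stays positive as $s \to 0^+$ by continuity.

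The crux is to show that each $P_s$ (for $s$ in some $(0,s_0)$) keeps all of $\cals$ on the opposite side. I would argue by contradiction: suppose sequences $\bx_k \in \cals$ and $s_k \to 0^+$ satisfy $\eta_{s_k}\cdot(\bx_k - \bx_0) > 0$. If $\{\bx_k\}$ is bounded in $\mathbb{R}^{n+1}$, local finiteness of $\cals$ (inherited from $\mathbb{H}^n$ via (\ref{x - y})) forces a constant subsequence $\bx_k = \bx$; taking $s_k \to 0$ then places $\bx \in P\cap\cals \subset P^+$, on which $\delta\cdot(\bx - \bx_0) \leq 0$, so $\eta_{s_k}\cdot(\bx - \bx_0) = \sin(s_k)\,\delta\cdot(\bx - \bx_0) \leq 0$, contradiction. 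If $\{\bx_k\}$ is unbounded, pass to a subsequence with $\bx_k/\|\bx_k\|$ converging to some unit vector $\xi_\infty$; since $\bx_k\circ\bx_k = -1$ and $x_{k,0} \geq 1$, one has $\xi_\infty \in L^+$. Dividing $\eta_{s_k}\cdot(\bx_k - \bx_0) > 0$ by $\|\bx_k\|$ and letting $k\to\infty$, combined with $\eta\cdot(\bx_k - \bx_0) \leq 0$ for all $k$, forces $\eta\cdot\xi_\infty = 0$, equivalently $\bu\circ\xi_\infty = 0$. Since $\mathbb{R}^{n+1}$ has Lorentzian signature $(n,1)$ and so contains no $2$-dimensional totally isotropic subspace, $\xi_\infty$ must be a positive multiple of $\bu$; but then $\delta\cdot\xi_\infty$ inherits the sign of $\delta\cdot\bu < 0$, contradicting the weak inequality $\delta\cdot\xi_\infty \geq 0$ obtained from $\delta\cdot(\bx_k - \bx_0) > 0$ by the same averaging.

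The main obstacle is precisely this unbounded case. Without local finiteness combined with the asymptotic isotropy analysis above, sequences of points of $\cals$ drifting out toward the ideal center of the horosphere $P\cap\mathbb{H}^n$ --- the pathology exhibited by the bad examples of Section~\ref{bad example} --- could conceivably be swept across by the rotating plane for every positive $s$, destroying the separation. The hypothesis $\bu \notin Q_0$, which manifests as $\delta\cdot\bu \neq 0$ and gives the crucial sign $\delta\cdot\bu < 0$, is exactly what defeats this pathology.
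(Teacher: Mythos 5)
Your proposal is correct, but it reaches the conclusion by a genuinely different route than the paper.  You realize the rotation by tilting the Euclidean normal, $\eta_s=\cos(s)\,\eta+\sin(s)\,\delta$, check space-likeness and the placement of $\bo$ and $P-P^+$ by short direct computations, and then handle the real issue --- keeping all of $\cals$ on the far side for every small $s>0$ --- by a soft ``asymptotic direction'' argument: a sequence of violating points is either Euclidean-bounded, hence eventually constant by local finiteness and then excluded by $P\cap\cals\subset P^+$, or it escapes to infinity with renormalized limit $\xi_\infty\in L^+$ Lorentz-orthogonal to $\bu$, hence a positive multiple of $\bu$ (here one should note $u_0>0$, which follows from $\bx\circ\bu=-1$ for $\bx\in P\cap\mathbb{H}^n$ together with (\ref{x circ y})), contradicting the sign $\delta\cdot(\bx_k-\bx_0)>0$.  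The paper instead proves Claim \ref{where's the max} via Lagrange multipliers --- the first-coordinate function has no local maximum on $P\cap\mathbb{H}^n$, and its unique maximum on $P_t\cap\mathbb{H}^n$ lies in $B^-$ --- to obtain a height bound $M$, uniform in $t$, on the region of $U_{n+1}$ swept on the $\cals$-side (Claim \ref{upper half}), and then pushes the finitely many points of $\cals$ below height $M$ across by shrinking $t$.  Your argument is shorter and isolates exactly where each hypothesis ($\bu\notin Q-\bx_0$, $P\cap\cals\subset P^+$, local finiteness) enters; what the paper's more quantitative detour buys is a reusable by-product: the same Lagrange-multiplier computation and height bound recur essentially verbatim as Claim \ref{where's the maxx} in the proof of Lemma \ref{locally finite}, where the space-like planes involved do not come from a single rotation about a fixed $Q$, so your softer limiting argument would not directly substitute there even though it fully proves the present lemma.
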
 

\begin{proof}  We will assume $Q\cap\mathbb{H}^n$ is non-empty.  If it were empty then for the minimal $t_0>0$ such that $(Q+t_0\bu)\cap\mathbb{H}^n\neq\emptyset$, $Q+t_0\bu$ would still satisfy the hypotheses.  The plane produced by rotating $P$ about $Q+t_0\bu$ in the sense below would produce a plane that separates $\cals$ from the plane produced by rotating $P$ about $Q$ by the same amount.

Thus fix $\bx_0\in Q\cap\mathbb{H}^n$ and let $V_0 = Q-\bx_0$.  This $(n-1)$-dimensional subspace of $V$ is  space-like since it does not contain $V^{\perp}$.  Let $\bu\in V^{\perp}$ satisfy $\bu\circ\bx_0 = -1$.  The Lorentz-orthogonal complement to $V_0$ in $\mathbb{R}^{n+1}$ also contains some $\bw\in\mathbb{H}^n$, hence is spanned by $\bu$ and $\bw$.

For $t\in\mathbb{R}$ let $V_t$ be the vector space spanned by $V_0$ and $\bu-t\bw$, and let $P_t = V_t+\bx_0$.  If $t>0$ we say $P_t$ is obtained by rotating $P$ around $Q$ \textit{in the space-like direction}.  The following hold:\begin{itemize}
\item  For any $t\neq 0$, $V\cap V_t = V_0$ and $P\cap P_t = Q$.
\item  For $0<t<-\bu\circ\bw$, $V_t$ is space-like, with time-like normal vector $\bn_t=\bu+s\bw$ for $s = t\bu\circ\bw/(t+\bu\circ\bw)$.  (Note that $\bu\circ\bw<0$ by (\ref{x circ y}).)
\item  For $0<t<-\bu\circ\bw$, the half-space $B_t^-=\{\bx\,|\,\bx\circ\bn_t\geq\bx_0\circ\bn_t\}$ bounded by $P_t$ that contains $\bo$ has convex intersection with $\mathbb{H}^n$ (by Lemma \ref{convex component}, since $\bo\circ\bn_t = 0$).
\item  The half-space $B^-$ bounded by $P$ and containing $\bo$ contains $P_t^+ = \{a\bu_t+\bq\,|\, a>0\ \mbox{and}\ \bq\in Q\}$, where $\bu_t=\bu-t\bw$. (One checks directly that $\bu\circ \bx\geq -1$ for $\bx\in P_t^+$.)
\item  $B_t^-$ contains $P^-=\{a\bu+\bq\,|\,a>0, \bq\in Q\}$. (As above, $\bn_t\circ\bx\geq\bn_t\circ\bx_0$ for $\bx\in P^-$.)\end{itemize}

Let $B^+$ and $B_t^+$ be the half-spaces opposite $B^-$ and $B_t^-$, respectively.  Since $\bo$ and $P^-$ are in $B_t^-$, the goal is to show for small $t>0$ that $\cals\subset B_t^+$.  The claim below will be a key aid:

\begin{claim}\label{where's the max} The set of initial entries of points of $P^-\cap U_{n+1}$ and the set of initial entries of points of $P_t^-\cap U_{n+1}$ share a maximum $M$, attained in $Q\cap\mathbb{H}^n$.\end{claim}

The case of $P^-\cap U_{n+1}$ is a warm-up.  Recall from Lemma \ref{Euclid's planes} that $P^-\cap U_{n+1}$ is compact, so the first entry function attains a maximum.  Since $P$ is affine and not parallel to $\{0\}\times\mathbb{R}^n$ the maximal first entry of $P^-\cap U_{n+1}$ does not occur in its interior.  It is thus attained in the boundary $(Q\cap U_{n+1})\cup(P^-\cap\mathbb{H}^n)$.  Since $Q$ is affine the maximal first entry of $Q\cap U_{n+1}$ occurs in its boundary $Q\cap\mathbb{H}^n\subset P\cap\mathbb{H}^n$.  So it suffices to consider $P^-\cap\mathbb{H}^n$.

We will use the method of Lagrange multipliers to show that the first entry function has no local maximum on $P\cap\mathbb{H}^n$, whence the maximal first entry of $P^-\cap\mathbb{H}^n$ must be attained in its boundary $Q\cap\mathbb{H}^n$.  This will prove the claim for $P^-$.

For $\bx = (x_0,\hdots,x_n)\in\mathbb{H}^n$, $x_0 = \sqrt{1+x_1^2+\hdots+x_n^2}$.  We find local extrema of this function subject to the constraint that $\bx$ is in $P$; i.e. $\bx\circ\bu = -1$.  Write $\bu = (u_0,\hdots,u_n)$.  After simplifying we find that at a local extremum for the first entry function there exists $\lambda$ with:
$$ (x_1,\hdots,x_n) = \lambda\left(u_1\sqrt{1+x_1^2+\hdots+x_n^2} - u_0x_1,\hdots,u_n\sqrt{1+x_1^2+\hdots+x_n^2}-u_0x_n\right) $$
This implies that $x_i/u_i = x_j/u_j$ for any $i,j>0$, so $(x_1,\hdots,x_n) = \gamma(u_1,\hdots,u_n)$ for some scalar $\gamma$.  Applying the constraint equation at a local extremum thus yields:\begin{align*}
  -u_0\sqrt{1+\gamma^2(u_1^2+\hdots+\hdots u_n^2)} + \gamma(u_1^2+\hdots+u_n^2) = -1 \end{align*}
Since $\bu$ is light-like we have $u_0^2 = u_1^2+\hdots+u_n^2$.  Substituting above and simplifying produces a linear equation in $\gamma$ with the single solution $\gamma = (u_0^2-1)/2u_0^2$.  It follows that there is a unique local extremum for the first entry function on $P\cap\mathbb{H}^n$.  Since first entries attain a global minimum this is it, and there is no local maximum.  The claim follows for $P^-$.

We now modify the argument to treat $P_t^-$.  Since $V_t$ is space-like $P_t\cap U_{n+1}$ is compact (Lemma \ref{convex component}), so there is a maximal first entry in $P_t^-\cap U_{n+1}$.  For small enough $t>0$, $P_t$ is again not parallel to $\{0\}\times\mathbb{R}^n$, so the maximum is in the boundary $(Q\cap\mathbb{H}^n)\cup(P_t^-\cap\mathbb{H}^n)$.  As before it suffices to consider $P_t^-\cap\mathbb{H}^n$.

Lagrange multipliers will show in this case that the first entry function on $P_t\cap\mathbb{H}^n$ has exactly two local extrema, one of which must therefore be the maximum and the other the minimum (attained by compactness).  We will show that the unique local maximum lies in $B^-$ and hence, by the fourth bullet above, in $P_t^+$, and it will follow that the maximal first entry on $P_t^-\cap\mathbb{H}^n$ lies in its boundary $Q\cap\mathbb{H}^n$.

We find local extrema of $x_0$ for $\bx=(x_0,\hdots,x_n)$ as before, but this time with the constraint $\bx\circ\bn_t = k\doteq\bx_0\circ\bn_t$ (i.e.~$\bx\in P_t$).  Taking $\bn_t = (n_0,\bn_0)$, as before we find that extrema occur at $\bx$ with $(x_1,\hdots,x_n) = \gamma\bn_0$ for some scalar $\gamma$.  At such an $\bx$, $x_0 = \sqrt{1+\gamma^2\|\bn_0\|^2}$ and the constraint equation takes the form:\begin{align}\label{constraint II}
 -n_0\sqrt{1+\gamma^2\|\bn_0\|^2} + \gamma\|\bn_0\|^2 = k \end{align}
This determines a quadratic in $\gamma$, with leading coefficient $\|\bn_0\|^2(\|\bn_0\|^2-n_0^2) = \|\bn_0\|^2\bn_t\circ\bn_t$.  Since $\bn_t$ is time-like this coefficient is non-zero.  The discriminant is $\bn_t\circ\bn_t+k^2$.  By (\ref{x circ y}), $k<-\sqrt{-\bn_t\circ\bn_t}$ (recall $\bx_0\in\mathbb{H}^n$), so the discriminant is positive and the quadratic has two solutions.  These correspond to the minimal and maximal first entries on $P_t\cap\mathbb{H}^n$.  The maximum occurs at the larger possibility for $\gamma$: $(k\|\bn_0\|-n_0\sqrt{\bn_t\circ\bn_t+k^2})/(\|\bn_0\|\bn_t\circ\bn_t)$.

Using the above, the Lorentzian inner product of the maximum point $\bx$ and $\bu$ is:\begin{align}\label{check convex}
 \bx\circ\bu 
   & = -x_0u_0 + \gamma\bn_0\cdot\bu_0 = (\gamma n_0 - x_0)u_0 + \gamma\bn_t\circ\bu\nonumber \\
   & = \frac{\sqrt{\bn_t\circ\bn_t+k^2}}{\|\bn_0\|}\left(u_0-n_0\frac{\bn_t\circ\bu}{\bn_t\circ\bn_t}\right) + k\frac{\bn_t\circ\bu}{\bn_t\circ\bn_t} \end{align}
Here we have used (\ref{constraint II}) to rewrite $x_0$ as $(\gamma\|\bn_0\|^2-k)/n_0$, and simplified.  An explicit computation shows that $\bn_t\circ\bu/\bn_t\circ\bn_t$ approaches $1/2$ as $t\to 0$.  That $\bn_t\to\bu$, implies that $\bn_t\circ\bn_t \to 0$, $n_0\to u_0$, $\bn_0\to\bu_0$, and $k\to -1$ as $t\to 0$.  It follows that $\bx\circ\bu\to 0$ as $t\to 0$.  In particular, for small $t>0$ the maximum point is in $B^-$.  The claim follows for $P_t^-$.

\begin{claim}\label{upper half}  $C\doteq\{\bx\in B^+\cap U_{n+1}\,|\,x_0\geq M\}$ is contained in $B_t^+$.\end{claim}

$B^+$ is divided into two convex ``half-spaces'' by $P_t$, each the closure of a component of $B^+ - P_t$.  By the fourth bullet above, $P_t \cap B^+ = P_t^-$ and $P^-\subset B_t^-$.  Since $B_t^-\cap U_{n+1}$ is compact (Lemma \ref{convex component}, since $P_t$ is space-like), the half-space of $B^+$ bounded by $P_t^-$ and $P^-$ has compact intersection $K$ with $U_{n+1}$. For any $\bx$ in $K$, the ray $\bx+(t,0,\hdots,0)$ remains in $U_{n+1}$ for all $t\geq 0$ but exits $K$ at some $t_0>0$.  This point is in $P_t^-\cap U_{n+1}$ or $P^-\cap U_{n+1}$, so by claim \ref{where's the max} its first coordinate (hence also that of $\bx$) is at most $M$.  The claim follows.

Since $\cals\subset B^+\cap U_{n+1}$, claim \ref{upper half} implies that $B_t^+$ contains $\{\bs\in\cals\,|\,s_0\geq M\}$.  We now show explicitly that the set $\cals_0$ of $\bs\in\cals$ with $s_0 \leq M$ is contained in $B_t^+$.  Since $\cals$ is locally finite $\cals_0$ is finite, so there is some $\epsilon>0$ such that $\bs\circ\bu < -1-\epsilon$ for all $\bs\in\cals_0-Q$.  (Recall that $\cals\cap P^-\subset Q$.) For $t$ near enough to $\bo$ it follows that $\bs\circ\bn_t < -1-\epsilon/2$ for all such $\bs$.  (Note that $s$ as defined in the second bullet above approaches $0$ as $t\to 0$.)  On the other hand, $t$ can also be chosen near enough to $0$ that $\bx_0\circ\bn_t > -1-\epsilon/2$.  Thus $\cals_0\subset B_t^+$ for small $t>0$.  This proves the lemma.\end{proof}

\begin{corollary}\label{gots a point}Suppose $\cals\subset\mathbb{H}^n$ is locally finite.  If $P$ is a support plane for $\mathrm{Hull}(\cals)$ parallel to a light-like subspace of $\mathbb{R}^{n+1}$ that separates $\bo$ from $\cals$, then $P\cap\cals\neq\emptyset$.\end{corollary}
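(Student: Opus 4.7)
The plan is to argue by contradiction. Suppose $P\cap\cals=\emptyset$; the goal is to use Lemma~\ref{rotate plane} to produce a space-like plane that separates some fixed $\bx_0\in P\cap\mathrm{Hull}(\cals)$ from $\mathrm{Hull}(\cals)$, which is absurd. Such $\bx_0$ exists since $P$ is a support plane. Let $V$ be the light-like subspace parallel to $P$; by Lemma~\ref{normal vectors} $V^{\perp}=V\cap L$ is one-dimensional, and fix $\bu\in V^{\perp}\cap L^+$ with $\bx_0\circ\bu=-1$. As in the proof of Lemma~\ref{Euclid's balls}, put $V_0=V\cap(\{0\}\times\mathbb{R}^n)$, a codimension-one space-like subspace of $V$, so that $V=V_0\oplus\langle\bu\rangle$.

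The key choice is the auxiliary codimension-one plane $Q=V_0+(\bx_0+\bu)\subset P$. Since $V_0$ is space-like while $\bu$ is light-like, $\bu\notin V_0$, so $Q$ contains no translate of $V^{\perp}$. In the parametrization $\bx_0+s\bu+\bw$ of $P$ (with $s\in\mathbb{R}$, $\bw\in V_0$), $Q$ is the hyperplane $\{s=1\}$ while $\bx_0$ sits at $s=0$. Equation~(\ref{go circ yourself}) gives $P\cap U_{n+1}=\{s\ge \tfrac12\bw\circ\bw+\bx_0\circ\bw\}$, which is non-compact precisely in the direction of increasing $s$; so the non-compact half $P^+$ of $P$ bounded by $Q$ is $\{s\ge 1\}$, and $\bx_0$ lies in the complementary open half $P-P^+$. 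The remaining hypothesis of Lemma~\ref{rotate plane}, that $P^+$ contain $P\cap\cals$, holds vacuously.

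Applying Lemma~\ref{rotate plane} now yields, for small $t>0$, a space-like plane $P_t$ with $\cals$ strictly inside one closed half-space $B_t^+$ and $\{\bo\}\cup(P-P^+)$ inside the complementary closed half-space $B_t^-$. Taking closures, $\mathrm{Hull}(\cals)\subset B_t^+$, so $\bx_0\in B_t^+$. On the other hand $\bx_0\in P-P^+\subset B_t^-$, and since $P\cap P_t=Q$ while $\bx_0\notin Q$, we in fact have $\bx_0\in\inter(B_t^-)$. This contradicts $\bx_0\in B_t^+$ and proves the corollary. The substantive technical work has already been carried out in Lemma~\ref{rotate plane}; the only delicate point here is to choose $Q$ so that $\bx_0$ lands on the compact side of $P$ while $Q$ remains transverse to $V^{\perp}$, and the explicit formula $Q=V_0+(\bx_0+\bu)$ accomplishes both.
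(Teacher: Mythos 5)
Your proof is correct and takes essentially the same route as the paper: argue by contradiction, note that the hypothesis of Lemma \ref{rotate plane} about $P\cap\cals$ holds vacuously, and rotate $P$ around a suitable codimension-one plane $Q$ to push it off $\mathrm{Hull}(\cals)$, contradicting that $P$ is a support plane. The only difference is cosmetic: the paper sweeps a family of cutting planes $Q_h = P\cap(\{h\}\times\mathbb{R}^n)$ to exclude all of $P$ from $\mathrm{Hull}(\cals)$, whereas you make a single choice of $Q$ placing a fixed point $\bx_0\in P\cap\mathrm{Hull}(\cals)$ strictly on the compact side (your dropping of the constant $\tfrac12(1+\bx_0\circ\bx_0)$ in the formula for $P\cap U_{n+1}$ tacitly assumes $\bx_0\in\mathbb{H}^n$, but this harmless term does not affect the identification of the non-compact side or the rest of the argument).
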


\begin{proof}  Supposing there exists such a support plane $P$ with $P\cap\cals = \emptyset$, for arbitrary $h > 1$ consider $Q_h = P\cap (\{h\}\times\mathbb{R}^n)$.  Since $\{h\}\times\mathbb{R}^n$ is space-like the half-space $P^-$ bounded by $Q_h$ as in Lemma \ref{Euclid's planes} has compact intersection with $U_{n+1}$ (it is contained in the half-space bounded by $\{h\}\times\mathbb{R}^n$ that has compact intersection with $\mathbb{H}^n$, cf.~Lemma \ref{convex component}).  Therefore by Lemma \ref{rotate plane} rotating $P$ a small amount around $Q_h$ in the space-like direction produces a plane separating all points of $P^-$ from $\cals$; hence also from $\mathrm{Hull}(\cals)$.  But since $h$ was arbitrary it follows that $P\cap\mathrm{Hull}(\cals)=\emptyset$, a contradiction.\end{proof}

\begin{proposition}\label{good support}  Suppose $\cals$ is locally finite and invariant under a torsion-free lattice $\Gamma$ of $\mathit{SO}^+(1,n)$.  If $P=V+\bx_0$ is a support plane for $\mathrm{Hull}(\cals)$ that separates $\bo$ from $\cals$, with $V$ light-like, then a parabolic subgroup of $\Gamma$ preserves $V^{\perp}$ and acts co-compactly on $P\cap\mathbb{H}^n$.\end{proposition}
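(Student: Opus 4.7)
The plan is to exhibit a parabolic subgroup of $\Gamma$ with fixed point at the ideal point of the horosphere $P\cap\mathbb{H}^n$, then invoke Lemma \ref{invariant horoballs} to obtain cocompactness. Choose $\bu\in V^{\perp}$ with $P=\{\bx\,|\,\bx\circ\bu=-1\}$; by Lemma \ref{normal vectors}, $V^{\perp}=\langle\bu\rangle$ is one-dimensional and light-like, so $U=p(V^{\perp}\cap L^+)$ is a well-defined ideal point at which $S=P\cap\mathbb{H}^n$ is centered. The half-space of $\mathbb{R}^{n+1}$ bounded by $P$ that contains $\bo$ meets $\mathbb{H}^n$ in the horoball $H=\{\bx\,|\,\bx\circ\bu\geq-1\}\cap\mathbb{H}^n$ (Lemma \ref{convex component}). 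The separation hypothesis yields $\cals\cap\inter(H)=\emptyset$, and Corollary \ref{gots a point} supplies some $\bs\in P\cap\cals$.

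The central step is to show that $U$ is a parabolic fixed point of $\Gamma$. Since $\Gamma$ is a lattice it is geometrically finite, and a standard dichotomy (see e.g.~Ch.~12 of \cite{Ratcliffe}) partitions the ideal points of $\mathbb{H}^n$ into conical limit points of $\Gamma$ and parabolic fixed points of $\Gamma$. I would argue by contradiction: if $U$ were a conical limit point, then some sequence $\{\gamma_n\}\subset\Gamma$ with $\gamma_n\bs\to U$ would remain within a fixed hyperbolic distance of a geodesic ray ending at $U$. A routine check (easiest in the upper half-space model with $U=\infty$) shows that any such sequence eventually lies inside every horoball centered at $U$, in particular inside $\inter(H)$. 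But $\gamma_n\bs\in\cals$ by $\Gamma$-invariance, contradicting $\cals\cap\inter(H)=\emptyset$. Hence $U$ is a parabolic fixed point.

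To finish, pick a parabolic subgroup $\Gamma_1<\Gamma$ fixing $U$, which exists because $U$ is now known to be a parabolic fixed point. The converse direction of Lemma \ref{invariant horoballs} identifies $\Gamma_1$ as a subgroup of the stabilizer of a component $\widetilde{L}$ of the preimage of some cusp $L$ of $M=\mathbb{H}^n/\Gamma$; choosing $\Gamma_1$ to be this full stabilizer, the forward direction of the same lemma delivers a cocompact action of $\Gamma_1$ on every horosphere centered at $U$, in particular on $S=P\cap\mathbb{H}^n$. Each element of $\Gamma_1$ fixes $U$ and therefore preserves the light-like line $V^{\perp}=\langle\bu\rangle$.

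The main obstacle is the parabolic-vs-conical dichotomy, which is standard for lattices but is not established within the excerpt; its essential content is that an ideal point approached by $\Gamma$-orbits only ``tangentially'' must be fixed by parabolic elements. A more self-contained alternative would proceed by a volume argument: since $H$ has infinite hyperbolic volume while $M$ has finite volume, the projection $\pi|_H$ must be infinite-to-one on a set of positive measure; combining this with Lemma \ref{intersect balls} (distinct-center horoball intersections are compact) and proper discontinuity of the $\Gamma$-action should force infinitely many elements of $\Gamma$ to stabilize $U$, directly yielding the parabolic subgroup. This route bypasses the dichotomy at the cost of additional bookkeeping.
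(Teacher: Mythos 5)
Your argument is sound in outline, but it takes a genuinely different route to the key point than the paper does. Where you establish that $U=p(V^{\perp}\cap L^+)$ is a parabolic fixed point by excluding the conical alternative and then quoting the conical/bounded-parabolic dichotomy for geometrically finite groups, the paper never invokes that dichotomy. Instead it argues directly with the thick--thin decomposition it has already set up: by Lemma \ref{bad balls}, every point of the shrunken horoball $B_k=\{\bx\circ\bu\geq k\}$ (for $k$ close to $0$) is at distance at least $\ln(-1/k)$ from $\cals$; since $\pi(\bs)$ lies in the compact thick part $M_{[\epsilon,\infty)}$, choosing $k$ so that $\ln(-1/k)$ exceeds the diameter of the thick part forces $\pi(B_k\cap\mathbb{H}^n)$ into a single thin component, which is noncompact and hence a cusp; Lemma \ref{invariant horoballs} then supplies the parabolic subgroup and the cocompact action, and Lemma \ref{intersect balls} identifies the cusp's ideal point with $V^{\perp}$ (nested noncompact horoballs cannot have distinct centers). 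Your exclusion-of-conical-points argument is correct -- orbit points $\gamma_n\bs\in\cals$ converging conically to $U$ would eventually enter the open horoball bounded by $P\cap\mathbb{H}^n$, violating the separation hypothesis, and Fact \ref{limit of lattice} guarantees $U$ is a limit point -- and it is arguably shorter and more conceptual. What it costs is an external black box: geometrical finiteness of lattices and the resulting dichotomy, which is standard (and is in the spirit of \cite[Ch.~12]{Ratcliffe}) but is not established in the paper and is of comparable depth to the Margulis-lemma material from \cite{BenPet} that the paper uses directly; the paper's route stays within the tools it has already imported, and its ``deep sub-horoball projects into a cusp'' device recurs later (e.g.\ in Corollary \ref{co-finite}).

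Two smaller remarks. First, your endgame elides one step: the forward direction of Lemma \ref{invariant horoballs}, applied to the component $\widetilde{L}$ stabilized by your chosen $\Gamma_1$, gives cocompactness on horospheres centered at the common eigenspace $U'$ of \emph{that} parabolic subgroup, and you must still check $U'=U$ before concluding cocompactness on $P\cap\mathbb{H}^n$. This is easy -- either note that a nontrivial parabolic element of $\Gamma_1$ lies in the larger parabolic group and has a unique fixed point, or run the paper's horoball-nesting argument via Lemma \ref{intersect balls} -- but it should be said. Second, the ``volume argument'' you sketch as a self-contained alternative is not yet a proof: infinite-to-one behavior of $\pi$ on the horoball only produces elements $\gamma$ with $\gamma H\cap H$ large, and upgrading this to elements stabilizing $U$ requires exactly the kind of care (via Lemma \ref{intersect balls} and discreteness) that you defer to ``bookkeeping''; as written it should be treated as a plausible plan rather than an established route.
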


\begin{proof}  Suppose $P = V+\bx_0$ is a support plane for $\mathrm{Hull}(\cals)$ with $V$ light-like, and let $B$ be the half-space bounded by $P$ such that $B\cap \mathbb{H}^n$ is convex.  Recall from Lemma \ref{convex component} that $B = \{\bx\,|\,\bx\circ\bu\geq -1\}$, where $\bu\in V^{\perp}$ satisfies $\bu\circ\bx=-1$ for all $\bx\in P$.  In particular, $\bo\in B$.  Since $P$ separates $\bo$ from $\cals$, $\cals\cap B\subset P$.

For each $k$ between $-1$ and $0$ let $P_k = \{\bx\circ\bu = k\}$, and let $B_k = \{\bx\circ\bu \geq k\}$ be the half-space bounded by $P_k$ that is contained in $B$.  Let us note a few things about the $P_k$.  For each $k$, $P_k\cap\mathbb{H}^n$ is a horosphere, since taking $\bu_k = \frac{-1}{k}\bu\in V^{\perp}$ we may write $P_k = \{\bx\circ\bu_k = -1\}$.  The collection $\{P_k\cap\mathbb{H}^n\,|\,-1\leq k < 0\}$ foliates $B\cap\mathbb{H}^n$, since it follows from (\ref{x circ y}) that $\bx\circ\bu < 0$ for any $\bx\in\mathbb{H}^n$.  Finally, for $S_k = P_k\cap\mathbb{H}^n$ Lemma \ref{bad balls} asserts that $d_H(S_k,S) = \ln (-1/k)$.

Let $M = \mathbb{H}^n/\Gamma$ and $\pi\co\mathbb{H}^n\to M$ the quotient map.  Since all points of $\cals$ lie in $S$ or outside $B$, the last fact above implies that they all have distance at least $\ln (-1/k)$ from all points of $B_k$.  This in turn implies the same inequality on the distance in $M$ between $\pi(\cals)$ and $\pi(B_k\cap\mathbb{H}^n)$.  (The distance between $x,y\in M$ is the minimum, for any fixed $x_0\in\pi^{-1}(x)$, of the hyperbolic distances from $x_0$ to the points of $\pi^{-1}(y)$.)

As $\epsilon\to 0$, the thick parts $M_{[\epsilon,\infty)}$ form an exhaustion of $M$ by compact subsets, so for fixed $\bs\in \cals$ there is an $\epsilon >0$ such that $\pi(\bs)\in M_{[\epsilon,\infty)}$.  For $k<0$ near enough to $0$ that $\ln (-1/k)$ is larger than the diameter of $M_{[\epsilon_0,\infty)}$, it follows that $\pi(B_k\cap\mathbb{H}^n)$ is disjoint from $M_{[\epsilon_0,\infty)}$ and hence contained in a component $L$ of the $\epsilon$-thin part.  As $\pi(B_k\cap\mathbb{H}^n)$ contains points at arbitrarily large distance from $\pi(\bs)$ it is non-compact, so it lies in a cusp.

Let $\tilde{L}$ be the component of $\pi^{-1}(L)$ containing $B$, and let $\Gamma_1$ be the parabolic subgroup supplied by Lemma \ref{invariant horoballs} with light-like eigenspace of $\Gamma_1$.  By Lemma \ref{invariant horoballs}, $\tilde{L}$, and hence also $B\cap\mathbb{H}^n$, is contained in the horoball determined by some $\bu\in U$.  This implies that $U=V^{\perp}$, since by Lemma \ref{intersect balls} horoballs determined by linearly independent light-like vectors have compact intersection.  Lemma \ref{invariant horoballs} thus also implies that $\Gamma_1$ acts cocompactly on $P\cap\mathbb{H}^n$.\end{proof}

\begin{proposition}\label{horospherical cells}  Suppose $\cals$ is locally finite and invariant under a torsion-free lattice $\Gamma$ of $\mathit{SO}^+(1,n)$.  If $P$ is a support plane for $\mathrm{Hull}(\cals)$ parallel to a light-like subspace of $\mathbb{R}^{n+1}$ then $P\cap\mathrm{Hull}(\cals)$ is an $n$-dimensional face $F$ of $\mathrm{Hull}(\cals)$ with $F = \mathrm{Hull}(P\cap\cals)$, and:\begin{itemize}
\item  the stabilizer $\Gamma_U$ of $U = V^{\perp}$ in $\Gamma$ is parabolic, and $F$ and $r_n(F)$ are $\Gamma_U$-invariant;
\item  each face of $F$ is a compact face of $\mathrm{Hull}(\cals)$ of the form $P'\cap\mathrm{Hull}(\cals)=\mathrm{Hull}(P'\cap\cals)$, where $P'$ is a support plane for $\mathrm{Hull}(\cals)$ parallel to a space-like subspace of $\mathbb{R}^{n+1}$; and
\item the collection of faces of $F$ is locally finite and finite up to the $\Gamma_U$-action, and $\partial F$ is the union of the $(n-1)$-dimensional faces.\end{itemize}\end{proposition}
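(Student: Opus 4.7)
By Corollary~\ref{empty spheres}, the support plane $P$ separates $\mathrm{Hull}(\cals)$ from $\bo$, so Proposition~\ref{good support} supplies a parabolic subgroup $\Gamma_1\leq\Gamma$ with fixed point $U=V^{\perp}$, acting cocompactly on $P\cap\mathbb{H}^n$. I would then upgrade this to: $\Gamma_U$ itself is parabolic, fixing each $\bu\in U$ pointwise and hence preserving $P=\{\bx\circ\bu=-1\}$. The key input is the standard fact that in a torsion-free lattice, the $\Gamma$-stabilizer of a cusp of $\mathbb{H}^n/\Gamma$ is parabolic (cf.~Lemma~\ref{invariant horoballs} and Section~\ref{margulis}); this also ensures $\Gamma_U$ acts cocompactly on $P\cap\mathbb{H}^n$. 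Lemma~\ref{invariance'n'stuff} then gives $\Gamma_U$-invariance of $F=P\cap\mathrm{Hull}(\cals)$ and of $r_n(F)$, and Corollary~\ref{gots a point} provides a point in $P\cap\cals$ to work with.

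\textbf{The equality $F=\mathrm{Hull}(P\cap\cals)$.} The plan is to use the paraboloid parametrization $F_P\colon V_0\to P\cap\mathbb{H}^n$ of Lemma~\ref{Euclid's balls} (where $V_0= V\cap(\{0\}\times\mathbb{R}^n)$) to coordinatize $P$ as $V_0\oplus\mathbb{R}\bu$. The $\Gamma_1$-orbit of any point of $P\cap\cals$ projects along $\bu$ to a $\Gamma_1$-cocompact subset of $V_0$, whose closed convex hull is all of $V_0$; consequently the projection of $\mathrm{Hull}(P\cap\cals)$ along $\bu$ is all of $V_0$. This already forces $F\supseteq\mathrm{Hull}(P\cap\cals)$ to be $n$-dimensional. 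For the reverse inclusion, suppose $\bx\in F\setminus\mathrm{Hull}(P\cap\cals)$ and separate by a hyperplane $Q\subset P$. The projection property just noted prevents $Q$ from containing a translate of $\bu$, so by Lemma~\ref{Euclid's planes} the side of $Q$ opposite to $\mathrm{Hull}(P\cap\cals)$---which contains $\bx$---has compact intersection with $U_{n+1}$. Lemma~\ref{rotate plane} then rotates $P$ about $Q$ through space-like planes separating $\cals$ from $\bx$, contradicting $\bx\in\mathrm{Hull}(\cals)$.

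\textbf{The remaining bullets.} For a proper face $F_0$ of $F$, write $F_0=Q\cap F$ for a support hyperplane $Q\subset P$. The same projection argument shows $Q$ does not contain a translate of $\bu$, so $F_0\subseteq Q\cap U_{n+1}$ is compact (Lemma~\ref{Euclid's planes}). Applying Lemma~\ref{rotate plane} to this $Q$ yields space-like planes $P_t\to P$ separating $\cals$ from $\bo$ and from the compact half $P^-$. Following the final paragraph of that lemma's proof---where Claim~\ref{upper half} bounds first coordinates so that $P_t\cap\cals$ is confined to a compact region, and local finiteness of $\cals$ there finishes the job---I would establish $P_t\cap\cals=Q\cap\cals$ for small $t$. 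Then Lemma~\ref{rotate space} gives $P_t\cap\mathrm{Hull}(\cals)=\mathrm{Hull}(P_t\cap\cals)=\mathrm{Hull}(Q\cap\cals)$, and sandwiching with $Q\cap\cals\subseteq F_0\subseteq P_t\cap\mathrm{Hull}(\cals)$ identifies $F_0$ as a face of $\mathrm{Hull}(\cals)$ in the required form. For local finiteness and $\Gamma_U$-finiteness, I would reinterpret the face characterization through $F_P$: non-vertical affine hyperplanes in $P$ intersect the paraboloid in Euclidean spheres in $V_0$, and support hyperplanes correspond to empty circumspheres. Thus the face complex of $F$ pulls back to the Euclidean Delaunay complex of the $\Gamma_U$-cocompact set $F_P^{-1}(P\cap\cals)\subset V_0\cong\mathbb{R}^{n-1}$. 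Standard Delaunay theory (apply Proposition~\ref{the finite case} in a fundamental domain, and bound Delaunay diameters by the uniform density that cocompactness provides) gives local finiteness and finitely many $\Gamma_U$-orbits; the decomposition of $\partial F$ into $(n-1)$-faces then follows from $n$-dimensionality of $F$ together with this local finiteness.

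The main obstacle will be the localization step in Paragraph~3: showing $P_t\cap\cals=Q\cap\cals$ exactly. This requires simultaneously confining far-away points of $\cals$ via the first-coordinate bound of Claim~\ref{upper half}, and separating nearby points of $\cals$ from $P_t$ via local finiteness---essentially re-running the end of Lemma~\ref{rotate plane}'s proof in a slightly different key.
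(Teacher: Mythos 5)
Your proposal is correct in outline, and for the equality $F=\mathrm{Hull}(P\cap\cals)$, the $n$-dimensionality, and the second bullet it is essentially the paper's own route: Proposition \ref{good support} for the cocompact parabolic action, Lemma \ref{Euclid's planes} to see that the relevant hyperplanes $Q\subset P$ cut off a compact piece of $U_{n+1}$, Lemma \ref{rotate plane} to rotate $P$ about $Q$, and Lemma \ref{rotate space} applied to the rotated space-like-parallel plane. The paper gets non-verticality of $Q$ and the inclusion of $P\cap\cals$ in the non-compact half directly from cocompactness where you use the projection-onto-$V_0$ observation; these are the same content, though you should say explicitly that $\mathrm{Hull}(P\cap\cals)$, being unbounded, must lie in the non-compact half, so the side containing your point $\bx$ really is the compact one. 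Your flagged localization step $P_t\cap\cals=Q\cap\cals$ does go through exactly as you predict, via Claims \ref{where's the max} and \ref{upper half} and the $\epsilon$-argument closing the proof of Lemma \ref{rotate plane}; the paper's proof is no more explicit at the corresponding point, where it asserts $P'\cap\mathrm{Hull}(\cals)=Q\cap\mathrm{Hull}(\cals)$. Where you genuinely diverge is the third bullet: the paper proves local finiteness by the Epstein--Penner limiting argument (Lemma \ref{plane convergence} plus Fact \ref{small neighborhood}), covers $\partial F$ by $(n-1)$-faces with an interval-of-rotations argument, and counts $\Gamma_U$-orbits through vertices, whereas you pass through the paraboloid parametrization of Lemma \ref{Euclid's balls} to the Euclidean Delaunay complex of the projected set. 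That route works and is arguably more quantitative, since coarse density bounds every empty ball and hence every face diameter by a uniform constant, giving local finiteness and orbit-finiteness simultaneously; but note that Proposition \ref{the finite case} does not literally apply (it concerns finite sets, and the tessellation of the points in a fundamental domain is not the restriction of the tessellation of the whole set). What you actually need is the uniform circumradius bound together with your own second bullet (each face is the hull of its vertex set), and for the statement about $\partial F$ either the covering property of the Euclidean Delaunay tessellation of a coarsely dense locally finite set or the direct observation that a locally finite family of closed faces of dimension at most $n-2$ cannot cover an open subset of the $(n-1)$-manifold $\partial F$, while every boundary point lies in some face; with those details supplied, your argument is complete.
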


\begin{proof}  By Proposition \ref{good support} there is a parabolic subgroup of $\Gamma$ preserving $P$ and acting cocompactly on $P\cap\mathbb{H}^n$.  The maximal such subgroup $\Gamma_U$ is the stabilizer of $U$ in $\Gamma$, since all elements fixing $U$ are parabolic (cf.~\cite[Lemma D.3.6]{BenPet}).  By Lemma \ref{gots a point}, $\cals_U = P\cap \cals$ is non-empty, and since $\cals$ is $\Gamma$-invariant $\cals_U$ is $\Gamma_U$-invariant.

Let $\bu\in V^{\perp}$ satisfy $\bu\circ\bx_0=-1$.  For any support plane $Q$ for $\mathrm{Hull}(\cals_U)$ in $P$, the half-space $P^-\doteq \{\bq-a\bu\,|\,\bq\in Q\ \mbox{and}\ a\geq 0\}$ of $P$ bounded by $Q$ has compact intersection with $U_{n+1}$, by Lemma \ref{Euclid's planes}.  For if not then $Q\cap\mathbb{H}^n$ would be a Euclidean hyperplane of $S=P\cap\mathbb{H}^n$, thus bounding a half-space containing points of $S$ arbitrarily far from $\cals_U$.  But this would contradict cocompactness of the $\Gamma_U$-action.  

Cocompactness also implies that $\cals_U$ is contained in the half-space $P^+$ opposite $P^-$, and that it is not entirely contained in $Q$.  By Lemma \ref{rotate plane}, $P$ can be rotated in the space-like direction around $Q$ to produce a new support plane $P'$ for $\mathrm{Hull}(\cals)$ that separates $P^+$ from $\bo$, excluding all points of $P - P^+$ from $\mathrm{Hull}(\cals)$.  It follows that $F$, which could \textit{a priori} contain $\mathrm{Hull}(\cals_U)$ properly, does not, and also that $F = \mathrm{Hull}(\cals)$ is $n$-dimensional.

$P'$ is parallel to a space-like subspace and satisfies $P'\cap\mathrm{Hull}(\cals) = Q\cap\mathrm{Hull}(\cals) = F_0$, where $F_0$ is the face of $F$ contained in $Q$.  Therefore $F_0=\mathrm{Hull}(P'\cap\cals)$ is compact and equal to $\mathrm{Hull}(P'\cap\cals)$ by Lemma \ref{rotate space}, and we have the second bullet above.

An argument of Epstein--Penner, from the first full paragraph on \cite[p.~75]{EpstePen}, shows local finiteness of the collection of faces.  For a compact set $K\subset P$ and a sequence of distinct faces $F_1,F_2,\hdots$ of $F$ intersecting $K$, the corresponding sequence of support planes $Q_1,Q_2,\hdots$ in $P$ has a subsequence that converges (in the sense of Lemma \ref{plane convergence}) to a support plane $Q_0$ for $F$.  By the above $Q_0\cap U_{n+1}$ is compact, so by Fact \ref{small neighborhood} some fixed neighborhood of $Q_0\cap U_{n+1}$ contains $Q_i\cap U_{n+1}$ for all large enough $i$.  But $P\cap\cals$ is locally finite so this neighborhood contains only finitely many of its points.  The $F_i$ are thus not all distinct (recall Lemma \ref{rotate space}).

That every $\partial F$ is the union of $(n-1)$-dimensional faces also follows from an argument of \cite{EpstePen}, in the paragraph spanning pp.~74--75 there.  If $Q$ is a support plane for $F$ in $P$ with $Q\cap F\subset Q_0$ for some $(n-2)$-dimensional subspace $Q_0$ then since $Q\cap U_{n+1}$ is compact, by Fact \ref{small neighborhood} rotating $Q$ about $Q_0$ by a small amount produces new support planes for $F$.  Since a limit of support planes is a support plane (by Lemma \ref{plane convergence}), there is a closed interval about $0$ of rotations through support planes, with boundary points consisting of support planes containing points of $\cals$ outside $Q_0$.  Such a plane intersects $F$ in a face properly containing $Q_0\cap F$, and the assertion follows.

Since each face of $F$ is the convex hull of its intersection with $\cals$, $\cals_U$ is the set of $0$-dimensional faces.  There are only finitely many $\Gamma_U$-orbits in $\cals_U$ by cocompactness, and each point of $\cals_U$ is in only finitely many faces of $F$ by local finiteness.  Since each face of $F$ contains a vertex, it follows that there are only finitely many $\Gamma_U$-orbits of faces.\end{proof}

\begin{corollary}\label{its a poly}  Suppose $\cals$ is locally finite and invariant under a torsion-free lattice $\Gamma$ of $\mathit{SO}^+(1,n)$.  If $P$ is a support plane for $\mathrm{Hull}(\cals)$ parallel to a light-like vector subspace $V$ of $\mathbb{R}^{n+1}$ then for $F = P\cap\mathrm{Hull}(\cals)$, $r_n(F)$ is an $n$-dimensional convex polyhedron equal to the closed convex hull of $P\cap\cals$ in $\mathbb{H}^n$ and containing the horoball determined by some $\bu\in V^{\perp}$.\end{corollary}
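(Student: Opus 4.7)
The plan is to establish three facts: (a) $r_n(F)$ equals the closed hyperbolic convex hull of $\cals_U := P\cap\cals$ in $\mathbb{H}^n$; (b) $r_n(F) = \bigcap_\alpha H_\alpha^+$, an intersection of locally finitely many hyperbolic half-spaces indexed by the codimension-$1$ faces $\{F_\alpha\}$ of $F$; and (c) $r_n(F)$ contains a horoball determined by some $\bu'\in V^\perp$. Proposition \ref{horospherical cells} is the starting point: $F = \mathrm{Hull}(\cals_U)$ is $n$-dimensional with a locally finite, $\Gamma_U$-cofinite collection of compact codim-$1$ faces, and $\partial F$ is their union.

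Fix $\bu\in V^\perp$ with $\bx_0\circ\bu = -1$, so $P = \{\bx\circ\bu = -1\}$. For $\bx\in F\subset P$ and $\by = r_n(\bx)$ one has $\bx = -\by/(\by\circ\bu)$, with $\by\circ\bu < 0$ by (\ref{x circ y}). If $\by_n = r_n(\bx_n)\to\by_\infty$ in $\mathbb{H}^n$ with $\bx_n\in F$, then the denominators $\by_n\circ\bu$ stay bounded away from $0$, forcing $\bx_n\to -\by_\infty/(\by_\infty\circ\bu)\in F$ and $\by_\infty\in r_n(F)$. Thus $r_n(F)$ is closed; combined with convexity (Lemma \ref{convex hull}) and containment of $\cals_U$, comparison with the hyperbolic closed convex hull (also Lemma \ref{convex hull}) yields (a).

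For (b): each $F_\alpha$ is $(n-1)$-dimensional with $\bo\notin F_\alpha$, so $W_\alpha := \mathrm{span}(F_\alpha)$ is $n$-dimensional; since vertices of $F_\alpha$ lie in $\cals_U\subset\mathbb{H}^n$, $W_\alpha$ is time-like and $W_\alpha\cap\mathbb{H}^n$ is a hyperbolic hyperplane. A direct check gives $F\cap W_\alpha = F_\alpha$, so $W_\alpha$ is a support plane for $F$ in $\mathbb{R}^{n+1}$; let $B_\alpha$ be the half-space bounded by $W_\alpha$ containing $F$ and $H_\alpha^+ := B_\alpha\cap\mathbb{H}^n$ (a hyperbolic half-space by Lemma \ref{convex component}). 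Since the $F_\alpha$ determine $F$ within $P$, $F = P\cap\bigcap_\alpha B_\alpha$, and $F\subset U_{n+1}$ forces $P\cap\bigcap_\alpha B_\alpha\subset U_{n+1}$. Applying $\bx = -\by/(\by\circ\bu)$ converts the hyperbolic inequalities defining $\bigcap_\alpha H_\alpha^+$ into linear ones defining $\bigcap_\alpha B_\alpha$, yielding $r_n(F) = \bigcap_\alpha H_\alpha^+$. Local finiteness of $\{W_\alpha\cap\mathbb{H}^n\}$ in $\mathbb{H}^n$ follows because $\by\mapsto -\by/(\by\circ\bu)$ sends compact subsets of $\mathbb{H}^n$ to compact subsets of $P$, and an adaptation of the convergent-support-plane argument from Proposition \ref{horospherical cells}'s proof bounds the number of $Q_\alpha := W_\alpha\cap P$ that can meet a given compact.

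The main obstacle is (c). The key claim is that $\bu\notin W_\alpha$ for every $\alpha$: otherwise $Q_\alpha$ would contain a translate of $\bu$, making $Q_\alpha\cap\mathbb{H}^n$ a Euclidean hyperplane of the horosphere $P\cap\mathbb{H}^n$ bounding $\cals_U$ on one side, contradicting cocompactness of the $\Gamma_U$-action on the horosphere (via Lemma \ref{Euclid's planes} and the argument in Proposition \ref{horospherical cells}'s proof). Since $\bu\notin W_\alpha$, $W_\alpha\cap\mathbb{H}^n$ is not asymptotic to the ideal point $U$, and $\by\mapsto\by\circ\bu$ attains a maximum $m_\alpha < 0$ on it. Because $\Gamma_U$ fixes $\bu$ pointwise and has finitely many orbits on $\{F_\alpha\}$, $\{m_\alpha\}$ has finitely many distinct values; let $m^* := \sup_\alpha m_\alpha < 0$. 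For any $c\in(m^*,0)$, the horoball $H_c := \{\by\in\mathbb{H}^n : \by\circ\bu\geq c\}$ is disjoint from every $W_\alpha\cap\mathbb{H}^n$, and meets $r_n(F)$ (for $\bs\in\cals_U$ the ray $\bs + \lambda\bu$ lies in $F$ for all $\lambda\geq 0$, with $r_n$-image entering $H_c$ for large $\lambda$); so by connectedness $H_c\subset H_\alpha^+$ for every $\alpha$, whence $H_c\subset r_n(F) = \bigcap_\alpha H_\alpha^+$. This is the horoball for $\bu' := -\bu/c\in V^\perp$.
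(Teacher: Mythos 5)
Your overall plan is sound and reaches the corollary with the same main inputs as the paper (Proposition \ref{horospherical cells}, Lemma \ref{Euclid's planes}, cocompactness of $\Gamma_U$, finiteness of $\Gamma_U$-orbits of faces), and parts (a)--(b) take a genuinely different and in places cleaner route: proving closedness of $r_n(F)$ via the explicit section $\by\mapsto -\by/(\by\circ\bu)$ of $r_n|_{P\cap U_{n+1}}$ is simpler than the paper's exhaustion of $P$ by compact slabs cut off by time-like hyperplanes, and transporting the half-space description upstairs-to-downstairs by positive scaling is legitimate because each $B_\alpha$ is a cone (the identity $F=P\cap\bigcap_\alpha B_\alpha$ does need the standard segment argument, but $\partial F=\bigcup_\alpha F_\alpha$ and local finiteness from Proposition \ref{horospherical cells} supply it). For local finiteness of $\{W_\alpha\cap\mathbb{H}^n\}$ you also diverge from the paper, which argues via $\Gamma_U$-cofiniteness, proper discontinuity of $\Gamma$ and Lemma \ref{intersect ball}; your transport-to-$P$ route can be made to work, but note that the argument you cite controls \emph{faces} meeting a compact subset of $P$, not the planes $Q_\alpha$, which may meet your compact set $K'$ far from $F_\alpha$ (even outside $U_{n+1}$). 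The limit of such $Q_\alpha$'s need not touch $F$, so you must observe that it still has $\cals_U$ on one side; then the dichotomy of Lemma \ref{Euclid's planes} together with cocompactness and Corollary \ref{gots a point} still forces compact intersection with $U_{n+1}$, and Fact \ref{small neighborhood}, local finiteness of $\cals$, and Lemma \ref{rotate space} (faces are hulls of their vertex sets) finish as before. That observation is missing but easily added.

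The genuine gap is in (c): the parenthetical claim that the ray $\bs+\lambda\bu$ lies in $F$ for all $\lambda\geq 0$ is the load-bearing step --- without it the connectedness argument only places $H_c$ in \emph{one} of the two half-spaces bounded by each $W_\alpha\cap\mathbb{H}^n$, possibly the wrong one --- and it does not follow from anything you have established. It is false for hulls of arbitrary subsets of a horosphere (e.g.\ finite ones) and genuinely uses coarse density of $\cals_U$, i.e.\ cocompactness. The quickest repair avoids the ray claim altogether: by cocompactness choose $\bs_1,\bs_2\in\cals_U$ with $d=d_H(\bs_1,\bs_2)$ large; the Euclidean segment $[\bs_1,\bs_2]$ lies in the convex set $F$, and the $r_n$-image of its midpoint lies in $r_n(F)$ and pairs with $\bu$ to $-2/\sqrt{2+2\cosh d}$, which tends to $0$ as $d\to\infty$, so $H_c\cap r_n(F)\neq\emptyset$ for every $c<0$. (Alternatively, the ray claim itself can be proved as a limit of such chords.) With that supplied, and the local-finiteness adaptation spelled out as above, your proof is correct; the horoball step then parallels the paper's, which likewise takes the maximum of $\bx\circ\bu$ on each wall and uses $\Gamma_U$-invariance and finiteness of orbits.
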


\begin{proof}  That $r_n(F)$ is the closed convex hull of $P\cap\cals$ will follow from Lemma \ref{convex hull} upon showing that $r_n(F)$ is closed.  This in turn is a consequence of the following fact: there is a sequence of $\{V_k\}$ time-like subspaces of $\mathbb{R}^{n+1}$, each bounding a half-space $B_k$ with $B_k\cap P$ compact, such that $B_1\cap P \subset B_2\cap P\subset\hdots$ exhausts $P$ and $d_H(V_k\cap P,V_{k+1}\cap P) = 1$ for each $k$.  To obtain the $V_k$ fix $\bx\in P\cap\mathbb{H}^n$, and for $\gamma_{\bx}$ as in Lemma \ref{convex component} let $V_k = \gamma_{\bx}'(k)^{\perp}$.

For each $k$, $r_n$ preserves $V_k$ and takes $B_k\cap P$ to $B_k\cap (B\cap\mathbb{H}^n)$, where $B$ is the half-space bounded by $P$ containing $\bo$ (recall Lemma \ref{ratatouille}).  By the paragraph above, a convergent sequence in $r_n(F)$ is entirely contained in some $B_k\cap(B\cap\mathbb{H}^n)$.  Since $B_k\cap F$ is compact its image under $r_n$ is closed, hence contains the limit point.

Enumerate the set of faces of $F$ as $F_1,F_2,\hdots$ and let $Q_1,Q_2,\hdots$ be the corresponding sequence of support planes for $F$ in $Q$, with $F_i = Q_i\cap F$ for each $i$.  For each $i$ let $V_i$ be the time-like vector subspace of $\mathbb{R}^{n+1}$ spanned by $Q_i$.  We claim that $\{V_i\cap\mathbb{H}^n\}$ is locally finite in $\mathbb{H}^n$.

To prove the claim, suppose not and let $\bx_0$ be an accumulation point for the $V_i\cap\mathbb{H}^n$.  Since there are finitely many $\Gamma_U$-orbits of the $F_i$ (Proposition \ref{horospherical cells}), passing to a subsequence yields $\bx_i\to \bx_0$ with $\bx_i\in V_i = g_i.V_1$, where $g_i\in\Gamma_U$ for each $i$.  If $V$ is the light-like subspace parallel to $P$ and $\bu\in V^{\perp}$ has the property that $\bu\circ\bx = -1$ for each $\bx\in P$, then $\bu\circ\bx_0-1 \leq \bu\circ\bx_i$ for all large enough $i$.  We may substitute $\bu\circ g_i^{-1}\bx_i = \bu\circ\bx_i$ in this inequality since $g_i\in\mathit{SO}^+(1,n)$ fixes $\bu$, and note that thus all $g_i^{-1}\bx_i$ lie in the horoball $\{\bx\circ\bu\geq \bx_0\circ\bu-1\}$.

For any fixed $\by\in V_1\cap\mathbb{H}^n$, $d_H(\bx,g_i\by)\to\infty$ as $i\to\infty$ since $\Gamma$ acts discontinuously on $\mathbb{H}^n$.  Since $\bx_i\to\bx$ the same holds for $d_H(\bx_i,g_i\by) = d_H(g_i^{-1}\bx_i,\by)$.  But the $g_i^{-1}\bx_i$ all lie in the compact subset $\{\bu\circ\bx_0-1\leq\bu\circ\bx\}$ of $V_1\cap\mathbb{H}^n$ (cf.~Lemma \ref{intersect ball}), a contradiction.

The claim is proved, so to show $r_n(F)$ is a polyhedron we observe that $r_n(F) = \left(\bigcap B_i\right)\cap\mathbb{H}^n$, where $B_i$ is the half-space bounded by $V_i$ and containing $F$ for each $i$.  Since $r_n$ preserves each $V_i$ and $B_i$ it is clear that $r_n(F)\subset\left(\bigcap B_i\right)\cap\mathbb{H}^n$.  Recall from Proposition \ref{horospherical cells} that $\partial F = \bigcup F_i$, so since $r_n(F)$ is closed and $r_n|_{P\cap U_{n+1}}$ is a local homeomorphism, $\partial r_n(F) = \bigcup r_n(F_i)$.  This implies equality: for fixed $\bx\in \mathit{int}\,r_n(F)$ and any $\by\in\mathbb{H}^n-F$ the geodesic arc from $\bx$ to $\by$ exits $r_n(F)$ in some $F_i$; hence also exits $\left(\bigcap B_i\right)\cap\mathbb{H}^n$ there.

If $B = \{\bx\circ\bu\geq -1\}$ is the half-space bounded by $P$ that intersects $\mathbb{H}^n$ in the horoball containing $r_n(F)$, Lemma \ref{intersect ball} implies that $(V_i\cap B)\cap\mathbb{H}^n$ is compact for any $i$, so $\bx\circ\bu$ attains a non-zero maximum on it.  This is invariant under the $\Gamma_U$-action, so since there are finitely many $\Gamma_U$-orbits of the $F_i$ there exists $k<0$ such that $\bx\circ\bu\leq k$ for all $\bx\in F_i$ and any $i$.  Therefore $r_n(F)$ contains the horoball $B_k\cap\mathbb{H}^n$, where $B_k = \{\bx\circ\bu\geq k\}$.\end{proof}

\begin{lemma}\label{locally finite}  If $\cals$ is locally finite and invariant under a torsion-free lattice $\Gamma$ of $\mathit{SO}^+(1,n)$ then the collection of faces of $\mathrm{Hull}(\cals)$ is locally finite.\end{lemma}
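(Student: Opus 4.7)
The plan is to argue by contradiction. Suppose the collection of faces of $\mathrm{Hull}(\cals)$ fails to be locally finite at some $\bx_0 \in \mathbb{R}^{n+1}$, yielding a sequence of distinct faces $F_k$ with points $\by_k \in F_k$ converging to $\bx_0$. For each $k$ fix a support plane $P_k$ with $F_k = P_k \cap \mathrm{Hull}(\cals)$, let $\eta_k$ be a Euclidean unit outward normal, and pass to a subsequence so that $\eta_k \to \eta$. Then by Lemma \ref{plane convergence} the planes $P_k$ converge to a support plane $P$ for $\mathrm{Hull}(\cals)$ through $\bx_0$, and by Lemma \ref{no time} $P$ is parallel to a space-like or light-like subspace.

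In the space-like case, $P \cap U_{n+1}$ is compact by Lemma \ref{convex component}. Local finiteness of $\cals$ allows choosing $\delta > 0$ so that the $\delta$-Euclidean neighborhood $N$ of $P \cap U_{n+1}$ satisfies $N \cap \cals \subset P \cap \cals$, a finite set. Fact \ref{small neighborhood} then places $P_k \cap U_{n+1} \subset N$ for all large $k$, so the vertex sets $P_k \cap \cals$ of the $F_k$ all lie in this finite set. Since every face is the convex hull of its vertex set (Lemma \ref{rotate space} in the space-like case, Proposition \ref{horospherical cells} in the light-like case), only finitely many distinct $F_k$ are possible, a contradiction.

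The light-like case is the principal obstacle. Let $V$ denote the parallel light-like subspace, $U = V^\perp$, and $F = P \cap \mathrm{Hull}(\cals)$. Proposition \ref{good support} provides a parabolic subgroup $\Gamma_U < \Gamma$ acting cocompactly on $P \cap \mathbb{H}^n$, and Proposition \ref{horospherical cells} ensures the faces of $F$ form a locally finite, $\Gamma_U$-finite collection. Those $F_k$ that are faces of $F$ yield an immediate contradiction via the local finiteness for $F$, so we may assume each $F_k$ has a vertex outside $P$. The plan is then to apply Lemma \ref{rotate plane} with a carefully chosen codimension-one subplane $Q \subset P$, rotating $P$ in the space-like direction into a space-like support plane $P'$ for $\mathrm{Hull}(\cals)$ whose intersection with $U_{n+1}$ is compact and contains a neighborhood of $\bx_0$. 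Up to translating by elements of $\Gamma_U$ --- which preserve $V$ and hence the convergence $P_k \to P$ --- the faces $F_k$ are confined near this compact region of $P'$, so the space-like analysis applies to give only finitely many distinct possibilities.

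The technical heart of the argument lies in the construction of $Q$: Lemma \ref{rotate plane}'s hypotheses require $\cals \cap P$ on the non-compact side of $Q$, yet the $\Gamma_U$-cocompact structure of $\cals \cap P$ --- understood via the Euclidean structure on $P \cap \mathbb{H}^n$ from Lemma \ref{Euclid's balls} --- distributes points of $\cals$ in all horospheric directions. Balancing these constraints by exploiting $\Gamma_U$-translation and the detailed geometry of the horosphere is what makes the argument delicate.
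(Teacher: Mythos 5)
Your space-like case is fine and is essentially the paper's argument (compactness of $P\cap U_{n+1}$, Fact \ref{small neighborhood}, local finiteness of $\cals$, and the fact that a face is determined by its vertex set via Lemma \ref{rotate space}). The gap is in the light-like case, which is where all the real content of this lemma lies, and your proposal does not close it: you describe a plan and then explicitly defer the ``delicate'' step, which is precisely the step that needs proof. Concretely, the danger is that faces $F_k$ with points converging to $\bx_0\in F=P\cap\mathrm{Hull}(\cals)$ may nevertheless have vertex sets running off toward the ideal point $U$, since their support planes $P_k$ degenerate to the light-like-parallel plane $P$ and $P_k\cap U_{n+1}$ is not uniformly confined (Fact \ref{small neighborhood} is unavailable). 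Your mechanism --- rotate $P$ about some $Q$ via Lemma \ref{rotate plane} to a space-like support plane $P'$ and claim ``the faces $F_k$ are confined near this compact region of $P'$'' --- does not deliver this: $P'$ being a support plane only says $\cals$ lies on one side of it and $P'\cap\mathrm{Hull}(\cals)$ is a single face of $F$; it gives no control over \emph{which} points of $\cals$ occur as vertices of the $F_k$, so the confinement claim is unsupported and is essentially equivalent to what must be shown. (Also, $P'\cap U_{n+1}$ cannot ``contain a neighborhood of $\bx_0$'' for dimension reasons, and $\bx_0$ need not lie on $P'$; and you never rule out that infinitely many of the $P_k$ are themselves light-like-parallel.)

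For comparison, the paper's proof does not rotate $P$ at all. It sets $Q_n=P_n\cap P$, uses Proposition \ref{good support} (cocompactness of $\Gamma_U$ on $P\cap\mathbb{H}^n$) together with Lemma \ref{Euclid's planes} to show no $Q_n$ contains a translate of $\bu$, rules out light-like $P_n$ via Lemma \ref{intersect balls} and cocompactness, and shows $Q_n$ converges to a support plane $Q\subset P$ for $F$ with $Q\cap U_{n+1}$ compact. The decisive step is Claim \ref{where's the maxx}: the maximal first coordinate on $P_n\cap B^+\cap U_{n+1}$ is attained on $Q_n\cap\mathbb{H}^n$, proved by rerunning the Lagrange-multiplier estimate from Lemma \ref{rotate plane} with $\bu_n$ in place of $\bn_t$, where the needed asymptotic $\bu_n\circ\bu/\bu_n\circ\bu_n\to 1/2$ comes from Lemma \ref{intersect ballssss}, whose hypothesis is again verified by $\Gamma_U$-cocompactness (empty balls in the horosphere have bounded radius). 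This bounds the vertex sets $P_n\cap\cals$ inside the finite set of points of $\cals$ with first coordinate at most $M+1$, so two of the $F_n$ share a vertex set and coincide by Lemma \ref{rotate space}. Some quantitative control of this kind on the vertices of the accumulating faces is unavoidable, and your proposal supplies no substitute for it.
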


\begin{proof}  Given a compact set $K\subset U_{n+1}$ that intersects infinitely many distinct visible faces $F_n$, the corresponding sequence $\{P_n\}$ of support planes has a subsequence that converges (in the sense of Lemma \ref{plane convergence}) to a support plane $P$.  If the parallel subspace $V$ to $P$ is space-like then $P\cap U_{n+1}$ is compact and we follow Epstein--Penner \cite[p.~75]{EpstePen}; see the proof of Proposition \ref{horospherical cells}.  We therefore suppose below that $V$ is light-like.

For each $n$ let $\bx_n\in F_n$, and let $\eta_n$ be a unit-length Euclidean outward normal to the half-space $B_n^-$ bounded by $P_n$ and containing $\bo$.  After subsequencing we may take $\{(\bx_n,\eta_n)\}\to (\bx_0,\eta)$ for some $\bx_0$ in the face $F = P\cap\mathrm{Hull}(\cals)$ and Euclidean outward normal $\eta$ to the half-space $B^-$ bounded by $P$ that contains $\bo$.

By Remark \ref{Euclid vs Lorentz}, $\bar{\eta}\circ\bx_0 < 0$, so there is a positive scalar multiple $\bu$ of $\bar{\eta}$ with $\bu\circ\bx_0 = -1$.  We further have $\bu\in V^{\perp}$ and $B^- = \{\bx\circ\bu\geq-1\}$.  Scaling the $\bar{\eta}_n$ by the same factor produces a sequence $\{\bu_n\}\to\bu$ such that $\bu_n$ is Lorentz-orthogonal to the subspace $V_n$ parallel to $P_n$ for each $n$, and $B_n^- = \{\bx\circ\bu_n\geq\bx_n\circ\bu_n\}$.

By Proposition \ref{horospherical cells} the collection of faces of $F$ is locally finite, so after excluding finitely many $F_n$ we may assume the support planes $P_n$ satisfy $P_n\neq P$ for all $n$.  $P_n$ is also not parallel to $P$, being a support plane, so $Q_n = P_n\cap P$ is non-empty for all $n$.

By Proposition \ref{good support}, a parabolic subgroup $\Gamma_U$ of $\Gamma$ preserves $V^{\perp}$ and acts cocompactly on $P\cap\mathbb{H}^n$.  Since $P\cap\cals$ is non-empty (Corollary \ref{gots a point}) and $\Gamma_U$-invariant, cocompactness implies there exists $J>0$ such that all points of $P\cap\mathbb{H}^n$ are within $J$ of $P\cap\cals$.  It therefore follows from Lemma \ref{Euclid's planes} that none of the $Q_n$ contain a translate of $\bu$, since otherwise $B_n^-\cap P$ would contain points arbitrarily far from $P\cap\cals$.

If an infinite subsequence of the $P_n$ were parallel to light-like subspaces then Lemma \ref{intersect balls} would imply that the corresponding subsequence of the $(B_n^-\cap P)\cap \mathbb{H}^n$ contains empty balls of arbitrarily large diameter, again contradicting cocompactness.  Upon discarding a finite collection of the $P_n$ we may thus assume that each parallel subspace $V_n$ is space-like (also recall Lemma \ref{no time}), so $B_n^-\cap U_{n+1}$ is compact (Lemma \ref{convex component}).

Since $\{\bx_n\in F_n\}\to \bx_0\in F$, one can show by a Euclidean argument that there is a sequence $\{\by_n\in Q_n\}$ converging to $\bx_0$.  It follows that a subsequence of the $Q_n$ converges to a support plane $Q\subset P$ for $F$ containing $\bx_0$.  
As was showed in the proof of Proposition \ref{horospherical cells}, $Q\cap U_{n+1}$ is compact.
Let $M$ be the maximum of initial entries of points of $Q\cap U_{n+1}$.  Since $Q_n\to Q$, by Fact \ref{small neighborhood} the first entries of points of $Q_n\cap U_{n+1}$ are at most $M+1$ for large enough $n$.  

\begin{claim}\label{where's the maxx}  Let $B^+$ be the half-space bounded by $P$ opposite $B^-$, and for each $n$ let $P_n^- = P_n\cap B^+$.  The maximal first entry of points of $P_n^-\cap U_{n+1}$ is attained in $Q_n\cap\mathbb{H}^n$.\end{claim}

This is analogous to claim \ref{where's the max} of Lemma \ref{rotate plane}, and its proof is analogous to the $P_t^-$ case there.  In fact upon replacing $P_t$ and $Q$ with $P_n$ and $Q_n$, respectively, and $\bx_0$ by $\bx_n$ and $\bn_t$ by $\bu_n$, the argument holds verbatim.  The only assertion requiring additional comment is that $\bu_n\circ\bu/\bu_n\circ\bu_n\to 1/2$ as $n\to \infty$, used in the estimate on equation (\ref{check convex}) to show that the point of $P_t\cap U_{n+1}$ with maximal first entry lies in $B^-$.  Here we use Lemma \ref{intersect ballssss} to prove this: the ball $U_n$ of the lemma contains no points of $\cals$, so by cocompactness the radius of $U_n\cap P$ must remain bounded.

Therefore formula (\ref{check convex}) for $\bx\circ\bu$ approaches $0$ as $n\to\infty$, where $\bx\in P_n\cap\mathbb{H}^n$ has maximal first entry.  The claim follows, so for large enough $P_n^-\cap\cals$ is contained in the finite set $\cals_0$ of $\bs\in\cals$ with first entry at most $M+1$.  But $P_n^-\cap\cals$ is the vertex set of $F_n$, so there must exist $m\neq n$ such that $F_m$ and $F_n$ share a vertex set.  It therefore follows from Lemma \ref{rotate space} that $F_m$ and $F_n$ are identical, contradicting our hypothesis, so we have local finiteness.\end{proof}

\begin{corollary}\label{locally finite hyperbolic}  If $\cals$ is locally finite and invariant under a torsion-free lattice $\Gamma$ of $\mathit{SO}^+(1,n)$ then the collection $\{r_n(F)\,|\,F\ \mbox{is a visible face of}\ \mathrm{Hull}(\cals)\}$ is locally finite.\end{corollary}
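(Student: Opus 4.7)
The plan is to reduce Corollary \ref{locally finite hyperbolic} to the local finiteness of faces of $\mathrm{Hull}(\cals)$ established by Lemma \ref{locally finite}, via a contradiction argument. Suppose $K \subset \mathbb{H}^n$ is compact and infinitely many distinct visible faces $F_k$ of $\mathrm{Hull}(\cals)$ satisfy $r_n(F_k) \cap K \neq \emptyset$. For each $k$ I will pick $\bx_k \in K \cap r_n(F_k)$ and its unique preimage $\tilde{\bx}_k = s_k \bx_k \in F_k$ under $r_n$, where $s_k \geq 1$. The main claim is that $\{s_k\}$ is bounded; since $K$ is bounded in $\mathbb{R}^{n+1}$, this will place $\{\tilde{\bx}_k\}$ in a compact subset of $U_{n+1}$, whereupon Lemma \ref{locally finite} contradicts the assumption of infinitely many distinct $F_k$.

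To prove the claim, assume for a further contradiction that $s_k \to \infty$ along a subsequence, with $\bx_k \to \bx_0 \in K$. For each $k$, Corollary \ref{empty spheres} provides a support plane $P_k$ through $\tilde{\bx}_k$ separating $\mathrm{Hull}(\cals)$ from $\bo$; let $\eta_k$ be the Euclidean unit outward normal to the half-space $B_k^-$ bounded by $P_k$ and containing $\bo$, and let $\bar{\eta}_k$ be its Lorentz counterpart from Remark \ref{Euclid vs Lorentz}. After a further subsequence $\eta_k \to \eta$ (Euclidean unit), hence $\bar{\eta}_k \to \bar{\eta}$. The two support-plane inequalities $\bar{\eta}_k \circ \bx_k \leq 0$ (from $\bo \in B_k^-$ and $\tilde{\bx}_k = s_k \bx_k$) and $\bar{\eta}_k \circ \by \leq s_k \,\bar{\eta}_k \circ \bx_k$ for each fixed $\by \in \mathrm{Hull}(\cals)$, after dividing the second by $s_k \to \infty$ and passing to the limit, yield $\bar{\eta} \circ \bx_0 \leq 0$ and $0 \leq \bar{\eta} \circ \bx_0$, forcing $\bar{\eta} \circ \bx_0 = 0$.

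To derive the contradiction, observe that Lemma \ref{no time} forces the subspace $V_k$ parallel to $P_k$ to be space-like or light-like, so $\bar{\eta}_k$ is time-like or light-like. Combined with the Euclidean-unit normalization of $\eta_k$, this gives $\bar{\eta}_k \circ \bar{\eta}_k = 1 - 2\eta_{k,0}^2 \leq 0$, and the correct sign of $\eta_{k,0}$ (forced by $\bar{\eta}_k \circ \tilde{\bx}_k \leq 0$ together with the future-pointing character of $\tilde{\bx}_k \in U_{n+1}$) yields $\eta_{k,0} \geq 1/\sqrt{2}$. Taking the limit, $\bar{\eta}$ is time-like or light-like with $\bar{\eta}_0 \geq 1/\sqrt{2}$, hence future-pointing causal. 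Inequality (\ref{x circ y}) then gives $\bar{\eta} \circ \bx_0 < 0$, with strict inequality even in the light-like case because $\bar{\eta}$ cannot be a scalar multiple of the time-like $\bx_0$. This contradicts $\bar{\eta} \circ \bx_0 = 0$.

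The main obstacle is the bookkeeping of causal type and first-coordinate sign for $\bar{\eta}_k$ in the third step: Euclidean-unit normalization is essential to extract a convergent subsequence from the $\eta_k$, and one must verify it interacts correctly with the Lorentz pairing so that the limit $\bar{\eta}$ remains future-pointing causal. The lattice hypothesis enters only through Lemma \ref{no time}, and as the examples of Section \ref{bad example} illustrate, without this input (which excludes time-like $V_k$ and hence space-like $\bar{\eta}_k$) the conclusion of the corollary can fail.
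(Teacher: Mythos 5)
Your proposal is correct and takes essentially the same route as the paper: the same dichotomy, with the bounded-lift case settled by Lemma \ref{locally finite} and the unbounded case ($s_k\to\infty$) handled by passing to a limit of Euclidean-unit normals of support planes that separate $\mathrm{Hull}(\cals)$ from $\bo$, which is exactly the argument of Lemma \ref{convex hull boundary}, with Lemma \ref{no time} supplying the contradiction. The only real difference is the endgame: the paper invokes Lemma \ref{no time} once, applied to the limiting time-like subspace through $\bx_0$ produced as in Lemma \ref{convex hull boundary}, whereas you apply Lemma \ref{no time} to each $P_k$ so that every $\bar{\eta}_k$ is causal and then contradict $\bar{\eta}\circ\bx_0=0$ using closedness of the future causal cone --- the same computation packaged slightly differently, and your version has the mild advantage of not needing to verify that the limit plane is itself a support plane.
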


\begin{proof}  Suppose this does not hold, and $K$ is a compact set in $\mathbb{H}^n$ intersecting $r_n(F_k)$ for a sequence $F_k$ of distinct faces of $\mathrm{Hull}(\cals)$.  Passing to a subsequence we assume that there exist $\bx_k\in F_k$ such that $\{r_n(\bx_k)\}\to \bx$ for some $\bx\in K$.

If for any $\bx_0\in r_n^{-1}(\bx)$ a subsequence of the $\bx_k$ converged to $\bx_0$, local finiteness of the collection of faces of $\mathrm{Hull}(\cals)$ (Lemma \ref{locally finite}) would be violated.  Therefore $\bx_k\circ\bx_k\to-\infty$ as $k\to\infty$.  Arguing as in the proof of Lemma \ref{convex hull boundary} produces a time-like $n$-dimensional subspace $V$ of $\mathbb{R}^{n+1}$ that has $\cals$ on one side.  This violates Lemma \ref{no time}, a contradiction.\end{proof}

\begin{theorem}\label{pseudo EP}\PseudoEP\end{theorem}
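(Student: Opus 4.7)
The plan is to assemble the structural results already proven; the main new ingredient is the existence and uniqueness of the horospherical circumsphere in (3). Most of the global framework is in place: Corollary \ref{all the points} gives that the cells cover $\mathbb{H}^n$, Lemma \ref{invariance'n'stuff} gives $\Gamma$-invariance, and Corollary \ref{locally finite hyperbolic} gives local finiteness. Each Delaunay cell $C=r_n(F)$ arises from a visible face $F=P\cap\mathrm{Hull}(\cals)$ whose parallel subspace, by Lemma \ref{no time}, is either space-like or light-like. In the space-like case, Lemma \ref{rotate space} identifies $F$ as $\mathrm{Hull}(P\cap\cals)$, and Lemma \ref{poly to poly} makes $r_n(F)$ a compact convex polyhedron in $\mathbb{H}^n$ equal to the closed convex hull of $P\cap\cals$ there. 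In the light-like case, Corollary \ref{its a poly} identifies $r_n(F)$ as an $n$-dimensional convex polyhedron in $\mathbb{H}^n$ equal to the closed convex hull of $P\cap\cals$ and containing a horoball.

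For the polyhedral complex property (1), I would follow the pattern of Proposition \ref{the finite case}. That every face of a cell is a cell uses Lemma \ref{good visibility} in the compact case, and the second bullet of Proposition \ref{horospherical cells} in the light-like case, which asserts that each face of a light-like-supported $n$-face is a compact face of $\mathrm{Hull}(\cals)$ cut out by a space-like support plane. The intersection-in-a-face claim transports essentially verbatim from the proof of Proposition \ref{the finite case}, replacing its appeal to Lemma \ref{good visibility} with Corollary \ref{empty spheres}. Property (2) is then the direct translation via Lemma \ref{classification of spheres}: the support plane $P$ has $P\cap\mathbb{H}^n$ a metric sphere or horosphere, and the half-space $B$ containing $\bo$ bounds a convex region with $B\cap\cals=P\cap\cals$ by Corollary \ref{empty spheres}. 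Conversely, any metric sphere or horosphere $S=P\cap\mathbb{H}^n$ whose convex bounding half-space $B$ satisfies $B\cap\cals=S\cap\cals$ makes $P$ a support plane for $\mathrm{Hull}(\cals)$, and $r_n(F)$ is the associated cell for $F=P\cap\mathrm{Hull}(\cals)$.

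The main obstacle is property (3). Fix a parabolic fixed point $U$ with a disjoint horoball, and choose $\bu\in U$ with $u_0>0$; the hypothesis yields $M\doteq\sup_{\bs\in\cals}\bs\circ\bu<0$, and the key step is to show $M$ is attained. By Lemma \ref{invariant horoballs}, $\Gamma_U$ acts cocompactly on the reference horosphere $\{\bx\circ\bu=-1\}$; fix a compact fundamental domain $D$ there. The geodesic flow toward $U$ from Lemma \ref{convex component} commutes with the $\Gamma_U$-action since every element of $\Gamma_U$ fixes $\bu$, and it carries $\{\bx\circ\bu=-1\}$ onto $\{\bx\circ\bu=-e^{-t}\}$ at time $t$. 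For a sequence $\bs_n\in\cals$ with $\bs_n\circ\bu\to M$, retract each along this flow to the reference horosphere and translate by some $g_n\in\Gamma_U$ into $D$; the flow times $\ln(-1/(\bs_n\circ\bu))$ converge to $\ln(-1/M)$, so the $g_n\bs_n$ lie in a compact region of $\mathbb{H}^n$, and a convergent subsequence has its limit in $\cals$ (local finiteness forces $\cals$ to be closed), giving a point $\bs\in\cals$ with $\bs\circ\bu=M$. Then $P=\{\bx\circ\bu=M\}$ is a support plane for $\mathrm{Hull}(\cals)$ parallel to the light-like subspace $\bu^{\perp}$, so Proposition \ref{horospherical cells} together with Corollary \ref{its a poly} delivers the required $\Gamma_U$-invariant $n$-cell as $\mathrm{Hull}(S\cap\cals)$, where $S=P\cap\mathbb{H}^n$. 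Uniqueness follows because any horosphere $S'$ at $U$ for which $\mathrm{Hull}(S'\cap\cals)$ is a Delaunay $n$-cell corresponds to a light-like support plane with normal $\bu'=c\bu\in U$, and the support and non-emptiness conditions (the latter via Corollary \ref{gots a point}) force $c=-1/M$, hence $S'=S$. The final clause, that every other cell is compact with a metric circumsphere, is then the space-like case already collected in the first paragraph.
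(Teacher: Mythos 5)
Your proposal is correct and follows essentially the same route as the paper: parts (1)--(2) are assembled exactly as the paper does from Lemmas \ref{invariance'n'stuff}, \ref{no time}, \ref{rotate space}, \ref{poly to poly}, Proposition \ref{horospherical cells}, Corollaries \ref{its a poly}, \ref{all the points}, \ref{locally finite hyperbolic}, and the arguments of Proposition \ref{the finite case}, while your existence-and-uniqueness argument for the horospherical cell in (3) is the same supremum-attainment-via-cocompactness argument (using Lemma \ref{invariant horoballs} and closedness of the locally finite set $\cals$) that the paper carries out in Corollary \ref{co-finite}, here made explicit inside the theorem's proof. One small citation correction: Lemma \ref{good visibility} is stated only for finite $\cals$, so in the space-like case the fact that faces of a compact cell's supporting face are again faces of $\mathrm{Hull}(\cals)$ should instead be drawn from the final assertion of Lemma \ref{rotate space} (or from Corollary \ref{empty spheres} together with the argument of Proposition \ref{the finite case}, as you already do for the intersection-in-a-face claim).
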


\begin{remark}\label{lattice geometric dual}  For $\cals$ as in Theorem \ref{pseudo EP}, the geometric dual complex of $\cals$ includes all Delaunay cells but the parabolic-invariant ones.  See Theorem \ref{the real geometric dual}.\end{remark}

\begin{proof}It is immediate that the empty circumspheres condition (2) above uniquely determines a collection of convex subsets of $\mathbb{H}^n$.  We will show that the Delaunay tessellation we  defined in \ref{faces n stuff} has this and the other listed properties.  By Definition \ref{faces n stuff} it is the collection:
$$\{r_n(F)\,|\, F\ \mbox{is a visible face of}\ \mathrm{Hull}(\cals)\}$$
By Lemma \ref{invariance'n'stuff} the collection of faces of $\mathrm{Hull}(\cals)$ is $\Gamma$-invariant, so since $r_n$ is $\mathit{SO}^+(1,n)$-invariant the Delaunay tessellation is as well.  Moreover, $\cals$ is locally finite so the prior results of this section apply to it.  Thus by Corollary \ref{locally finite hyperbolic} the collection of Delaunay cells is locally finite, and by Corollary \ref{all the points} their union is $\mathbb{H}^n$.

By Lemma \ref{no time}, a face $F$ of $\mathrm{Hull}(\cals)$ is of the form $P\cap\mathbb{H}^n$ for a support plane $P$ that is parallel to a space-like or light-like vector subspace of $\mathbb{R}^{n+1}$.  In the former case Lemma \ref{rotate space} implies that $F=\mathrm{Hull}(P\cap\cals)$ is a compact, convex polyhedron.  Therefore by Lemma \ref{poly to poly}, $r_n(F)$ is a compact, convex polyhedron in $\mathbb{H}^n$ equal to the closed convex hull of $S\cap\cals$ where $S = P\cap\mathbb{H}^n$.  If $P$ is parallel to a light-like subspace then by Proposition \ref{horospherical cells}, $r_n(F)$ is a convex polyhedron equal to the closed convex hull of $P\cap\cals$ in $\mathbb{H}^n$.

The proofs that each face of each cell is a cell, and that distinct cells that intersect do so in a face of each, follow those of the corresponding assertions of Proposition \ref{the finite case}.  The only modifications needed are to appeal to Lemma \ref{no time} instead of Lemma \ref{good visibility}.  This proves (1).


Lemma \ref{no time} implies that any support plane $P$ for $\mathrm{Hull}(\cals)$ separates $\cals$ from $\bo$. Thus for any face $F$ of $\mathrm{Hull}(\cals)$, $r_n(F)$ is contained in the convex region $B\cap\mathbb{H}^n$ bounded by $S = P\cap\mathbb{H}^n$ by Lemma \ref{ratatouille}, where $B$ is the half-space bounded by $P$ that contains $\bo$.  If $S = P\cap\mathbb{H}^n$ is a hypersphere such that $S\cap\cals\neq\emptyset$ but $B\cap\cals=S\cap\cals$, where $B$ is the half-space bounded by $P$ with $B\cap\mathbb{H}^n$ convex, then $P$ is a support plane for $\mathrm{Hull}(\cals)$ separating $\bo$ from $\mathrm{Hull}(\cals)$ (again by Lemma \ref{convex component}).  Hence $F = P\cap\mathrm{Hull}(\cals)$ is a visible face.  This proves the empty circumspheres condition (2).

For a support plane $P$ for $\mathrm{Hull}(\cals)$ parallel to a space-like subspace, $S = P\cap\mathbb{H}^n$ is a metric sphere by Lemma \ref{classification of spheres}.  $S$ is the circumsphere for $r_n(F)$, where $F = P\cap\mathbb{H}^n$, and $r_n(F)$ is compact as proved above.  If $P$ is parallel to a light-like subspace $V$ then by Proposition \ref{horospherical cells}, $C_U = r_n(F)$ is $n$-dimensional, the closed convex hull in $\mathbb{H}^n$ of $P\cap\cals$, and preserved by the (parabolic) stabilizer $\Gamma_U$ of $U=V^{\perp}$ in $\Gamma$.  This proves (3).\end{proof}

For the sake of clarity we will record the cocompact case separately.  It is a direct consequence of Theorem \ref{pseudo EP} and the standard fact that a cocompact lattice in $\mathit{SO}^+(1,n)$ contains no parabolics.  (This follows for example from Lemma \ref{invariant horoballs}.)

\begin{corollary}\label{cocompact case}  For a torsion-free, cocompact lattice $\Gamma<\mathit{SO}^+(1,n)$, the Delaunay tessellation of a non-empty $\Gamma$-invariant set $\cals\subset\mathbb{H}^n$ that has finite image under $\pi\co\mathbb{H}^n\to\mathbb{H}^n/\Gamma$ is a locally finite, $\Gamma$-invariant polyhedral decomposition of $\mathbb{H}^n$ that is the union of its $n$-cells.  For each metric sphere of $S$ of $\mathbb{H}^n$ that intersects $\cals$ and bounds an open ball disjoint from $\cals$, the closed convex hull of $S\cap\cals$ is a Delaunay cell.  Each Delaunay cell has this form.\end{corollary}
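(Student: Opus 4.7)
The plan is to derive the corollary from Theorem \ref{pseudo EP} by showing that its third conclusion is vacuous in the cocompact case. First I would verify the hypotheses of that theorem. Since $\Gamma$ is a torsion-free discrete subgroup of $\mathit{SO}^+(1,n)$, the quotient map $\pi\co\mathbb{H}^n\to M\doteq \mathbb{H}^n/\Gamma$ is a covering map, and the preimage $\cals=\pi^{-1}(\pi(\cals))$ of a finite set under a covering is a finite union of discrete $\Gamma$-orbits, hence locally finite. Together with $\Gamma$-invariance and non-emptiness this allows Theorem \ref{pseudo EP} to apply, producing a locally finite, $\Gamma$-invariant polyhedral complex whose cells tile $\mathbb{H}^n$ and which satisfies the empty circumspheres condition for metric spheres and horospheres.

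Next I would rule out parabolic behavior. Suppose for contradiction that some $g\in\Gamma-\{\mathit{id}\}$ were parabolic. Then $\langle g\rangle$ is a non-trivial parabolic subgroup of $\Gamma$, and the second half of Lemma \ref{invariant horoballs} produces a cusp $L$ of $M$ such that $\langle g\rangle$ stabilizes a component of $\pi^{-1}(L)$. But $\Gamma$ is cocompact and torsion-free, so $M$ is a closed hyperbolic manifold and cannot possess a cusp (cusps are non-compact, as noted in the discussion preceding Lemma \ref{invariant horoballs}). Hence $\Gamma$ contains no parabolic elements, so in particular it has no parabolic fixed points and condition (3) of Theorem \ref{pseudo EP} is vacuous. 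Combined with condition (2), this already gives the two circumspheres assertions of the corollary: every Delaunay cell is compact with a metric circumsphere, and conversely any metric sphere $S$ with $S\cap\cals\neq\emptyset$ that bounds a ball disjoint from $\cals$ yields a Delaunay cell equal to the closed convex hull of $S\cap\cals$ in $\mathbb{H}^n$.

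Finally I would show $\mathbb{H}^n$ is the union of its $n$-cells. Fix $x\in\mathbb{H}^n$. By local finiteness only finitely many cells meet a closed ball $\overline{B(x,\epsilon)}$ for any $\epsilon>0$, and those not containing $x$ lie at positive distance from $x$; shrinking $\epsilon$ we may assume every cell meeting $B(x,\epsilon)$ contains $x$, and these finitely many cells cover $B(x,\epsilon)$. Each cell has dimension at most $n$, and any cell of dimension less than $n$ is a closed subset of a totally geodesic subspace of positive codimension in $\mathbb{H}^n$, hence has empty interior. A finite union of closed sets with empty interior cannot contain the open ball $B(x,\epsilon)$, so some cell containing $x$ must be $n$-dimensional. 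I do not foresee a real obstacle: the heart of the corollary is packaged inside Theorem \ref{pseudo EP}, and the only genuinely new observation is that cocompactness excludes parabolics, which is delivered quickly by Lemma \ref{invariant horoballs}.
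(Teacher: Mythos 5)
Your proposal is correct and follows essentially the same route as the paper, which derives the corollary directly from Theorem \ref{pseudo EP} together with the fact that a cocompact lattice contains no parabolic elements, citing Lemma \ref{invariant horoballs} for exactly the cusp argument you give. Your added details --- local finiteness of $\cals$ from finiteness of $\pi(\cals)$ (handled in the paper via Corollary \ref{co-finite}) and the nowhere-dense argument showing every point lies in an $n$-cell --- simply fill in steps the paper leaves implicit.
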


The cocompact case is in turn a special case of the \textit{co-finite} case described below.

\begin{corollary}\label{co-finite}  Let $\Gamma<\mathit{SO}^+(1,n)$ be a torsion-free lattice and $\cals\subset\mathbb{H}^n$ a non-empty $\Gamma$-invariant set with finite image under $\pi\co\mathbb{H}^n\to\mathbb{H}^n/\Gamma$.  Then $\cals$ is locally finite so the conclusions of Theorem \ref{pseudo EP} apply, and each parabolic fixed point $U$ of $\Gamma$ has a non-compact $\Gamma_U$-invariant $n$-cell as described in Theorem \ref{pseudo EP}(3).\end{corollary}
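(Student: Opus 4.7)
My plan is to verify the two hypotheses needed to invoke Theorem \ref{pseudo EP}: namely, local finiteness of $\cals$, and---for the content of item (3) of that theorem---the existence for each parabolic fixed point $U$ of $\Gamma$ of a horoball centered at $U$ that is disjoint from $\cals$. Once both are in hand, Theorem \ref{pseudo EP} gives all the asserted consequences directly, and only the second hypothesis requires any real work.

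For local finiteness, I would use finiteness of $\pi(\cals)$ together with proper discontinuity of the $\Gamma$-action on $\mathbb{H}^n$. For any compact $K\subset\mathbb{H}^n$, write
$$\cals\cap K = \bigcup_{\bar\bs\in\pi(\cals)} \pi^{-1}(\bar\bs)\cap K.$$
Each orbit $\pi^{-1}(\bar\bs)$ is a closed, discrete subset of $\mathbb{H}^n$, so meets the compact set $K$ in only finitely many points. Since $\pi(\cals)$ is finite by hypothesis, $\cals\cap K$ is a finite union of finite sets.

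The substantive step is producing the disjoint horoball. I would begin by observing that, since $\pi(\cals)$ is a finite subset of $M=\mathbb{H}^n/\Gamma$, every one of its points has positive injectivity radius, so there exists $\epsilon_1>0$ with $\pi(\cals)\subset M_{[\epsilon_1,\infty)}$. Fix any positive $\epsilon<\min(\epsilon_1,\epsilon_n)$, where $\epsilon_n$ is the $n$-dimensional Margulis constant; then the $\epsilon$-thin part $M_{(0,\epsilon]}$ is disjoint from $\pi(\cals)$. Given a parabolic fixed point $U$ with maximal parabolic stabilizer $\Gamma_U<\Gamma$, the converse direction of Lemma \ref{invariant horoballs} provides a cusp $L$ of $M$ and a component $\widetilde L$ of $\pi^{-1}(L)$ stabilized by $\Gamma_U$ (and with $U$ as the common fixed point). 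I would then pass to the cusp component $L'$ of $M_{(0,\epsilon]}$ contained in $L$, and take the component $\widetilde L'$ of $\pi^{-1}(L')$ contained in $\widetilde L$. Since $L'\cap\pi(\cals)=\emptyset$, $\widetilde L'$ is disjoint from $\cals$, and the first bullet of Lemma \ref{invariant horoballs} then supplies a horoball centered at some $\bu\in U$ contained in $\widetilde L'$, hence disjoint from $\cals$.

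The main subtlety here will be matching data: I need the small cusp component $L'$ to lift to a $\Gamma_U$-stabilized horoball-containing region centered at exactly the given $U$, and not at some other parabolic fixed point. This is where I would lean on the description of the parabolic subgroup associated to $L$ in Lemma \ref{invariant horoballs}, together with the fact that nested components of preimages of nested cusps inherit the same stabilizer (essentially because the common light-like eigenspace $U$ of $\Gamma_U$ is uniquely determined by $\Gamma_U$). With this matching in place, the hypothesis of Theorem \ref{pseudo EP}(3) is verified for every parabolic fixed point of $\Gamma$, and that theorem produces the non-compact $\Gamma_U$-invariant $n$-cell described in the statement.
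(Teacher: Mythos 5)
Your first two steps are sound and essentially reproduce the paper's argument. Local finiteness from finiteness of $\pi(\cals)$ plus discreteness of $\Gamma$ is the paper's argument in direct rather than contrapositive form. Producing a horoball centered at $U$ and disjoint from $\cals$ via the thick--thin decomposition and Lemma \ref{invariant horoballs} is also the paper's route; the ``matching'' subtlety you flag is real but is settled by the paper's own tools: either choose $\epsilon<\min(\epsilon_1,\epsilon_n)$ once and apply Lemma \ref{invariant horoballs} at that level (as the paper does), or note that your nested component $\widetilde L'$ is non-compact and lies in a horoball centered at $U$ (second bullet of the lemma), so by Lemma \ref{intersect balls} the horoball it contains cannot be centered at any other ideal point --- the same trick used in the proof of Proposition \ref{good support}.

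The problem is the final step, where you treat Theorem \ref{pseudo EP}(3) as a black box that converts ``disjoint horoball at $U$'' into ``$\Gamma_U$-invariant $n$-cell at $U$.'' The paper's proof of (3) only classifies cells by the type of their support plane: cells from space-like support planes are compact with metric circumspheres, and cells from light-like support planes are $n$-dimensional and $\Gamma_U$-invariant (via Proposition \ref{horospherical cells}). It never shows that a disjoint horoball centered at $U$ forces a light-like support plane parallel to $U^{\perp}$ to exist, and that existence is exactly the content the paper assigns to the proof of Corollary \ref{co-finite}. Concretely, after obtaining $\bu\in U$ with $\bs\circ\bu<-1$ for all $\bs\in\cals$, the paper sets $c_0=\sup\{\bs\circ\bu\,|\,\bs\in\cals\}$, rescales to $\bu_0=\bu/(-c_0)$, and proves the supremum is attained: if $\bs_k\circ\bu_0\to-1$, Lemma \ref{bad balls} makes the $\bs_k$ approach the horosphere $\{\bx\circ\bu_0=-1\}\cap\mathbb{H}^n$, cocompactness of $\Gamma_U$ on that horosphere (Lemma \ref{invariant horoballs}) plus $\Gamma$-invariance lets one assume the nearest points lie in a compact fundamental domain, and local finiteness then forces the sequence to stabilize. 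This makes $P_0=\{\bx\circ\bu_0=-1\}$ an honest support plane for $\mathrm{Hull}(\cals)$ parallel to $U^{\perp}$, and only then do Proposition \ref{horospherical cells} and the machinery of Theorem \ref{pseudo EP} yield the non-compact $\Gamma_U$-invariant $n$-cell. Your proposal is formally consistent with the literal wording of (3), but it omits this attained-supremum argument, which is the substantive step of the corollary; as written, you reduce the corollary to a clause whose proof appears nowhere earlier in the paper.
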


\begin{proof}  If $\cals$ is not locally finite it has an accumulation point $\bs_0\in\mathbb{H}^n$.  For $\pi$ as above, if $\pi(\cals)$ is finite then a sequence $\{\bs_n\}\subset\cals$ that approaches $\bs_0$ has a subsequence of $\Gamma$-equivalent points.  For any $\epsilon>0$ it follows that the closed ball $B$ about $\bs_0$ of radius $\epsilon$ intersects infinitely many of its $\Gamma$-translates.  Hence the $\Gamma$-action is not discontinuous, so $\Gamma$ is not discrete (see \cite[Theorem 5.3.5]{Ratcliffe}), a contradiction.

It remains to show that for each parabolic fixed point $U$ there is a support plane $P$ for $\mathrm{Hull}(\cals)$ separating $\cals$ from $\bo$ and parallel to $V = U^{\perp}$.   For then the half-space $B$ bounded by $P$ and containing $\bo$ intersects $\mathbb{H}^n$ in a horoball intersecting $\cals$ only in its boundary.  If $B\cap\cals$ is non-empty we may replace $P$ by a nearby parallel copy in $B$ and run the same argument to get a disjoint horoball.

Since $\pi(\cals)\subset \mathbb{H}^n/\Gamma$ is finite, $\pi(\cals)\subset M_{[\epsilon,\infty)}$ for some $\epsilon>0$.  We may assume that $\epsilon$ is less than the $n$-dimensional Margulis constant, so components of $M_{(0,\epsilon]}$ satisfy the classification Theorem D.3.3 of \cite{BenPet} sketched in Section \ref{margulis}.

Let $\Gamma_1$ be the parabolic subgroup of $\Gamma$ stabilizing $U$.  By Lemma \ref{invariant horoballs} there is a cusp $L$ of $M$ and a component $\widetilde{L}$ stabilized by $\Gamma_1$, and there exists $\bu\in U$ such that the horoball $\{\bx\circ\bu\geq-1\}\cap\mathbb{H}^n$ is contained in $\widetilde{L}$.  Then $\bs\circ\bu <-1$ for all $\bs\in\cals$, since $\cals$ lies outside $\widetilde{L}$.  Let $c_0 = \sup\{\bs\circ\bu\,|\,\bs\in\cals\}$ and let $\bu_0 = \bu/-c_0$.  Then $\sup\{\bs\circ\bu_0\,|\,\bs\in\cals\} = -1$.

The plane $P_0 = \{\bx\circ\bu_0 = -1\}$ separates $\bo$ from $\cals$.  We claim that $P\cap\cals\neq\emptyset$; i.e. that the supremum above is a maximum, whence $P$ is a support plane for $\mathrm{Hull}(\cals)$ and $F=P\cap\mathrm{Hull}(\cals)$ satisfies the claim above.  Lemma \ref{bad balls} implies for a sequence $\bs_k\in\cals$ with $\bs_k\circ\bu_0\to-1$ that $d_H(\bx_k,\bs_k)\to0$, where $\bx_k$ is the nearest point of $P_0\cap\mathbb{H}^n$ to $\bs_k$ for each $k$.  By $\Gamma$-invariance of $\cals$ we may assume that all $\bx_k$ lie in a compact fundamental domain for the $\Gamma_1$-action on $P_0\cap\mathbb{H}^n$.  Thus a subsequence of the $\bx_k$, hence also of the $\bs_k$ converges to some $\bx\in P_0\cap\mathbb{H}^n$, so $\bs_k = \bx$ for all large enough $k$ by local finiteness of $\cals$.\end{proof}

We finally describe the image in the quotient manifold in the setting of Theorem \ref{pseudo EP}.

\begin{corollary}\label{down below}  Let $\Gamma<\mathit{SO}^+(1,n)$ be a torsion-free lattice and $\cals$ a non-empty, locally finite, $\Gamma$-invariant set in $\mathbb{H}^n$.  The interior of each compact Delaunay cell embeds in $\mathbb{H}^n/\Gamma$ under $\pi\co\mathbb{H}^n\to\mathbb{H}^n/\Gamma$.  For a cell $C_U$ with parabolic stabilizer $\Gamma_U$, $\pi|_{\mathit{int}\,C_U}$ factors through an embedding of $\mathit{int}\,C_U/\Gamma_U$ to a set containing a cusp of $\mathbb{H}^n/\Gamma$.

If $\cals_0=\pi(\cals)$ is finite then there are finitely many $\Gamma$-orbits of Delaunay cells.
\end{corollary}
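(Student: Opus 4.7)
The plan is to prove the three assertions in order, using Theorem \ref{pseudo EP}(1) -- that distinct cells meet only in a face of each -- to pin down how $\Gamma$ permutes cells.

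For the first assertion, suppose $g \in \Gamma$ with $g.x = y$ for $x, y \in \mathit{int}\,C$ and $C$ a compact Delaunay cell. Then $g.C$ is a cell meeting $C$ in an interior point of each, so Theorem \ref{pseudo EP}(1) forces $g.C = C$. Now $g$ permutes the finite vertex set of $C$, so some power $g^k$ fixes each vertex pointwise. Since a torsion-free discrete subgroup of $\mathit{SO}^+(1,n)$ acts freely on $\mathbb{H}^n$, $g^k = \mathit{id}$, and then torsion-freeness of $\Gamma$ gives $g = \mathit{id}$; hence $x = y$.

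For the second assertion, the same argument gives $g.C_U = C_U$ whenever $g \in \Gamma$ maps $\mathit{int}\,C_U$ into itself. By injectivity of $r_n$ on visible points (Lemma \ref{convex hull}), $g$ then preserves the visible face $F$ with $r_n(F) = C_U$. Since $F$ is $n$-dimensional by Proposition \ref{horospherical cells}, the affine $n$-plane $P$ containing $F$ is the affine span of $F$, hence uniquely determined, so $g.P = P$. As $g$ is linear, it preserves the parallel light-like subspace $V$ and therefore $U = V^\perp$, placing $g \in \Gamma_U$. Since $\Gamma_U$ preserves $C_U$ by Theorem \ref{pseudo EP}(3), $\pi|_{\mathit{int}\,C_U}$ factors through an embedding $\mathit{int}\,C_U/\Gamma_U \hookrightarrow M$. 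That $\pi(C_U)$ contains a cusp of $M$ follows from Corollary \ref{its a poly} (a horoball centered at $U$ lies inside $C_U$) together with Lemma \ref{invariant horoballs} (the $\pi$-image of a sufficiently deep horoball at $U$ is a cusp neighborhood).

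For the finiteness assertion, I first observe that every Delaunay cell has a vertex in $\cals$. By (\ref{x circ y}), for any $\bs \in \cals$ the affine hyperplane $\{\by : \by \circ \bs = -1\}$ meets $U_{n+1}$ only at $\bs$, so it is a support plane of $\mathrm{Hull}(\cals)$ exhibiting $\{\bs\}$ as a visible $0$-face; hence $\bs$ is a $0$-cell of the Delaunay tessellation. By Theorem \ref{pseudo EP}(2) each compact cell is the closed hyperbolic convex hull of $S \cap \cals$ for its circumsphere $S$, with $S \cap \cals \neq \emptyset$; and by Corollary \ref{gots a point}, each horospherical cell $C_U$ contains $P \cap \cals \neq \emptyset$, where $P$ is its support plane. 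So every cell has a vertex in $\cals$. Now choose orbit representatives $\bs_1, \ldots, \bs_N$ for $\cals/\Gamma$; each $\Gamma$-orbit of Delaunay cells contains a representative with a vertex in $\{\bs_1, \ldots, \bs_N\}$. By Corollary \ref{locally finite hyperbolic} only finitely many Delaunay cells contain any given point, so there are finitely many $\Gamma$-orbits.

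The main subtlety I anticipate is in paragraph two: the implication $g.C_U = C_U \Rightarrow g \in \Gamma_U$ depends on $n$-dimensionality of $C_U$ (Proposition \ref{horospherical cells}) for uniqueness of the containing support plane. Without this, $g$ could a priori act nontrivially on a lower-dimensional $C_U$ without stabilizing $U$.
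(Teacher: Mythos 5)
Your proof is correct and follows essentially the same route as the paper: use the fact that distinct cells meet in a face to get $g.C=C$, kill $g$ on compact cells via torsion-freeness, show a symmetry of a parabolic-invariant cell preserves $U$ and hence lies in $\Gamma_U$, get the cusp from Corollary \ref{its a poly} and Lemma \ref{invariant horoballs}, and count orbits using that every cell contains a point of $\cals$ together with local finiteness. The only differences are cosmetic: you replace the paper's appeal to the Cartan fixed-point theorem (\cite[Cor.~2.8]{BrH}) by the finite vertex-set permutation argument, and you deduce $g\in\Gamma_U$ from invariance of the affine span of the $n$-dimensional face rather than of the circum-horosphere, both of which are fine.
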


\begin{proof}  For any cell $F$ and $g\in\Gamma$, $g(F)$ is also a Delaunay cell (by $\Gamma$-invariance), so if $g(F)\neq F$ then $g(F)\cap \mathit{int}\,F=\emptyset$.  But since $\Gamma$ is torsion-free, if $F$ is compact then $g(F)\neq F$ for every $g\in\Gamma$ (see eg.~\cite[Cor.~2.8]{BrH}) so $\pi$ is embedding on $\mathit{int}\,F$.  If $F$ is parabolic-invariant then any $g\in\Gamma$ with $g(F) = F$ preserves the circum-horosphere of $F$, thus also its ideal point $U$, and therefore lies in $\Gamma_U$.  It follows that $\pi|_{\mathit{int}\,C_U}$ factors through an embedding of $\mathit{int}\,C_U/\Gamma_U$.

By Corollary \ref{its a poly} a parabolic-invariant cell $C_U$ contains a horoball centered at some $\bu\in U$, so the final assertion of Lemma \ref{invariant horoballs} implies $C_U/\Gamma_U$ contains a cusp of $\mathbb{H}^n/\Gamma$.

The empty circumspheres condition implies that the set of $0$-cells of the Delaunay tessellation is $\cals$.  The set of $\Gamma$-orbits of $\cals$ is in bijective correspondence with its image under $\pi$, so if $\pi(\cals)$ is finite then so is the set of orbits.  By local finiteness, each point of $\cals$ lies in only finitely many Delaunay cells, so since each such cell contains a point of $\cals$ they also have finitely many $\Gamma$-orbits in this case.\end{proof}

\bibliographystyle{plain}
\bibliography{Delaunay}

\begin{thebibliography}{10}

\bibitem{AkSak}
Hirotaka Akiyoshi and Makoto Sakuma.
\newblock Comparing two convex hull constructions for cusped hyperbolic
  manifolds.
\newblock In {\em Kleinian groups and hyperbolic 3-manifolds ({W}arwick,
  2001)}, volume 299 of {\em London Math. Soc. Lecture Note Ser.}, pages
  209--246. Cambridge Univ. Press, Cambridge, 2003.

\bibitem{BenPet}
Riccardo Benedetti and Carlo Petronio.
\newblock {\em Lectures on hyperbolic geometry}.
\newblock Universitext. Springer-Verlag, Berlin, 1992.

\bibitem{BergerII}
Marcel Berger.
\newblock {\em Geometry. {II}}.
\newblock Universitext. Springer-Verlag, Berlin, 1987.
\newblock Translated from the French by M. Cole and S. Levy.

\bibitem{Berger}
Marcel Berger.
\newblock {\em Geometry {I}}.
\newblock Universitext. Springer-Verlag, Berlin, 2009.
\newblock Translated from the 1977 French original by M. Cole and S. Levy,
  Fourth printing of the 1987 English translation [MR0882541].

\bibitem{BDT}
Mikhail Bogdanov, Olivier Devillers, and Monique Teillaud.
\newblock Hyperbolic {D}elaunay complexes and {V}oronoi diagrams made
  practical.
\newblock INRIA Research Report n$^{\circ}$ 8146, December 2012.

\bibitem{BrH}
Martin~R. Bridson and Andr{\'e} Haefliger.
\newblock {\em Metric spaces of non-positive curvature}, volume 319 of {\em
  Grundlehren der Mathematischen Wissenschaften [Fundamental Principles of
  Mathematical Sciences]}.
\newblock Springer-Verlag, Berlin, 1999.

\bibitem{CDM}
R.~Charney, M.~Davis, and G.~Moussong.
\newblock Nonpositively curved, piecewise {E}uclidean structures on hyperbolic
  manifolds.
\newblock {\em Michigan Math. J.}, 44(1):201--208, 1997.

\bibitem{CooLo}
D.~Cooper and D.~Long.
\newblock A generalization of the {E}pstein-{P}enner construction to projective
  manifolds.
\newblock Preprint. arXiv:1307.5016v1, July 2013.

\bibitem{DeLoRS}
Jes{\'u}s~A. De~Loera, J{\"o}rg Rambau, and Francisco Santos.
\newblock {\em Triangulations}, volume~25 of {\em Algorithms and Computation in
  Mathematics}.
\newblock Springer-Verlag, Berlin, 2010.
\newblock Structures for algorithms and applications.

\bibitem{DeB_Voronoi}
Jason DeBlois.
\newblock The centered dual and the maximal injectivity radius of hyperbolic
  surfaces.
\newblock {\em Geom. Topol.}, 19(2):953--1014, 2015.

\bibitem{DeB_locmax}
Jason DeBlois.
\newblock The local maxima of maximal injectivity radius among hyperbolic
  surfaces.
\newblock Preprint. ar{X}iv:1506.08080, June 2015.

\bibitem{DMT}
Olivier Devillers, Stefan Meiser, and Monique Teillaud.
\newblock The space of spheres, a geometric tool to unify results on {V}oronoi
  diagrams.
\newblock INRIA Research Report 1620, 1992.

\bibitem{EpstePen}
D.~B.~A. Epstein and R.~C. Penner.
\newblock Euclidean decompositions of noncompact hyperbolic manifolds.
\newblock {\em J. Differential Geom.}, 27(1):67--80, 1988.

\bibitem{GrahamYao}
Ron Graham and Frances Yao.
\newblock A whirlwind tour of computational geometry.
\newblock {\em Amer. Math. Monthly}, 97(8):687--701, 1990.

\bibitem{NaatPen}
M.~N{\"a}{\"a}t{\"a}nen and R.~C. Penner.
\newblock The convex hull construction for compact surfaces and the {D}irichlet
  polygon.
\newblock {\em Bull. London Math. Soc.}, 23(6):568--574, 1991.

\bibitem{Ratcliffe}
John~G. Ratcliffe.
\newblock {\em Foundations of hyperbolic manifolds}, volume 149 of {\em
  Graduate Texts in Mathematics}.
\newblock Springer, New York, second edition, 2006.

\end{thebibliography}

\end{document}